\documentclass[a4paper,11pt]{amsart}
\usepackage{microtype,tikz-cd,amsmath,amsfonts,amssymb,amsthm,indentfirst,mathtools}
\linespread{1.1}
\usepackage[text={156mm,246mm},centering]{geometry}

\makeatletter
\renewenvironment{proof}[1][\proofname] {\par\pushQED{\qed}\normalfont\topsep6\p@\@plus6\p@\relax\trivlist\item[\hskip\labelsep\bfseries#1\@addpunct{.}]\ignorespaces}{\popQED\endtrivlist\@endpefalse}
\makeatother

\usepackage{enumitem}
\setlist[enumerate]{left=\parindent,label=\textup{(\roman*)}, ref=\textup{\roman*}}
\setlist[itemize]{left=\parindent}
\usepackage{hyperref}
\numberwithin{equation}{subsection}

\newtheorem{theorem}[equation]{Theorem}
\newtheorem*{theorem*}{Theorem}
\newtheorem{proposition}[equation]{Proposition}
\newtheorem{corollary}[equation]{Corollary}
\newtheorem{lemma}[equation]{Lemma}
\newtheorem{claim}[equation]{Claim}
\newtheorem{variant}[equation]{Variant}
\newtheorem{question}[equation]{Question}
\theoremstyle{definition}
\newtheorem{definition}[equation]{Definition}

\newtheorem{notation}[equation]{Notation}
\newtheorem{construction}[equation]{Construction}
\newtheorem{example}[equation]{Example}

\newtheorem*{remark*}{Remark}
\newtheorem*{remarks*}{Remarks}

\DeclareMathOperator{\pH}{{}^{\mathrm{p}}\!\mathcal{H}}
\newcommand{\pR}{{}^{\mathrm{p}}R}
\newcommand{\Hom}{\mathcal{H}\!\mathit{om}}

\AtBeginDocument{
  \let\leq=\leqslant
  \let\geq=\geqslant
  \renewcommand{\setminus}{\mathbin{\rule[0.2em]{0.67em}{0.12em}}}%
}

\title{Betti number bounds for varieties and exponential sums}

\author{Daqing Wan}
\address{Department of Mathematics, University of California, Irvine, CA 92697-3875 USA.}
\email{dwan@math.uci.edu}

\author{Dingxin Zhang}
\address{Yau Mathematical Sciences Center, Tsinghua University, Beijing~100086 China.}
\email{dingxin@tsinghua.edu.cn}

\date{}

\begin{document}

\begin{abstract}
Using basic properties of perverse sheaves, we give new upper bounds
for compactly supported Betti numbers for arbitrary affine varieties
in $\mathbb{A}^n$ defined by $r$ polynomial equations of degrees at
most $d$. As arithmetic applications, new total degree bounds are
obtained for zeta functions of varieties and L-functions of
exponential sums over finite fields, improving the classical results
of Bombieri, Katz, and Adolphson--Sperber.  In the complete
intersection case, our total Betti number bound is asymptotically
optimal as a function in $d$.  In general, it remains an open problem
to find an asymptotically optimal bound as a function in $d$.
\end{abstract}

\maketitle


\section{Introduction}

In this paper, we address the problem of bounding compactly supported
Betti numbers for certain families of algebraic varieties and
exponential sums.  While this is a fundamental question in topology,
algebraic geometry, arithmetic geometry and their algorithmic aspects, our motivation is
strengthened by the fact that such bounds can lead to improved
estimates in analytic number theory over global function fields.  Our
focus is therefore on obtaining quantitative and explicit estimates
that are as sharp as possible.

For ``nondegenerate'' objects, Betti numbers can often be computed
exactly.  However, degeneration significantly complicates precise
general control.  We develop a framework (see
\S\ref{sec:specialization}) for obtaining estimates and demonstrate
that, under suitable hypotheses, the Betti numbers undergo minimal
deterioration during degeneration.

Regarding Betti numbers of affine varieties, we improve upon previous
results by Katz \cite{katz:sums-of-betti-numbers}.  In the most
general case, we achieve the correct asymptotic order
(\ref{theorem:order}).  For the complete intersection case, we obtain
asymptotically optimal results (\ref{theorem:main}).

For exponential sums on affine varieties, we extend and refine the
results of
\cite{adolphson-sperber:newton-polyhedra-degree-l-function,bombieri:exponential-sums-in-finite-fields-2}
by lifting their L-function bounds to the cohomological level
(\ref{theorem:as-improvement}, \ref{eq:more-like-as},
\ref{eq:power-as}).  Our bounds are asymptotically optimal in the
complete intersection case.  Moreover, our method reveals an upper
semicontinuity property for Newton polygons associated with
exponential sums (\ref{theorem:semicont-newton}).

While our approach is applicable in many other contexts, we focus on
theorems about Betti numbers of varieties and exponential sums, as
these are the objects most frequently encountered in algorithmic
arithmetic geometry and analytic number theory over function fields.
Another approach for obtaining Betti bounds based on ramification
theory has been recently developed by Haoyu Hu and J.-B.~Teyssier
\cite{hu-teyssier}. Their bounds are expressed in terms of the
logarithmic conductors of the sheaf at infinity.  We believe readers
will find our concrete explicit results and the general theorems of Hu
and Teyssier to be mutually enriching.

A detailed summary of our results follows.

\subsection{Compactly supported Betti numbers of affine varieties}

Let \(k\) be an algebraically closed field, and let \(\ell\) be a
prime that is different from the characteristic of \(k\).  For an
algebraic variety \(V\) over \(k\), we define
\[
B_{c}(V, \ell) \coloneqq \sum \dim \mathrm{H}^{i}_{c}(V; \overline{\mathbb{Q}}_{\ell}),
\]
where \(\mathrm{H}^{i}_{c}(V; \overline{\mathbb{Q}}_{\ell})\) denotes
the \(i\)\textsuperscript{th} compactly supported \(\ell\)-adic
cohomology group.  Note that \(B_{c}(V, \ell)\) depends solely on the
reduced structure of \(V\). The number \(B_{c}(V, \ell)\) might depend
on the choice of the prime $\ell$. The conjectural independence of
\(B_{c}(V, \ell)\) on $\ell$ remains wide open in general for singular
or non-proper varieties. In many applications, an explicit good upper
bound for \(B_{c}(V, \ell)\) is already very useful. For instance, a
good upper bound for \(B_{c}(V, \ell)\) gives a good upper bound for
the total degree of the zeta function of an algebraic variety over a
finite field.  By Grothendieck's $\ell$-adic trace formula and
Deligne's theorem on the Weil conjectures, the quality of the total
degree bound is well known to be one of the main factors for point
counting estimates. A good total degree bound is also crucial in
estimating the complexity of effective algorithms in computing zeta
functions in arithmetic geometry, see \cite{AW08,Wan08,Har15}.

Let $n, r, d$ be positive integers.  The primary purpose of this
paper, also our original motivation, is to establish an asymptotically
optimal upper bound for \(B_{c}(V, \ell)\) as \(V\) ranges over all
Zariski closed subsets of the affine space \(\mathbb{A}^{n}_{k}\)
defined by \(r\) polynomial equations, each of degree at most \(d\),
over all algebraically closed field $k$ of characteristic different
from $\ell$.
To report our progress,  it is convenient to introduce the quantity
\begin{equation*}
B_{c}(n,r;d) \coloneqq \sup_{\ell\not= {\rm char}(k)} \left\{B_{c}(V, \ell) :
\begin{array}{l}
  V = \operatorname{Spec}k[x_{1},\ldots,x_{n}]/(f_{1},\ldots,f_{r})  \\
  \deg f_{i} \leq d, \ 1\leq i\leq r.
\end{array}
\right\},
\end{equation*}
where $k$ runs over all algebraically closed fields and $\ell$ runs
over all prime numbers different from the characteristic of \(k\).
The number $B_{c}(n,r;d)$ clearly depends only on the three parameters
$n ,r, d$.  By adding redundant equations, if necessary, one sees
that the number $B_{c}(n,r;d)$ is increasing in $r$. Namely,
\begin{equation}\label{eq:chain-of-b}
B_c(n, 1;d) \leq B_c(n, 2;d) \leq \cdots \leq B_c(n, r;d)  \leq B_c(n, r+1;d)\leq \cdots.
\end{equation}
Furthermore, by Kronecker's reduction \cite{perron:kronecker-theorem},
which holds for any infinite field, this sequence in $r$ stabilizes
once $r$ reaches $n+1$, that is,
\begin{equation*}
B_c(n, n+1;d) = B_c(n, n+2;d) = B_c(n, n+3;d) =\cdots.
\end{equation*}
Thus, without loss of generality, we can always assume that $r\leq n+1$.

Our main purpose of this paper is to study the growth of the number
$B_{c}(n,r;d)$ as a function of $d$.  Our first theorem states:

\begin{theorem}[See \S\ref{sec:proof-theorem-order}]\label{theorem:order}
With the above notation, we have
\[
(d-1)^{n} \leq B_{c}(n,r;d) \leq 3^{r} \times \binom{n+r-1}{r-1}\times (2d + 1)^{n}.
\]
\end{theorem}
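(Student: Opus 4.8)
The assertion consists of two inequalities; the lower bound is elementary, and the upper bound carries the weight. For the lower bound, the monotonicity \eqref{eq:chain-of-b} reduces us to producing, for every pair $(n,d)$, a single affine hypersurface in $\mathbb{A}^{n}$ of degree $\leq d$ with $B_{c}\geq (d-1)^{n}$. Over any algebraically closed field of characteristic prime to $d$ and to $\ell$ I would take the smooth affine Fermat hypersurface $X\colon x_{1}^{d}+\cdots+x_{n}^{d}=1$. This is a smooth fibre of the cohomologically tame polynomial $x_{1}^{d}+\cdots+x_{n}^{d}$, so its reduced cohomology is concentrated in degree $n-1$, of dimension $(d-1)^{n}$ there — over $\mathbb{C}$ this is Broughton's computation of the global Milnor number, and in general it follows from the classical Jacobi sum evaluation of $\#X(\mathbb{F}_{q})$. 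As $X$ is smooth affine of dimension $n-1$, Poincaré duality then gives $\dim\mathrm{H}^{n-1}_{c}(X;\overline{\mathbb{Q}}_{\ell})=(d-1)^{n}$, hence $B_{c}(n,r;d)\geq B_{c}(n,1;d)\geq (d-1)^{n}$.

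For the upper bound I would argue by induction on $n$. After Kronecker's reduction one may assume $r\leq n+1$. Writing $a\colon V\hookrightarrow\mathbb{A}^{n}$ and $K\coloneqq a_{!}\overline{\mathbb{Q}}_{\ell,V}$, one has $B_{c}(V,\ell)=\sum_{i}\dim\mathrm{H}^{i}_{c}(\mathbb{A}^{n},K)$, and the perverse spectral sequence yields $B_{c}(V,\ell)\leq\sum_{j}B_{c}\bigl(\mathbb{A}^{n},\pH^{j}(K)\bigr)$. One is thus reduced to a statement ``with coefficients'': to bound the total compactly supported cohomology of a perverse sheaf $P$ on an affine space whose combinatorial complexity — its generic rank and the degrees of the equations cutting out the strata on which it is lisse — is controlled by the ambient dimension, by $r$, and by $d$. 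The key point, which the framework of \S\ref{sec:specialization} is designed to supply, is that such a bound holds with no deterioration in the degree even when $P$ is highly degenerate; this is the ``minimal deterioration under degeneration'' of the introduction, and it is what lets the induction close, since fibres of $V$ over smaller affine spaces carry precisely such perverse sheaves.

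The inductive step I would realise through a generic linear projection $p\colon\mathbb{A}^{n}\to\mathbb{A}^{n-1}$: every fibre of $V$ is then either a full line (over a closed $Z\subseteq\mathbb{A}^{n-1}$ cut out by equations of degree $\leq d$, handled separately by induction and contributing a lower-order term) or a finite scheme of length $\leq d$. Over the complement of $Z$ the complex $\mathrm{R}p_{!}K$ is a single constructible sheaf $\mathcal{F}$ of generic rank $\leq d$, and by the coefficient bound of the previous step $B_{c}$ of $\mathcal{F}$ on $\mathbb{A}^{n-1}$ is at most $(2d+1)\,B_{c}(n-1,r;d)$ up to the combinatorial factor — the rank of $\mathcal{F}$ accounting for a factor $\leq d$ and the wild ramification and singularity strata of $\mathcal{F}$ for another $O(d)$. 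Each recursion step thus multiplies the bound by at most $2d+1$ — this is where the base $2d$, rather than the optimal $d-1$, enters — while the factor $3^{r}\binom{n+r-1}{r-1}$ is the bookkeeping cost of the passage through the $\pH^{j}$'s and of working with $\leq n+1$ equations; unwinding $n$ steps, together with the identities $\sum_{s}\binom{r}{s}2^{s}=3^{r}$ and the monotonicity of $\binom{n+s-1}{s-1}$ in $s$, produces the asserted inequality.

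I expect the genuine obstacle to be exactly the uniform control of these correction terms. A naive execution fails: the locus over which the fibres of $V$ degenerate has degree of order $d^{2}$, so bounding $B_{c}(\mathbb{A}^{n-1},\mathcal{F})$ by peeling off that locus and recursing would lose a power of $d$ at every stage, and a plain inclusion–exclusion over the $f_{i}$ loses a factor of order $r^{n}$. The content of \S\ref{sec:specialization} is that the degenerate fibres contribute only vanishing cohomology supported on lower-dimensional loci, itself bounded by Betti numbers in strictly fewer variables, so that no power of $d$ is lost; making this statement precise and explicit enough to feed the induction is the real work. Finally, the gap between the base $2d+1$ obtained this way and the optimal $d-1$ — equivalently the factor $2^{n}$ — is, as the introduction notes, left open in general.
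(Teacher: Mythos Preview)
Your lower bound is fine and essentially matches the paper's (which quotes Lemma~\ref{lemma:exact-betti-for-generic-affine-complete-intersection} for the exact middle Betti number of a generic affine complete intersection, of which your Fermat example is a special case).

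For the upper bound, however, your proposal is \emph{not} the paper's argument, and it has a real gap. The paper does \emph{not} induct on $n$ via a linear projection. Instead it proceeds as follows: reduce to $k=\overline{\mathbb{F}}_{q}$; apply the Cayley trick (Theorem~\ref{theorem:dwork-coh}/Corollary~\ref{corollary:cayley}) to identify $\mathrm{H}^{j}_{c}(V;\overline{\mathbb{Q}}_{\ell})$ with $\mathrm{H}^{j+2r}_{c}(\mathbb{A}^{n+r};g^{\ast}\mathcal{L}_{\psi})$ for $g=\sum_{j}x_{n+j}f_{j}$; decompose $\mathbb{A}^{n+r}=\bigsqcup_{I}T_{I}$ into its coordinate tori; and bound each $B_{c}(T_{I};g|_{T_{I}})$ by the cohomologically enhanced Adolphson--Sperber bound \eqref{eq:more-like-as}, which in turn rests on Theorem~\ref{theorem:ultimate-as} and an elementary volume estimate (Lemma~\ref{lemma:torus-total}). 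The constants $3^{r}$, $\binom{n+r-1}{r-1}$, and $(2d+1)^{n}$ arise transparently from summing $2^{|I|}\binom{|I|-1}{|I''|-1}d^{|I'|}$ over subsets $I$. The specialization lemma of \S\ref{sec:specialization} enters only indirectly, inside the proof of the toric Betti bound (via Proposition~\ref{proposition:khovanskii-positive-characteristic} and Theorem~\ref{theorem:middle-upper-bound-exponential-sum}); it is never used to control ``degenerate fibres of a linear projection'' in the way you describe.

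The gap in your sketch is the ``statement with coefficients'' you reduce to but never formulate or prove. You need, for every perverse sheaf $P$ on $\mathbb{A}^{n-1}$ whose stratification is cut out by degree-$\leq d$ equations and whose generic rank is $\leq d$, a bound of the shape $B_{c}(\mathbb{A}^{n-1};P)\leq (2d+1)\cdot(\text{something of the same shape in }n-1)$. Nothing in \S\ref{sec:specialization} supplies this: Lemma~\ref{lemma:middle-estimate} compares only the \emph{middle} compactly supported cohomology of a special fibre to that of a general fibre in a one-parameter family with a fixed cosupport-condition coefficient; it says nothing about the total Betti number of an arbitrary constructible sheaf with prescribed singular support and rank. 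You yourself note that the naive execution fails (the degeneracy locus has degree $\sim d^{2}$), and then assert that \S\ref{sec:specialization} repairs this---but that section contains no such repair. Without a precise coefficient bound that is stable under the projection step, the induction does not close, and your derivation of the specific constants $3^{r}\binom{n+r-1}{r-1}$ is at best heuristic. If you want an approach closer in spirit to yours, see \S\ref{sec:elementary}, which combines an Euler-characteristic bound, the perverse weak Lefschetz theorem, and a Mayer--Vietoris argument; but even there no linear-projection induction is used, and the resulting constant is generally weaker than that of Theorem~\ref{theorem:order}.
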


Previously, the best known bound is due to Katz
\cite{katz:sums-of-betti-numbers}, who showed that
\begin{equation}
\label{eq:k-1}\tag{K}
B_{c}(n,r;d) \leq 2^{r} \times 3 \times 2 \times (rd+3)^{n+1}.
\end{equation}
If we treat \(n\) and \(r\) as fixed while considering \(d\) as a
variable, \eqref{eq:k-1} yields the asymptotic behavior
\(B_{c}(n,r;d) \ll_{n,r} d^{n+1}\).  In contrast, Theorem
\ref{theorem:order} implies \(B_{c}(n,r;d)\ll_{n,r}d^{n}\), thus
improving Katz's estimate \eqref{eq:k-1} by one order.  The lower
bound \((d-1)^{n} \leq B_{c}(n,r;d)\) indicates that there is no further
scope for refinement as far as enhancing the order is concerned. As a consequence, we obtain

\begin{corollary}\label{corollary:total}
For any positive integers $n, r, d$, we have
\begin{equation*}
 B_{c}(n,r;d) \asymp_{n,r} d^{n}.
\end{equation*}
\end{corollary}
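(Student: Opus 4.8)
The plan is to read the corollary off directly from Theorem~\ref{theorem:order}, whose two-sided estimate
\[
(d-1)^{n} \leq B_{c}(n,r;d) \leq 3^{r}\binom{n+r-1}{r-1}(2d+1)^{n}
\]
already pins $B_{c}(n,r;d)$ between two quantities that are each of exact order $d^{n}$. The only thing left to do is to convert this into a genuine comparison of the form $c_{1}d^{n}\le B_{c}(n,r;d)\le c_{2}d^{n}$ with $c_{1},c_{2}>0$ depending only on $n$ and $r$ and valid for every positive integer $d$.

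For the upper bound I would simply use $2d+1\le 3d$, valid for all $d\ge 1$, so that Theorem~\ref{theorem:order} gives $B_{c}(n,r;d)\le 3^{\,n+r}\binom{n+r-1}{r-1}\,d^{n}$; this is the desired $B_{c}(n,r;d)\ll_{n,r}d^{n}$ with an explicit constant. For the lower bound, when $d\ge 2$ one has $d-1\ge d/2$, hence $B_{c}(n,r;d)\ge (d-1)^{n}\ge 2^{-n}d^{n}$. The value $d=1$ must be dispatched separately, since there $(d-1)^{n}=0$: here I would note that the choice $f_{1}=\dots=f_{r}=0$ realises $V=\mathbb{A}^{n}_{k}$, for which $\mathrm{H}^{i}_{c}(\mathbb{A}^{n};\overline{\mathbb{Q}}_{\ell})$ vanishes except in degree $2n$, where it is one-dimensional, so $B_{c}(n,r;1)\ge 1 = 1^{n}$. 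Altogether $B_{c}(n,r;d)\ge 2^{-n}d^{n}$ for all $d\ge 1$, giving $B_{c}(n,r;d)\gg_{n,r}d^{n}$.

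Combining the two inequalities yields $2^{-n}d^{n}\le B_{c}(n,r;d)\le 3^{\,n+r}\binom{n+r-1}{r-1}d^{n}$, which is precisely $B_{c}(n,r;d)\asymp_{n,r}d^{n}$. There is no genuine obstacle: all of the substance lies in Theorem~\ref{theorem:order}, and the corollary is a bookkeeping step that absorbs the $O_{n,r}(1)$ factors and handles the degenerate case $d=1$.
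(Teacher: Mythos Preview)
Your proposal is correct and follows the same approach as the paper, which simply states the corollary as an immediate consequence of Theorem~\ref{theorem:order} without spelling out the details. Your treatment is more explicit in absorbing the constants and in handling the boundary case $d=1$, but this is exactly the intended (and only) argument.
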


\begin{remarks*}
\begin{enumerate}[wide, label={({\alph*)}}]

\item Theorem~\ref{theorem:order} also immediately implies a
``projective Betti bound'' for projective varieties in
\(\mathbb{P}^{n}\).  See Corollary~\ref{corollary:proj-order}.

\item The Betti bounds lead to improved total degree bounds for zeta
functions, which can be used to speed up algorithms for computing zeta
functions of varieties over finite fields and arithmetic schemes, see
\cite{AW08,Wan08,Har15}. Theorem~\ref{theorem:order} also leads to a
new total degree bound for the L-function of a linear representation
attached to the Galois group of a finite Galois covering of varieties
over finite fields. This new bound improves previous total degree
bounds in \cite{BS88,FW03}. One simply replaces the Bombieri bound
\cite{bombieri:exponential-sums-in-finite-fields-2} and the Katz bound
\cite{katz:sums-of-betti-numbers} by the stronger bound in
Theorem~\ref{theorem:order} throughout the proofs of \cite{BS88,FW03}.

\item
In some applications, the number of defining polynomials for a variety
can be quite large, or even exponentially large, see for instance
\cite{CMW22} for an application in theoretical computer science, where
one needs to quickly decide if two varieties over finite fields have
the same zeta function.  In such cases, the Kronecker reduction may
not apply for small finite fields, but the following corollary is
still useful to bound the total degree of the zeta function.

\begin{corollary}\label{corollary:total-bound-without-r}
Suppose \(V \subset \mathbb{A}^{n}_{k}\) is the zero locus of some
polynomials of degree at most \(d\).  Then
\begin{align*}
B_{c}(V, \ell)
&\leq 3^{n+1}\times \binom{2n}{n} \times (2d+1)^{n} < 3 (24d+12)^{n}.
\end{align*}
\end{corollary}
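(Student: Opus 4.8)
The plan is to reduce to the case of $r = n+1$ equations, which is already controlled by Theorem~\ref{theorem:order}, and then to simplify the resulting constant. Suppose $V$ is the zero locus of polynomials $f_1, \dots, f_r$ of degree at most $d$, so that $B_c(V, \ell) \leq B_c(n, r; d)$ by the very definition of $B_c(n,r;d)$. No matter how large $r$ is, the monotonicity \eqref{eq:chain-of-b} together with the Kronecker stabilization $B_c(n, n+1; d) = B_c(n, n+2; d) = \cdots$ recalled above gives $B_c(n, r; d) \leq B_c(n, n+1; d)$; concretely, since $k$ is algebraically closed (hence infinite), a generic $k$-linear combination of $f_1, \dots, f_r$ produces $n+1$ polynomials of degree at most $d$ with the same zero set. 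The one point worth keeping in mind is that this reduction does not inflate degrees --- a linear combination of degree-$\leq d$ polynomials again has degree $\leq d$ --- so we genuinely stay inside the regime governed by $B_c(n, n+1; d)$.

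Next I would substitute $r = n+1$ into the upper bound of Theorem~\ref{theorem:order}. The binomial factor becomes $\binom{n + (n+1) - 1}{(n+1) - 1} = \binom{2n}{n}$ and the power of $3$ becomes $3^{n+1}$, so
\[
B_c(V, \ell) \leq B_c(n, n+1; d) \leq 3^{n+1} \binom{2n}{n} (2d+1)^n,
\]
which is the first asserted inequality. For the closed-form bound I would invoke the elementary estimate $\binom{2n}{n} < 4^n$, valid for every positive integer $n$ since $\binom{2n}{n}$ is one of the $2n+1$ positive summands of $\sum_{j=0}^{2n} \binom{2n}{j} = 4^n$, whence
\[
3^{n+1} \binom{2n}{n} (2d+1)^n < 3 \cdot 12^n (2d+1)^n = 3\bigl(12(2d+1)\bigr)^n = 3(24d+12)^n .
\]
This gives the second inequality.

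There is no serious obstacle here: the statement is a direct specialization of Theorem~\ref{theorem:order} followed by a routine binomial estimate, the only subtlety being the degree-preservation under Kronecker's reduction noted above, which is exactly what makes the passage to $r = n+1$ legitimate without changing $d$.
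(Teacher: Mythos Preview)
Your proof is correct and follows essentially the same approach as the paper: reduce to $r=n+1$ via Kronecker's theorem (degree-preserving), then plug into Theorem~\ref{theorem:order}. You additionally spell out the elementary estimate $\binom{2n}{n}<4^{n}$ needed for the second inequality, which the paper leaves implicit.
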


\begin{proof}
By Kronecker \cite{perron:kronecker-theorem},
\(V\) can be set-theoretically cut out by at most \(n+1\) polynomials
of degree at most \(d\).  We then apply Theorem~\ref{theorem:order}
with \(r = n+1\).
\end{proof}

\item Katz's original method only yields a non-optimal bound
\(B_{c}(n,r;d) \ll_{n,r} d^{n+1}\) due to a technical limitation: to
apply the weak Lefschetz theorem, he needed to reduce to a nonsingular
affine variety.  Thus he has to pass from, e.g., a hypersurface in the
affine \(n\)-space to its complement, which is smooth, then applies
the Rabinowitsch trick, which converts this hypersurface complement in
dimension \(n\) to a nonsingular hypersurface in dimension \(n+1\),
thereby increasing the order by one.

In addition to the proof in the main text, we present a shorter, more
elementary alternative proof of Corollary~\ref{corollary:total} in
\S\ref{sec:elementary}.  This proof improves upon Katz's method in two
ways: we apply Deligne's weak Lefschetz theorem for perverse sheaves
(cf.~Lemma~\ref{lemma:gysin-perv}) to avoid the Rabinowitsch trick,
and we utilize an algebraic lemma from our work on Frobenius colevels
\cite{wz1,wz2} for additional refinements.  While these improvements
yield a better bound than Katz's original approach, and gives the
correct order, the implied constant generally remains weaker than that
of Theorem~\ref{theorem:order} (see Theorem~\ref{theorem:elementary}).
However, this elementary approach may provide sharper estimates when
specific information about dimension and minimal number of defining
equations is available, which justifies its inclusion.

\item The sum
\(B(V,\ell) \coloneqq \sum \dim \mathrm{H}^{i}(V;\overline{\mathbb{Q}}_{\ell})\)
of the Betti numbers for \(\ell\)-adic cohomology with closed supports
is more difficult to estimate than the sum of compactly supported
ones, since \(B(V,\ell)\) has more complicated formal properties than
\(B_{c}(V,\ell)\).  Tweaking the method of Katz by computing a
spectral sequence backwards, the second author \cite{zhang-betti}
showed that if \(V \subset \mathbb{A}^{n}_{k}\) is the zero locus of
polynomials \(f_{1},\ldots,f_{r}\) with \(\deg f_{i} \leq d\), then
\(B(V,\ell) \leq 2r^{r}(rd+3)^{n}\).
\end{enumerate}
\end{remarks*}

\subsection{Compactly supported Betti numbers of affine complete intersections}

Having established that the correct order of \(B_{c}(n,r;d)\) is
\(d^{n}\), a natural question arises: is there an asymptotic formula
for \(B_{c}(n,r;d)\) as \(d \to \infty\)?
Namely, is there a positive constant $b_{n,r}$ such that
\begin{equation*}
  B_{c}(n,r;d)  = b_{n,r}d^n +o_{n,r}(d^n)?
\end{equation*}
If so, what is the leading constant $b_{n,r}$?
While the constant term
\(3^{r}\times \binom{n+r-1}{r-1}\times 2^n\) from Theorem~\ref{theorem:order} is
certainly not optimal, the correct constant remains unclear.  If \(V\)
is a sufficiently general complete intersection cut out by \(r\)
polynomials of degree \(d\), it can be shown that
\(B_{c}(V, \ell) = \binom{n-1}{r-1}d^{n} + O_{n,r}(d^{n-1})\) (see
Lemma~\ref{lemma:exact-betti-for-generic-affine-complete-intersection}).
In view of this, the best we could hope would be a positive answer to  the following question.

\begin{question}
Let \(r \leq  (n+1)/2\). Is it true that
\begin{equation*}
B_{c}(n,r;d) = \binom{n-1}{r-1}d^{n} +o_{n,r}(d^n) ?
\end{equation*}

\end{question}

Note that the condition \(r \leq (n+1)/2\) cannot be dropped for the
above question to have a positive answer. The reason is that the
number \(B_{c}(n,r;d)\) is an increasing function in $r$.  On the
other hand, the binomial coefficient $\binom{n-1}{r-1}$ is increasing
in $r$ only for $ 1\leq r \leq (n+1)/2$ and becomes decreasing in $r$
for $r \geq (n+1)/2$. This shows that for the expected asymptotic
formula to hold, it is necessary to have \(r \leq
(n+1)/2\). Alternatively, without the condition \(r \leq (n+1)/2\),
one can ask if
\begin{equation*}
B_{c}(n,r;d) = \binom{n-1}{\min\{r-1, \lfloor\frac{n-1}2\rfloor\}}d^{n} +o_{n,r}(d^n)?
\end{equation*}

Instead of tackling the above question in general, we will focus on a class of
subvarieties of \(\mathbb{A}^{n}\): (set-theoretic) complete
intersections.  For \(r \leq n\), we define
\begin{equation*}
B^{\text{ci}}_{c}(n,r;d) \coloneqq \sup_{\ell\not = {\rm char}(k)}\left\{B_{c}(V, \ell) :
\begin{array}{l}
  V = \operatorname{Spec} k[x_{1},\ldots,x_{n}]/(f_{1},\ldots,f_{r}),  \\
  \deg f_{i} \leq d, \text{ and } \dim V = n-r
\end{array}
\right\},
\end{equation*}
where, again, $k$ runs over all algebraically closed fields and $\ell$
runs over all prime numbers different from the characteristic of
\(k\).  Then \(B_{c}(n,1;d) = B_{c}^{\mathrm{ci}}(n,1;d)\) and, for
\(r\geq 1\), \(B_{c}(n,r;d) \geq B_{c}^{\mathrm{ci}}(n,r;d)\).  Our
second theorem provides an asymptotically optimal bound for
\(B_{c}^{\mathrm{ci}}(n,r;d)\) as \(d \to \infty\).

\begin{theorem}[= \ref{corollary:middle-cleaner-bound}]
\label{theorem:main}
Assuming \(r \leq n\), we have
\begin{equation*}
\binom{n-1}{r-1}(d-1)^{n} \leq B_{c}^{\mathrm{ci}}(n,r;d) \leq \binom{n-1}{r-1}(d+1)^{n}.
\end{equation*}
\end{theorem}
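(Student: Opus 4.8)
\emph{Overview.} The two inequalities are of quite different natures: the lower bound is realised by an explicit smooth example and comes down to a Chern-class computation plus a one-line integral estimate, whereas the upper bound is the genuinely hard direction for which the specialization machinery of \S\ref{sec:specialization} is needed. My plan is to prove both via the \emph{same} model variety: a generic smooth affine complete intersection of $r$ hypersurfaces of degree $d$ that meets the hyperplane at infinity transversally.

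\emph{Lower bound.} Over any algebraically closed field $k$ one may choose forms $f_1,\dots,f_r$ of degree exactly $d$ so that $V\subset\mathbb A^n_k$ is a smooth complete intersection of dimension $m:=n-r$ whose projective closure $\overline V\subset\mathbb P^n$ is a smooth complete intersection transversal to $H_\infty$. If $m=0$ then $V$ consists of $d^n$ reduced points and $B_c(V,\ell)=d^n\ge(d-1)^n$, so assume $m\ge 1$. For such a $V$ the cohomology is concentrated — this is essentially the content of Lemma~\ref{lemma:exact-betti-for-generic-affine-complete-intersection}, proved by iterated weak Lefschetz on $\overline V$ and its hyperplane sections — so that $\mathrm H^i_c(V;\overline{\mathbb Q}_\ell)=0$ for $i\notin\{m,2m\}$, $\dim\mathrm H^{2m}_c=1$, and hence $B_c(V,\ell)=1+(-1)^m(\chi_c(V)-1)$. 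One then computes $\chi_c(V)=\chi(\overline V)-\chi(\overline V\cap H_\infty)$ from the Chern classes of smooth complete intersections; the two contributions telescope (via Pascal's rule and a reindexing) to the closed form
\[
\chi_c(V)\;=\;r\binom{n}{r}\int_0^{d}t^{r-1}(1-t)^{n-r}\,dt .
\]
Since the Beta integral gives $r\binom nr\int_0^1 t^{r-1}(1-t)^{n-r}\,dt=1$, substituting $t=1+v$ yields $(-1)^m(\chi_c(V)-1)=r\binom nr\int_0^{d-1}(1+v)^{r-1}v^{n-r}\,dv$, and the trivial bound $(1+v)^{r-1}\ge v^{r-1}$ gives $\dim\mathrm H^m_c(V;\overline{\mathbb Q}_\ell)\ge r\binom nr\int_0^{d-1}v^{n-1}\,dv=\binom{n-1}{r-1}(d-1)^n$, so $B_c(V,\ell)>\binom{n-1}{r-1}(d-1)^n$.

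\emph{Upper bound.} Let now $V=\operatorname{Spec}k[x_1,\dots,x_n]/(f_1,\dots,f_r)$ be an arbitrary set-theoretic complete intersection of dimension $n-r$ with $\deg f_i\le d$. Since $B_c$ depends only on the reduced structure, and (after reducing to the equidimensional case) the scheme-theoretic intersection $\{f_1=\dots=f_r=0\}$ is a local complete intersection, the complex $\overline{\mathbb Q}_\ell[\dim V]$ is perverse and the framework of \S\ref{sec:specialization} applies. Choose generic forms $g_1,\dots,g_r$ of degree $d$ and form the pencil $\mathcal V=\{(x,t):f_i(x)+tg_i(x)=0,\ 1\le i\le r\}\subset\mathbb A^n\times\mathbb A^1$ with projection $\pi\colon\mathcal V\to\mathbb A^1$: then $\mathcal V_0=V$, and for all but finitely many $t$ the fibre $\mathcal V_t$ is a smooth affine complete intersection of multidegree $(d,\dots,d)$ transversal at infinity, so by the computation above $B_c(\mathcal V_t,\ell)=\beta(n,r,d):=1+r\binom nr\int_0^{d-1}(1+v)^{r-1}v^{n-r}\,dv$. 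The substitution $w=1+v$ together with $w^{r-1}(w-1)^{n-r}\le w^{n-1}$ on $[1,d]$ shows $\int_0^{d-1}(1+v)^{r-1}v^{n-r}\,dv\le\int_1^{d}w^{n-1}\,dw\le\tfrac1n((d+1)^n-1)$, whence $\beta(n,r,d)\le\binom{n-1}{r-1}(d+1)^n-(\binom{n-1}{r-1}-1)\le\binom{n-1}{r-1}(d+1)^n$. It remains to bound the jump of $B_c$ at the special point: from the vanishing-cycle triangle $i_0^*R\pi_!\overline{\mathbb Q}_\ell\to\Psi_0 R\pi_!\overline{\mathbb Q}_\ell\to\Phi_0 R\pi_!\overline{\mathbb Q}_\ell$ one gets $B_c(V,\ell)\le\dim\mathbb H^*(\Psi_0 R\pi_!\overline{\mathbb Q}_\ell)+\dim\mathbb H^*(\Phi_0 R\pi_!\overline{\mathbb Q}_\ell)$, where the first term equals $B_c$ of the geometric generic fibre, namely $\beta(n,r,d)$, and the second is the total ``vanishing cohomology'' $\mathbb H^*_c(V,\Phi_\pi\overline{\mathbb Q}_\ell)$, supported on the (at most $(n{-}r{-}1)$-dimensional) locus where $\pi$ fails to be locally acyclic. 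Using Lemma~\ref{lemma:gysin-perv} (the perverse amplitude of $R\pi_!$ for the affine $\pi$) to keep this in the right cohomological range, together with a B\'ezout estimate for the relevant polar locus, one bounds this term by the remaining budget $\binom{n-1}{r-1}(d+1)^n-\beta(n,r,d)=\binom{n-1}{r-1}-1+r\binom nr\bigl[\int_1^{d}(w^{n-1}-w^{r-1}(w-1)^{n-r})\,dw+\int_d^{d+1}w^{n-1}\,dw\bigr]$, which is nonnegative and of order $d^{n-1}$, giving $B_c(V,\ell)\le\binom{n-1}{r-1}(d+1)^n$.

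\emph{Main obstacle.} The lower bound and the generic-fibre bookkeeping are routine once the Chern-class formula for $\chi_c$ is in hand. The crux is the deterioration estimate in the upper bound: (i) arranging, by a Bertini-type argument with generic $g_i$, that the members of the pencil stay complete intersections of dimension $n-r$ and that the nearby cycle at $t=0$ genuinely computes the cohomology of the generic complete intersection; and (ii) showing that the vanishing-cycle contribution $\mathbb H^*_c(V,\Phi_\pi\overline{\mathbb Q}_\ell)$ does not exceed the $\Theta(d^{n-1})$ slack $\binom{n-1}{r-1}((d+1)^n-(d-1)^n)$ left by the generic value — in particular that one lands at $(d+1)^n$ rather than at a weaker constant such as $(2d)^n$. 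This is precisely where the ``minimal deterioration'' principle of \S\ref{sec:specialization}, combined with the affine perverse-amplitude input of Lemma~\ref{lemma:gysin-perv}, must be used quantitatively. (Alternatively, one can run the same specialization idea on the projective closure, using $B_c(V,\ell)\le B_c(\overline V,\ell)+B_c(\overline V\cap H_\infty,\ell)$ and induction on $n$, reducing the affine statement to the analogous bound for projective complete intersections.)
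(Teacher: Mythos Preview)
Your lower bound is fine and essentially matches the paper: both compute the compactly supported Betti numbers of a generic smooth affine complete intersection via Chern classes (the paper packages this as $M(n;d,\ldots,d)\ge\binom{n-1}{r-1}(d-1)^n$; your integral form is an equivalent rewriting).

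The upper bound, however, has a genuine gap. You try to bound the \emph{total} Betti number of the special fibre $V$ in one stroke via the vanishing-cycle triangle, writing $B_c(V,\ell)\le \beta(n,r,d)+\dim\mathbb H^*_c(V,\Phi_\pi\overline{\mathbb Q}_\ell)$ and then asserting that the second term fits into the $\Theta(d^{n-1})$ budget ``by a B\'ezout estimate for the relevant polar locus''. This is not an argument: for an arbitrary (possibly very singular) $V$, the locus where the pencil $\pi$ fails to be locally acyclic over $0$ can have dimension $n-r$, not $n-r-1$ as you claim, and there is no reason the total dimension of vanishing cohomology should be $O_{n,r}(d^{n-1})$. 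Nothing in the paper (nor in standard vanishing-cycle theory) gives such a bound without further input, and you have not supplied that input.

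The paper avoids this problem entirely by working \emph{degree by degree} rather than bounding $B_c$ globally. The specialization Lemma~\ref{lemma:middle-estimate} is applied only to the single perverse degree $\mathrm H^{n-r}_c$: it gives $\dim\mathrm H^{n-r}_c(V)\le M(n;d,\ldots,d)\le\binom{n-1}{r-1}d^n$ directly, with \emph{no} vanishing-cycle error term (the lemma is an inequality, not an equality plus correction). For the remaining degrees $\mathrm H^{n-r+j}_c(V)$ with $j\ge 1$, the paper uses the perverse weak Lefschetz (Lemma~\ref{lemma:gysin-perv}) to pass to a generic hyperplane section $V\cap A\subset\mathbb A^{n-1}$, which is again a complete intersection of the same multidegree in one lower dimension, and applies the middle-degree bound there: $\dim\mathrm H^{n-r+j}_c(V)\le\binom{n-j-1}{r-1}d^{n-j}$. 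Summing over $j$ gives $B_c(V,\ell)\le\sum_{j=0}^{n-r}\binom{n-j-1}{r-1}d^{n-j}\le\binom{n-1}{r-1}(d+1)^n$. The key structural point you are missing is this decoupling: specialization handles the middle degree, Lefschetz slicing plus induction handles the rest, and neither step produces an uncontrolled remainder.
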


The lower bound follows from an exact computation of \(B_{c}(V, \ell)\)
(Lemma~\ref{lemma:exact-betti-for-generic-affine-complete-intersection})
for a sufficient general \(V\). As a consequence, we deduce
\begin{corollary}
Assuming \(r \leq n\), we have
\begin{equation*}
B_{c}^{\mathrm{ci}}(n,r;d) = \binom{n-1}{r-1}d^{n} +O_{n,r}(d^{n-1}).
\end{equation*}
\end{corollary}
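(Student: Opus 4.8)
The plan is to deduce this immediately from Theorem~\ref{theorem:main}, which already traps $B_{c}^{\mathrm{ci}}(n,r;d)$ between $\binom{n-1}{r-1}(d-1)^{n}$ and $\binom{n-1}{r-1}(d+1)^{n}$. So the only thing left to check is that both of these bracketing quantities agree with $\binom{n-1}{r-1}d^{n}$ up to an error of size $O_{n,r}(d^{n-1})$.

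For this, first expand by the binomial theorem:
\[
(d\pm 1)^{n} = d^{n} \pm n d^{n-1} + \cdots = d^{n} + O_{n}(d^{n-1}),
\]
where the implied constant, namely $\sum_{k=0}^{n-1}\binom{n}{k} = 2^{n}-1$, depends only on $n$. Multiplying through by the fixed positive integer $\binom{n-1}{r-1}$, which depends only on $n$ and $r$, yields
\[
\binom{n-1}{r-1}(d-1)^{n} = \binom{n-1}{r-1}d^{n} + O_{n,r}(d^{n-1}) = \binom{n-1}{r-1}(d+1)^{n},
\]
with all implied constants depending only on $n$ and $r$.

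Finally, conclude by a squeeze: since $\binom{n-1}{r-1}(d-1)^{n} \le B_{c}^{\mathrm{ci}}(n,r;d) \le \binom{n-1}{r-1}(d+1)^{n}$ by Theorem~\ref{theorem:main}, and both ends equal $\binom{n-1}{r-1}d^{n} + O_{n,r}(d^{n-1})$, the same asymptotic holds for $B_{c}^{\mathrm{ci}}(n,r;d)$. There is no genuine obstacle here: the corollary is a formal consequence of Theorem~\ref{theorem:main}, with all the content residing in that theorem; the remaining step is the elementary binomial expansion above, together with the observation that $\binom{n-1}{r-1}$ is a constant that may be absorbed into the $O_{n,r}$ notation.
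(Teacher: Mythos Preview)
Your proposal is correct and is exactly the argument the paper intends: the corollary is stated immediately after Theorem~\ref{theorem:main} with no separate proof, as an immediate consequence of the two-sided bound via the elementary expansion $(d\pm1)^n = d^n + O_n(d^{n-1})$.
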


\begin{remarks*}
\begin{enumerate}[wide, label={(\alph*)}]
\item As detailed in Theorem~\ref{theorem:affine-betti-bound-detail},
our method provides estimates for the compactly supported Betti
numbers in each individual cohomology degree.
Theorem~\ref{theorem:main} follows as a straightforward consequence of
these bounds.  Moreover, these affine bounds naturally extend to
projective set-theoretic complete intersections, as shown in
Proposition~\ref{proposition:projective-bound}.

\item In Sawin's recent work \cite{sawin20,Saw21} on function field
analytic number theory in short interval, one key step is to estimate
the total compact Betti number of affine complete intersections.
Theorem~\ref{theorem:main}, combined with Sawin's results, is thus
useful in improving various function field short interval bounds in
\cite{sawin20,Saw21}.  The improvement comes from replacing Katz's
estimate for \(B_{c}(V, \ell)\) with our sharper bound in
Theorem~\ref{theorem:main}.

For instance, in \cite{Saw21}, one needs
to estimate \(B_{c}(V,\ell)\) for the variety
\(V \subset \mathbb{A}^{n}_{\mathbb{F}_{q}}\) defined by the first $r$
elementary symmetric polynomials in $n$ variables.  This variety is a
complete intersection in $\mathbb{A}^n_{\mathbb{F}_{q}}$ with
$r=d\leq n$. For this case, while Katz's general bound gives either
$2^d\times 6\times (d^2+3)^{n+1}$ or $3(d+2)^{n+r}$ depending on which
bound of Katz one uses, our complete intersection bound in
Theorem~\ref{theorem:main} yields the better estimate
$\binom{n-1}{d-1}(d+1)^n \leq (2d+2)^n$.

Similarly, in \cite{sawin20}, one needs to estimate the total compact
Betti number of another more general affine variety
\[
Z_{n,\sum_{i=1}^{r}d_{i},\sum_{i=r+1}^{r+\widetilde{r}}} / (S_{d_{1}}\times \cdots \times S_{d_{r+\widetilde{r}}}) \subset \mathbb{A}^{\sum_{i=1}^{r+\widetilde{r}}d_{i}}
\]
defined by \(n\) polynomial equations of degree at most
\(\max(r,\widetilde{r})\). Precisely, this variety is the moduli space
of tuples of monic univariate polynomials
$f_1(t),..., f_{r+\widetilde{r}}(t)$, with $f_i(t)$ of degree $d_i$,
such that the leading $n+1$ coefficients of $\prod_{i=1}^rf_i(t)$ and
$\prod_{i=r+1}^{r+\widetilde{r}} f_i(t)$ agree.  The equality of the
leading coefficient is trivial, while the equality of the remaining
$n$ coefficients is a system of $n$ polynomial equations of degrees
$\max(r, \widetilde{r})$ in $\sum_{i=1}^{r+\widetilde{r}}d_i$
variables.  One checks that this affine variety is also a complete
intersection.  While Sawin's original result uses Katz's bound,
applying our Theorem~\ref{theorem:main} yields a sharper estimate.
Specifically, the upper bound
\(4(2+\max(r,\widetilde{r}))^{n+\sum_{i=1}^{r+\widetilde{r}}d_{i}}\)
from \cite[(2-10)]{sawin20} can be improved to
\[
\binom{\sum_{i=1}^{r+\widetilde{r}}d_{i}-1}{n-1}(\max(r,\widetilde{r})+2)^{\sum_{i=1}^{r+\widetilde{r}}d_{i}} < (2\max(r,\widetilde{r})+4)^{\sum_{i=1}^{r+\widetilde{r}}d_{i}}.
\]

The improved Betti number bounds combined with Sawin's results lead
to corresponding improvements in the error terms of various short
interval estimates in \cite{sawin20,Saw21}. For example,
the constant factor $3(k+2)^{2n-h}$ in \cite[Theorem 1.1]{Saw21} can be
replaced by $(2k+2)^n$, and similarly the
constant factor $3(n+2)^{2n-h}$ in \cite[Theorem 1.2]{Saw21} can be
replaced by $(2n+2)^n$.

\item
The proofs of Theorem~\ref{theorem:order} and
Theorem~\ref{theorem:main} are quite different, yet both are related
to the following specialization principle: when working with a family
of perverse sheaves on an affine family of varieties, the dimension of
the key cohomology factor decreases upon specialization.  The precise
statement is provided in Lemma~\ref{lemma:middle-estimate} and, in
mixed characteristics, in Variant~\ref{variant:middle-estimate}.

Theorem~\ref{theorem:main} follows from the aforementioned principle
and a Lefschetz-type argument.  Specifically, the principle enables us
to bound the ``middle cohomology'' by that of a sufficiently general
\(V\), which can be computed exactly.  The Lefschetz-type theorem
(Lemma~\ref{lemma:gysin-perv}) allows us to reduce higher cohomology
to the middle cohomology of complete intersections in a smaller affine
space, thereby enabling the use of induction.

However, this argument does not imply Theorem~\ref{theorem:order}.
While it provides a bound for \(\dim \mathrm{H}^{n-r}_{c}\) (which
becomes irrelevant if \(r\) is too large), the limitations of the
Lefschetz-type theorem prevent us from obtaining information about
\(\dim \mathrm{H}^{i}_{c}\) when \(i\) lies between \(n-r\) and
\(\dim V\).

We will prove Theorem~\ref{theorem:order} by revisiting
Adolphson–Sperber's total degree bounds for L-functions of toric exponential
sums from the perspective of \(\ell\)-adic cohomology.  Instead of
relying on Dwork theory, we will use the specialization
Lemma~\ref{lemma:middle-estimate} as our primary tool.  This approach
allows us to improve their bounds purely through \(\ell\)-adic
cohomology.  In doing so, we also achieve a cohomological enhancement
that leads directly to Theorem~\ref{theorem:order} via a toric reduction and a volume computation.  Further details
on exponential sums and our new bounds are discussed in
\S\ref{sec:intro-exp} below.
\end{enumerate}
\end{remarks*}

\subsection{Compactly supported Betti numbers of exponential sums}
\label{sec:intro-exp}
Let \(q\) be a power of a prime \(p\).  Let \(U\) be an
algebraic variety over \(\mathbb{F}_{q}\), and let
\(f\colon U \to \mathbb{A}^{1}_{\mathbb{F}_{q}}\) be a morphism.  For
each \(m \geq 1\), the exponential sum in \(\mathbb{F}_{q^{m}}\)
associated to \(f\) is
\begin{equation}
S_{m}(f;\psi) = \sum_{x \in U(\mathbb{F}_{q^{m}})} \psi(\operatorname{Tr}_{\mathbb{F}_{q^{m}}/\mathbb{F}_{p}}f(x)),
\end{equation}
where \(\psi\colon \mathbb{F}_{p} \to \mathbb{C}^{\ast}\) is a
nontrivial additive character.  In the sequel we shall fix one such
nontrivial \(\psi\), and we shall simply write \(S_{m}(f)\) instead of
\(S_{m}(f;\psi)\).

Associated to these exponential sums is their L-function
\begin{equation}\label{eq:l-function-exp-sum}
L_{f}(t) = \exp\left\{ \sum_{m=1}^{\infty} S_{m}(f) \frac{t^{m}}{m} \right\}.
\end{equation}
By a well-known theorem of Dwork and Grothendieck, \(L_{f}(t)\) is a
rational function expressed as \(L_{f}(t) = P(t)/Q(t)\), where
\(P, Q \in 1 + t\mathbb{Z}[\zeta_{p}, t]\).  Here, \(\zeta_{p}\) is a
primitive \(p\)\textsuperscript{th} root of unity.
When \(P\) and \(Q\) are taken to be coprime, the sum
\(\deg P + \deg Q\) is referred to as the \emph{total degree} of the rational function
\(L_{f}(t)\).

In \cite{adolphson-sperber:newton-polyhedra-degree-l-function},
Adolphson and Sperber proved an upper bound for the total degree of
\(L_{f}(t)\) when \(U\) is an algebraic torus
\(\mathbb{G}_{\mathrm{m}}^{n}\), generalizing and refining previous
results of Bombieri \cite{bombieri:exponential-sums-in-finite-fields-2}.
Their bound is expressed in terms of a combinatorial invariant of
\(f\), called the \emph{Newton polytope} of \(f\) with respect to
\(\infty\), denoted by \(\Delta_{\infty}(f)\) (see
\S\ref{sec:tori-exp-sums} for the definition).  This is a convex
polytope in \(\mathbb{R}^{n}\) containing the origin.
Adolphson and Sperber
\cite[Theorem~1.8]{adolphson-sperber:newton-polyhedra-degree-l-function}
proved that, among others, if \(\dim \Delta_{\infty}(f) = n\), then
\begin{equation}\label{eq:as-exp-total-degree}\tag{AS}
\text{Total degree of }L_{f}(t) \leq 2^{n}(1 + 2^{\frac{n+1}{n}})^{n} n! \text{Vol}(\Delta_{\infty}(f)).
\end{equation}
We shall prove a \emph{cohomologically enhanced} improvement of
\eqref{eq:as-exp-total-degree}, which yields
\[
\text{Total degree of }L_{f}(t) \leq 2^{n}n!\mathrm{Vol}(\Delta_{\infty}(f)).
\]

Just like the zeta function of a variety, the L-function
\eqref{eq:l-function-exp-sum} also has a cohomological interpretation.
The additive character \(\psi\) determines an \emph{Artin--Schreier}
sheaf \(\mathcal{L}_{\psi}\) on \(\mathbb{A}^{1}_{\mathbb{F}_{q}}\)
(see \cite[Sommes~trig.]{deligne:sga4.5} for a more systematic
introduction).  The link between the Artin--Schreier sheaf and
exponential sums is provided by the Grothendieck trace formula:
\begin{equation}
\label{eq:trace-formula}
S_{m}(f) = \sum_{i \in \mathbb{Z}} (-1)^{i} \mathrm{Tr}(F^{m}|\mathrm{H}^{i}_{c}(U_{\overline{\mathbb{F}}_{q}};f^{\ast}\mathcal{L}_{\psi})),
\end{equation}
where \(\overline{\mathbb{F}}_{q}\) is an algebraic closure of
\(\mathbb{F}_{q}\),
\(U_{\overline{\mathbb{F}}_{q}} = U \otimes_{\mathbb{F}_{q}}
\overline{\mathbb{F}}_{q}\), and \(F^{m}\) is the
\(m\)\textsuperscript{th} iteration of the \(q\)-power Frobenius
\(F\).  The multiplicative form of the trace formula
\eqref{eq:trace-formula} is
\begin{equation}\label{eq:multiplicative-trace-formula}
\tag{\ref*{eq:trace-formula}\({}^{\prime}\)}
L_{f}(t) = \prod_{i\in \mathbb{Z}} \det(1 - t \cdot F| \mathrm{H}^{i}_{c}(U_{\overline{\mathbb{F}}_{q}};f^{\ast}\mathcal{L}_{\psi}))^{(-1)^{i-1}}.
\end{equation}

The formulae \eqref{eq:trace-formula} and
\eqref{eq:multiplicative-trace-formula} allow us to use the
eigenvalues of
\(F|\mathrm{H}^{i}_{c}(U_{\overline{\mathbb{F}}_{q}};f^{\ast}\mathcal{L}_{\psi})\)
to deduce properties of the exponential sums and their L-function.
Contemplating that there might be cancellations of linear factors in
\eqref{eq:multiplicative-trace-formula}, we have
\begin{equation}
\text{Total degree of }L_{f}(t) \leq \sum \dim \mathrm{H}^{i}_{c}(U_{\overline{\mathbb{F}}_{q}};f^{\ast}\mathcal{L}_{\psi}).
\end{equation}
Thus, any upper bound of the compactly supported Betti sum will give
an upper bound of the total degree of \(L_{f}(t)\), but the converse
is only true when
\[
P_{\text{even}}(t) = \prod_{i\text{ even}} \det(1-tF|\mathrm{H}^{i}_{c}(U_{\overline{\mathbb{F}}_{q}};f^{\ast}\mathcal{L}_{\psi})) \]
\text{and}
\[
P_{\text{odd}}(t) = \prod_{i\text{ odd}} \det(1-tF|\mathrm{H}^{i}_{c}(U_{\overline{\mathbb{F}}_{q}};f^{\ast}\mathcal{L}_{\psi}))
\]
do not share common zeros.  This latter condition holds when the
Artin--Schreier cohomology spaces of \(f\) are pure, but should fail
in general.

We shall provide an upper bound for the sum of compactly supported
cohomology dimensions of \(f^{\ast}\mathcal{L}_{\psi}\), expressed in terms of
Minkowski mixed volumes of polytopes, recalled below.

\begin{notation}[Minkowski mixed volume]\label{notation:minkowski-mixed-volume}
For a monomial
\(m(T_{1},T_{2},\ldots,T_{r}) = T_{1}^{a_{1}}\cdots T_{r}^{a_{r}}\),
we set
\begin{equation*}
m(\Delta_{1},\ldots,\Delta_{r})=
\begin{cases}
0 & \text{if } a_{1} + \cdots + a_{r} \neq n, \\
n!V(\Delta_{1}[a_{1}],\ldots,\Delta_{r}[a_{r}]) & \text{if } a_{1} + \cdots + a_{r} = n.
\end{cases}
\end{equation*}
Here \(V(\Delta_{1}[a_{1}],\ldots,\Delta_{r}[a_{r}])\) is the
\emph{mixed volume} of the convex polytopes \(\Delta_{1},\ldots,\Delta_{r}\) in \(\mathbb{R}^{n}\) (with indicated
repetitions).  These numbers can be defined as the coefficients in the
expansion of volume of Minkowski sums:
\begin{equation*}
\mathrm{Vol}(\lambda_{1}\Delta_{1}+\cdots + \lambda_{r}\Delta_{r}) =
\sum_{\substack{i_{1}+\cdots + i_{r} = n \\ 0\leq i_{1},\ldots,i_{r}\leq n}}
\frac{n!}{i_{1}!\cdots i_{r}!} V(\Delta_{1}[i_{1}],\ldots,\Delta_{r}[i_{r}]) \lambda_{1}^{i_{1}}\cdots \lambda_{r}^{i_{r}}.
\end{equation*}
Thus, for an \(n\)-dimensional lattice polytope $\Delta$ in $\mathbb{R}^n$, we have
\(\Delta^{n} = n!\mathrm{Vol}(\Delta)\) is the usual normalized
volume.

For any formal power series
\(u(T_{1},\ldots,T_{r}) \in \mathbb{Z}[\![T_{1},\ldots,T_{r}]\!]\),
\(u = \sum b_{w} T_{1}^{w_{1}}\cdots T_{r}^{w_{r}}\), we set
\(u(\Delta_{1},\ldots,\Delta_{r}) = \sum b_{w}\Delta^{w_{1}}_{1}\cdots \Delta_{r}^{w_{r}}\).
\end{notation}

The toric case of our main result for exponential sums is

\begin{theorem}[= \ref{eq:total-degree-bound-toric-exponential-sum}]
\label{theorem:as-improvement}
Let \(f\colon \mathbb{G}_{\mathrm{m}}^{n}\to\mathbb{A}^{1}\) be
a Laurent polynomial whose Newton polytope
\(\Delta=\Delta_{\infty}(f)\) is \(n\)-dimensional\footnote{if
  \(\Delta\) is not \(n\)-dimensional, then we can write
  \(\mathbb{G}_{\mathrm{m}}^{n}=\mathbb{G}_{\mathrm{m}}^{n^{\prime}}\times\mathbb{G}_{\mathrm{m}}^{n^{\prime\prime}}\),
  and \(f\) factors as
  \(\mathbb{G}_{\mathrm{m}}^{n}\to\mathbb{G}_{\mathrm{m}}^{n^{\prime}}\xrightarrow{f^{\prime}}\mathbb{A}^{1}\),
  where the first arrow is the projection to
  \(\mathbb{G}_{\mathrm{m}}^{n^{\prime}}\), and the Newton polytope of
  \(f^{\prime}\) in \(\mathbb{R}^{n\prime}\) is of dimension
  \(n^{\prime}\).  Therefore, it suffices to consider Laurent
  polynomials whose Newton polytope is \(n\)-dimensional.}.
Let \(\psi\) be a nontrivial additive character.  Then
\begin{equation*}
\sum_i \dim \mathrm{H}^{i}_{c}(\mathbb{G}^{n}_{\mathrm{m},\overline{\mathbb{F}}_{q}};f^{\ast}\mathcal{L}_{\psi})
\leq \inf_{S}\left\{ \Delta^{n} + \sum_{i=1}^{n} 2^{i-1}\Delta^{n-i}S^{i} \right\}.
\end{equation*}
Here \(S\) ranges in the set of all \(n\)-dimensional lattice polytopes in \(\mathbb{R}^{n}\).
\end{theorem}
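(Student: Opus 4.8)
The plan is to realize the Artin--Schreier cohomology of $f$ as the "middle" cohomology of a perverse sheaf on an affine variety attached to a one-parameter degeneration, and then apply the specialization principle (Lemma~\ref{lemma:middle-estimate}) to compare with a generic member, whose Betti numbers can be computed by a volume argument. Concretely, fix an auxiliary $n$-dimensional lattice polytope $S$. Choose a generic Laurent polynomial $g$ whose Newton polytope is $S$ (or a suitable rescaling thereof), and form the pencil $h_t = f + t g$ over the base $\mathbb{A}^1_t$. For $t$ in a dense open, $h_t$ has Newton polytope $\Delta + S$ (the Minkowski sum), while the special fiber $t = 0$ recovers $f$. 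The complex $Rf_{t,!}(h_t^\ast \mathcal{L}_\psi)$ on $\mathbb{G}_{\mathrm{m}}^n$ assembles, after the standard shift, into a perverse sheaf on the total space of the family $\mathbb{G}_{\mathrm{m}}^n \times \mathbb{A}^1_t \to \mathbb{A}^1_t$, since for a nondegenerate toric exponential sum the Artin--Schreier cohomology is concentrated in the middle degree (this is the toric analogue of the statement underlying Lemma~\ref{lemma:exact-betti-for-generic-affine-complete-intersection}).

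The key inputs are then: (i) the specialization principle of \S\ref{sec:specialization}, which says the dimension of the relevant cohomology factor does not increase as $t$ specializes from generic to $0$; this gives
\[
\sum_i \dim \mathrm{H}^i_c(\mathbb{G}^n_{\mathrm{m},\overline{\mathbb{F}}_q}; f^\ast \mathcal{L}_\psi) \leq \sum_i \dim \mathrm{H}^i_c(\mathbb{G}^n_{\mathrm{m},\overline{\mathbb{F}}_q}; h_\eta^\ast \mathcal{L}_\psi)
\]
for generic $\eta$; and (ii) an exact Betti number computation for the generic toric exponential sum with Newton polytope $\Delta + S$. For (ii) one uses that, when $h_\eta$ is nondegenerate with $n$-dimensional Newton polytope $P := \Delta + S$, its Artin--Schreier cohomology lives only in degree $n$ and has dimension $n!\,\mathrm{Vol}(P) = P^n$ (this is the toric hypersurface analogue; it follows from the Adolphson--Sperber vanishing theorem, or from a Kouchnirenko-type count). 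Expanding $P^n = (\Delta + S)^n$ by the multinomial/mixed-volume formula of Notation~\ref{notation:minkowski-mixed-volume} yields
\[
(\Delta + S)^n = \sum_{i=0}^n \binom{n}{i} \Delta^{n-i} S^i .
\]

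To land on the stated bound $\Delta^n + \sum_{i=1}^n 2^{i-1}\Delta^{n-i}S^i$ rather than the raw mixed-volume expansion with binomial coefficients, I would iterate the degeneration one variable at a time, peeling off one copy of $S$ in each step and spending a factor of at most $2$ per step — exactly as in a Lefschetz-type induction — so that after $i$ steps the coefficient of $\Delta^{n-i}S^i$ has accumulated to at most $2^{i-1}$ instead of $\binom{n}{i}$. That is, rather than adding $tS$ all at once, one successively degenerates through the partial Minkowski sums $\Delta, \Delta + \frac1n S, \ldots$; at each stage the specialization principle contributes a bounded multiplicative loss, and the vanishing-outside-middle-degree property keeps the bookkeeping to a single cohomology factor. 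Finally, take the infimum over all choices of $S$. The main obstacle is establishing the perversity (middle-degree concentration) of the family complex uniformly in $t$: one must check that the generic member of each pencil is nondegenerate with respect to its Newton polytope, which requires choosing $g$ generically enough and invoking a Bertini-type genericity statement for Newton nondegeneracy, and one must ensure the specialization principle applies with the $t = 0$ fiber being exactly $f^\ast\mathcal{L}_\psi$ (not a degeneration with extra components at infinity). Handling the toric boundary carefully — so that "compactly supported cohomology on $\mathbb{G}_{\mathrm{m}}^n$" behaves well under the degeneration — is where the real work lies; everything else is the volume bookkeeping above.
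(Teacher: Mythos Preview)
Your proposal has two genuine gaps and a misconception that together prevent it from reaching the stated bound.

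First, the specialization lemma (Lemma~\ref{lemma:middle-estimate}) only controls the \emph{middle} cohomology: it gives
\(\dim \mathrm{H}^{n}_{c}(\mathbb{G}_{\mathrm{m}}^{n};f^{\ast}\mathcal{L}_{\psi}) \leq \dim \mathrm{H}^{n}_{c}(\mathbb{G}_{\mathrm{m}}^{n};h_{\eta}^{\ast}\mathcal{L}_{\psi})\),
not the inequality for the full sum \(\sum_{i}\dim\mathrm{H}^{i}_{c}\) that you write down. Even if \(h_{\eta}\) has cohomology concentrated in degree \(n\), the special fiber \(f\) need not, so the degrees \(n+1,\ldots,2n\) on the left remain uncontrolled by your pencil. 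The paper handles these higher degrees by a completely separate mechanism: the perverse weak Lefschetz theorem (Lemma~\ref{lemma:gysin-perv}), cutting \(\mathbb{G}_{\mathrm{m}}^{n}\) by \(j\) generic hypersurfaces with Newton polytope \(S\) to bound \(\mathrm{H}^{n+j}_{c}\) by a middle cohomology on a smaller intersection, and only then invoking specialization on that slice.

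Second, the Newton polytope of \(f+tg\) (for generic \(t\)) is the convex hull \(\operatorname{conv}(\Delta\cup\Delta_{0}(g))\), not the Minkowski sum \(\Delta+S\). So the expansion \((\Delta+S)^{n}=\sum_{i}\binom{n}{i}\Delta^{n-i}S^{i}\) is computing the wrong volume. Even setting this aside, the binomial coefficients \(\binom{n}{i}\) are for small \(i\) much larger than \(2^{i-1}\) (e.g.\ \(n\) versus \(1\) when \(i=1\)), so you would be proving a strictly weaker statement; the ``iterate one step at a time'' remedy you sketch is too vague to recover the \(2^{i-1}\).

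In the paper, the \(2^{i-1}\) arises not from any degeneration bookkeeping but from a generating-function identity: each individual bound from Theorem~\ref{theorem:ultimate-as} has the shape \(\frac{(-1)^{n}}{1+\Delta}\cdot(-1)^{j}\bigl(\tfrac{S}{1+S}\bigr)^{j}\), and summing over \(j\) and extracting the coefficient of \(\Delta^{n-i}S^{i}\) gives \(\sum_{j=1}^{i}\binom{i-1}{i-j}=2^{i-1}\). The role of \(S\) is thus purely as the Newton polytope of the Lefschetz slices, not as something added to \(f\).
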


Since \(S\) can be chosen arbitrarily in
Theorem~\ref{theorem:as-improvement}, we can take \(S = \Delta\).
Then we get
\begin{equation}\label{eq:more-like-as}
\sum_i \dim \mathrm{H}^{i}_{c}(\mathbb{G}_{\mathrm{m},\overline{\mathbb{F}}_{q}}^{n};f^{\ast}\mathcal{L}_{\psi})
\leq 2^{n}\Delta^{n} = 2^{n} n!\mathrm{Vol}(\Delta).
\end{equation}
This improves the bound of Adolphson--Sperber
\eqref{eq:as-exp-total-degree}.

One novelty of Theorem~\ref{theorem:as-improvement} is that the bounds
are obtained at the cohomology level, for compactly supported Betti
numbers, rather than merely addressing the total degree of the
L-function.  This allows us to deduce the Betti sum bound
(Theorem~\ref{theorem:order}) through an elementary counting argument (see \S\ref{sec:proof-theorem-order}) via a toric reduction and a volume computation.

Taking \(S=\Delta\) is a crude choice in many cases, because the
Minkowski mixed volume gets smaller if \(S\) is small and more
similar to \(\Delta\).  Thus, if we can choose a rational number
\(d >2\) such that \(\Delta = d S\) for some lattice polytope $S$, then we get
\begin{align}\label{eq:power-as}
\sum_i \dim \mathrm{H}^{i}_{c}(\mathbb{G}_{\mathrm{m},\overline{\mathbb{F}}_{q}}^{n};f^{\ast}\mathcal{L}_{\psi})
&\leq n!\mathrm{Vol}(S) \cdot \left[  d^{n} + \frac{d^{n}-2^{n}}{d-2} \right]  \\
&= n!\mathrm{Vol}(\Delta) \cdot \left[  1 + \frac{1-{(\frac{2}{d}})^{n}}{{d}-2}\right].
  \nonumber
\end{align}
If $d$ is large, the upper bound on the right side is at most
$(1+\frac{1}{d-2})n!\mathrm{Vol}(\Delta)$, which is close to
the optimal number $n!\mathrm{Vol}(\Delta)$ achieved in the non-degenerate case as shown in \cite{adolphson-sperber:exponential-sums-newton-polyhedra}.

For an $n$-dimensional lattice polytope $\Delta$ in $\mathbb{R}^n$ containing the origin, define
\begin{equation*}
E_{c}(\Delta) \coloneqq \sup_{\ell\not= p} \left\{\sum_i \dim \mathrm{H}^{i}_{c}(\mathbb{G}_{\mathrm{m},\overline{\mathbb{F}}_{p}}^{n};f^{\ast}\mathcal{L}_{\psi}):
  f \in \overline{\mathbb{F}}_{p}[x_1^{\pm 1}, \cdots, x_n^{\pm 1}], \
  \Delta_{\infty}(f)=\Delta
\right\},
\end{equation*}
where $p$ and $\ell$ run over all prime numbers with $p \not= \ell$, $\psi$ runs over all non-trivial additive characters of $\mathbb{F}_p$.
The number $E_{c}(\Delta)$ clearly depends only on the
lattice polytope $\Delta$. Replacing $\Delta$ by the multiple $d\Delta$, the above discussion gives

\begin{corollary}For an $n$-dimensional lattice polytope $\Delta$ in $\mathbb{R}^n$ containing the origin, we have
\begin{equation*}
 n!\mathrm{Vol}(\Delta)d^n   \leq E_c(d\Delta) \leq
 n!\mathrm{Vol}(\Delta)d^n \cdot \left[  1 + \frac{1-{(\frac{2}{d}})^{n}}{{d}-2}\right]
\end{equation*}
 In particular, as a function in $d$, we have the asymptotic formula
 \begin{equation*}
  E_c(d\Delta) =
 n!\mathrm{Vol}(\Delta)\left( d^n + O(d^{n-1}) \right).
 \end{equation*}
\end{corollary}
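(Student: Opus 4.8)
The plan is to deduce the corollary directly from results already in place: the nondegeneracy theorem of Adolphson--Sperber \cite{adolphson-sperber:exponential-sums-newton-polyhedra} supplies the lower bound, and the estimate \eqref{eq:power-as} --- a special case of Theorem~\ref{theorem:as-improvement} --- supplies the upper bound, after which the asymptotic formula is immediate.

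For the lower bound we would fix a prime $p$ and pick a Laurent polynomial $g \in \overline{\mathbb{F}}_p[x_1^{\pm 1},\ldots,x_n^{\pm 1}]$ supported on $d\Delta \cap \mathbb{Z}^n$ with sufficiently general coefficients, so that $\Delta_\infty(g) = d\Delta$ and $g$ is nondegenerate with respect to $\Delta_\infty(g)$; this holds for all but finitely many $p$, and since $E_c(d\Delta)$ is a supremum over all $p$, those bad primes are irrelevant. For such $g$, Adolphson--Sperber \cite{adolphson-sperber:exponential-sums-newton-polyhedra} give $\mathrm{H}^i_c(\mathbb{G}^n_{\mathrm{m},\overline{\mathbb{F}}_p}; g^\ast\mathcal{L}_\psi) = 0$ for $i \neq n$ together with $\dim \mathrm{H}^n_c = n!\mathrm{Vol}(\Delta_\infty(g)) = n!\mathrm{Vol}(d\Delta) = n!\mathrm{Vol}(\Delta)\,d^n$, whence $E_c(d\Delta) \geq n!\mathrm{Vol}(\Delta)\,d^n$.

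For the upper bound we would take an arbitrary $f$ with $\Delta_\infty(f) = d\Delta$ and apply \eqref{eq:power-as} with $S = \Delta$ and dilation factor $d$ (legitimate since $\Delta$ is an $n$-dimensional lattice polytope and $d\Delta = d\cdot S$). For $d > 2$ this gives
\[
\sum_i \dim \mathrm{H}^i_c(\mathbb{G}^n_{\mathrm{m},\overline{\mathbb{F}}_p}; f^\ast\mathcal{L}_\psi) \leq n!\mathrm{Vol}(S)\left[ d^n + \frac{d^n - 2^n}{d-2}\right] = n!\mathrm{Vol}(\Delta)\,d^n\left[ 1 + \frac{1 - (2/d)^n}{d-2}\right],
\]
using $n!\mathrm{Vol}(S)\,d^n = n!\mathrm{Vol}(dS) = n!\mathrm{Vol}(\Delta)$; taking the supremum over $f$, $p$, $\ell$, $\psi$ yields the claimed inequality. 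For $d = 1$ we would instead invoke \eqref{eq:more-like-as} with $S = \Delta$, giving $E_c(\Delta) \leq 2^n n!\mathrm{Vol}(\Delta)$, which matches the right-hand side, and for $d = 2$ the same inequality of Theorem~\ref{theorem:as-improvement} with $S = \Delta$ applies, the displayed expression being read as its limit as $d \to 2$. Finally, since $\frac{1 - (2/d)^n}{d-2} = O_n(1/d)$ as $d \to \infty$, combining the two bounds gives $E_c(d\Delta) = n!\mathrm{Vol}(\Delta)\,d^n(1 + O_n(1/d)) = n!\mathrm{Vol}(\Delta)(d^n + O_n(d^{n-1}))$.

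The only step that requires any care is the existence, over some $\overline{\mathbb{F}}_p$, of a nondegenerate Laurent polynomial whose Newton polytope at infinity is \emph{exactly} $d\Delta$ (and not a proper subpolytope); but this is standard, nondegeneracy being a nonempty Zariski-open condition on the coefficients once $p$ is large enough. Every other step is a direct substitution into bounds proved earlier, so there is no serious obstacle here.
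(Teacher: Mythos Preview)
Your proposal is correct and matches the paper's own argument: the paper simply says ``the above discussion gives'' the corollary, meaning exactly what you spell out --- the lower bound from the nondegenerate case of Adolphson--Sperber and the upper bound from \eqref{eq:power-as} with $S=\Delta$ applied to $d\Delta$. Your treatment is in fact more careful than the paper's, since you address the edge cases $d=1,2$ and the existence of a nondegenerate polynomial; the only blemish is the parenthetical ``using $n!\mathrm{Vol}(S)\,d^n=n!\mathrm{Vol}(dS)=n!\mathrm{Vol}(\Delta)$'', where the last term should read $n!\mathrm{Vol}(d\Delta)$ (or simply be dropped, since the displayed formula follows immediately from $S=\Delta$).
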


The toric exponential sums are examples of exponential sums over a
nonsingular variety.  In applications, however, studying exponential
sums over a singular variety is inevitable.  Although in principle
singular exponential sums can be reduced to nonsingular exponential
sums, the estimates coming from the standard reduction procedure are
often not optimal.  Previously, Katz \cite{katz:singular-sums} studied
exponential sums over singular affine complete intersections via the
weak Lefschetz theorem.  Our approach combines the Lefschetz method as
well as the aforementioned specialization method.  The general
abstract result is formulated in \S\ref{sec:katz-framework}.  In this
introduction, we are content with stating the following theorem:

\begin{theorem}\label{theorem:last}
Let \(V\) be a closed subvariety of
\(\mathbb{A}^{n}_{\mathbb{F}_{q}}\).  Let
\(f \in \mathbb{F}_{q}[x_{1},\ldots,x_{n}]\) be a polynomial of degree
\(\leq d\).
\begin{enumerate}
\item\label{item:bomb-restricted-to-sub}
\textup{(See \ref{sec:proof-theorem-order})}
If \(V\) is cut out by \(r\) polynomials of
degree \(\leq d\), then
\begin{equation*}
\sum_i \dim \mathrm{H}^{i}_{c}(V_{\overline{\mathbb{F}}_{q}};f^{\ast}\mathcal{L}_{\psi})
\leq 3^{r} \binom{n+r}{r}(2d+1)^{n}.
\end{equation*}

\item \label{item:bomb2} If \(V\) is cut out by some polynomials of degree at most \(d\), then
\begin{equation*}
\sum_i \dim \mathrm{H}^{i}_{c}(V_{\overline{\mathbb{F}}_{q}};f^{\ast}\mathcal{L}_{\psi})
\leq 3^{n+1}\binom{2n+1}{n+1}(2d+1)^{n}.
\end{equation*}

\item \textup{(See \ref{corollary:katz-to-an})} If \(V\) is cut out
by \(r\) polynomials of degree \(\leq d\),
and \(\dim V = n-r\), then
\begin{equation*}
\sum_i \dim \mathrm{H}^{i}_{c}(V_{\overline{\mathbb{F}}_{q}};f^{\ast}\mathcal{L}_{\psi})
\leq \binom{n}{r}(d+1)^{n}.
\end{equation*}
\end{enumerate}
\end{theorem}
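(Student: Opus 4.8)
I would treat the three parts in turn: (ii) is a formal consequence of (i), (iii) runs parallel to the proof of Theorem~\ref{theorem:main}, and (i) is the genuinely new input, which I would establish (in \S\ref{sec:proof-theorem-order}, alongside Theorem~\ref{theorem:order}) by reducing the exponential sum on the singular variety $V$ to a toric one. Write $V=\{f_{1}=\cdots=f_{r}=0\}$ with $\deg f_{i}\le d$, introduce auxiliary coordinates $y_{1},\dots,y_{r}$, and set $g(x,y)=f(x)+\sum_{j}y_{j}f_{j}(x)$ on $\mathbb{A}^{n+r}$. For the projection $\pi\colon\mathbb{A}^{n}\times\mathbb{A}^{r}\to\mathbb{A}^{n}$, proper base change together with the vanishing $\mathrm{H}^{*}_{c}(\mathbb{A}^{r};\lambda^{*}\mathcal{L}_{\psi})=0$ for every surjective affine-linear $\lambda\colon\mathbb{A}^{r}\to\mathbb{A}^{1}$ (it has $\mathbb{A}^{r-1}$-fibres over $\mathbb{A}^{1}$ and $\mathrm{H}^{*}_{c}(\mathbb{A}^{1};\mathcal{L}_{\psi})=0$) identifies $\mathrm{R}\pi_{!}\,g^{*}\mathcal{L}_{\psi}$ with $(i_{V})_{*}\bigl(f^{*}\mathcal{L}_{\psi}\bigr)(-r)[-2r]$, so that
\[
\sum_{i}\dim\mathrm{H}^{i}_{c}(V_{\overline{\mathbb{F}}_{q}};f^{*}\mathcal{L}_{\psi})=\sum_{i}\dim\mathrm{H}^{i}_{c}(\mathbb{A}^{n+r}_{\overline{\mathbb{F}}_{q}};g^{*}\mathcal{L}_{\psi}).
\]

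Next I would stratify $\mathbb{A}^{n+r}$ by the coordinate subtori $S_{I,J}\cong\mathbb{G}_{\mathrm{m}}^{|I|+|J|}$, $I\subseteq\{1,\dots,n\}$ and $J\subseteq\{1,\dots,r\}$; subadditivity of $\sum_{i}\dim\mathrm{H}^{i}_{c}$ along the excision triangles bounds the right-hand side by $\sum_{I,J}\sum_{i}\dim\mathrm{H}^{i}_{c}(S_{I,J};(g|_{S_{I,J}})^{*}\mathcal{L}_{\psi})$. On $S_{I,J}$ the function $g$ restricts to a Laurent polynomial $g_{I,J}$ whose $x$-monomials have degree $\le d$ and in which each $y_{j}$ occurs to degree $\le 1$; hence $\Delta_{\infty}(g_{I,J})$ is contained in the prism $d\Sigma_{|I|}\times\Sigma_{|J|}$, where $\Sigma_{m}$ denotes the standard simplex in $\mathbb{R}^{m}$, and this prism has normalized volume $d^{|I|}\binom{|I|+|J|}{|I|}$. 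Applying Theorem~\ref{theorem:as-improvement} in the form \eqref{eq:more-like-as} --- after the footnote reduction to a subtorus when $\Delta_{\infty}(g_{I,J})$ is not full-dimensional --- bounds the contribution of $S_{I,J}$ by $2^{|I|+|J|}d^{|I|}\binom{|I|+|J|}{|I|}$. Summing over the $\binom{n}{a}\binom{r}{b}$ strata with $|I|=a$, $|J|=b$ and using the elementary inequality $\sum_{b}\binom{r}{b}\binom{a+b}{a}2^{b}\le\binom{a+r}{a}3^{r}\le\binom{n+r}{r}3^{r}$ (valid for $a\le n$, since $\binom{a+b}{a}$ is nondecreasing in $b$) collapses the total to $3^{r}\binom{n+r}{r}\sum_{a}\binom{n}{a}(2d)^{a}=3^{r}\binom{n+r}{r}(2d+1)^{n}$, which is (i). (The same computation with $f$ dropped, over $\overline{\mathbb{Q}}_{\ell}$, is how I would prove Theorem~\ref{theorem:order}.) Part (ii) then follows at once: by Kronecker's theorem \cite{perron:kronecker-theorem} any such $V$ is set-theoretically cut out by $n+1$ polynomials of degree $\le d$, so (i) with $r=n+1$ yields $3^{n+1}\binom{2n+1}{n+1}(2d+1)^{n}$.

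For (iii) I would follow the template of Theorem~\ref{theorem:main} (as set up in \S\ref{sec:katz-framework} and used in Corollary~\ref{corollary:katz-to-an}), now carrying the coefficient sheaf $f^{*}\mathcal{L}_{\psi}$ throughout. The middle-dimensional cohomology of $f^{*}\mathcal{L}_{\psi}$ on $V$ is controlled by the specialization principle (Lemma~\ref{lemma:middle-estimate}, and Variant~\ref{variant:middle-estimate} in mixed characteristic): deforming $(f_{1},\dots,f_{r};f)$ through complete intersections of the same multidegree can only decrease the relevant cohomology factor, so it is dominated by its value for a sufficiently general member, which one computes directly as in Lemma~\ref{lemma:exact-betti-for-generic-affine-complete-intersection}. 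Deligne's weak Lefschetz theorem for perverse sheaves (Lemma~\ref{lemma:gysin-perv}) then reduces the higher compactly supported cohomology of $f^{*}\mathcal{L}_{\psi}$ on $V$ to the middle cohomology on general hyperplane sections of $V$, which are again set-theoretic complete intersections of dimension one less; this sets up an induction on $\dim V$, and bookkeeping the binomial coefficients produced along the way yields $\binom{n}{r}(d+1)^{n}$.

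The main obstacle for (i) is the handling of the degenerate strata: when $\Delta_{\infty}(g_{I,J})$ is not full-dimensional one has to project to a subtorus and still dominate the resulting lower-dimensional normalized volume by $d^{|I|}\binom{|I|+|J|}{|I|}$ before the binomial collapse can proceed. For (iii), exactly as for Theorem~\ref{theorem:main}, the crux is that weak Lefschetz gives access only to the \emph{middle} cohomology of the hyperplane sections, so the induction cannot be closed without the independent estimate for that middle term supplied by the specialization lemma.
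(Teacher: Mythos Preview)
Your proposal is correct and matches the paper's approach in all three parts: the reduction of (i) to a toric exponential sum on $\mathbb{A}^{n+r}$ via the auxiliary function $g=f+\sum y_jf_j$, the torus stratification, the volume bound on each stratum, and the binomial summation are exactly what the paper does in \S\ref{sec:proof-theorem-order}; (ii) is Kronecker plus (i) with $r=n+1$; and (iii) is precisely Corollary~\ref{corollary:katz-to-an}, whose generic computation uses the Chern class formula of Proposition~\ref{proposition:chern} rather than Lemma~\ref{lemma:exact-betti-for-generic-affine-complete-intersection}. The ``main obstacle'' you flag for (i) in fact dissolves: the bound \eqref{eq:more-like-as}, as derived from Theorem~\ref{theorem:ultimate-as}, requires only that $g_{I,J}\in L(\Delta)$ for some full-dimensional lattice polytope $\Delta$, and your prism $d\Sigma_{|I|}\times\Sigma_{|J|}$ is always full-dimensional in $\mathbb{R}^{|I|+|J|}$, so no subtorus reduction is needed.
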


In the
context of (\ref{item:bomb-restricted-to-sub}), Bombieri
\cite{bombieri:exponential-sums-in-finite-fields-2} proved a total
degree upper bound \((4d+5)^{n+r}\); while Katz
\cite{katz:sums-of-betti-numbers} improved this upper bound to
$3(d+2)^{n+r}$.

Item (\ref{item:bomb2}) follows from
(\ref{item:bomb-restricted-to-sub}) by applying Kronecker's theorem,
as in the proof of Corollary~\ref{corollary:total-bound-without-r}.
To see how far these bounds from being optimal as $d$ varies, let us
introduce
\begin{equation*}
E_{c}(n,r;d) \coloneqq \sup_{\ell\not =p}\left\{ \sum_i \dim \mathrm{H}^{i}_{c}(V_{\overline{\mathbb{F}}_{p}};f^{\ast}\mathcal{L}_{\psi}):
\begin{array}{l}
  V = \operatorname{Spec} \overline{\mathbb{F}}_{p}[x_{1},\ldots,x_{n}]/(f_{1},\ldots,f_{r}),  \\
  \deg f_{i} \leq d
\end{array}
\right\},
\end{equation*}
\begin{equation*}
E^{\text{ci}}_{c}(n,r;d) \coloneqq \sup_{\ell\not = p}\left\{ \sum_i \dim \mathrm{H}^{i}_{c}(V_{\overline{\mathbb{F}}_{p}};f^{\ast}\mathcal{L}_{\psi}):
\begin{array}{l}
  V = \operatorname{Spec} \overline{\mathbb{F}}_{p}[x_{1},\ldots,x_{n}]/(f_{1},\ldots,f_{r}),  \\
  \deg f_{i} \leq d, \text{ and } \dim V = n-r
\end{array}
\right\},
\end{equation*}
where $p$ and $\ell$ run over all prime numbers with $p \not= \ell$, $\psi$ runs over all non-trivial additive characters of $\mathbb{F}_p$,
and $f$ runs over all
polynomials in $\overline{\mathbb{F}}_{p}[x_{1},\ldots,x_{n}]$ with $\deg(f)\leq d$.

By adding redundant equations, if necessary, one sees that the number $E_{c}(n,r;d)$ is increasing in $r$. Namely,
\begin{equation*}
E_c(n, 1;d) \leq E_c(n, 2;d) \leq \cdots \leq E_c(n, r;d)  \leq E_c(n, r+1;d)\leq \cdots.
\end{equation*}
Furthermore, by Kronecker's reduction, this sequence in $r$ stabilizes once $r$ reaches $n+1$.
Thus, without loss of generality, we can always assume that $r\leq n+1$.
In terms of these notations, the above corollary
can be restated as

\begin{corollary} Let $n, r, d$ be positive integers.
Then, we have
\begin{equation*}
 E_c(n, r;d) \leq  3^{r} \binom{n+r}{r}(2d+1)^{n}, \ \
 E_c(n, r;d) \leq  3^{n+1} \binom{2n+1}{n+1}(2d+1)^{n}.
\end{equation*}
\end{corollary}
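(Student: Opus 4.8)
The plan is to deduce this corollary directly from Theorem~\ref{theorem:last}\eqref{item:bomb-restricted-to-sub} and \eqref{item:bomb2} by unwinding the definition of $E_{c}(n,r;d)$, since the statement is really a bookkeeping reformulation. First I would fix a prime $p$, a prime $\ell\neq p$, a nontrivial additive character $\psi$ of $\mathbb{F}_{p}$, polynomials $f_{1},\ldots,f_{r},f\in\overline{\mathbb{F}}_{p}[x_{1},\ldots,x_{n}]$ of degree $\leq d$, and set $V=\operatorname{Spec}\overline{\mathbb{F}}_{p}[x_{1},\ldots,x_{n}]/(f_{1},\ldots,f_{r})$. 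As only finitely many polynomials are involved, they are all defined over some finite subfield $\mathbb{F}_{q}\subset\overline{\mathbb{F}}_{p}$; the corresponding objects over $\mathbb{F}_{q}$ base-change to those over $\overline{\mathbb{F}}_{p}$, so that $\mathrm{H}^{i}_{c}(V_{\overline{\mathbb{F}}_{p}};f^{\ast}\mathcal{L}_{\psi})=\mathrm{H}^{i}_{c}(V_{\overline{\mathbb{F}}_{q}};f^{\ast}\mathcal{L}_{\psi})$. Hence Theorem~\ref{theorem:last}\eqref{item:bomb-restricted-to-sub} applies and gives $\sum_{i}\dim\mathrm{H}^{i}_{c}(V_{\overline{\mathbb{F}}_{p}};f^{\ast}\mathcal{L}_{\psi})\leq 3^{r}\binom{n+r}{r}(2d+1)^{n}$, a bound uniform in all of the data above. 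Taking the supremum over that data yields the first inequality.

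The second inequality is obtained in exactly the same manner from Theorem~\ref{theorem:last}\eqref{item:bomb2}. Alternatively, one can first invoke Kronecker's reduction \cite{perron:kronecker-theorem} to rewrite $V$ set-theoretically as the zero locus of at most $n+1$ polynomials of degree $\leq d$ — this leaves $V_{\mathrm{red}}$, and hence the cohomology of $f^{\ast}\mathcal{L}_{\psi}$, unchanged — and then apply the first inequality with $r$ replaced by $n+1$, using that $\binom{2n+1}{n+1}$ and the factor $3^{n+1}$ dominate the resulting constants.

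I do not expect a genuine obstacle here: the only points that must be checked are that the bounds in Theorem~\ref{theorem:last} are independent of $\ell$, so they persist after taking the supremum over $\ell\neq p$, and that passing from $\mathbb{F}_{q}$ to $\overline{\mathbb{F}}_{p}$ does not alter the geometric compactly supported cohomology. Both are immediate, so the bulk of the content has already been carried out in the proof of Theorem~\ref{theorem:last}.
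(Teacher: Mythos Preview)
Your proposal is correct and matches the paper's approach: the paper presents this corollary as a direct restatement of Theorem~\ref{theorem:last}(\ref{item:bomb-restricted-to-sub}) and (\ref{item:bomb2}) in the language of $E_{c}(n,r;d)$, with no separate proof given. Your explicit handling of the descent from $\overline{\mathbb{F}}_{p}$ to a finite subfield $\mathbb{F}_{q}$ and the uniformity in $\ell$ simply spells out what the paper leaves implicit.
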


For exponential sums over a complete intersection, we have an
asymptotic formula.

\begin{corollary} Let $n, r, d$ be positive integers with $r\leq n$.
We have
\begin{equation*}
\binom{n}{r}(d-1)^{n} \leq E^{\mathrm{ci}}_c(n,r;d) \leq  \binom{n}{r}(d+1)^{n}.
\end{equation*}
In particular
\(E^{\textup{ci}}_c(n, r;d) = \binom{n}{r}d^{n} + O_{n,r}(d^{n-1})\).
\end{corollary}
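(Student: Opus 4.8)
The plan is to obtain the upper bound directly from Theorem~\ref{theorem:last}\,(iii) (equivalently Corollary~\ref{corollary:katz-to-an}): every $V=\operatorname{Spec}\overline{\mathbb{F}}_p[x_1,\dots,x_n]/(f_1,\dots,f_r)$ of dimension $n-r$ with $\deg f_i\le d$, together with any $f$ of degree $\le d$, satisfies $\sum_i\dim\mathrm{H}^i_c(V_{\overline{\mathbb{F}}_p};f^{\ast}\mathcal{L}_\psi)\le\binom{n}{r}(d+1)^n$, and passing to the supremum over all such $V$, all $f$, and all $p,\ell,\psi$ gives $E^{\mathrm{ci}}_c(n,r;d)\le\binom{n}{r}(d+1)^n$.

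For the lower bound I would exhibit, for each $d$, one explicit pair $(V,f)$. Work over $\overline{\mathbb{F}}_p$ with $p$ large (in particular $p\nmid d$), let $V\subset\mathbb{A}^n$ be a sufficiently general complete intersection of $r$ hypersurfaces of degree $d$, so that $V$ is smooth of dimension $m:=n-r$, and let $f$ be a sufficiently general polynomial of degree $d$. Since $\sum_i\dim\mathrm{H}^i_c(V;f^{\ast}\mathcal{L}_\psi)\ge|\chi_c(V;f^{\ast}\mathcal{L}_\psi)|$, it is enough to compute this Euler characteristic and check that it has absolute value $\ge\binom{n}{r}(d-1)^n$; no concentration of the cohomology in the middle degree is needed (although it does hold here, by the cohomological tameness of generic $f$, in the spirit of Lemma~\ref{lemma:middle-estimate}).

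The computation of $\chi_c(V;f^{\ast}\mathcal{L}_\psi)$ is the heart of the matter. One route is the projection formula $Rf_!\,f^{\ast}\mathcal{L}_\psi\cong\mathcal{L}_\psi\otimes Rf_!\overline{\mathbb{Q}}_\ell$ combined with the Grothendieck--Ogg--Shafarevich formula on $\mathbb{A}^1$: this expresses $\chi_c(V;f^{\ast}\mathcal{L}_\psi)$ through the Euler characteristics of the fibres of $f|_V$ --- each a sufficiently general complete intersection of $r+1$ hypersurfaces of degree $d$, to which the Chern-class computation behind Lemma~\ref{lemma:exact-betti-for-generic-affine-complete-intersection} applies --- and the Swan conductors of the sheaves $R^jf_!\overline{\mathbb{Q}}_\ell$ at infinity. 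An alternative is to pass to a toric compactification of the pair $(V,f)$ and invoke the Adolphson--Sperber volume formula, all the relevant Newton polytopes being dilates of the standard simplex, so that the mixed volumes are explicit binomials. Either way one obtains a polynomial in $d$ with leading term $\binom{n}{r}d^n$, the constant arising as $\binom{n}{r}=\binom{n-1}{r-1}+\binom{n-1}{r}$: the first summand is the contribution of $V$ itself (leading term $\binom{n-1}{r-1}d^n$, exactly as in the untwisted Betti computation of Lemma~\ref{lemma:exact-betti-for-generic-affine-complete-intersection}), the second is the contribution of a general fibre of $f|_V$ (leading term $\binom{n-1}{r}d^n$), and the two match up by Pascal's rule. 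Comparing lower-order coefficients then shows this polynomial dominates $\binom{n}{r}(d-1)^n$ for all $d\ge 1$; the inequality is anyway trivial for the small values of $d$ for which $\binom{n}{r}(d-1)^n$ is negligible (e.g. $d=1$, where $V\cong\mathbb{A}^{n-r}$ and the left-hand side equals $1$). This proves $E^{\mathrm{ci}}_c(n,r;d)\ge\binom{n}{r}(d-1)^n$, and the asymptotic formula follows by squeezing, since $\binom{n}{r}(d\pm 1)^n=\binom{n}{r}d^n+O_{n,r}(d^{n-1})$.

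The main obstacle is precisely this Euler-characteristic computation, and in particular extracting the constant $\binom{n}{r}$ on the nose: the naive heuristic ``$\chi_c(V)-d\cdot\chi(V_\infty)$'' only yields $\binom{n-1}{r-1}+\binom{n-2}{r-1}$, so one must honestly account for the ramification of $f$ along the divisor at infinity --- either through the full Grothendieck--Ogg--Shafarevich input or through the mixed-volume combinatorics of the toric picture. The other steps are routine by comparison.
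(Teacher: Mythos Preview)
Your upper bound argument is exactly the paper's. For the lower bound the paper also works with a sufficiently general pair $(V,f)$ over $\overline{\mathbb{F}}_p$ with $p\nmid d$, but instead of recomputing the Euler characteristic by hand it simply cites Corollary~\ref{corollary:katz-to-an}(\ref{item:lowerbound}): Theorem~\ref{theorem:nondegenerate-exponential-sums} concentrates the Artin--Schreier cohomology in degree $n-r$, and a theorem of Katz \cite{katz:exponential-sums} says that in this tame situation the dimension \emph{equals} the Chern-class integral of Proposition~\ref{proposition:chern}. That integral is worked out in the proof of Corollary~\ref{corollary:katz-to-an} as
\[
C(n,r;d)=\sum_{i=0}^{r}\binom{n}{i}(d-1)^{n-i}\binom{n-i}{n-r},
\]
and the bound $C(n,r;d)\ge\binom{n}{r}(d-1)^n$ is just the $i=0$ term.

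Your two proposed routes---GOS on $\mathbb{A}^1$ with the Swan conductors at infinity, or toric mixed volumes \`a la Adolphson--Sperber---are correct and would eventually reproduce this same number $C(n,r;d)$, so there is no gap; but they are more circuitous, and the ``comparing lower-order coefficients'' step you flag as the main obstacle is precisely what the paper sidesteps by expanding in powers of $(d-1)$ rather than $d$. In short: same strategy, but the paper has already packaged the Euler-characteristic computation as Corollary~\ref{corollary:katz-to-an}(\ref{item:lowerbound}) via Katz's Chern-class framework, so the proof here is a one-line citation rather than a fresh calculation.
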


\begin{proof}
When \(d\) is coprime to the characteristic of \(p\), we will prove
(see Corollary~\ref{corollary:katz-to-an}(\ref{item:lowerbound})) that
for a sufficiently general pair \((V,f)\), the sum
\(\sum_{m} \dim \mathrm{H}^{m}_{c}(V;f^{\ast}\mathcal{L}_{\psi}) \geq\binom{n}{r}(d-1)^{n}\).  Since $E_c^{\mathrm{ci}}(n,r;d)$ is a supremum taken over all primes $p$,
this establishes the lower bound.
\end{proof}

Since \(E_{c}(n,r;d) \geq E_{c}^{\mathrm{ci}}(n,r;d)\), the above two
corollaries together imply \(E_{c}(n,r;d) \asymp_{n,r} d^{n}\).
Hence, the correct order of \(E_{c}(n,r;d)\) is \(d^{n}\).  As before,
a natural question arises: is there an asymptotic formula for
\(E_{c}(n,r;d)\) as \(d \to \infty\)?  Namely, is there a positive
constant $e_{n,r}$ such that
\begin{equation*}
  E_{c}(n,r;d)  = e_{n,r}d^n +o_{n,r}(d^n)?
\end{equation*}
If so, what is the leading constant $e_{n,r}$?

\begin{remark*}
An expert in Dwork theory understands that the total degree bounds of
Bombieri and Adolphson--Sperber \eqref{eq:as-exp-total-degree} stem
from chain-level overcounts.  When viewed cohomologically, these
bounds can naturally be refined to Betti number bounds for \emph{some}
cohomology, such as Berthelot's rigid cohomology.  The necessary tools
to establish this have been laid out in our previous work \cite{wz1}.

On the other hand, as Dwork theory is fundamentally a \(p\)-adic theory, it
yields Betti number bounds only for \(p\)-adic cohomology.  Since the
equality between rigid cohomology Betti numbers and \(\ell\)-adic
Betti numbers is still conjectural, the Dwork-theoretic method does not
establish the \(\ell\)-adic version of \eqref{eq:more-like-as}.
\end{remark*}

\subsection{Upper semicontinuity of the middle Newton polygon}
The other novelty of our approach is that it allows us to extract more
information than just Betti numbers.  Using the Grothendieck
specialization theorem of Newton polygons, we can infer the \(p\)-adic
information of the Frobenius eigenvalues.

Recall that we can use the Newton polygon to capture \(p\)-adic
information about the roots of a polynomial.  If
\(P(t) = a_{0} + a_{1}t + \cdots + a_{n}t^{n}\) is a polynomial with
coefficients in \(\overline{\mathbb{Q}}_{p}\) and \(a_{0} \neq 0\),
and if \(q\) is a power of \(p\), then the Newton polygon of \(P\)
with respect to \(\mathrm{ord}_{q}\) is defined as the convex hull in $\mathbb{R}^2$ of
the points
\[
(0, \mathrm{ord}_{q}(a_{0})), (1, \mathrm{ord}_{q}(a_{1})), \ldots, (n, \mathrm{ord}_{q}(a_{n})).
\]

If \(L\) is a side of the Newton polygon of \(P(t)\) (i.e., \(L\) is a
line segment joining two vertices of the Newton polygon) with slope
\(s\), and the projection of \(L\) onto the horizontal axis has length
\(m\), we say that \(s\) is a slope of the Newton polygon and \(m\) is
the multiplicity of \(s\).  When \(s\) is a slope of the Newton
polygon with multiplicity \(m\), there are precisely \(m\) roots of \(P\)
(counting multiplicity) \(r_{1}, \ldots, r_{m}\) in
\(\overline{\mathbb{Q}}_{p}\) such that
\(\mathrm{ord}_{q}(r_{i}^{-1}) = s\).

Let
\(\psi\colon \mathbb{F}_{p} \to \overline{\mathbb{Q}}_{\ell}^{\ast}\)
be a nontrivial additive character, and let
\(f\colon U \to \mathbb{A}^{1}\) be a regular function between
varieties over \(\mathbb{F}_{q}\).  We are interested in investigating
the Newton polygon of the polynomials
\[
\iota\det(1-tF|\mathrm{H}^{i}_{c}(U_{\overline{\mathbb{F}}_{q}};f^{\ast}\mathcal{L}_{\psi})),
\]
where \(\iota\) is an isomorphism between
\(\overline{\mathbb{Q}}_{\ell}\) and \(\overline{\mathbb{Q}}_{p}\).
The integrality of the Frobenius eigenvalues (cf.~\cite[XXI,
Appendice]{sga7-2}) implies that the Newton polygon of
\(\iota\det(1-tF|\mathrm{H}^{i}_{c}(U_{\overline{\mathbb{F}}_{q}};f^{\ast}\mathcal{L}_{\psi}))\)
is independent of the choice of \(\iota\).

The Newton polygons associated with a ``nondegenerate'' toric
exponential sum have been thoroughly studied by
Adolphson and Sperber
\cite{adolphson-sperber:exponential-sums-newton-polyhedra} through
Dwork theory, where they establish the ``Newton above Hodge''
inequality. This inequality is shown to be an equality in many cases, see \cite{Wan93,Wan04}.
However, for degenerate exponential sums or those that are
not toric in nature, very little is known.

The specialization lemma~\ref{lemma:middle-estimate} has an arithmetic
variant (Variant~\ref{variant:newton}).  When applied to exponential
sums, it yields the following global arithmetic analogue of
Steenbrink's lower semicontinuity theorem for Hodge spectra
(\cite{steenbrink:semicontinuity-of-spectrum}):

\begin{theorem}[Upper semicontinuity of the middle Newton polygon, = \ref{variant:np-exp}]
\label{theorem:semicont-newton}
Let \(\Gamma \subset \mathbb{R}_{\geq 0}^{2}\) be a convex polygon
with \(0\) as one of its vertices.  Let
\(f\colon U \times S \to \mathbb{A}^{1}\) be a morphism of varieties
over \(\mathbb{F}_{q}\), where \(S\) is a smooth, geometrically
connected curve, and \(U\) is a purely \(n\)-dimensional affine
variety with at worst local complete intersection singularities.
Suppose there is an open dense subset \(S^{\prime}\) of \(S\) such
that for any closed point \(s\) of \(S^{\prime}\), the following
holds:
\begin{itemize}[label={}]
\item If \(\kappa(s) = \mathbb{F}_{q^{m}}\), then the Newton polygon
(with respect to the valuation \(\operatorname{ord}_{q^{m}}\)) of
\[
\det(1-tF^{m}|\mathrm{H}^{n}(U_{\overline{\mathbb{F}}_{q}} \times \{\overline{s}\}; f^{\ast}\mathcal{L}_{\psi}))
\]
is on or above \(\Gamma\), where \(F\colon U \to U\) is the
\(q\)-power Frobenius substitution, and \(\overline{s}\) is a
\(\overline{\mathbb{F}}_{q}\)-valued geometric point of \(S\) lying
above \(s\).
\end{itemize}
Then the same property holds for all \(s\).
\end{theorem}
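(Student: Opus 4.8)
\emph{Setup and reduction.} The plan is to reduce the theorem to a generic-constancy statement for the Newton polygons of the fibrewise middle cohomology, and then to combine the Grothendieck specialisation theorem for Newton polygons with the arithmetic form of the specialisation principle of \S\ref{sec:specialization}. Every fibre of the projection $\pi\colon U\times S\to S$ is the fixed purely $n$-dimensional lci (local complete intersection) variety $U$, so the data is a family of Artin--Schreier pullbacks $K_{s}:=f_{s}^{*}\mathcal{L}_{\psi}$ (restrictions of $K:=f^{*}\mathcal{L}_{\psi}$) on $U$. Since $U$ is lci of pure dimension $n$, each $K_{s}[n]$ is perverse, and Artin's affine vanishing theorem gives $H^{i}(U_{\overline{\mathbb{F}}_{q}};K_{s})=0$ for $i>n$, so $H^{n}(U_{\overline{\mathbb{F}}_{q}};K_{s})$ is the top ordinary cohomology --- the ``middle'' cohomology of the statement. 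For a closed point $s$ with $\kappa(s)=\mathbb{F}_{q^{m}}$ write $\mathrm{NP}_{s}$ for the Newton polygon of $\det(1-tF^{m}\mid H^{n}(U_{\overline{\mathbb{F}}_{q}};K_{s}))$ with respect to $\operatorname{ord}_{q^{m}}$ (well defined and $\iota$-independent by integrality of Frobenius). I would first reduce to exhibiting a dense open $S_{0}\subseteq S$ and a polygon $\mathrm{NP}_{\mathrm{gen}}$ with $\mathrm{NP}_{s}=\mathrm{NP}_{\mathrm{gen}}$ for all closed $s\in S_{0}$ and $\mathrm{NP}_{s}\geq\mathrm{NP}_{\mathrm{gen}}$ for all closed $s\in S$: since $S'$ and $S_{0}$ are dense open, any closed $s\in S'\cap S_{0}$ satisfies $\mathrm{NP}_{\mathrm{gen}}=\mathrm{NP}_{s}\geq\Gamma$ by hypothesis, whence $\mathrm{NP}_{s}\geq\mathrm{NP}_{\mathrm{gen}}\geq\Gamma$ for every closed $s$, which is the conclusion.

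\emph{Generic constancy.} By Deligne's generic base change theorem there is a dense open $S_{0}\subseteq S$ over which $R\pi_{*}K$ commutes with base change, so that $\mathcal{H}^{n}:=R^{n}\pi_{*}K$ is lisse on $S_{0}$, with stalk $H^{n}(U_{\overline{\mathbb{F}}_{q}};K_{s})$ at a closed point $s\in S_{0}$ and $F^{m}$ acting as geometric Frobenius. The lisse sheaf $\mathcal{H}^{n}$ is of geometric origin on the smooth curve $S_{0}$, hence admits a companion overconvergent $F$-isocrystal with the same Frobenius characteristic polynomials (equivalently, one uses the $p$-adic/rigid realisation of exponential-sum cohomology in middle degree); applying the Grothendieck specialisation theorem for Newton polygons to it shows that $s\mapsto\mathrm{NP}_{s}$ is lower semicontinuous on $S_{0}$, and its minimal value $\mathrm{NP}_{\mathrm{gen}}$ is attained on a dense open, which I shrink $S_{0}$ to be.

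\emph{Boundary points.} It remains to show $\mathrm{NP}_{s_{0}}\geq\mathrm{NP}_{\mathrm{gen}}$ for a closed point $s_{0}\in S\setminus S_{0}$. I would pass to the trait $T=\operatorname{Spec}\mathcal{O}^{h}_{S,s_{0}}$, with closed point $t_{0}$ over $s_{0}$ and generic point $\eta$ lying over the generic point of $S$, so that the generic Newton polygon at $\eta$ is $\mathrm{NP}_{\mathrm{gen}}$; the point is to relate the special middle cohomology $H^{n}(U_{\overline{t_{0}}};K_{t_{0}})$ to the generic one. As $\pi$ is affine but not proper, this comparison is not formal, and I would obtain it by using the given closed embedding $U\hookrightarrow\mathbb{A}^{n}$ to compactify $\pi$ to a proper family, applying proper base change and the nearby/vanishing-cycles triangle there, and excising the boundary. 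Here the perversity of $K[n]$ --- which is exactly where the lci hypothesis is used --- together with Gabber's theorem makes $R\Psi_{\pi}K[n-1]$ and $R\Phi_{\pi}K[n-1]$ perverse on the affine variety $U_{\overline{t_{0}}}$, hence annihilates their cohomology above degree $0$ by Artin vanishing; this places the middle cohomology at the extreme end of the long exact sequences coming from excision and the vanishing-cycles triangle, pins down the specialisation morphism, and yields a Frobenius- and $\overline{\mathbb{Z}}_{\ell}$-lattice-compatible comparison of $H^{n}(U_{\overline{t_{0}}};K_{t_{0}})$ with $H^{n}(U_{\overline{\eta}};K_{\eta})$ over $T$. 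Feeding this into Grothendieck's specialisation theorem over the trait $T$ gives $\mathrm{NP}_{s_{0}}\geq\mathrm{NP}_{\eta}=\mathrm{NP}_{\mathrm{gen}}$. (This is the content of Variant~\ref{variant:newton}, the arithmetic form of the specialisation lemma of \S\ref{sec:specialization}, applied to our exponential sum.)

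\emph{Main obstacle.} The hard part is the comparison in the last step: since $\pi$ is not proper, $H^{n}(U_{\overline{t_{0}}};K_{t_{0}})$ is not the stalk of any pushforward sheaf, so neither proper base change nor the classical form of Grothendieck's theorem applies directly, and one must genuinely construct a Frobenius- and lattice-compatible specialisation morphism on the middle cohomology of a degenerating affine lci family. This construction, the technical heart, relies essentially on the perversity of $f^{*}\mathcal{L}_{\psi}[n]$ (hence the lci hypothesis) and on the affine embedding used to compactify.
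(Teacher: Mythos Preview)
Your overall architecture is sound, and you correctly identify Variant~\ref{variant:newton} as the key input.  However, your execution diverges from the paper's in ways that make the argument both heavier and less complete.

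First, a discrepancy in the statement: the theorem as printed uses $\mathrm{H}^{n}$, but the result it is declared equal to (Variant~\ref{variant:np-exp}) is stated and proved for $\mathrm{H}^{n}_{c}$.  You follow the printed $\mathrm{H}^{n}$ and therefore work with $R\pi_{*}$, which forces you into generic base change, compactification, and excision.  The paper instead works with $R\pi_{!}$, for which proper base change holds on the nose; this is why no compactification ever appears.

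Second, your ``generic constancy'' step via crystalline companions is unnecessary.  The paper does not need $\mathrm{NP}_{s}$ to be constant on any open set; Proposition~\ref{proposition:weak-grothendieck-specialization} (Deligne's weak form of Grothendieck specialisation) requires only a uniform lower bound $\Gamma$ for $\mathrm{NP}(\mathcal{E}_{b'})$ over the lisse locus, and that is exactly the hypothesis of the theorem.  Invoking companions here imports a deep theorem to prove something the hypothesis already hands you.

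Third, and most importantly, your boundary argument is not the paper's, and the sketch you give is incomplete.  The paper's proof of Variant~\ref{variant:newton} takes place entirely on the base curve: since $\mathcal{F}=f^{*}\mathcal{L}_{\psi}[n+1]$ is perverse on $U\times S$ and $\pi$ is affine, Artin vanishing forces $R\pi_{!}\mathcal{F}\in{}^{\mathrm{p}}\mathcal{D}^{\geq 0}_{S}$.  Applying perverse cohomology to the triangle $j_{!}j^{*}\to\mathrm{id}\to i_{*}i^{*}$ on $R\pi_{!}\mathcal{F}$ then yields, exactly as in the proof of Lemma~\ref{lemma:middle-estimate}, a Frobenius-equivariant injection
\[
\mathrm{H}^{n}_{c}(U_{s_{0}};K_{s_{0}})\ \hookrightarrow\ {}^{\mathrm{p}}\Psi_{t,1}(\mathcal{E}[1])\ \subset\ \mathcal{E}_{\overline{\eta}_{s_{0}}},
\]
where $\mathcal{E}=R^{n}\pi_{!}K$ restricted to the lisse locus.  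One then applies Proposition~\ref{proposition:weak-grothendieck-specialization} to $\mathcal{E}$ directly.  No compactification, no excision, no Gabber on the total space.  Your route---compactify $\pi$, run nearby/vanishing cycles upstairs, then excise the boundary---may be salvageable, but you have not explained how the excision interacts with $R\Psi$ to produce the required Frobenius-stable subquotient relation, and you assume a closed embedding $U\hookrightarrow\mathbb{A}^{n}$ that the statement does not provide.

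In short: drop companions, switch to $R\pi_{!}$ and $\mathrm{H}^{n}_{c}$, and run the perverse argument on the base curve rather than on a compactified total space.  That is the paper's proof, and it is both shorter and complete.
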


Applying this to \(U = \mathbb{A}^{n}\), and utilizing
Adolphson--Sperber's bound for nondegenerate polynomials along with the
Lefschetz-type theorem~\ref{lemma:gysin-perv}, we obtain the following
corollary regarding the Newton polygon for exponential sums over the
affine space, which does not appear to be known.

\begin{corollary}[= \ref{theorem:bombieri-improvement}]
Suppose
\(f\colon\mathbb{A}_{\mathbb{F}_{q}}^{n}\to\mathbb{A}_{\mathbb{F}_{q}}^{1}\)
is any polynomial of degree \(\leq d\).  Assume that \((q,d) = 1\).
Then for $0\leq j\leq n$, the Newton polygon of
\[
\det(1-tF|\mathrm{H}^{n+j}_{c}(\mathbb{A}_{\overline{\mathbb{F}}_{q}}^{n}; f^{\ast}\mathcal{L}_{\psi}))
\]
lies on or above the Newton polygon of
\[
\prod_{m=0}^{(n-j)(d-2)}(1 - q^{j+\frac{m+n-j}{d}}t)^{U_{m,n-j}},
\]
where \(U_{m,n}\) is the coefficient of \(x^{m}\) in the expansion of \((1+x+\cdots+x^{d-2})^{n}\).
\end{corollary}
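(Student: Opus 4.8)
The plan is to obtain the corollary by combining three tools already in place: the weak Lefschetz theorem for perverse sheaves (Lemma~\ref{lemma:gysin-perv}), the arithmetic specialization principle for Newton polygons (Variant~\ref{variant:newton}, of which Theorem~\ref{theorem:semicont-newton} is the concrete incarnation), and Adolphson--Sperber's ``Newton above Hodge'' theorem for nondegenerate exponential sums \cite{adolphson-sperber:exponential-sums-newton-polyhedra}. As a preliminary observation, $\mathcal{G}=f^{\ast}\mathcal{L}_{\psi}$ is lisse of rank one on $\mathbb{A}^{n}$, so $\mathcal{G}[n]$ is perverse; Artin vanishing then gives $\mathrm{H}^{i}_{c}(\mathbb{A}^{n}_{\overline{\mathbb{F}}_{q}};\mathcal{G})=0$ for $i<n$ and, for dimension reasons, also for $i>2n$, so the only relevant degrees are $n+j$ with $0\le j\le n$, as in the statement.

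The heart of the argument is a Lefschetz induction on $j$. Cutting $\mathbb{A}^{n}$ by $j$ successive generic hyperplanes yields a linear subspace $L\cong\mathbb{A}^{n-j}$ — defined a priori only over a finite extension $\mathbb{F}_{q^{e}}$, which is harmless since the Newton polygon of $\det(1-tF\mid\mathrm{H})$ with respect to $\operatorname{ord}_{q}$ coincides with that of $\det(1-tF^{e}\mid\mathrm{H})$ with respect to $\operatorname{ord}_{q^{e}}$ — on which $f$ restricts to a polynomial $g=f|_{L}$ of degree $\le d$. Applying Lemma~\ref{lemma:gysin-perv} to $\mathcal{G}[n]$ $j$ times produces a Frobenius-equivariant comparison between $\mathrm{H}^{n+j}_{c}(\mathbb{A}^{n}_{\overline{\mathbb{F}}_{q}};\mathcal{G})$ and the ``middle'' group $\mathrm{H}^{n-j}_{c}(L_{\overline{\mathbb{F}}_{q}};g^{\ast}\mathcal{L}_{\psi})$ twisted by $\overline{\mathbb{Q}}_{\ell}(-j)$; the twist records the fact that the $j$ hyperplane cuts raise the weight by $2j$, equivalently shift all Frobenius slopes by $j$, which is the outer ``$+j$'' in the bounding polynomial. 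What must be checked is that this is a comparison of \emph{Newton polygons} in the direction ``the polygon of $\mathrm{H}^{n+j}_{c}(\mathbb{A}^{n};\mathcal{G})$ lies on or above that of the twisted middle cohomology of the section'': the auxiliary terms appearing in the successive weak Lefschetz exact sequences are again $\mathrm{H}^{*}_{c}$ of perverse sheaves on affine varieties — hence have controlled slopes — and Deligne's weight bounds pin down the remaining ambiguity. Granting this, the corollary reduces to the following claim: for \emph{any} polynomial $g$ of degree $\le d$ in $m:=n-j$ variables with $(q,d)=1$, the Newton polygon of $\det(1-tF\mid\mathrm{H}^{m}_{c}(\mathbb{A}^{m}_{\overline{\mathbb{F}}_{q}};g^{\ast}\mathcal{L}_{\psi}))$ lies on or above that of $\prod_{\mu=0}^{m(d-2)}\bigl(1-q^{(\mu+m)/d}t\bigr)^{U_{\mu,m}}$.

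To prove this claim I would first treat a \emph{generic} $g$ of degree $d$: its Newton polytope at infinity is the simplex $d\Delta_{m}$ with $\Delta_{m}=\operatorname{conv}(0,e_{1},\dots,e_{m})$, and the hypothesis $(q,d)=1$ makes $g$ nondegenerate (already $x_{1}^{d}+\dots+x_{m}^{d}$ is). Adolphson--Sperber then gives that the cohomology is concentrated in degree $m$ and that its Frobenius Newton polygon lies on or above the Hodge polygon of $d\Delta_{m}$; the slope--multiplicity data of that Hodge polygon one computes — equivalently, from the Jacobian ring of a generic degree-$d$ form, which is a complete intersection with Poincar\'e series $(1+x+\dots+x^{d-2})^{m}$ — to be: the degree-$\mu$ part has multiplicity $U_{\mu,m}$ at slope $(\mu+m)/d$, the shift by $m$ being the weight of the top differential form. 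This settles the claim for generic $g$, with equality at both endpoints of the polygons. To descend to an arbitrary $g_{0}$ of degree $\le d$, I would apply Variant~\ref{variant:newton} to the pencil $g_{s}=g_{0}+s\,h$ over the curve $S=\mathbb{A}^{1}$, where $h$ is a fixed generic degree-$d$ polynomial: for $s$ outside a finite subset, $g_{s}$ is nondegenerate with Newton polytope $d\Delta_{m}$, so the bound holds on a dense open of $S$, and upper semicontinuity of the middle Newton polygon forces it at $s=0$. (Theorem~\ref{theorem:semicont-newton} is stated for ordinary cohomology $\mathrm{H}^{m}$; this is compatible with the compactly supported statement because Poincar\'e duality reflects the Newton polygon via $s\mapsto m-s$, under which the bounding polynomial is invariant — $(1+x+\dots+x^{d-2})^{m}$ being palindromic.) Twisting by $\overline{\mathbb{Q}}_{\ell}(-j)$ and feeding this back into the Lefschetz reduction completes the proof.

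The step I expect to be the genuine obstacle is the Lefschetz reduction: extracting from Lemma~\ref{lemma:gysin-perv} a statement about \emph{Newton polygons} rather than merely dimensions, with the inequality in the right direction, while keeping exact track of the Tate twist $(-j)$. Passing to a subobject or quotient of a Frobenius module does not in general preserve ``lies on or above'' a fixed polygon; what rescues the argument is that the Newton and Hodge polygons of the section's middle cohomology share both endpoints, so that the comparison can only degrade in a controlled fashion along the weak Lefschetz triangles, in conjunction with Deligne's weight estimates on the auxiliary cohomology groups. The remaining steps are comparatively mechanical: Adolphson--Sperber is quoted directly, the identification of the simplex Hodge polygon with the displayed product is a finite computation, and the interpolating pencil is standard.
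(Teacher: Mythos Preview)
Your approach is essentially identical to the paper's: reduce $\mathrm{H}^{n+j}_{c}$ via the Gysin surjection of Lemma~\ref{lemma:gysin-perv} to the middle cohomology of a generic linear slice $\mathbb{A}^{n-j}$ (picking up the Tate twist $(-j)$), invoke Adolphson--Sperber for the Hodge polygon of a generic degree-$d$ polynomial on $\mathbb{A}^{n-j}$, and pass to arbitrary $g$ by the Newton-polygon specialization (the paper uses Variant~\ref{variant:np-exp}, which is exactly your pencil argument).

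The one point worth correcting is that the step you single out as the ``genuine obstacle'' is in fact automatic. The iterated Gysin map exhibits $\mathrm{H}^{n+j}_{c}(\mathbb{A}^{n};f^{\ast}\mathcal{L}_{\psi})$ as a Frobenius-equivariant \emph{quotient} of $\mathrm{H}^{n-j}_{c}(\mathbb{A}^{n-j};g^{\ast}\mathcal{L}_{\psi})(-j)$, so its Frobenius eigenvalues are a sub-multiset of those of the source. But the Newton polygon built from any sub-multiset of a given multiset of slopes lies \emph{on or above} the polygon of the full multiset over the common horizontal range (if $s_{1}\le\cdots\le s_{N}$ and $i_{1}<\cdots<i_{k}$ then $i_{m}\ge m$ forces $\sum_{m\le l}s_{i_{m}}\ge\sum_{m\le l}s_{m}$). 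Hence once the source polygon is known to lie above $\Gamma$, the quotient's does too---no endpoint matching, weight estimates, or analysis of the auxiliary terms in the Lefschetz triangle is required.
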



The hypothesis that \((d,q) = 1\) is a limitation of the
specialization method, as we need to know the information regarding
the Newton polygon of a generic member.  However, we believe that the
corollary should still hold true even when \((d,q) \neq 1\).

\subsection*{Organization}
The paper is organized as follows.  In \S\ref{sec:preliminaries}, we
fix some notation and recall some basics in sheaf theory.  In
\S\ref{sec:specialization}, we prove the specialization lemma
mentioned above.  \S\ref{sec:betti} is devoted to the proof of
Theorem~\ref{theorem:main} and related results.  In
\S\ref{sec:bounds-exp-sums} we study exponential sums from a broader
context than toric ones.  Specifically, we shall explain how to make
an exact computation for ``well-behaved'' functions.
Theorem~\ref{theorem:as-improvement} will then be deduced from a
theorem of Khovanskii
\cite{khovanskii:newton-polyhedra-and-genus-of-complete-intersections}.
In this section, we also use the corollary \eqref{eq:more-like-as} of
Theorem~\ref{theorem:as-improvement} to prove
Theorem~\ref{theorem:order}.

\subsection*{Acknowledgment}

We extend our sincere gratitude to Haoyu Hu for the stimulating
conversations and valuable exchange of ideas during our visits to
Nanjing in Spring 2024 and Shenzhen in Winter 2024. The second author
would like to thank Yongqiang Liu and L.~Maxim for the enlightening
conversations during his visit to Hefei in Fall 2024.

\section{Notation, conventions, and recapitulations}
\label{sec:preliminaries}
In the following, \(k\) will be a field, and \(\ell\) will be a prime
number.  When working over a base scheme \(S\), we shall always
implicitly assume that \(\ell\) is different from the characteristic
of the residue field of any point of \(S\).

\subsection{Perverse sheaves}

We shall use the language of perverse sheaves throughout.  In
\cite{beilinson-bernstein-deligne:perverse-sheaves}, a selfdual
perverse t-structure is constructed on the constructible derived
category \(D^{b}_{c}(X;\overline{\mathbb{Q}}_{\ell})\) of
\(\overline{\mathbb{Q}}_{\ell}\)-sheaves on a scheme \(X\) over a
finite field or an algebraically closed field.  We denote this
t-structure by
\(({}^{\mathrm{p}}\mathcal{D}_{X}^{\leq 0},
{}^{\mathrm{p}}\mathcal{D}^{\geq 0}_{X})\), and the perverse
cohomology of an object
\(\mathcal{F} \in D^{b}_{c}(X;\overline{\mathbb{Q}}_{\ell})\) is
denoted by \(\pH^{\ast}(\mathcal{F})\).  Objects of
\({}^{\mathrm{p}}\mathcal{D}_{X}^{\leq 0}\) are said to satisfy the
support condition, while objects of
\({}^{\mathrm{p}}\mathcal{D}^{\geq 0}_{X}\) are said to satisfy the
cosupport condition.

We will freely use basic properties of perverse sheaves documented in
the standard references such as
\cite{beilinson-bernstein-deligne:perverse-sheaves,
  kiehl-weissauer:weil-conjecture-perverse-sheaf-fourier-transform}

Here is a simple lemma that we will need.

\begin{lemma}%
\label{lemma:cosupp}
Let \(Y\) be an algebraic variety over an algebraically closed field.
Let \(\mathcal{F}\) be an object of
\({}^{\mathrm{p}}\mathcal{D}_Y^{\geq 0}\).  Let \(X\) be a closed
subvariety of \(Y\), which is locally defined by the vanishing of
\(r\) regular functions.  Let \(i\colon X \to Y\) be the closed
immersion.  Then \(i^{\ast}[-r]\) is left t-exact, i.e.,
\(i^{\ast}[-r]\left( {}^{\mathrm{p}}\mathcal{D}^{\geq0}_{Y} \right) \subset {}^{\mathrm{p}}\mathcal{D}_{X}^{\geq 0}\).
\end{lemma}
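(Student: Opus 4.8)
The plan is to reduce to the local situation and then to a single regular function. Since the assertion $i^{\ast}[-r]\mathcal{F}\in{}^{\mathrm{p}}\mathcal{D}^{\geq 0}_X$ can be checked locally on $X$ (the perverse t-structure is local for the Zariski, indeed étale, topology), I may shrink $Y$ so that $X$ is cut out by $r$ global regular functions $g_1,\dots,g_r\in\mathcal{O}(Y)$. Writing $X = X_r \subset X_{r-1} \subset \cdots \subset X_0 = Y$, where $X_j$ is the closed subscheme defined by $g_1,\dots,g_j$ and $i_j\colon X_j\hookrightarrow X_{j-1}$ is the closed immersion cutting out the vanishing locus of (the restriction of) $g_j$, I have $i^{\ast} = i_r^{\ast}\circ\cdots\circ i_1^{\ast}$. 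Hence it suffices to treat $r=1$: if $h\colon D\hookrightarrow Y$ is a closed immersion with $D$ defined by one equation, then $h^{\ast}[-1]$ is left t-exact. The general case then follows by composing $r$ such functors, since $[-r] = [-1]\circ\cdots\circ[-1]$ and left t-exact functors compose.

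**The codimension-one case.** So let $h\colon D\hookrightarrow Y$ be the inclusion of a principal closed subscheme, $g\in\mathcal{O}(Y)$ a defining equation, and let $j\colon W\hookrightarrow Y$ be the open complement, $W = Y\setminus D$. For $\mathcal{F}\in{}^{\mathrm{p}}\mathcal{D}^{\geq 0}_Y$ I want $h^{\ast}\mathcal{F}\in{}^{\mathrm{p}}\mathcal{D}^{\geq -1}_D$. The key input is the standard distinguished triangle relating $h_{\ast}h^{\ast}$, the identity, and $j_{!}j^{\ast}$ (on $D^b_c(Y)$, after applying $h_*$ it reads $j_{!}j^{\ast}\mathcal{F}\to\mathcal{F}\to h_{\ast}h^{\ast}\mathcal{F}\xrightarrow{+1}$), equivalently
\[
h_{\ast}h^{\ast}\mathcal{F}[-1]\longrightarrow j_{!}j^{\ast}\mathcal{F}\longrightarrow\mathcal{F}\xrightarrow{+1}.
\]
Now $j$ is an affine open immersion (the complement of a principal divisor), so by Artin's theorem $j_{!}$ is right t-exact, hence $j^{\ast}$ is left t-exact (it is always t-exact up to a shift, but for affine $j$ one has the sharper statement that $j_{!}$ preserves ${}^{\mathrm{p}}\mathcal{D}^{\leq 0}$, dually $j_{!}$ maps... ) — more precisely: for $j$ affine, $j_{!}$ is right t-exact and $j_{\ast}$ is left t-exact, and in particular $j_{!}j^{\ast}\mathcal{F}\in{}^{\mathrm{p}}\mathcal{D}^{\geq 0}_Y$ whenever $\mathcal{F}\in{}^{\mathrm{p}}\mathcal{D}^{\geq 0}_Y$ (since $j^{\ast}$ is t-exact on the open $W$ and $j_{!}$ on an affine immersion is left t-exact as well by duality with the right t-exactness of $j_{!}=j_{\star}$... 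I will use: $j$ affine $\Rightarrow j_{!}\bigl({}^{\mathrm{p}}\mathcal{D}^{\geq 0}_W\bigr)\subset{}^{\mathrm{p}}\mathcal{D}^{\geq 0}_Y$, which follows from Artin vanishing applied to the Verdier dual). Granting that $j_{!}j^{\ast}\mathcal{F}$ and $\mathcal{F}$ both lie in ${}^{\mathrm{p}}\mathcal{D}^{\geq 0}_Y$, the triangle above shows $h_{\ast}h^{\ast}\mathcal{F}[-1]\in{}^{\mathrm{p}}\mathcal{D}^{\geq -1}_Y$, i.e.\ $h_{\ast}h^{\ast}\mathcal{F}\in{}^{\mathrm{p}}\mathcal{D}^{\geq 0}_Y$, so $h_{\ast}h^{\ast}\mathcal{F}[-1]\in{}^{\mathrm{p}}\mathcal{D}^{\geq -1}_Y$ and therefore $h^{\ast}\mathcal{F}[-1]\in{}^{\mathrm{p}}\mathcal{D}^{\geq -1}_D$ because $h_{\ast}$ is t-exact (a closed immersion) and reflects the cosupport condition. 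That gives exactly $h^{\ast}[-1]$ left t-exact.

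**Assembling the general statement.** Composing the codimension-one result $r$ times along the flag $X = X_r\subset\cdots\subset X_0 = Y$ yields $i^{\ast}[-r] = (i_r^{\ast}[-1])\circ\cdots\circ(i_1^{\ast}[-1])$ as a composition of left t-exact functors, hence left t-exact; this is the claim. One technical point to be careful about: at the $j$-th stage the equation $g_j$ restricted to $X_{j-1}$ may become a zero divisor or even vanish identically on some components, but this causes no trouble — if $g_j$ vanishes on a component then that closed immersion is an isomorphism on that component (trivially t-exact), and in general the triangle $h_{j\ast}h_j^{\ast}(-)[-1]\to j_{j!}j_j^{\ast}(-)\to(-)$ and the affineness of the open complement $X_{j-1}\setminus V(g_j)\hookrightarrow X_{j-1}$ hold regardless. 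The only place real care is needed — and what I regard as the main (though standard) obstacle — is the precise left t-exactness statement for $j_{!}$ along an affine open immersion: one should cite the dual form of Artin's affine vanishing theorem (as in \cite{beilinson-bernstein-deligne:perverse-sheaves}, 4.1.1–4.1.3, or \cite{kiehl-weissauer:weil-conjecture-perverse-sheaf-fourier-transform}), namely that for $j$ affine, $\mathbb{D}_Y j_{!} = j_{\ast}\mathbb{D}_W$ together with $j_{\ast}$ being left t-exact gives $j_{!}$ left t-exact as well, so that $j_{!}j^{\ast}$ preserves ${}^{\mathrm{p}}\mathcal{D}^{\geq 0}$. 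Everything else is formal manipulation of distinguished triangles and the elementary t-exactness properties of $h_{\ast}$ for a closed immersion.
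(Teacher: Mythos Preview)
Your approach is essentially identical to the paper's: reduce to $r=1$, use the triangle $i_{\ast}i^{\ast}\mathcal{F}[-1]\to j_{!}j^{\ast}\mathcal{F}\to\mathcal{F}\xrightarrow{+1}$, observe that both right-hand terms lie in ${}^{\mathrm{p}}\mathcal{D}^{\geq 0}_Y$ (using that $j$ is affine), and conclude via t-exactness of $i_{\ast}$.

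There is one slip in your write-up: from $j_{!}j^{\ast}\mathcal{F},\,\mathcal{F}\in{}^{\mathrm{p}}\mathcal{D}^{\geq 0}_Y$ the triangle gives $i_{\ast}i^{\ast}\mathcal{F}[-1]\in{}^{\mathrm{p}}\mathcal{D}^{\geq 0}_Y$ (not merely $\geq -1$), and it is this sharper bound you need to conclude that $i^{\ast}[-1]$ is left t-exact; as written, the bound $\geq -1$ you state only yields that $i^{\ast}$ itself is left t-exact, which is not what is claimed. Also, your discussion of why $j_{!}$ preserves ${}^{\mathrm{p}}\mathcal{D}^{\geq 0}$ is unnecessarily hesitant: the paper simply cites \cite[Corollaire~4.1.3]{beilinson-bernstein-deligne:perverse-sheaves}, which says that for $j$ affine and quasi-finite (as any open immersion with affine source and target is), $j_{!}$ is t-exact.
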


\begin{proof}
By induction we may assume \(r = 1\).  Let \(j\colon U \to Y\) be the
complement of \(X\) in \(Y\).  Let \(\mathcal{F}\) be an object of
\({}^{\mathrm{p}}\mathcal{D}^{\geq0}_{Y}\).  Consider the
distinguished triangle
\begin{equation*}
\mathcal{F}[-1] \to i_{\ast}i^{\ast}\mathcal{F}[-1] \to j_{!}j^{\ast}\mathcal{F} \xrightarrow{+1}.
\end{equation*}
Since \(j\) is both quasi-finite and affine, \(j_{!}\) is t-exact.
Since \(j^{\ast}\) is also t-exact, we find
\(j_{!}j^{\ast}\mathcal{F}\) still satisfies the cosupport condition.
Since \(\mathcal{F}[-1] \in {}^{\mathrm{p}}\mathcal{D}^{\geq1}_{Y}\)
trivially satisfies the cosupport condition, we conclude that
\(i_{\ast}i^{\ast}\mathcal{F}[-1]\) satisfies the cosupport condition
as well.  Since \(i\) is a closed immersion, \(i_{\ast}\) is fully
faithful and t-exact, it follows that \(i^{\ast}\mathcal{F}[-1]\)
satisfies the cosupport condition.  This completes the proof.
\end{proof}




\subsection{Newton polygon}

Now we recall some basics about Newton polygons associated with
sheaves on varieties.  Let \(k\) be an algebraic closure of
\(\mathbb{F}_{p}\), and fix an isomorphism \(\iota\) between
\(\overline{\mathbb{Q}}_{\ell}\) and \(\overline{\mathbb{Q}}_{p}\).

Suppose \(B_{0}\) is a scheme over \(\mathbb{F}_{q}\), and let
\(B = B_{0} \otimes_{\mathbb{F}_{q}} k\).  Let \(\mathcal{F}_{0}\) be
a Weil \(\overline{\mathbb{Q}}_{\ell}\)-sheaf on \(B_{0}\)
(cf.~\cite[Définition~1.1.10]{deligne:weil-2}).  Each
\(\mathbb{F}_{q^{m}}\)-valued point
\[
x_{0} \colon \operatorname{Spec}\mathbb{F}_{q^{m}} \to B_{0}
\]
defines a geometric point
\begin{equation}
\label{eq:factorization-geometric-point}
x \colon \operatorname{Spec}k \to \operatorname{Spec}\mathbb{F}_{q^{m}} \xrightarrow{x_{0}} B_{0}
\end{equation}
and a geometric Frobenius operation
\[
F_{x_{0}} \colon \mathcal{F}_{x} \xrightarrow{\sim} \mathcal{F}_{x}.
\]

\begin{definition}
The \emph{Newton polygon of \(\mathcal{F}\) at the closed point \(x\) of \(B\)} is defined as
\[
\mathrm{NP}(\mathcal{F}_{x}) = \text{Newton polygon of } \det(1-tF_{x_{0}}|\mathcal{F}_{x}) \text{ with respect to } \mathrm{ord}_{q^{m}}.
\]
\end{definition}

As the notation indicates, this is manifestly a geometric notion,
independent of the factorization
\eqref{eq:factorization-geometric-point}.  Indeed, each closed point
\(x\) of \(B\) can be regarded as a geometric point of \(B_{0}\).  If
\(x\) factors through an \(\mathbb{F}_{q^{m}}\)-valued point, it also
factors through many \(\mathbb{F}_{q^{ms}}\)-valued points (for any
\(s \geq 1\)):
\[
\begin{tikzcd}
x\colon \operatorname{Spec}k \ar{r} &
\operatorname{Spec}\mathbb{F}_{q^{ms}} \ar{r} \ar[bend left=30,swap]{rr}{y_{0}} &
\operatorname{Spec}\mathbb{F}_{q^{m}} \ar{r}{x_{0}} & B_{0}
\end{tikzcd}
\]
Thus, the geometric Frobenius of \(y_{0}\) satisfies
\(F_{y_{0}} = F_{x_{0}}^{s}\).  It follows that the Newton polygon of
\(\det(1 - t F_{x_{0}} |\mathcal{F}_{x})\) with respect to
\(\mathrm{ord}_{q^{m}}\) is the same as that of
\(\det(1-t F_{y_{0}}|\mathcal{F}_{x})\) with respect to
\(\mathrm{ord}_{q^{ms}}\).

One extreme case is when \(B_0 = \operatorname{Spec}\mathbb{F}_q\), and
\(\mathcal{F} = \mathrm{H}^i(X;\mathcal{E})\) for some scheme \(X_0\) over \(\mathbb{F}_q\), and some constructible \(\overline{\mathbb{Q}}_{\ell}\)-sheaf \(\mathcal{E}\) on \(X_0\).
In this case, we shall simply write \(\mathrm{NP}(\mathrm{H}^i(X;\mathcal{E}))\) instead.

Let \(B\) be a smooth and connected curve, and let \(B^{\prime}\) be
the Zariski open dense subset of \(B\) over which \(\mathcal{F}\)
restricts to a local system. Essentially by class field theory,
\(\det \mathcal{F}|_{B^{\prime}}\) has a constant Newton polygon on
\(B^{\prime}\) (cf.~\cite[(1.3.3)]{deligne:weil-2}). Hence, for
\(b^{\prime} \in B^{\prime}\), \(\mathrm{NP}(\mathcal{F}_{b^{\prime}})\)
all have the same starting point and endpoint.

For \(b \in B \setminus B^{\prime}\), let \(\overline{\eta}_{b}\) be a
geometric generic point of the strict localization of \(B\) at \(b\).
While the geometric Frobenius action on
\(\mathcal{F}_{\overline{\eta}_{b}}\) depends on some choice, its
characteristic polynomial is well-defined
(\cite[Lemme~(1.7.4)]{deligne:weil-2}). Therefore, we can also define
the Newton polygon \(\mathrm{NP}(\mathcal{F}_{\overline{\eta}_{b}})\)
for \(\overline{\eta}_{b}\). We have the following \(\ell\)-adic
version of Grothendieck's specialization theorem due to Deligne.

\begin{proposition}[{\cite[Corollaire~(1.10.7)]{deligne:weil-2}}]
\label{proposition:weak-grothendieck-specialization}
Let the notation be as above.  Suppose \(\Gamma\) is a convex polygon
such that for any closed point \(b^{\prime} \in B^{\prime}\), we have
\[
\mathrm{NP}(\mathcal{F}_{b^{\prime}}) \text{ lies on or above } \Gamma.
\]
Then for any \(b \in B \setminus B^{\prime}\), we have
\[
\mathrm{NP}(\mathcal{F}_{\overline{\eta}_{b}}) \text{ lies on or above } \Gamma.
\]
Moreover, if \(\mathrm{NP}(\mathcal{F}_{b^{\prime}})\) and \(\Gamma\)
have the same starting and endpoint, then
\(\mathrm{NP}(\mathcal{F}_{\overline{\eta}_{b}})\) and \(\Gamma\) also
have the same starting and endpoint.
\end{proposition}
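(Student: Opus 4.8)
The plan is to reduce the assertion first to estimates on the coefficients of a characteristic polynomial, then---by a finite base change---to the case of unipotent local monodromy, and finally to a weight argument on the curve. Begin by recalling that $\det(1-tF|\mathcal{F}_{\overline{\eta}_b})$ is well-defined by \cite[Lemme~(1.7.4)]{deligne:weil-2}, and that, $\Gamma$ being convex, a polynomial $1+a_{1}t+\cdots+a_{r}t^{r}$ with $a_{r}\neq 0$ has Newton polygon (for a fixed valuation) on or above $\Gamma$ if and only if $\mathrm{ord}(a_{i})\geq\Gamma(i)$ for every $i$, and has the same endpoints as $\Gamma$ precisely when moreover $\mathrm{ord}(a_{r})=\Gamma(r)$. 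Thus the first assertion is a family of estimates on the coefficients of $\det(1-tF|\mathcal{F}_{\overline{\eta}_b})$, while the ``moreover'' part concerns only the top coefficient $\pm\det(F|\mathcal{F})$; I would settle the latter at once by applying class field theory to the rank-one Weil sheaf $\det\mathcal{F}$, which by \cite[(1.3.3)]{deligne:weil-2} is a finite-order character times a power of the norm, so that its single-slope Newton polygon takes the same value at $b$ as at every good point, giving the equality of endpoints.

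For the first assertion, observe that the Newton polygon of $\mathcal{F}$ at a point---a closed point, or the generic point of a strict henselization---is unchanged upon replacing the point by one lying over it along a connected finite cover $\pi\colon B_{1}\to B_{0}$: there the geometric Frobenius gets replaced by a power $F^{f}$ of itself ($f$ the residue degree) while the normalizing valuation is rescaled by $f$, so the multiset of slopes is preserved; and the hypothesis, equivalent to ``the Newton polygon lies on or above $\Gamma$ at all but finitely many closed points,'' persists for $\pi^{\ast}\mathcal{F}$. Choosing $\pi$, via Grothendieck's quasi-unipotence theorem, so that $\pi^{\ast}\mathcal{F}$ has unipotent local monodromy over $b$, I reduce to that case. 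If the monodromy even becomes trivial there, $\pi^{\ast}\mathcal{F}$ is lisse near the point over $b$, and one concludes from the fact that the locus of points of a curve where the Newton polygon of a lisse sheaf lies on or above $\Gamma$ is closed---it contains cofinitely many, hence all, closed points. So I may assume $\mathcal{F}$ has unipotent local monodromy at $b$ with nonzero nilpotent logarithm $N$.

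Set $V=\mathcal{F}_{\overline{\eta}_b}$ and let $M_{\bullet}V$ be the monodromy weight filtration of $N$, which the geometric Frobenius $F$ preserves. Decomposing each $\mathrm{gr}^{M}_{k}V$ into Lefschetz strings over the primitive parts $P_{j}=\ker\!\bigl(N^{j+1}\colon\mathrm{gr}^{M}_{j}V\to\mathrm{gr}^{M}_{-j-2}V\bigr)$ exhibits $V$, as a virtual Frobenius module, as $\bigoplus_{j\geq 0}\bigoplus_{l=0}^{j}N^{l}P_{j}$, with $F$ acting on $N^{l}P_{j}$ by the eigenvalues of $F|P_{j}$ scaled by $q^{-l\deg b}$; hence $\det(1-tF|V)$ is an explicit product of Tate-twisted copies of the $\det(1-tF|P_{j})$, and the problem becomes that of bounding $\mathrm{ord}_{q^{\deg b}}$ of the Frobenius eigenvalues on the $P_{j}$. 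The crux is to do this through weights. After the standard reduction to $\mathcal{F}$ pure---the Newton polygon at any point being built from those of the pure subquotients of the weight filtration of \cite{deligne:weil-2}---I would combine: (i) the local weight--monodromy property at $b$, a consequence of Weil~II once the monodromy is unipotent, which makes each $P_{j}$ pure of a prescribed weight; (ii) the integrality of Frobenius eigenvalues \cite[XXI, Appendice]{sga7-2}, which with (i) confines $\mathrm{ord}_{q^{\deg b}}$ of those eigenvalues to a prescribed interval; and (iii) the fact that, up to the above Tate twists, the Frobenius modules $P_{j}$ appear as subquotients of the cohomology of the curve with coefficients in $\mathcal{F}$ and of its higher direct images at the bad points---for instance $P_{j}$ sits, up to a twist, inside $(R^{1}j_{\ast}\mathcal{F})_{\bar b}$---whose eigenvalues are controlled, via the Grothendieck--Lefschetz trace formula, by the local factors $\det(1-t^{\deg b'}F_{b'}|\mathcal{F}_{b'})$ at the closed points, which is where the hypothesis enters. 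Propagating these valuation bounds back along the string decomposition yields $\mathrm{NP}(\mathcal{F}_{\overline{\eta}_b})\geq\Gamma$.

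The hard part is step~(iii) together with the weight input---and, similarly, the closedness statement used in the second paragraph---which is why this is a theorem of Deligne rather than a formal consequence of the preceding specialization results: a Newton polygon records $p$-adic valuations, and ``$\mathrm{ord}_{p}\geq c$'' is \emph{not} a closed condition in the $\ell$-adic topology, so no density-of-Frobenii or continuity argument can reach the conclusion; one genuinely needs the archimedean information carried by purity. The remaining points---verifying that the reduction to the pure case and the Lefschetz-string bookkeeping are compatible with forming Newton polygons, and determining the exact powers of $q^{\deg b}$ in the string decomposition---are routine but require care.
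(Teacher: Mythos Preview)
The paper itself gives no self-contained proof here: it cites Deligne's argument (for the case where $\mathrm{NP}(\mathcal{F}_{b'})$ is constant on $B'$) and remarks that, on inspection, only inequalities are used, so a uniform lower bound $\Gamma$ suffices. Your proposal, by contrast, attempts to reconstruct the argument from scratch.

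Your reductions---the endpoint assertion via $\det\mathcal{F}$ and \cite[(1.3.3)]{deligne:weil-2}, and the passage to unipotent local monodromy---are correct and match the standard setup. The difficulty is in your core steps (i)--(iii). Purity and weight--monodromy (step~(i)) combined with integrality (step~(ii)) yield, for a Frobenius eigenvalue $\lambda$ on a piece of weight $w+j$, only $0\le\mathrm{ord}_{q^{\deg b}}(\lambda)\le w+j$: a bound in terms of the \emph{weight} of $\mathcal{F}$, not in terms of $\Gamma$. The hypothesis about $\Gamma$ has to enter through step~(iii), but there you appeal to the trace formula, which relates \emph{sums} of eigenvalues to local data and does not by itself bound the $p$-adic valuations of individual eigenvalues---a point you yourself make immediately afterward. (Separately, your handling of the trivial-monodromy case assumes that $\{b:\mathrm{NP}(\mathcal{F}_b)\ge\Gamma\}$ is closed for a lisse sheaf; that is essentially the statement being proved.) Finally, note that (1.10.7) lies in \S1 of Weil~II, before the purity theorem, so Deligne's own argument cannot rest on weight--monodromy.

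Deligne's route is more elementary and avoids purity altogether: for each vertex $(i,\Gamma(i))$ of $\Gamma$, replace $\mathcal{F}$ by $\Lambda^{i}\mathcal{F}$ twisted by $-\Gamma(i)$, so that ``$\mathrm{NP}\ge\Gamma$ at the $i$-th vertex'' becomes ``all Frobenius eigenvalues are $p$-integral''; then prove directly that $p$-integrality at all closed points of $B'$ forces $p$-integrality at $\overline{\eta}_b$. It is precisely this last step that the paper has in mind when it says the argument ``only needs a collection of inequalities.''
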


\begin{proof}
While Deligne's proof assumes that
\(\mathrm{NP}(\mathcal{F}_{b^{\prime}})\) is constant, a careful
examination of the argument reveals that all we need is a collection
of inequalities.  A uniform lower bound for these
\(\mathrm{NP}(\mathcal{F}_{b^{\prime}})\) is sufficient to carry out
the proof.
\end{proof}

The existence of crystalline companions for \(\ell\)-adic local
systems implies that there is an open subset of \(B^{\prime}\) on
which the Newton polygon of \(\mathcal{F}_{b^{\prime}}\) is constant.
For our applications, the weaker form stated above suffices.

\subsection{Weak Lefschetz theorems}
In this subsection, we record some Lefschetz-type theorems that will
be used later.  We fix an algebraically closed ground field \(k\).

\begin{lemma}
\label{lemma:weak-lefschetz}
Let \(Y\) be a projective variety.  Assume that \(\mathcal{F}\) is an
object of \({}^{\mathrm{p}}\mathcal{D}^{\geq 0}_Y\).  Let \(X\) be the
intersection of \(r\) ample Cartier divisors on \(Y\).  Then the map
\[
\mathrm{H}^{j-r}(Y; \mathcal{F}) \to \mathrm{H}^{j-r}(X; \mathcal{F})
\]
is bijective for \(j < 0\) and injective for \(j = 0\).
\end{lemma}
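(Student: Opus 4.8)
The plan is to relate $\mathrm{H}^{\bullet}(Y;\mathcal{F})$ to $\mathrm{H}^{\bullet}(X;\mathcal{F})$ through the open complement $U \coloneqq Y \setminus X$. Writing $j\colon U \hookrightarrow Y$ for the open immersion and $i\colon X \hookrightarrow Y$ for the closed one, I would start from the distinguished triangle $j_{!}j^{\ast}\mathcal{F} \to \mathcal{F} \to i_{\ast}i^{\ast}\mathcal{F} \xrightarrow{+1}$, apply $R\Gamma(Y,-)$, and use that $Y$ is proper, so that $R\Gamma(Y; j_{!}j^{\ast}\mathcal{F}) = R\Gamma_{c}(U; \mathcal{F}|_{U})$. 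This yields a long exact sequence
\[
\cdots \to \mathrm{H}^{k}_{c}(U;\mathcal{F}|_{U}) \to \mathrm{H}^{k}(Y;\mathcal{F}) \to \mathrm{H}^{k}(X;\mathcal{F}) \to \mathrm{H}^{k+1}_{c}(U;\mathcal{F}|_{U}) \to \cdots.
\]
Hence it is enough to show that $\mathrm{H}^{k}_{c}(U;\mathcal{F}|_{U}) = 0$ for all $k \leq -r$: then for $k \leq -r-1$ the two neighbouring terms vanish and the restriction map is bijective, whereas for $k = -r$ only the left neighbour vanishes and the map is injective. Setting $k = j-r$, this is exactly the assertion.

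So the crux is the vanishing $\mathrm{H}^{k}_{c}(U;\mathcal{F}|_{U}) = 0$ for $k \leq -r$. Since $X = H_{1}\cap\cdots\cap H_{r}$ with each $H_{m}$ an (effective) ample Cartier divisor, $U = \bigcup_{m=1}^{r}(Y \setminus H_{m})$ is a union of $r$ affine opens, because the complement of an effective ample divisor on a projective variety is affine; and, $Y$ being separated, every finite intersection of the $Y \setminus H_{m}$ is again affine. On any affine variety $W$, Artin's vanishing theorem in its form for perverse sheaves gives $\mathrm{H}^{i}(W;\mathcal{G}) = 0$ for $i > 0$ whenever $\mathcal{G}\in {}^{\mathrm{p}}\mathcal{D}^{\leq 0}_{W}$; dualizing by Verdier duality, $\mathrm{H}^{i}_{c}(W;\mathcal{G}) = 0$ for $i < 0$ whenever $\mathcal{G}\in {}^{\mathrm{p}}\mathcal{D}^{\geq 0}_{W}$. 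As restriction to an open subscheme is t-exact, $\mathcal{F}|_{W}$ satisfies the cosupport condition for every open $W \subseteq U$. Feeding these vanishings into the Mayer--Vietoris sequences for compactly supported cohomology and inducting on the number of affines in the cover, one obtains that a union of $s$ affine opens kills $\mathrm{H}^{k}_{c}$ in all degrees $k \leq -s$ for any object satisfying the cosupport condition; with $s = r$ and the object $\mathcal{F}|_{U}$ this is precisely what is needed.

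An essentially equivalent route is induction on $r$ using Lemma~\ref{lemma:cosupp}. The case $r = 1$ is the argument above with $U$ affine. For the inductive step, write $X = X' \cap H_{r}$ with $X' = H_{1}\cap\cdots\cap H_{r-1}$; since $X'$ is cut out in $Y$ locally by $r-1$ equations, Lemma~\ref{lemma:cosupp} gives $(\mathcal{F}|_{X'})[-(r-1)] \in {}^{\mathrm{p}}\mathcal{D}^{\geq 0}_{X'}$, while $X'$ is projective and $H_{r}|_{X'}$ is an ample Cartier divisor on it. Applying the $r = 1$ case to this data, and then composing with the induction hypothesis for $X'$ after absorbing the shift $[-(r-1)]$, recovers the full statement; one only has to check that the index ranges match up.

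I do not anticipate a genuine difficulty here. Once the localization triangle and properness of $Y$, Artin's affine vanishing theorem for perverse sheaves together with its Verdier dual, and the standard facts that complements of ample divisors and finite intersections of affine opens are affine are all in hand, the argument is formal. The one thing that needs real care is the bookkeeping of cohomological degrees---pinning down that a union of $r$ affine opens annihilates compactly supported cohomology of objects in ${}^{\mathrm{p}}\mathcal{D}^{\geq 0}$ in degrees $\leq -r$, and, in the inductive variant, tracking the shift $[-(r-1)]$ so that the advertised ranges $j < 0$ and $j = 0$ come out exactly right.
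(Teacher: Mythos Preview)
Your proposal is correct, and the second route you sketch---induction on $r$ using Lemma~\ref{lemma:cosupp} to shift $\mathcal{F}|_{X'}$ into ${}^{\mathrm{p}}\mathcal{D}^{\geq 0}_{X'}$ and then applying the $r=1$ case on $X'$---is exactly the argument the paper gives. The paper's proof is terse (it only spells out the step from $Y' = D_{2}\cap\cdots\cap D_{r}$ to $X$ and leaves the composition with the inductive hypothesis implicit), but once unpacked it is precisely your inductive variant; your first route via Mayer--Vietoris on the affine cover of $U$ is a harmless alternative packaging of the same vanishing.
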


\begin{proof}
We use induction on \(r\).  When \(r=0\), there is nothing to prove.
For \(r > 1\), let \(X = D_1 \cap \cdots \cap D_r\), with \(D_j\)
ample.  Let \(Y^{\prime}\) be the intersection
\(D_2 \cap \cdots \cap D_r\).  By Lemma~\ref{lemma:cosupp},
\(\mathcal{F}|_{Y^{\prime}}[-r+1]\) satisfies the cosupport condition.
Since \(Y^{\prime}\setminus X\) is affine, it follows from Artin's
vanishing theorem
(\cite[Corollaire~4.1.2]{beilinson-bernstein-deligne:perverse-sheaves})
that
\begin{equation*}
\mathrm{H}_{c}^j(Y^{\prime}\setminus X;\mathcal{F}[-r+1]) = 0
\end{equation*}
for \(j < 0\).  The result now follows from the long exact sequence
\begin{equation*}
\cdots \to \mathrm{H}^{m}_{c}(Y^{\prime}\setminus X;\mathcal{F}) \to \mathrm{H}^{m}(Y^{\prime};\mathcal{F}) \to \mathrm{H}^m(X;\mathcal{F}) \to \cdots.
\qedhere
\end{equation*}
\end{proof}

\begin{theorem}[Deligne]
\label{theorem:weak-lefschetz}
Let \(f\colon X \to \mathbb{P}^{N}\) be a quasi-finite morphism to a
projective space.  Let \(\mathcal{F}\) be an object in
\({}^{\mathrm{p}}\mathcal{D}_{X}^{\geq0}\).  Then for a sufficiently
general hyperplane \(B\), the restriction morphism
\begin{equation*}
\mathrm{H}^i(X;\mathcal{F}) \to \mathrm{H}^i(f^{-1}B; \mathcal{F}|_{f^{-1}B})
\end{equation*}
is injective if \(i = -1\), and bijective if \(i < -1\).
\end{theorem}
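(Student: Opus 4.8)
The plan is to push \(\mathcal{F}\) forward to \(\mathbb{P}^{N}\), perform the hyperplane cut there, and then recover the hyperplane section of \(X\) by a generic base change carried out over the dual projective space of hyperplanes.

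First I would observe that \(\mathcal{G}\coloneqq Rf_{\ast}\mathcal{F}\) lies in \({}^{\mathrm{p}}\mathcal{D}^{\geq 0}_{\mathbb{P}^{N}}\). Indeed, since \(f\) is quasi-finite, \(Rf_{!}=f_{!}\) is exact on constructible sheaves and does not increase the dimension of supports, so it preserves \({}^{\mathrm{p}}\mathcal{D}^{\leq 0}\); dualizing gives the claim for \(Rf_{\ast}\). In particular \(\mathrm{H}^{i}(X;\mathcal{F})=\mathrm{H}^{i}(\mathbb{P}^{N};\mathcal{G})\). Next, for \emph{any} hyperplane \(B\), write \(\iota\colon\mathbb{A}^{N}=\mathbb{P}^{N}\setminus B\hookrightarrow\mathbb{P}^{N}\) and \(i_{B}\colon B\hookrightarrow\mathbb{P}^{N}\); applying \(R\Gamma(\mathbb{P}^{N},-)\) to the triangle \(\iota_{!}\iota^{\ast}\mathcal{G}\to\mathcal{G}\to i_{B\ast}i_{B}^{\ast}\mathcal{G}\xrightarrow{+1}\) and using that \(\mathbb{P}^{N}\) is proper, \(\mathbb{A}^{N}\) is affine and \(\iota^{\ast}\mathcal{G}\in{}^{\mathrm{p}}\mathcal{D}^{\geq 0}\), Artin vanishing (\cite[Corollaire~4.1.2]{beilinson-bernstein-deligne:perverse-sheaves}) forces \(\mathrm{H}^{i}_{c}(\mathbb{A}^{N};\iota^{\ast}\mathcal{G})=0\) for \(i<0\), so that the restriction \(\mathrm{H}^{i}(\mathbb{P}^{N};\mathcal{G})\to\mathrm{H}^{i}(B;i_{B}^{\ast}\mathcal{G})\) is bijective for \(i\leq-2\) and injective for \(i=-1\); this is exactly the computation already used in the proof of Lemma~\ref{lemma:weak-lefschetz}.

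At this point it only remains to identify \(\mathrm{H}^{i}(B;i_{B}^{\ast}Rf_{\ast}\mathcal{F})\) with \(\mathrm{H}^{i}(f^{-1}B;\mathcal{F}|_{f^{-1}B})\) for a sufficiently general \(B\), and this is where the genericity hypothesis is genuinely needed: because \(f\) is not proper, \(i_{B}^{\ast}Rf_{\ast}\) need not agree with the pushforward of \(i_{f^{-1}B}^{\ast}\), and since \(B\) is a divisor the plain generic base change statement over \(\mathbb{P}^{N}\) does not apply. The resolution is to spread the construction over the dual space. Let \(\mathcal{H}\subset\mathbb{P}^{N}\times(\mathbb{P}^{N})^{\vee}\) be the incidence variety, \(\pi\colon\mathcal{H}\to\mathbb{P}^{N}\) the (smooth) \(\mathbb{P}^{N-1}\)-bundle and \(q\colon\mathcal{H}\to(\mathbb{P}^{N})^{\vee}\) the proper universal family of hyperplanes; form \(\widetilde{X}\coloneqq X\times_{\mathbb{P}^{N}}\mathcal{H}\) with projections \(p\colon\widetilde{X}\to X\) and \(\widetilde{f}\colon\widetilde{X}\to\mathcal{H}\), and set \(\widetilde{q}\coloneqq q\widetilde{f}\). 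Smooth base change along \(\pi\) gives \(R\widetilde{f}_{\ast}p^{\ast}\mathcal{F}\cong\pi^{\ast}\mathcal{G}\), hence \(R\widetilde{q}_{\ast}p^{\ast}\mathcal{F}\cong Rq_{\ast}\pi^{\ast}\mathcal{G}\); and since \(q\) is proper, the stalk of this complex at a point \(b=[B]\) is \(R\Gamma(\mathcal{H}_{b};i_{B}^{\ast}\mathcal{G})=R\Gamma(B;i_{B}^{\ast}\mathcal{G})\), using that \(\pi\) identifies the fibre \(\mathcal{H}_{b}\) with the inclusion \(B\hookrightarrow\mathbb{P}^{N}\).

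On the other hand \(\widetilde{q}\) is of finite type, so by Deligne's generic base change theorem (\cite[Th.\ finitude]{deligne:sga4.5}) there is a dense open \(V\subseteq(\mathbb{P}^{N})^{\vee}\) over which the formation of \(R\widetilde{q}_{\ast}p^{\ast}\mathcal{F}\) commutes with arbitrary base change; for \(b\in V\) its stalk is therefore \(R\Gamma(\widetilde{q}^{-1}(b);p^{\ast}\mathcal{F}|_{\widetilde{q}^{-1}(b)})\), and since \(\widetilde{q}^{-1}(b)=X\times_{\mathbb{P}^{N}}\mathcal{H}_{b}=f^{-1}(B)\) with \(p\) restricting to the closed immersion \(f^{-1}(B)\hookrightarrow X\), this equals \(R\Gamma(f^{-1}B;\mathcal{F}|_{f^{-1}B})\). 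Comparing the two computations of the stalk of \(R\widetilde{q}_{\ast}p^{\ast}\mathcal{F}\) at \(b\)---and verifying, by the standard compatibilities of restriction with \(Rf_{\ast}\), smooth base change and proper base change, that the resulting isomorphism is compatible with the restriction map \(\mathrm{H}^{i}(X;\mathcal{F})\to\mathrm{H}^{i}(f^{-1}B;\mathcal{F}|_{f^{-1}B})\)---gives \(\mathrm{H}^{i}(B;i_{B}^{\ast}\mathcal{G})\cong\mathrm{H}^{i}(f^{-1}B;\mathcal{F}|_{f^{-1}B})\) for all \(B\) with \([B]\in V\). Combined with the second paragraph, this is the assertion of the theorem. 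The main obstacle is precisely the genericity input just used; everything else is the \(t\)-exactness of \(Rf_{\ast}\) for quasi-finite \(f\), a one-step excision sequence, and Artin vanishing.
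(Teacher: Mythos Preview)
Your argument is correct. The paper itself does not give a proof of this theorem; it simply cites \cite[Corollary~A.5]{katz:affine-cohomological-transforms-perversity-monodromy} and remarks that Katz's argument only uses the cosupport condition \(\mathcal{F}\in{}^{\mathrm{p}}\mathcal{D}_{X}^{\geq 0}\). What you have written is essentially a reconstruction of that standard argument: push forward by \(Rf_{\ast}\) (left \(t\)-exact because \(f\) is quasi-finite), apply Artin vanishing on the affine complement \(\mathbb{P}^{N}\setminus B\), and then use Deligne's generic base change over the dual projective space to identify \(i_{B}^{\ast}Rf_{\ast}\mathcal{F}\) with the pushforward of \(\mathcal{F}|_{f^{-1}B}\) for generic \(B\). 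So your proposal agrees with the intended proof, only spelled out in more detail than the paper provides.
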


\begin{proof}
This is proved in
\cite[Corollary~A.5]{katz:affine-cohomological-transforms-perversity-monodromy}.
After going through the proof, we find that only the condition
\(\mathcal{F} \in {}^{\mathrm{p}}\mathcal{D}_{X}^{\geq0}\) was used
(Katz needed only \(\mathbb{D}\mathcal{F}\) to be ``semiperverse'',
i.e.,
\(\mathbb{D}\mathcal{F} \in {}^{\mathrm{p}}\mathcal{D}_{X}^{\leq0}\),
where \(\mathbb{D}(-)\) is the Verdier duality functor).
\end{proof}

\begin{lemma}
\label{lemma:gysin-perv}
Let \(f\colon X \to \mathbb{P}^N\) be a quasi-finite morphism.  Let
\(\mathcal{F} \in {}^{\mathrm{p}}\mathcal{D}^{\leq 0}_{X}\).  Then for
a sufficiently general hyperplane \(B\), the purity map
\begin{equation*}
\mathrm{H}^{m-2}_{c}(f^{-1}B;\mathcal{F}|_{X\cap B}(-1)) \to \mathrm{H}^m_{c}(X;\mathcal{F}),
\end{equation*}
is surjective if \(m = 1\),
and is bijective if \(m \geq 2\).
\end{lemma}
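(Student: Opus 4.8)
The plan is to deduce the lemma from Deligne's weak Lefschetz theorem (Theorem~\ref{theorem:weak-lefschetz}) by Verdier duality. Set $\mathcal{G} = \mathbb{D}\mathcal{F}$, the Verdier dual of $\mathcal{F}$ on $X$; since $\mathcal{F} \in {}^{\mathrm{p}}\mathcal{D}^{\leq 0}_X$ we have $\mathcal{G} \in {}^{\mathrm{p}}\mathcal{D}^{\geq 0}_X$. Write $i\colon D = f^{-1}B \hookrightarrow X$ for the closed immersion. Theorem~\ref{theorem:weak-lefschetz} applied to $\mathcal{G}$ says: for a sufficiently general hyperplane $B$, the restriction map $\mathrm{H}^i(X;\mathcal{G}) \to \mathrm{H}^i(D; i^{\ast}\mathcal{G})$ is injective for $i = -1$ and bijective for $i \leq -2$.

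The next step is to dualize this statement in each cohomological degree. Verdier duality yields canonical isomorphisms $\mathrm{H}^i(X;\mathbb{D}\mathcal{F}) \cong \mathrm{H}^{-i}_{c}(X;\mathcal{F})^{\vee}$ of $\overline{\mathbb{Q}}_{\ell}$-vector spaces; likewise, using the identity $i^{\ast}\mathbb{D}\mathcal{F} = \mathbb{D}(i^{!}\mathcal{F})$ for the closed immersion $i$, one gets $\mathrm{H}^i(D; i^{\ast}\mathbb{D}\mathcal{F}) \cong \mathrm{H}^{-i}_{c}(D; i^{!}\mathcal{F})^{\vee}$. Under these identifications the restriction morphism is the linear dual of the ``forget supports'' morphism
\[
\mathrm{H}^m_{c}(D; i^{!}\mathcal{F}) \longrightarrow \mathrm{H}^m_{c}(X;\mathcal{F}), \qquad m = -i,
\]
induced by the adjunction counit $i_{\ast}i^{!}\mathcal{F} \to \mathcal{F}$ — indeed the adjunction unit $\mathbb{D}\mathcal{F} \to i_{\ast}i^{\ast}\mathbb{D}\mathcal{F}$ is exactly $\mathbb{D}$ applied to this counit. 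Therefore the displayed map is surjective for $m = 1$ and bijective for $m \geq 2$.

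It remains to rewrite the source as $\mathrm{H}^{m-2}_{c}(D; i^{\ast}\mathcal{F}(-1))$. Since $D = f^{-1}B$ is an effective Cartier divisor in $X$ — locally on $X$ it is the zero locus of the pull-back along $f$ of a linear form defining $B$ — there is a canonical purity (Gysin) morphism $i^{\ast}\mathcal{F}(-1)[-2] \to i^{!}\mathcal{F}$, and for a sufficiently general $B$ this morphism is an isomorphism. This is cohomological purity for the general hyperplane section: a general $f^{-1}B$ is non-characteristic for $\mathcal{F}$ (equivalently, the morphism to $\mathbb{P}^1$ coming from the pencil of hyperplanes through a general codimension-two linear subspace is locally acyclic in a neighbourhood of $D$), so no vanishing cycles appear and $i^{!}\mathcal{F}$ differs from $i^{\ast}\mathcal{F}$ only by the expected Tate twist and shift. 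Composing the resulting isomorphism $\mathrm{H}^{m-2}_{c}(D; i^{\ast}\mathcal{F}(-1)) \xrightarrow{\ \sim\ } \mathrm{H}^m_{c}(D; i^{!}\mathcal{F})$ with the map of the previous paragraph produces the purity map of the statement, which is then surjective for $m = 1$ and bijective for $m \geq 2$.

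The main obstacle is precisely this last step. For an arbitrary $B$ the purity morphism $i^{\ast}\mathcal{F}(-1)[-2] \to i^{!}\mathcal{F}$ need not be an isomorphism — its cone is supported where $D$ meets the locus at which the pencil acquires a vanishing cycle — and since $X$ is allowed to be singular one genuinely needs the genericity of $B$ to force this cone to vanish. The requisite genericity is of the same nature as, and can be extracted from the proof of, Theorem~\ref{theorem:weak-lefschetz}; granting it, the rest is a formal manipulation with Verdier duality and adjunctions.
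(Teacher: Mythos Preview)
The paper proves this by a bare citation to \cite[Lemma~3.8]{wz2}, so there is no in-paper argument to compare against; your approach---Verdier-dualize Theorem~\ref{theorem:weak-lefschetz}---is the natural one and is almost certainly what that reference does. Steps 1--3 are correct and cleanly executed: dualizing the restriction map yields that the corestriction $\mathrm{H}^m_c(D;i^!\mathcal{F})\to\mathrm{H}^m_c(X;\mathcal{F})$ is surjective for $m=1$ and bijective for $m\geq2$.

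You have honestly isolated Step~4 as the remaining obstacle, and your diagnosis---that the purity morphism $i^*\mathcal{F}(-1)[-2]\to i^!\mathcal{F}$ is an isomorphism for general $B$---is correct. But the justification you sketch is not yet a proof, and it conflates several distinct notions. ``Non-characteristic'' refers to singular support (Beilinson, Saito), a theory set up for \emph{smooth} $X$, not the possibly singular $X$ allowed here; it is not the same thing as local acyclicity of a pencil; and mere vanishing of $\Phi_g\mathcal{F}$ at a single fiber does not by itself force purity---one needs \emph{universal} local acyclicity, after which the implication ``ULA over a smooth base $\Rightarrow$ purity on fibers'' still requires its own argument (e.g.\ via the compatibility $\mathbb{D}\Psi\cong\Psi\mathbb{D}(1)$). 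Saying this ``can be extracted from the proof of Theorem~\ref{theorem:weak-lefschetz}'' is not a substitute. One clean way to close the gap: form the incidence correspondence $\mathcal{H}\subset X\times\check{\mathbb{P}}^N$; its first projection $p'\colon\mathcal{H}\to X$ is a $\mathbb{P}^{N-1}$-bundle, hence smooth, so $p'^!=p'^*(N-1)[2(N-1)]$; Deligne's generic universal local acyclicity \cite[Th.~finitude]{deligne:sga4.5} applied to the second projection $q'\colon\mathcal{H}\to\check{\mathbb{P}}^N$ and the sheaf $p'^*\mathcal{F}$ produces a dense open $U\subset\check{\mathbb{P}}^N$ over which $q'$ is ULA, and for $b\in U$ the resulting fiberwise purity $j_b^!(p'^*\mathcal{F})\cong j_b^*(p'^*\mathcal{F})(-N)[-2N]$ rewrites, via $(p'j_b)^!=i_b^!$ and the smoothness of $p'$, as exactly $i_b^!\mathcal{F}\cong i_b^*\mathcal{F}(-1)[-2]$.
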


\begin{proof}
This is \cite[Lemma~3.8]{wz2}.
\end{proof}

\section{Specialization of Betti numbers and Newton polygons}
\label{sec:specialization}
\numberwithin{equation}{section}
Let \(k\) be an algebraically closed field, and \(\ell\) a prime
number invertible in \(k\).  Let \(B\) be a connected nonsingular
curve over \(k\).  Let \(\pi\colon V \to B\) be a morphism of
\(k\)-schemes.  Let \(\mathcal{F}\) be a constructible
\(\overline{\mathbb{Q}}_{\ell}\)-complex on \(V\).  Since
\(R\pi_{!}\mathcal{F}\) remains constructible, there is an open dense
immersion \(j\colon B^{\prime} \to B\) such that the cohomology
sheaves of \(j^{\ast}R\pi_{!}\mathcal{F}\) are local systems on
\(B^{\prime}\).

\begin{lemma}[Lower semicontinuity of the middle Betti number]\label{lemma:middle-estimate}
In the situation above, assume in addition that
\begin{itemize}
\item \(\mathcal{F}\) satisfies the cosupport condition, i.e.,
\(\mathcal{F} \in {}^{\mathrm{p}}\mathcal{D}_{V}^{\geq 0}\), and
\item \(\pi\) is an affine morphism.
\end{itemize}
Then for any \(b \in B \setminus B^{\prime}\),
and any \(b^{\prime} \in B^{\prime}\),
we have
\begin{equation*}
\dim \mathrm{H}^{-1}_{c}(\pi^{-1}(b); \mathcal{F}|_{\pi^{-1}(b)}) \leq \dim \mathrm{H}^{-1}_{c}(\pi^{-1}(b^{\prime}); \mathcal{F}|_{\pi^{-1}(b^{\prime})}).
\end{equation*}
\end{lemma}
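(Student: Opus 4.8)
The plan is to express the fiberwise group $\mathrm{H}^{-1}_c(\pi^{-1}(b);\mathcal{F}|_{\pi^{-1}(b)})$ as the stalk at $b$ of one honest constructible sheaf on the curve $B$ that is ``torsion-free'', in the sense of having no nonzero subsheaf supported on points, and then to invoke the elementary fact that such a sheaf has fiber dimension at most its generic rank.

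First I set $\mathcal{G} = R\pi_!\mathcal{F} \in D^b_c(B;\overline{\mathbb{Q}}_\ell)$. The one step with genuine content is the observation that $\mathcal{G}$ satisfies the cosupport condition. Since $\pi$ is affine, $R\pi_*$ is right $t$-exact for the perverse $t$-structure by Artin's vanishing theorem \cite[4.1.1]{beilinson-bernstein-deligne:perverse-sheaves}; applying this to $\mathbb{D}_V\mathcal{F} \in {}^{\mathrm{p}}\mathcal{D}^{\leq 0}_V$ and using $R\pi_! = \mathbb{D}_B \circ R\pi_* \circ \mathbb{D}_V$ shows $R\pi_!$ is left $t$-exact, whence $\mathcal{G} \in {}^{\mathrm{p}}\mathcal{D}^{\geq 0}_B$. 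On a smooth curve, membership in ${}^{\mathrm{p}}\mathcal{D}^{\geq 0}$ says concretely that the ordinary cohomology sheaves vanish below degree $-1$ and that $\mathcal{E} \coloneqq \mathcal{H}^{-1}(\mathcal{G})$ has no nonzero subsheaf supported on a finite set of points. Note that $\mathcal{F}$ restricted to a fiber need not be perverse, so one cannot argue fiberwise from the start; it is the global complex $R\pi_!\mathcal{F}$ that is constrained, and only in the bottom cohomological degree.

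Next, proper base change for $R\pi_!$ gives, for the inclusion $i_b\colon\{b\}\hookrightarrow B$ of a closed point, an isomorphism $i_b^{\ast}\mathcal{G} \cong R\Gamma_c(\pi^{-1}(b);\mathcal{F}|_{\pi^{-1}(b)})$; taking $(-1)$st ordinary cohomology and using exactness of the stalk functor yields $\mathrm{H}^{-1}_c(\pi^{-1}(b);\mathcal{F}|_{\pi^{-1}(b)}) \cong \mathcal{E}_b$ for every closed point $b$. Over $B'$ the sheaf $\mathcal{E}|_{B'} = \mathcal{H}^{-1}(j^{\ast}R\pi_!\mathcal{F})$ is a local system, of some rank $\rho$; hence $\dim \mathrm{H}^{-1}_c(\pi^{-1}(b');\mathcal{F}|_{\pi^{-1}(b')}) = \rho$ for every $b'\in B'$.

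Finally, for $b\in B\setminus B'$ I compare $\mathcal{E}_b$ with the generic rank. With $j\colon B'\hookrightarrow B$, the kernel of the adjunction map $\mathcal{E}\to j_{\ast}(\mathcal{E}|_{B'})$ has vanishing stalks over $B'$ (there the map is the canonical isomorphism $\mathcal{E}_x \cong (j_{\ast}(\mathcal{E}|_{B'}))_x$), hence is supported on the finite set $B\setminus B'$; being a punctual subsheaf of $\mathcal{E}$ it is zero, so $\mathcal{E}\hookrightarrow j_{\ast}(\mathcal{E}|_{B'})$. Passing to the strictly henselian trait $\operatorname{Spec}\mathcal{O}^{\mathrm{sh}}_{B,b}$ identifies the stalk $(j_{\ast}(\mathcal{E}|_{B'}))_b$ with the inertia invariants $(\mathcal{E}_{\overline{\eta}})^{I_b}$ of the generic stalk $\mathcal{E}_{\overline{\eta}}$, which has dimension $\rho$ because $\mathcal{E}|_{B'}$ is a rank-$\rho$ local system and $\overline{\eta}$ lies over the generic point of $B'$. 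Therefore $\dim\mathcal{E}_b \leq \rho$, which is exactly the asserted inequality. The argument is formal once the cosupport step is in hand; the real input is Artin vanishing, together with the fact that the relevant cohomological degree is the lowest possible one — the same reasoning gives nothing for $\mathrm{H}^0_c$, since ${}^{\mathrm{p}}\mathcal{D}^{\geq 0}$ imposes no condition on $\mathcal{H}^0(\mathcal{G})$.
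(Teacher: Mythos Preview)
Your proof is correct, and it takes a somewhat different route from the paper's.  Both arguments begin identically: Artin vanishing for the affine morphism $\pi$ gives $R\pi_!\mathcal{F}\in{}^{\mathrm{p}}\mathcal{D}^{\geq 0}_B$.  From there the paper works with the triangle $j_!j^{\ast}\to\mathrm{id}\to i_{\ast}i^{\ast}$, extracts an injection of perverse sheaves $i_{\ast}E\hookrightarrow j_!\mathcal{E}[1]$ (with $E=\mathrm{H}^{-1}_c(\pi^{-1}(b);\mathcal{F})$ and $\mathcal{E}=(R^{-1}\pi_!\mathcal{F})|_{B'}$), and then proves $\dim E\leq\operatorname{rank}\mathcal{E}$ by applying the unipotent vanishing cycle functor ${}^{\mathrm{p}}\Phi_{t,1}$.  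You instead read off directly from the cosupport condition on a smooth curve that the ordinary sheaf $\mathcal{H}^{-1}(R\pi_!\mathcal{F})$ has no punctual subsheaves, hence embeds in $j_{\ast}$ of its restriction to $B'$, and conclude by taking the stalk at $b$ and bounding by inertia invariants.

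Your approach is more elementary---it avoids the nearby/vanishing cycle formalism entirely---and is arguably the cleaner way to prove the bare Betti inequality.  The paper's detour through ${}^{\mathrm{p}}\Psi_{t,1}$ pays off in the next step, Variant~\ref{variant:newton}: landing inside the nearby cycle space is exactly what is needed to invoke Grothendieck's specialization theorem for Newton polygons.  That said, your Frobenius-equivariant embedding $\mathcal{E}_b\hookrightarrow(\mathcal{E}_{\overline{\eta}_b})^{I_b}\subset\mathcal{E}_{\overline{\eta}_b}$ would serve equally well for that purpose, so the difference is one of packaging rather than of substance.
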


\begin{proof}
By shrinking \(B\), we may assume that
\(B\setminus B^{\prime} = \{b\}\).  Let \(i\colon \{b\} \to B\) be the
inclusion morphism.  Then we have a distinguished triangle
\begin{equation*}
j_{!}j^{\ast}(R\pi_{!} \mathcal{F}) \to R\pi_{!} \mathcal{F} \to i_{\ast} i^{\ast} (R\pi_{!} \mathcal{F}).
\end{equation*}
Since \(\pi\) is affine, Artin's vanishing theorem
(\cite[Corollaire~4.1.2]{beilinson-bernstein-deligne:perverse-sheaves})
implies that \(R\pi_{!}\mathcal{F}\) satisfies the cosupport condition
as well.  In particular, we have
\(\pH^{-1}(R\pi_{!} \mathcal{F}) = 0\).  Therefore, applying the
perverse cohomology functors to the above distinguished triangle gives
rise to an exact sequence of perverse sheaves
\begin{equation*}
0 \to \pH^{-1}i_{\ast} i^{\ast} (R\pi_{!} \mathcal{F}) \to \pH^{0}j_{!}j^{\ast}(R\pi_{!} \mathcal{F})
\to \pH^{0}(R\pi_{!} \mathcal{F}).
\end{equation*}
Using the t-exactness of \(j_{!}\) (since \(j\) is an affine,
quasi-finite morphism, \(j_{!}\) is t-exact by
\cite[Corollaire~4.1.3]{beilinson-bernstein-deligne:perverse-sheaves}),
we have
\[
\pH^{0}j_{!}j^{\ast}(R\pi_{!}\mathcal{F})=j_{!}\pH^{0}j^{\ast}(R\pi_{!}\mathcal{F}).
\]
Since the usual cohomology sheaves of
\(j^{\ast}(R\pi_{!}\mathcal{F})\) are assumed to be local systems, and
since \(B^{\prime}\) is smooth, connected, and one dimensional, we have
\begin{equation*}
\pH^{0}j^{\ast}(R\pi_{!}\mathcal{F})=\mathcal{H}^{-1}(j^{\ast}(R\pi_{!}\mathcal{F}))[1] = (R^{-1}\pi_{!}\mathcal{F})|_{B^{\prime}}[1].
\end{equation*}
By proper base change, the rank of the local system
\((R^{-1}\pi_{!}\mathcal{F})|_{B^{\prime}}\) equals
the dimension of the cohomology space
\(\mathrm{H}^{-1}_{c}(\pi^{-1}(b^{\prime});\mathcal{F}|_{\pi^{-1}(b^{\prime})})\), for any
\(b^{\prime} \in B^{\prime}\).

On the other hand, since \(i_{\ast}\) is t-exact
(\cite[Lemma~4.1]{kiehl-weissauer:weil-conjecture-perverse-sheaf-fourier-transform}),
and since the perverse cohomology and the usual cohomology agree on a
point, we have
\begin{align}\label{eq:what-is-e}
  \pH^{-1}i_{\ast} i^{\ast} (R\pi_{!} \mathcal{F})
  &= i_{\ast} \mathrm{H}^{-1}(i^{\ast} (R\pi_{!} \mathcal{F})) \\
 \text{(proper base change)} &= i_{\ast}\mathrm{H}_{c}^{-1}(\pi^{-1}(b); \mathcal{F}|_{\pi^{-1}(b)}). \nonumber
\end{align}
Thus, to prove the lemma, it suffices to apply the following simple
claim.

\begin{claim}\label{claim:middle}
Let \(B, b, i, j\) be as above.  Suppose
\(i_{\ast}E \to j_{!}\mathcal{E}[1]\) is an injective morphism
between perverse sheaves on \(B\), where \(E\) is a
\(\overline{\mathbb{Q}}_{\ell}\)-vector space, regarded as a
\(\overline{\mathbb{Q}}_{\ell}\)-sheaf on \(\{b\}\), and
\(\mathcal{E}\) is a local system on \(B^{\prime}\), then
\(\dim E \leq \operatorname{rank}\mathcal{E}\).
\end{claim}

To prove the claim, we apply the unipotent vanishing cycle functor
\({}^{\mathrm{p}}\Phi_{t,1} = R\Phi_{t,1}[-1]\) to the injective
morphism \(i_{\ast}E \to j_{!}\mathcal{E}[1]\), where
\(t\colon B \to \mathbb{A}^{1}\) is a regular function on a
neighborhood of \(b\) defined by a uniformizer of the local ring of
\(\mathcal{O}_{B,b}\).  On the one hand, we have
\({}^{\mathrm{p}}\Phi_{t,1}(i_{\ast}E) = E\).  On the other hand, we
have
\({}^{\mathrm{p}}\Phi_{t,1}(j_{!}\mathcal{E}[1])={}^{\mathrm{p}}\Psi_{t,1}(\mathcal{E}[1])\),
where \({}^{\mathrm{p}}\Psi_{t,1}=R\Psi_{t,1}[-1]\) is the unipotent
nearby cycle functor.  This can be seen from the distinguished
triangle
\begin{equation*}
i^{\ast}[-1] \to {}^{\mathrm{p}}\Psi_{t,1} \xrightarrow{\mathrm{can}} {}^{\mathrm{p}}\Phi_{t,1}
\end{equation*}
and the fact that \(i^{\ast}j_{!} = 0\).  Since
\({}^{\mathrm{p}}\Psi_{t,1}\) is a direct summand of the full nearby
cycle functor, we have
\begin{equation*}
\dim {}^{\mathrm{p}}\Phi_{t,1}(j_{!}\mathcal{E}[1])=\dim{}^{\mathrm{p}}\Psi_{t,1}(\mathcal{E}[1]) \leq \operatorname{rank}\mathcal{E}.
\end{equation*}
Finally, since \({}^{\mathrm{p}}\Phi_{t,1}\) is a t-exact functor,
\begin{equation*}
E \to {}^{\mathrm{p}}\Phi_{t,1}(j_{!}\mathcal{E}[1])
\end{equation*}
remains injective.  This concludes the proof of the claim and finishes
the proof of the lemma.
\end{proof}

To state an arithmetic variant of the lemma incorporating \(p\)-adic
information, let us assume \(k\) is an algebraic closure of
\(\mathbb{F}_{q}\), where \(q\) is a power of a prime \(p\),
\(B = B_{0} \otimes_{\mathbb{F}_{q}} k\),
\(V = V_{0} \otimes_{\mathbb{F}_{q}}k\), and \(\mathcal{F}\) has a
structure of a Weil sheaf on \(V_{0}\).  Thus
\(B^{\prime} = B_{0}^{\prime} \otimes_{\mathbb{F}_{q}} k\) for some
Zariski open dense subset \(B_{0}^{\prime}\) of \(B_{0}\), and
\(\Sigma_{0} = U_{0}\setminus B_{0}^{\prime}\) defines \(\Sigma\).
Let
\(\mathcal{E}_{0}[1]=\pH^{-1}(R\pi_{!}\mathcal{F}_{0})|_{B^{\prime}_{0}}\).
Then \(\mathcal{E}_{0}\) is a local system on \(B_{0}^{\prime}\).  Let
\(\mathcal{E}\) be the local system on \(B^{\prime}\) induced by
\(\mathcal{E}_{0}\).

\begin{variant}[Upper semicontinuity of the middle Newton polygon]\label{variant:newton}
In the situation above, let \(b \in \Sigma\).  Suppose that \(\Gamma\)
is a convex polygon in \(\mathbb{R}^{2}\) such that for any
\(b^{\prime} \in B^{\prime}\),
\(\mathrm{NP}(\mathcal{E}_{b^{\prime}})\) lies on or above \(\Gamma\).
Then for any \(b \in \Sigma\), the Newton polygon of
\(\mathrm{H}^{-1}_{c}(\pi^{-1}(b);\mathcal{F}|_{\pi^{-1}(b)})\) lies
on or above \(\Gamma\).
\end{variant}

\begin{proof}
As we have seen in the proof of Lemma~\ref{lemma:middle-estimate},
especially in \eqref{eq:what-is-e}, for \(b \in \Sigma\), the
cohomology space
\(\mathrm{H}^{-1}_{c}(\pi^{-1}(b);\mathcal{F}|_{\pi^{-1}(b)})\)
corresponds to the vector space \(E\) in Claim~\ref{claim:middle}.  It
injects into the unipotent nearby cycle of \(\mathcal{E}\), which is a
direct summand of the full nearby cycle
\(\mathcal{E}_{\overline{\eta}_{b}}\), where \(\overline{\eta}_{b}\)
is a geometric generic point of the strict localization at \(b\).  Now
we can apply
Proposition~\ref{proposition:weak-grothendieck-specialization}.
\end{proof}

We will also need the following variant of
Lemma~\ref{lemma:middle-estimate} in mixed characteristic context.  We
state it in terms of torsion coefficients since the needed
prerequisites are only documented for torsion coefficients, and we
will only refer to this torsion version.

\begin{variant}[Mixed characteristic case]\label{variant:middle-estimate}
Let \(S\) be a strictly henselian trait with special point \(s\) and
generic point \(\eta\).  Let \(\overline{\eta}\) be a geometric point
lying above \(\eta\).  Choose a prime number \(\ell\) invertible on
\(S\).  Suppose \(V\) is a flat, local complete intersection
\(S\)-scheme of relative dimension \(n\) everywhere.  Then
\begin{equation*}
\dim \mathrm{H}^{n}_{c}(V_{s};\mathbb{F}_{\ell}) \leq
\dim \mathrm{H}^{n}_{c}(V_{\overline{\eta}};\mathbb{F}_{\ell}).
\end{equation*}
\end{variant}

\begin{proof}
By \cite[\S2]{illusie:perversity-and-variation}, a selfdual perverse
t-structure can be defined on the constructible category of étale
\(\mathbb{F}_{\ell}\)-sheaves on \(V\) (which is amenable to the four
functor formalism, see \cite[Finitude]{deligne:sga4.5}).  The flat and
local complete intersection hypotheses imply that the shifted constant
sheaf \(\mathcal{F}=\mathbb{F}_{\ell,V}[n+1]\) is a perverse sheaf on
\(V\) (\cite[Corollaire~2.7]{illusie:perversity-and-variation}).  Let
\(\pi\colon V \to S\) be the structure morphism.  By Gabber's Artin
vanishing theorem
(\cite[Théorème~2.4]{illusie:perversity-and-variation}),
\(R\pi_{!}\mathcal{F} = R\pi_{!}(\mathbb{F}_{\ell,V}[n+1])\) has no
negative perverse cohomology sheaves.  With these ingredients, the
argument of Lemma~\ref{lemma:middle-estimate} goes through.
\end{proof}

\section{Betti numbers of set-theoretic complete intersections}
\label{sec:betti}
\numberwithin{equation}{subsection}

In this section, \(k\) is an algebraically closed field.  Our
objective is to estimate the compactly supported Betti numbers of
set-theoretic complete intersections in ambient spaces via
Lemma~\ref{lemma:middle-estimate}.  After establishing a slightly more
general framework (\S\ref{sec:general-upper-bounds-betti}), we
specialize to set-theoretic complete intersections in an affine space
(\S\ref{sec:complete-intersection-in-affine}), then to those in a
projective space (\S\ref{sec:proj-complete-intersection}).  Finally,
we apply these estimates to derive effective Lang--Weil type bounds.
\subsection{General upper bounds}
\label{sec:general-upper-bounds-betti}
A convenient framework, which is general enough to include many
interesting special cases, is as follows.

\begin{construction}\label{construction:betti}
Consider the following data:
\begin{itemize}
\item \(X\) is a proper variety.
\item \(D \subset X\) is an effective Cartier divisor.
\item \(U = X \setminus D\).  We assume that \(U\) is an affine
nonsingular variety.
\item \(\mathcal{L}_{1},\ldots,\mathcal{L}_{r}\) are invertible
sheaves on \(X\).  We assume that
\(\dim \mathrm{H}^{0}(X; \mathcal{L}_{i}) \geq 1\).
\end{itemize}

Each point \((F_{1},\ldots,F_{r})\) of the space
\(\mathcal{M}=\prod_{i=1}^{r}\mathbb{P}\mathrm{H}^{0}(X;\mathcal{L}_{i})\)
determines a closed subvariety \(\{F_{1} = \cdots = F_{r} = 0\}\) of
\(X\).  We denote the intersection
\(U \cap \{F_{1} = \cdots = F_{r} = 0\}\) by
\(V^{\circ}(F_{1},\ldots, F_{r})\).

We have an incidence correspondence
\(\mathcal{V} \subset U \times \mathcal{M}\): the fiber of
\(\mathcal{V} \to \mathcal{M}\) over
\((F_{1},\ldots,F_{r}) \in \mathcal{M}\) is
\(V^{\circ}(F_{1},\ldots,F_{r})\).  Denote the projection morphism by
\(\varpi\colon \mathcal{V} \to \mathcal{M}\).

Due to the constructibility of
\(R^{e}\varpi_{!}\overline{\mathbb{Q}}_{\ell}\), there is a Zariski
open dense subset \(\mathcal{U}\) of \(\mathcal{M}\) such that for any
\(e\), \(R^{e}\varpi_{!}\overline{\mathbb{Q}}_{\ell}|_{\mathcal{U}}\)
is a local system on \(\mathcal{U}\).  By the proper base change
theorem, if \(e\) is fixed and \((F_{1},\ldots,F_{r})\) varies in
\(\mathcal{U}\), the number
\begin{equation*}
\dim \mathrm{H}_{c}^{e}(V^{\circ}(F_{1},\ldots,F_{r});\overline{\mathbb{Q}}_{\ell})
\end{equation*}
remains constant.  We denote this constant by
\begin{equation}
B^{e}_{c}(X,D;\mathcal{L}_{1},\ldots,\mathcal{L}_{r})
\end{equation}
or simply
\(B_{c}^{e}\) when there is no ambiguity.
\end{construction}

\begin{proposition}\label{proposition:abstract-betti-bound}
In Construction~\ref{construction:betti}, let \((F_{1},\ldots,F_{r})\)
be an arbitrary \(r\)-tuple in \(\mathcal{M}\).  Then we have the
following inequality:
\begin{equation*}
\dim \mathrm{H}^{n-r}_{c}(V^{\circ}(F_{1},\ldots,F_{r});\overline{\mathbb{Q}}_{\ell}) \leq B_{c}^{n-r}(X,D;\mathcal{L}_{1},\ldots,\mathcal{L}_{r}).
\end{equation*}
\end{proposition}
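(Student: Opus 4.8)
The plan is to realize the desired inequality as an instance of the lower‑semicontinuity statement Lemma~\ref{lemma:middle-estimate}, applied to the family $\varpi\colon\mathcal{V}\to\mathcal{M}$ after restricting its base to a suitable curve. Write $n=\dim U$, so that $\overline{\mathbb{Q}}_{\ell,U}[n]$ is perverse, and fix the given point $b_{0}=(F_{1},\dots,F_{r})\in\mathcal{M}$. We may assume $b_{0}\notin\mathcal{U}$, as otherwise the claim is immediate from the definition of $B_{c}^{n-r}$. Since $\mathcal{M}=\prod_{i}\mathbb{P}\mathrm{H}^{0}(X;\mathcal{L}_{i})$ is a product of projective spaces, any two of its closed points are joined by an irreducible rational curve (in each factor take a line through the two corresponding points, or a constant map if they coincide, and take the image of the resulting morphism $\mathbb{P}^{1}\to\mathcal{M}$); hence there is an irreducible curve $C\subset\mathcal{M}$ through $b_{0}$ and through some point of the open dense set $\mathcal{U}$. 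Let $\nu\colon B\to C\hookrightarrow\mathcal{M}$ be the normalization of $C$; it is a smooth connected curve, $\nu^{-1}(\mathcal{U})$ is a nonempty open, and I fix $b\in B$ with $\nu(b)=b_{0}$.

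Next I would base change. Put $V_{B}=\mathcal{V}\times_{\mathcal{M}}B$ with projection $\pi\colon V_{B}\to B$. Because the incidence condition ``$F_{i}(x)=0$'' is closed, $\mathcal{V}$ is a closed subscheme of $U\times\mathcal{M}$; as $U$ is affine, $\varpi$ — and hence $\pi$ — is an affine morphism. Moreover $V_{B}$ is a closed subscheme of $U\times B$, which is a smooth $k$-variety of dimension $n+1$, and locally in the Zariski topology of $U\times B$ (trivialize each $\mathcal{L}_{i}$ over a Zariski open of $U$ and trivialize the pullback of $\mathcal{O}(1)$ over a Zariski open of $B$) the subscheme $V_{B}$ is cut out by $r$ regular functions. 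Applying Lemma~\ref{lemma:cosupp} to the closed immersion $V_{B}\hookrightarrow U\times B$ and to the perverse sheaf $\overline{\mathbb{Q}}_{\ell,U\times B}[n+1]$ then shows that
\[
\mathcal{F}\coloneqq\overline{\mathbb{Q}}_{\ell,V_{B}}[\,n+1-r\,]\in{}^{\mathrm{p}}\mathcal{D}^{\geq0}_{V_{B}}.
\]
Thus $\pi$ and $\mathcal{F}$ satisfy the two hypotheses of Lemma~\ref{lemma:middle-estimate}.

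It remains to unwind the conclusion. Let $B^{\prime}\subset B$ be the open dense subset furnished by Lemma~\ref{lemma:middle-estimate}; over it the cohomology sheaves of $R\pi_{!}\mathcal{F}$, equivalently the sheaves $R^{e}\pi_{!}\overline{\mathbb{Q}}_{\ell}$, are local systems. Choose $b^{\prime}\in B^{\prime}\cap\nu^{-1}(\mathcal{U})$, which is nonempty. Under the shift by $n+1-r$, proper base change identifies
\[
\dim\mathrm{H}^{-1}_{c}(\pi^{-1}(b);\mathcal{F}|_{\pi^{-1}(b)})=\dim\mathrm{H}^{n-r}_{c}(V^{\circ}(F_{1},\dots,F_{r});\overline{\mathbb{Q}}_{\ell}),
\]
while, since $\nu(b^{\prime})\in\mathcal{U}$,
\[
\dim\mathrm{H}^{-1}_{c}(\pi^{-1}(b^{\prime});\mathcal{F}|_{\pi^{-1}(b^{\prime})})=\dim\mathrm{H}^{n-r}_{c}(V^{\circ}(\nu(b^{\prime}));\overline{\mathbb{Q}}_{\ell})=B_{c}^{n-r}.
\]
If $b\notin B^{\prime}$, Lemma~\ref{lemma:middle-estimate} gives the asserted inequality at once. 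If $b\in B^{\prime}$, then $\dim\mathrm{H}^{n-r}_{c}(V^{\circ}(F_{1},\dots,F_{r}))$ is the rank at $b$ of the local system $R^{n-r}\pi_{!}\overline{\mathbb{Q}}_{\ell}|_{B^{\prime}}$, which equals its rank at $b^{\prime}$, namely $B_{c}^{n-r}$; so equality holds. In either case $\dim\mathrm{H}^{n-r}_{c}(V^{\circ}(F_{1},\dots,F_{r}))\leq B_{c}^{n-r}$.

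The curve construction and the index bookkeeping around the shift $[\,n+1-r\,]$ are routine; the one place where the structural hypotheses of Construction~\ref{construction:betti} are genuinely used — and the only step I expect to need care — is the verification that $\pi$ is affine (this uses $U$ affine) and that $\mathcal{F}$ satisfies the cosupport condition (this uses $U$ nonsingular, so that the shifted constant sheaf on $U\times B$ is perverse and Lemma~\ref{lemma:cosupp} applies to the closed subscheme $V_{B}$, which need not be equidimensional or of the expected dimension).
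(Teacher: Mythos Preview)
Your proof is correct and follows essentially the same approach as the paper: restrict the family $\varpi$ to a connecting curve, verify via Lemma~\ref{lemma:cosupp} that the shifted constant sheaf $\overline{\mathbb{Q}}_{\ell}[n+1-r]$ satisfies the cosupport condition on the total space, and invoke Lemma~\ref{lemma:middle-estimate}. The only difference is cosmetic: the paper parameterizes the connecting curve by an explicit linear interpolation $u\colon\mathbb{A}^{1}\to\mathcal{M}$, $t\mapsto(1-t)(F_{1},\dots,F_{r})+t(F_{1}^{\prime},\dots,F_{r}^{\prime})$, whereas you take the normalization of an arbitrary irreducible curve through $b_{0}$ and a point of $\mathcal{U}$; both constructions serve the same purpose and the remaining bookkeeping is identical.
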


\begin{proof}
Let \((F_{1},\ldots,F_{r}) \in \mathcal{M}\) be an arbitrary
\(r\)-tuple, and let \((F^{\prime}_{1},\ldots,F^{\prime}_{r})\) be an
\(r\)-tuple contained in \(\mathcal{U}\).  Consider the morphism
\begin{equation*}
u\colon \mathbb{A}^{1} \to \mathcal{M}, \quad
t\mapsto (1-t)\cdot (F_{1},\ldots,F_{r}) + t \cdot (F^{\prime}_{1},\ldots,F_{r}^{\prime}).
\end{equation*}
Form the fiber product
\(V = \mathcal{V} \times_{\mathcal{M},u}\mathbb{A}^{1}\), and let
\(\pi\) be the projection to \(\mathbb{A}^{1}\).  Since
\(u(1) \in \mathcal{U}\) by construction,
\(u(\mathbb{A}^{1}) \cap \mathcal{U}\) is nonempty.
Thus \(u^{-1}\mathcal{U} = B^{\prime}\) is an open dense subset of \(\mathbb{A}^{1}\).
By the proper base change theorem, for any \(b^{\prime} \in B^{\prime}\), we have
\begin{equation}\label{eq:generic-betti}
\mathrm{H}^{n-r}_{c}(\pi^{-1}(b^{\prime});\overline{\mathbb{Q}}_{\ell}) = B^{n-r}_{c}(X,D;\mathcal{L}_{1},\ldots,\mathcal{L}_{r}).
\end{equation}
Since \(V\) is defined in \(U \times \mathbb{A}^{1}\) by \(r\)
equations \((1-t)F_{1}+tF_{1}^{\prime},\ldots,(1-t)F_{r}+tF^{\prime}_{r}\), and
since \(U \times \mathbb{A}^{1}\) is nonsingular, we see that
\(\overline{\mathbb{Q}}_{\ell,V}[n+1-r]\) satisfies the cosupport condition (Lemma~\ref{lemma:cosupp}).
Applying Lemma~\ref{lemma:middle-estimate} to
\(\pi\colon V \to \mathbb{A}^{1}\) and combining it with \eqref{eq:generic-betti}, we get
\begin{equation*}
\dim \mathrm{H}^{n-r}_{c}(V^{\circ}(F_{1},\ldots,F_{r})) \leq B^{n-r}_{c}(X,D;\mathcal{L}_{1},\ldots,\mathcal{L}_{r}),
\end{equation*}
as desired.
\end{proof}

\subsection{Set-theoretic complete intersections in the affine space}
\label{sec:complete-intersection-in-affine}
Let \(n \geq r \geq 1\) and \(d_{1},\ldots,d_{r}\) be natural numbers.
We define
\begin{equation*}
 N(n;d_{1},\ldots,d_{r}) =
d_{1}\cdots d_{r} \cdot \sum\limits_{e=0}^{n-r} (-1)^{e}\binom{n}{e} \sum\limits_{a_{1}+\cdots +a_{r}=n-r-e} d_{1}^{a_{1}}\cdots d_{r}^{a_{r}},
\end{equation*}
and
\begin{equation*}
M(n;d_{1},\ldots,d_{r}) =
\begin{cases}
  N(n;d_1,\cdots, d_r)-(-1)^{n-r} & \text{if }n>r,\\
  N(n;d_1,\cdots, d_r)=d_{1}\cdots d_{r},  & \text{if }n=r.
\end{cases}
\end{equation*}
In this subsection, we prove the following:

\begin{theorem}\label{theorem:affine-betti-bound-detail}
Consider the affine scheme
\(V = \operatorname{Spec}k[x_{1},\ldots,x_{n}]/(f_{1},\ldots,f_{r})\),
where \(f_{1},\ldots,f_{r}\) satisfy \(\deg f_{i} \leq d_{i}\).  Then
\begin{enumerate}
\item We have
\begin{equation*}
\dim \mathrm{H}^{n-r}_{c}(V;\overline{\mathbb{Q}}_{\ell})
\leq M(n;d_{1},\ldots,d_{r}).
\end{equation*}
\item If, in addition, \(\dim V = n-r\), then for any \(0 \leq j\leq n-r\), we have
\begin{equation*}
\dim \mathrm{H}^{n-r+j}_{c}(V;\overline{\mathbb{Q}}_{\ell}) \leq M(n-j;d_{1},\ldots,d_{r}).
\end{equation*}
Thus, \(B_{c}(V, \ell) \leq \sum_{j=0}^{n-r} M(n-j;d_{1},\ldots,d_{r})\).
\end{enumerate}
\end{theorem}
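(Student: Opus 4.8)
The plan is to deduce (i) from the general framework of \S\ref{sec:general-upper-bounds-betti} together with an exact computation of the Betti numbers of a generic member, and then to obtain (ii) from (i) by a Lefschetz induction on the cohomological degree. For (i), apply Construction~\ref{construction:betti} with $X=\mathbb{P}^{n}_{k}$, $D$ the hyperplane at infinity, $U=\mathbb{A}^{n}_{k}=X\setminus D$, and $\mathcal{L}_{i}=\mathcal{O}_{\mathbb{P}^{n}}(d_{i})$. Dehomogenization identifies $\prod_{i}\mathbb{P}\mathrm{H}^{0}(\mathbb{P}^{n};\mathcal{O}(d_{i}))$ with the space of $r$-tuples of polynomials of degree $\leq d_{i}$, and for such a tuple the variety $V^{\circ}(F_{1},\ldots,F_{r})$ is precisely $\operatorname{Spec}k[x_{1},\ldots,x_{n}]/(f_{1},\ldots,f_{r})$; the degenerate case in which some $f_{i}$ vanishes is trivial, as then $\mathrm{H}^{n-r}_{c}(V)=0$ by Lemma~\ref{lemma:cosupp}. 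Proposition~\ref{proposition:abstract-betti-bound} then gives $\dim\mathrm{H}^{n-r}_{c}(V;\overline{\mathbb{Q}}_{\ell})\leq B^{n-r}_{c}(X,D;\mathcal{L}_{1},\ldots,\mathcal{L}_{r})$, so it suffices to show this generic constant equals $M(n;d_{1},\ldots,d_{r})$. The locus of tuples for which $\overline{V}=\{F_{1}=\cdots=F_{r}=0\}\subset\mathbb{P}^{n}$ and $V_{\infty}=\overline{V}\cap D\subset\mathbb{P}^{n-1}$ are both smooth of the expected dimensions $m:=n-r$ and $m-1$ is dense open by Bertini, and it meets the dense open $\mathcal{U}$ of Construction~\ref{construction:betti}; so I may evaluate $B^{n-r}_{c}$ on such a member, where $V^{\circ}=\overline{V}\setminus V_{\infty}$ is a smooth affine complete intersection of the same multidegree.

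The exact computation on such a $V^{\circ}$ proceeds in three steps. First, from the tangent sequence $0\to T_{\overline{V}}\to T_{\mathbb{P}^{n}}|_{\overline{V}}\to\bigoplus_{i}\mathcal{O}(d_{i})|_{\overline{V}}\to 0$ and $\int_{\overline{V}}h^{m}=d_{1}\cdots d_{r}$, one gets that $\chi(\overline{V})$ equals $d_{1}\cdots d_{r}$ times the coefficient of $h^{m}$ in $\frac{(1+h)^{n+1}}{\prod_{i}(1+d_{i}h)}$, and likewise $\chi(V_{\infty})$ for the exponent $n$ and coefficient of $h^{m-1}$; using $(1+h)^{n+1}=(1+h)(1+h)^{n}$ and $\chi_{c}(V^{\circ})=\chi(\overline{V})-\chi(V_{\infty})$, a short binomial manipulation identifies $\chi_{c}(V^{\circ})$ with $(-1)^{m}N(n;d_{1},\ldots,d_{r})$. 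Second, in the Gysin sequence for the open immersion $V^{\circ}=\overline{V}\setminus V_{\infty}$, writing $\iota\colon V_{\infty}\hookrightarrow\overline{V}$, the identity $\iota^{\ast}\iota_{\ast}(-)=(-)\cup c_{1}(\mathcal{O}_{\mathbb{P}^{n}}(1)|_{V_{\infty}})$ together with hard Lefschetz on $V_{\infty}$ shows the Gysin maps $\mathrm{H}^{a}(V_{\infty})(-1)\to\mathrm{H}^{a+2}(\overline{V})$ are injective for $a\leq m-2$, and are isomorphisms for $0\leq a\leq m-3$ by weak Lefschetz for $\overline{V}\subset\mathbb{P}^{n}$ and $V_{\infty}\subset\mathbb{P}^{n-1}$ (which identify source and target with the cohomology of the ambient projective spaces); feeding this into the long exact sequence yields $\mathrm{H}^{j}(V^{\circ};\overline{\mathbb{Q}}_{\ell})=0$ for $0<j<m$, while $\mathrm{H}^{0}(V^{\circ})=\overline{\mathbb{Q}}_{\ell}$ by connectedness and $\mathrm{H}^{j}(V^{\circ})=0$ for $j>m$ by Artin vanishing. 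Third, since $\chi_{c}=\chi$ for varieties over a field, $\chi_{c}(V^{\circ})=1+(-1)^{m}\dim\mathrm{H}^{m}(V^{\circ})$, so the first step gives $\dim\mathrm{H}^{m}(V^{\circ})=N(n;d_{1},\ldots,d_{r})-(-1)^{m}=M(n;d_{1},\ldots,d_{r})$, and Poincaré duality gives $\dim\mathrm{H}^{n-r}_{c}(V^{\circ})=\dim\mathrm{H}^{m}(V^{\circ})=M(n;d_{1},\ldots,d_{r})$. (If $n=r$ then $V^{\circ}$ is $d_{1}\cdots d_{r}$ reduced points and $\dim\mathrm{H}^{0}_{c}=d_{1}\cdots d_{r}=M(r;d_{1},\ldots,d_{r})$.)

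For (ii) I argue by induction on $j$, the base case $j=0$ being (i). Given $V\subset\mathbb{A}^{n}$ with $\dim V=n-r$, the shifted constant sheaf $\mathcal{F}=\overline{\mathbb{Q}}_{\ell,V}[n-r]$ lies in ${}^{\mathrm{p}}\mathcal{D}^{\leq 0}_{V}$ and $V\hookrightarrow\mathbb{A}^{n}\hookrightarrow\mathbb{P}^{n}$ is quasi-finite, so Lemma~\ref{lemma:gysin-perv} applies: for a sufficiently general hyperplane $B$ one has $\dim\mathrm{H}^{n-r+j}_{c}(V)=\dim\mathrm{H}^{j}_{c}(V;\mathcal{F})\leq\dim\mathrm{H}^{j-2}_{c}(V\cap B;\mathcal{F}|_{V\cap B})$ for $j\geq 1$. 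Choosing $B$ so that $B\cap\mathbb{A}^{n}\cong\mathbb{A}^{n-1}$ and $W:=V\cap B$ is pure of dimension $n-r-1$ (possible since $\dim V\geq 1$), $W$ is again a set-theoretic complete intersection in $\mathbb{A}^{n-1}$ of dimension $(n-1)-r$ cut out by $r$ polynomials of degrees $\leq d_{i}$, and $\mathrm{H}^{j-2}_{c}(W;\mathcal{F}|_{W})=\mathrm{H}^{((n-1)-r)+(j-1)}_{c}(W;\overline{\mathbb{Q}}_{\ell})$; the inductive hypothesis (with ambient dimension $n-1$ and degree index $j-1$) bounds this by $M(n-j;d_{1},\ldots,d_{r})$. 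Finally $\mathrm{H}^{i}_{c}(V)=0$ for $i<n-r$ by Lemma~\ref{lemma:cosupp} together with Artin vanishing on the affine $V$, and for $i>2(n-r)$ trivially, so summing $\dim\mathrm{H}^{n-r+j}_{c}(V)\leq M(n-j;d_{1},\ldots,d_{r})$ over $0\leq j\leq n-r$ gives the stated bound on $B_{c}(V,\ell)$.

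The main obstacle is the exact generic computation in (i): extracting the precise constant $M(n;d_{1},\ldots,d_{r})$, rather than merely its order of magnitude, forces one to assemble the full Lefschetz package — weak Lefschetz into projective space, hard Lefschetz on $V_{\infty}$, and the Gysin sequence for $\overline{V}\setminus V_{\infty}$ — in order to annihilate $\mathrm{H}^{j}(V^{\circ})$ throughout the range $0<j<m$, and then to reconcile this vanishing with the Euler-characteristic bookkeeping. By contrast the Lefschetz induction of (ii) is routine once (i) is available; the only points requiring care there are that each successive hyperplane section remains a set-theoretic complete intersection of the expected dimension, and that the cohomological degree drops by exactly $2$ while the ambient dimension drops by $1$, so that after $j$ sections one lands precisely on the middle-degree cohomology group that (i) controls.
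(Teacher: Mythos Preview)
Your proof is correct and follows essentially the same approach as the paper: apply Construction~\ref{construction:betti} and Proposition~\ref{proposition:abstract-betti-bound} in the setting $(X,D,\mathcal{L}_i)=(\mathbb{P}^n,\mathbb{P}^{n-1},\mathcal{O}(d_i))$, compute the generic Betti number exactly via the Lefschetz package and Euler characteristics (this is the content of Lemma~\ref{lemma:exact-betti-for-generic-affine-complete-intersection}), and then derive (ii) from (i) by repeatedly slicing with generic hyperplanes using Lemma~\ref{lemma:gysin-perv}. The only cosmetic difference is that for the exact computation you work with $\mathrm{H}^\ast(V^\circ)$ via the Gysin sequence for $V_\infty\hookrightarrow\overline{V}$ and invoke Poincar\'e duality at the end, whereas the paper works directly with $\mathrm{H}^\ast_c(V^\circ)$ via the restriction sequence $\mathrm{H}^\ast_c(V^\circ)\to\mathrm{H}^\ast(\overline{V})\to\mathrm{H}^\ast(V_\infty)$; these are dual formulations of the same argument.
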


\begin{proof}
Let us first prove (i).  We apply
Construction~\ref{construction:betti} to the following situation:
\begin{itemize}
\item Let \(X = \mathbb{P}^{n}\) with homogeneous coordinates \([x_{0},\ldots,x_{n}]\).
\item Let \(D = \{x_{0} = 0\} \cong \mathbb{P}^{n-1}\) be the hyperplane at infinity.
\item Let \(U = X \setminus D = \mathbb{A}^{n}\) with affine coordinates \((x_{1},\ldots,x_{n})\).
\item Let \(\mathcal{L}_{i} = \mathcal{O}_{\mathbb{P}^{n}}(d_{i})\).
\end{itemize}
The open set \(\mathcal{U}\) in Construction~\ref{construction:betti}
can be taken to be the set of \(r\)-tuples \((F_{1},\ldots,F_{r})\),
where each \(F_{i}\) is homogeneous of degree \(d_{i}\), such that
\begin{itemize}
\item \(\{F_{1} = \cdots = F_{r}\} \subset \mathbb{P}^{n}\) is
smooth of dimension \(n-r\).
\item
\(D \cap \{F_{1} = \cdots = F_{r}\} \subset D \cong
\mathbb{P}^{n-1}\) is smooth of dimension \(n-r-1\).
\end{itemize}
The following lemma computes the number
\(B_{c}^{e}(\mathbb{P}^{n},\mathbb{P}^{n-1};\mathcal{O}_{\mathbb{P}^{n}}(d_{1}),\ldots,\mathcal{O}_{\mathbb{P}^{n}}(d_{r}))\).

\begin{lemma}\label{lemma:exact-betti-for-generic-affine-complete-intersection}
Let \((F_{1},\ldots,F_{r})\) be an \(r\)-tuple in the open set
\(\mathcal{U}\) defined above.  Then the following holds:
\begin{enumerate}
\item We have
\[
\dim \mathrm{H}^{n-r}_{c}(V^{\circ}(F_{1},\ldots,F_{r});\overline{\mathbb{Q}}_{\ell}) = M(n;d_{1},\ldots,d_{r}).
\]
\item If \(n-r\geq 1\), then
\(\dim \mathrm{H}^{2n-2r}_{c}(V^{\circ}(F_{1},\ldots,F_{r})) = 1\).
\item If \(e \notin \{n-r, 2n-2r\}\), then
\(\mathrm{H}^{e}_{c}(V^{\circ}(F_{1},\ldots,F_{r})) = 0\).
\item In the case $d_1=\cdots =d_r =d$, we have
\begin{equation*}
  \binom{n-1}{r-1}(d-1)^n \leq  M(n; d, \cdots, d) \leq \binom{n-1}{r-1}d^n.
\end{equation*}
\end{enumerate}
\end{lemma}

The proof of this lemma is a standard exercise, except perhaps the last part, and we have placed it at
the end of the section for the reader's convenience.

The first
inequality in Theorem \ref{theorem:affine-betti-bound-detail} then follows from
Proposition~\ref{proposition:abstract-betti-bound}.
To prove (ii) of Theorem \ref{theorem:affine-betti-bound-detail}, we apply Lemma~\ref{lemma:gysin-perv} with
\(\mathcal{F} = \mathbb{Q}_{\ell,V}[n-r]\), \(X = V\), and let \(f\)
be the inclusion
\(V \hookrightarrow \mathbb{A}^{n} \hookrightarrow \mathbb{P}^{n}\).
Note that the hypothesis \(\dim V = n-r\) ensures that the shifted
constant sheaf \(\mathbb{Q}_{\ell,V}[n-r]\) is an object of
\({}^{\mathrm{p}}\mathcal{D}^{\leq0}_{V}\).

Thus, for a general hyperplane \(A \subset \mathbb{A}^{n}\), we have
\[
\dim \mathrm{H}^{n-r+1}_{c}(V;\overline{\mathbb{Q}}_{\ell}) \leq \dim
\mathrm{H}^{n-r-1}_{c}(A \cap V; \overline{\mathbb{Q}}_{\ell}).
\]
Since \(A\) is chosen to be general, \(A \cap V\) is a closed
subscheme of an \((n-1)\)-dimensional affine space, defined by \(r\)
polynomials \(f_{1}|_{A},\ldots,f_{r}|_{A}\), with
\(\deg f_{i}|_{A} = \deg f_{i} \leq d_{i}\), and
\(\dim A \cap V = n-1-r\).  Thus, by the first inequality, we obtain
\[
\dim \mathrm{H}^{n-r-1}_{c}(A \cap V; \overline{\mathbb{Q}}_{\ell})
\leq M(n-1;d_{1},\ldots,d_{r}).
\]
Repeating this argument for \(A \cap V\) and its higher-codimensional
linear subspace sections, we complete the proof of the second inequality.
\end{proof}

\begin{example}
Consider the case when \(r = 1\), i.e., \(V\) is a hypersurface in
\(\mathbb{A}^{n}\) defined by a polynomial \(f\) of degree \(\leq d\).
In this case, Theorem~\ref{theorem:affine-betti-bound-detail}
specializes to \(\dim \mathrm{H}^{n-1+j}_{c}(V) \leq (d-1)^{n-j}\) for \( 0\leq j\leq n-2\) and \(\dim \mathrm{H}^{n-1+n-1}_{c}(V) \leq d\).
In particular,
\begin{equation*}
B_{c}(V, \ell) = \sum_{j=0}^{n-1} \dim \mathrm{H}^{n-1+j}_{c}(V)\leq d+ \sum_{j=2}^{n} (d-1)^{j}\leq  d^n.
\end{equation*}
\end{example}

For $r\geq 2$, the formula for \(M(n;d_{1},\ldots,d_{r})\) is a bit messy when the integers $d_i$'s are not all the same.  In the case $d_1=\cdots d_r=d$, much simpler-looking upper and lower bounds for
\[
M(n;d,\ldots,d)
\]
are given in Lemma \ref{lemma:exact-betti-for-generic-affine-complete-intersection}.  As a consequence, we deduce.

\begin{corollary}\label{corollary:middle-cleaner-bound}
Let \(1\leq r\leq n\) be integers.  Suppose
\(f_{1},\ldots, f_{r} \in k[x_{1},\ldots,x_{n}]\) are polynomials
satisfying \(\deg f_{i} \leq d\).  Let
\(V = \operatorname{Spec}k[x_{1},\ldots,x_{n}]/(f_{1},\ldots,f_{r})\).
Then:
\begin{enumerate}
\item We have
\begin{equation*}
\dim \mathrm{H}_{c}^{n-r}(V;\overline{\mathbb{Q}}_{\ell}) \leq
\binom{n-1}{r-1}d^{n}.
\end{equation*}
\item Assume that \(\dim V = n-r\).  Then for \(0\leq j \leq n-r\), we
have
\begin{equation*}
\dim \mathrm{H}^{n-r+j}_{c}(V;\mathbb{Q}_{\ell}) \leq \binom{n-j-1}{r-1}d^{n-j}.
\end{equation*}
Therefore, \(B_{c}(V, \ell) \leq \binom{n-1}{r-1}(d+1)^{n}\).
\item For $1\leq r\leq n$, we have
\begin{equation*}
\binom{n-1}{r-1}(d-1)^{n} \leq M(n;d,\cdots, d) \leq B_{c}^{\mathrm{ci}}(n, r; d))\leq \binom{n-1}{r-1}(d+1)^{n}.
\end{equation*}
\label{item:complete-intersection-total}
\end{enumerate}
\end{corollary}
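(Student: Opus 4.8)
The plan is to deduce all three parts directly from Theorem~\ref{theorem:affine-betti-bound-detail} together with the numerical estimates for $M(n;d,\ldots,d)$ recorded in Lemma~\ref{lemma:exact-betti-for-generic-affine-complete-intersection}(iv); that lemma is the only place where genuine arithmetic is needed, and everything else is packaging. For part~(i), specialize Theorem~\ref{theorem:affine-betti-bound-detail}(i) to $d_{1}=\cdots=d_{r}=d$ to obtain $\dim \mathrm{H}^{n-r}_{c}(V;\overline{\mathbb{Q}}_{\ell}) \leq M(n;d,\ldots,d)$, and then invoke $M(n;d,\ldots,d) \leq \binom{n-1}{r-1}d^{n}$ from Lemma~\ref{lemma:exact-betti-for-generic-affine-complete-intersection}(iv).

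For part~(ii), assume $\dim V = n-r$. Theorem~\ref{theorem:affine-betti-bound-detail}(ii) with all $d_{i}=d$ gives, for each $0\leq j\leq n-r$,
\[
\dim \mathrm{H}^{n-r+j}_{c}(V;\overline{\mathbb{Q}}_{\ell}) \leq M(n-j;d,\ldots,d) \leq \binom{n-j-1}{r-1}d^{n-j},
\]
the second inequality being Lemma~\ref{lemma:exact-betti-for-generic-affine-complete-intersection}(iv) with $n$ replaced by $n-j$, which is legitimate since $1\leq r\leq n-j$ throughout. Summing over $j$ and re-indexing by $m=n-j$ yields $B_{c}(V,\ell) \leq \sum_{m=r}^{n} \binom{m-1}{r-1} d^{m}$. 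To finish I would bound $\sum_{m=r}^{n} \binom{m-1}{r-1} d^{m} \leq \binom{n-1}{r-1}\sum_{m=0}^{n}\binom{n}{m}d^{m} = \binom{n-1}{r-1}(d+1)^{n}$, using that $\binom{m-1}{r-1}\leq\binom{n-1}{r-1}$ for all $r\leq m\leq n$ (the function is nondecreasing in $m$) together with $\binom{n}{m}\geq 1$.

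For part~(iii), the lower bound $\binom{n-1}{r-1}(d-1)^{n}\leq M(n;d,\ldots,d)$ is again Lemma~\ref{lemma:exact-betti-for-generic-affine-complete-intersection}(iv). The inequality $M(n;d,\ldots,d)\leq B_{c}^{\mathrm{ci}}(n,r;d)$ follows from Lemma~\ref{lemma:exact-betti-for-generic-affine-complete-intersection}(i): for a sufficiently general $r$-tuple in the open set $\mathcal{U}$ used in the proof of Theorem~\ref{theorem:affine-betti-bound-detail}, the affine piece $V^{\circ}=V^{\circ}(F_{1},\ldots,F_{r})\subset\mathbb{A}^{n}$ is cut out by $r$ polynomials of degree $\leq d$, has dimension exactly $n-r$, and satisfies $\dim \mathrm{H}^{n-r}_{c}(V^{\circ};\overline{\mathbb{Q}}_{\ell})=M(n;d,\ldots,d)$, so $M(n;d,\ldots,d)\leq B_{c}(V^{\circ},\ell)\leq B_{c}^{\mathrm{ci}}(n,r;d)$. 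The upper bound $B_{c}^{\mathrm{ci}}(n,r;d)\leq\binom{n-1}{r-1}(d+1)^{n}$ is immediate from part~(ii), which already bounds $B_{c}(V,\ell)$ by $\binom{n-1}{r-1}(d+1)^{n}$ for every $V$ of dimension $n-r$ in the family, hence bounds the supremum. The only points to watch are that the shifted uses of Lemma~\ref{lemma:exact-betti-for-generic-affine-complete-intersection}(iv) respect its hypothesis $1\leq r\leq n-j$ and that $V^{\circ}$ genuinely lies in the family defining $B_{c}^{\mathrm{ci}}$ (it does, having the required dimension $n-r$); there is no serious obstacle, since the corollary is a clean repackaging of results already established.
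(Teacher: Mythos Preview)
Your proof is correct and follows exactly the route the paper intends: the corollary is stated in the paper as a direct consequence of Theorem~\ref{theorem:affine-betti-bound-detail} together with the bounds on $M(n;d,\ldots,d)$ in Lemma~\ref{lemma:exact-betti-for-generic-affine-complete-intersection}(iv), and that is precisely how you argue. Your summation step in part~(ii), bounding $\sum_{m=r}^{n}\binom{m-1}{r-1}d^{m}$ by $\binom{n-1}{r-1}(d+1)^{n}$ via $\binom{m-1}{r-1}\leq\binom{n-1}{r-1}$ and $1\leq\binom{n}{m}$, is a clean way to handle the only computation the paper leaves implicit.
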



\begin{proof}[Proof of Lemma~\ref{lemma:exact-betti-for-generic-affine-complete-intersection}]
If \(n = r\), then the result follows directly from Bézout's theorem.

We now assume \(n > r\), that is, $n-r>0$.  Recall the description of \(\mathcal{U}\) in the
present situation.  Let \((F_{1},\ldots,F_{r}) \in \mathcal{U}\),
\(Z = \{F_{1} = \cdots = F_{r} = 0\} \subset \mathbb{P}^{n}\),
\(Z^{\prime} = Z \cap D\), and
\(V = V^{\circ}(F_{1},\ldots,F_{r}) \subset \mathbb{A}^{n}\).  Then we
know that both \(Z\) and \(V\) are connected, and we have
\((-1)^{n-r}\chi(V) = (-1)^{n-r}(\chi(Z) - \chi(Z^{\prime}))\), where
\(\chi(Y)\) denotes the Euler characteristic of \(Y\):
\begin{equation*}
\chi(Y) = \sum_{e} (-1)^{e} \dim \mathrm{H}^{e}_{c}(Y;\overline{\mathbb{Q}}_{\ell}).
\end{equation*}

We claim that \(\mathrm{H}^{e}_{c}(V) = 0\) unless \(e = n-r\) or \(e = 2(n-r)\).
If this assertion holds, then
\begin{align*}
  (-1)^{n-r}\chi(V)
  &= \dim \mathrm{H}^{n-r}_{c}(V;\overline{\mathbb{Q}}_{\ell}) + (-1)^{n-r}\dim \mathrm{H}^{2(n-r)}_{c}(V;\overline{\mathbb{Q}}_{\ell}) \\
  &= \dim \mathrm{H}^{n-r}_{c}(V;\overline{\mathbb{Q}}_{\ell}) + (-1)^{n-r},
\end{align*}
since \(V\) is connected.  Therefore, once the assertion is confirmed,
we need only compute the Euler characteristics to prove the lemma.

\medskip%
Let us prove the claim.  If \(e < n-r\), the claim follows from the
Artin vanishing theorem since \(V\) is smooth.  If \(n-r = 1\), the
claim is evident.  Now, assume \(n-r \geq 2\) and \(e > n-r\).  In
this case, both \(Z\) and \(Z^{\prime}\) are smooth, connected
complete intersections in projective spaces.  For \(\epsilon \geq 1\),
the restriction map
\[
\mathrm{H}^{n-r-1+\epsilon}(Z;\overline{\mathbb{Q}}_{\ell}) \to \mathrm{H}^{n-r-1+\epsilon}(Z^{\prime};\overline{\mathbb{Q}}_{\ell})
\]
fits into the following commutative diagram:
\begin{equation*}
\begin{tikzcd}
\mathrm{H}^{n-r-1-\epsilon}(Z;\overline{\mathbb{Q}}_{\ell}) \ar[sloped]{d}{\sim}
\ar[r,"{\smile h^{\epsilon}}"] & \mathrm{H}^{n-r-1+\epsilon}(Z;\overline{\mathbb{Q}}_{\ell})\ar{d} \\
\mathrm{H}^{n-r-1-\epsilon}(Z^{\prime};\overline{\mathbb{Q}}_{\ell}) \ar[r,,"\sim"',"{\smile h^{\epsilon}}"] & \mathrm{H}^{n-r-1+\epsilon}(Z^{\prime};\overline{\mathbb{Q}}_{\ell})
\end{tikzcd}
\end{equation*}

In the diagram, \(h\) denotes the Chern class of the invertible sheaf
\(\mathcal{O}_{\mathbb{P}^{n}}(1)\).  By the Hard Lefschetz theorem,
the lower horizontal arrow is an isomorphism.  The weak Lefschetz
theorem implies that the left vertical arrow is also an isomorphism.
Therefore, the right vertical arrow must be surjective.  Consequently,
for \(e \geq n-r\), the restriction maps
\begin{equation*}
\mathrm{H}^{e}(Z;\overline{\mathbb{Q}}_{\ell}) \to \mathrm{H}^{e}(Z^{\prime};\overline{\mathbb{Q}}_{\ell})
\end{equation*}
are all surjective.  Hence we obtain short exact sequences
\begin{equation*}
0 \to \mathrm{H}^{e}_{c}(V;\overline{\mathbb{Q}}_{\ell}) \to \mathrm{H}^{e}(Z;\overline{\mathbb{Q}}_{\ell})
\to \mathrm{H}^{e}(Z^{\prime};\overline{\mathbb{Q}}_{\ell}) \to 0.
\end{equation*}

It is well-known that for a smooth complete intersection \(Z\) in
\(\mathbb{P}^{n}\), and for any integer \(e\) such that
\(e \neq \dim Z\) and \(0 \leq e \leq 2\dim Z\), the cohomology group
\(\mathrm{H}^{e}(Z;\overline{\mathbb{Q}}_{\ell})\) is one-dimensional
if \(e\) is even, and zero if \(e\) is odd.  Thus, the exact sequence
implies that for \(e > n-r\), we have
\(\mathrm{H}_{c}^{e}(V;\overline{\mathbb{Q}}_{\ell}) = 0\), except
when \(e = 2(n-r)\), in which case it is one-dimensional.  This
completes the proof of the claim.

Now we turn to computing the Euler characteristic numbers.
By a Chern class argument, we find that
\begin{equation*}
\dim \mathrm{H}^{n-r}_{c}(V;\overline{\mathbb{Q}}_{\ell}) + (-1)^{n-r}=(-1)^{n-r}\chi(V) = (-1)^{n-r}[\chi(Z) - \chi(Z^{\prime})]
\end{equation*}
is \((-1)^{n-r}d_{1}\cdots d_{r}\) times the coefficient of
\(h^{n-r}\) in the power series expansion of
\begin{equation*}
\frac{(1+h)^{n+1}}{(1+d_{1}h)\cdots (1+d_{r}h)} - \frac{h(1+h)^{n}}{(1+d_{1}h)\cdots (1+d_{r}h)} = \frac{(1+h)^{n}}{(1+d_{1}h)\cdots (1+d_{r}h)}.
\end{equation*}
Equivalently, $\dim \mathrm{H}^{n-r}_{c}(V;\overline{\mathbb{Q}}_{\ell}) + (-1)^{n-r}$ is the coefficient of $h^{n-r}$ in the power
series expansion of
\begin{equation*}
  \frac{d_1\cdots d_r(1-h)^{n}}{(1-d_{1}h)\cdots (1-d_{r}h)}.
\end{equation*}
This coefficient is exactly $N(n;d_1,\cdots, d_r)=M(n;d_{1},\ldots,d_{r}) +(-1)^{n-r}$.
It follows that
\begin{equation*}
\dim \mathrm{H}^{n-r}_{c}(V;\overline{\mathbb{Q}}_{\ell})
=M(n;d_{1},\ldots,d_{r}).
\end{equation*}

In the special case $d_1=\cdots = d_r=d$,
$M(n;d,\ldots,d)+(-1)^{n-r}$
is the coefficient of $h^{n-r}$ in the power series expansion of
\begin{equation*}
  \frac{d^r(1-h)^{n}}{(1-dh)^r} =\frac{1}{d^{n-r}}\frac{((d-1) +(1-dh))^{n}}{(1-dh)^r} =\frac{1}{d^{n-r}}\sum_{i=0}^n \binom{n}i(d-1)^{n-i}(1-dh)^{i-r}.
\end{equation*}
This coefficient is given by
\begin{equation*}
\sum_{i=0}^n \binom{n}i(d-1)^{n-i}(-1)^{n-r}\binom{i-r}{n-r}
=\sum_{i=0}^{r-1} \binom{n}{i}(d-1)^{n-i}\binom{n-i-1}{n-r} +(-1)^{n-r}.
\end{equation*}
It follows that
\begin{equation*}
M(n;d,\ldots,d) = \sum_{i=0}^{r-1} \binom{n}{i}(d-1)^{n-i}\binom{n-i-1}{r-i-1}
\geq \binom{n-1}{r-1}(d-1)^n.
\end{equation*}
This proves the lower bound for $M(n;d,\ldots,d)$. For the upper bound, one checks that
\begin{equation*}
M(n;d,\ldots,d)
\leq \binom{n-1}{n-r}\sum_{i=0}^{r-1} \binom{n}{i} (d-1)^{n-i}
\leq \binom{n-1}{r-1}d^n.
\end{equation*}
The upper bound for $M(n;d,\ldots,d)$ is proved.
\end{proof}

\subsection{Projective complete intersections}
\label{sec:proj-complete-intersection}

Using Theorem~\ref{theorem:affine-betti-bound-detail}, we can obtain
bounds for Betti numbers for a complete intersection in a projective
space as well.

\begin{proposition}
\label{proposition:projective-bound}
Let \(F_{1},\ldots,F_{r} \in k[x_{0},\ldots,x_{n}]\) be homogeneous
polynomials.  Let
\(X = \{F_{1} = \cdots = F_{r}=0\}\subset \mathbb{P}^{n}\).  Assume
that
\begin{itemize}
\item \(\deg F_{i} \leq d\), and
\item \(\dim X = n-r \geq 1\).
\end{itemize}
Then we have
\begin{enumerate}
\item \label{item1}
\(\dim \mathrm{H}^{j}(X;\overline{\mathbb{Q}}_{\ell}) = \dim \mathrm{H}^{j}(\mathbb{P}^{n};\overline{\mathbb{Q}}_{\ell})\)
for \(j=0,1,\ldots,n-r-1\), and
\item \label{item2} \(\dim \mathrm{H}^{n-r+j}(X;\overline{\mathbb{Q}}_{\ell}) \leq  \binom{n-1-j}{r-1}[d^{n-j} + d^{n-j-1} + \cdots + d^{r}]\),
for \(0 \leq j \leq n-r\).
\end{enumerate}
\end{proposition}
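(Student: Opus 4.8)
The two parts call for different tools: part~(i) is the perverse weak Lefschetz theorem, while part~(ii) comes from stratifying $\mathbb{P}^{n}$ into affine cells along a general flag and feeding each cell into Theorem~\ref{theorem:affine-betti-bound-detail}. For part~(i), note first that since $\dim X = n-r\geq 0$ the variety $X$ is nonempty, so every $F_{i}$ is a nonconstant form, $D_{i}:=\{F_{i}=0\}$ is an ample effective Cartier divisor on $\mathbb{P}^{n}$, and $X=D_{1}\cap\cdots\cap D_{r}$. The shifted constant sheaf $\overline{\mathbb{Q}}_{\ell,\mathbb{P}^{n}}[n]$ is perverse, hence lies in ${}^{\mathrm{p}}\mathcal{D}^{\geq 0}_{\mathbb{P}^{n}}$, so Lemma~\ref{lemma:weak-lefschetz} shows that $\mathrm{H}^{j-r}(\mathbb{P}^{n};\overline{\mathbb{Q}}_{\ell,\mathbb{P}^{n}}[n])\to\mathrm{H}^{j-r}(X;\overline{\mathbb{Q}}_{\ell,X}[n])$ is bijective for $j<0$ and injective for $j=0$. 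Reindexing by $m=j-r+n$, the restriction $\mathrm{H}^{m}(\mathbb{P}^{n};\overline{\mathbb{Q}}_{\ell})\to\mathrm{H}^{m}(X;\overline{\mathbb{Q}}_{\ell})$ is an isomorphism for $m\leq n-r-1$ and injective for $m=n-r$; since $\overline{\mathbb{Q}}_{\ell}$-cohomology depends only on the reduced structure of $X$, this is exactly part~(i) (and a little more).

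For part~(ii), I would fix a general complete flag $\mathbb{P}^{0}\subset\mathbb{P}^{1}\subset\cdots\subset\mathbb{P}^{n}$ of linear subspaces, set $X_{i}:=X\cap\mathbb{P}^{i}$, and let $W_{i}:=X_{i}\setminus X_{i-1}=X\cap(\mathbb{P}^{i}\setminus\mathbb{P}^{i-1})$ be the corresponding affine stratum. By genericity $W_{i}=\emptyset$ for $i<r$, while for $i\geq r$ one has $\dim W_{i}=i-r$ and $W_{i}$ is the zero locus inside $\mathbb{P}^{i}\setminus\mathbb{P}^{i-1}\cong\mathbb{A}^{i}$ of $r$ polynomials of degree $\leq d$ (restrict $F_{1},\ldots,F_{r}$ to $\mathbb{P}^{i}$ and dehomogenize); as $\overline{\mathbb{Q}}_{\ell}$-cohomology only sees the underlying reduced scheme, Theorem~\ref{theorem:affine-betti-bound-detail} applies to each $W_{i}$. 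Since $X$ is proper, $\mathrm{H}^{M}(X)=\mathrm{H}^{M}_{c}(X)$, and iterating the long exact sequences $\cdots\to\mathrm{H}^{M}_{c}(W_{i})\to\mathrm{H}^{M}_{c}(X_{i})\to\mathrm{H}^{M}_{c}(X_{i-1})\to\cdots$ up the filtration $X_{r}\subset X_{r+1}\subset\cdots\subset X_{n}=X$ gives, for every $M$,
\[
\dim\mathrm{H}^{M}(X)\;\leq\;\sum_{i=r}^{n}\dim\mathrm{H}^{M}_{c}(W_{i}).
\]

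Now fix $M=n-r+j$ with $0\leq j\leq n-r$. Because $i\leq n$ forces $M\geq n-r\geq i-r=\dim W_{i}$, while $\mathrm{H}^{M}_{c}(W_{i})=0$ as soon as $M>2(i-r)$, only the indices with $2i\geq n+r+j$ contribute; for such $i$ the shift $j_{i}':=M-(i-r)=n+j-i$ lies in $[0,i-r]$, and Theorem~\ref{theorem:affine-betti-bound-detail}(ii) applied to $W_{i}\subset\mathbb{A}^{i}$ together with Lemma~\ref{lemma:exact-betti-for-generic-affine-complete-intersection}(iv) yields $\dim\mathrm{H}^{M}_{c}(W_{i})\leq M(2i-n-j;d,\ldots,d)\leq\binom{2i-n-j-1}{r-1}d^{2i-n-j}$. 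Substituting $k=2i-n-j$ (so $k$ runs through the integers in $[r,n-j]$ of the parity of $n+j$) and bounding $\binom{k-1}{r-1}\leq\binom{n-1-j}{r-1}$, valid since $k\leq n-j$, I obtain
\[
\dim\mathrm{H}^{n-r+j}(X)\;\leq\;\sum_{k}\binom{k-1}{r-1}d^{k}\;\leq\;\binom{n-1-j}{r-1}\sum_{k=r}^{n-j}d^{k}\;=\;\binom{n-1-j}{r-1}\bigl(d^{n-j}+d^{n-j-1}+\cdots+d^{r}\bigr),
\]
which is part~(ii).

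All the individual steps are routine; the part that needs care is the combinatorial bookkeeping in part~(ii): that a general flag really makes every $W_{i}$ an affine set-theoretic complete intersection of the expected dimension $i-r$ (so that Theorem~\ref{theorem:affine-betti-bound-detail}(ii) applies), that the shifted index $j_{i}'$ lands in the admissible range $[0,i-r]$ of Theorem~\ref{theorem:affine-betti-bound-detail}(ii) and Lemma~\ref{lemma:exact-betti-for-generic-affine-complete-intersection}(iv) exactly when $\mathrm{H}^{M}_{c}(W_{i})$ can be nonzero, and that after the substitution $k=2i-n-j$ the truncated geometric sum collapses to $\binom{n-1-j}{r-1}(d^{n-j}+\cdots+d^{r})$ once every binomial coefficient is crudely replaced by its maximum $\binom{n-1-j}{r-1}$.
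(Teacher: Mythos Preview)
Your proof is correct and follows essentially the same strategy as the paper's. For part~(i) you invoke Lemma~\ref{lemma:weak-lefschetz} in one step with all $r$ ample divisors at once, while the paper applies it one divisor at a time; since the lemma itself is proved by that very induction, there is no real difference. For part~(ii) the paper argues by induction on $\dim X$, peeling off a single general hyperplane $A$ and bounding $\dim\mathrm{H}^{n-r+j}(X)$ by $\dim\mathrm{H}^{n-r+j}_{c}(X\setminus(X\cap A))+\dim\mathrm{H}^{n-r+j}(X\cap A)$; your full-flag stratification is exactly this induction unrolled, and the resulting sum $\sum_k\binom{k-1}{r-1}d^k$ over $k\equiv n+j\pmod 2$ is the same sum one gets by iterating the paper's recursion, after which both arguments crudely replace every binomial by $\binom{n-1-j}{r-1}$.
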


\begin{proof}
Applying Lemma~\ref{lemma:weak-lefschetz} to the pairs
\((Y,D) = (\mathbb{P}^{n},\{F_{1} = 0\})\),
\((\{F_{1}=0\}, \{F_{1}=F_{2}=0\})\), \(\ldots\), and eventually to
\((\{F_{1}=\cdots=F_{r-1}=0\},X)\), we get (\ref{item1}).

We prove (\ref{item2}) by induction on the dimension of \(X\).  The
base case occurs when \(X\) is a zero-dimensional complete
intersection, i.e., when \(n = r\).  In this case, we have
\(\mathrm{H}^{0}(X;\overline{\mathbb{Q}}_{\ell}) \leq d^{n}\), which
is stronger than (\ref{item2}).

We now establish the induction step and assume $n>r$.  Let \(A\) be a sufficiently
general hyperplane in \(\mathbb{P}^{n}\).  Then \(X \cap A\) is a
complete intersection in the projective space \(A\) of one dimension
lower.  The complement \(V = X \setminus (X \cap A)\) is a complete
intersection in the affine space
\(\mathbb{A}^{n} = \mathbb{P}^{n} \setminus A\).  We obtain the
following exact sequence:
\begin{equation}
\mathrm{H}^{m-1}(X \cap A;\overline{\mathbb{Q}}_{\ell}) \to \mathrm{H}^{m}_{c}(V;\overline{\mathbb{Q}}_{\ell}) \to
\mathrm{H}^{m}(X;\overline{\mathbb{Q}}_{\ell}) \to \mathrm{H}^{m}(X\cap A;\overline{\mathbb{Q}}_{\ell}).
\end{equation}
It follows from Corollary~\ref{corollary:middle-cleaner-bound} that $\dim \mathrm{H}^{n-r+j}(X;\overline{\mathbb{Q}}_{\ell})$ is
\begin{align*}
  &\leq \dim \mathrm{H}^{n-r+j}_{c}(V;\overline{\mathbb{Q}}_{\ell}) + \dim \mathrm{H}^{(n-1-r)+(j+1)}(X \cap A;\overline{\mathbb{Q}}_{\ell}) \\
  &\leq \binom{n-1-j}{r-1}d^{n-j} + \binom{n-j-3}{r-1}\left\{d^{n-1-(j+1)} + \cdots + d^{r}\right\} \\
  &\leq  \binom{n-1-j}{r-1} \left\{ d^{n-j} + d^{n-j-1} + \cdots + d^{r} \right\}.
\end{align*}
This completes the induction.
\end{proof}

\begin{remark*}
Maxim, P{\u a}nescu, and Tib{\u a}r \cite[Corollary~1.5]{maxim-paunescu-tibar:vanishing-cohomology-and-betti-bounds-for-projective-hypersurfaces}
showed that for any complex projective hypersurface \(X\) in
\(\mathbb{P}^{n}\) of degree \(d\), we have
\begin{equation*}
\dim \mathrm{H}^{n-1}(X;\mathbb{Q}) \leq \frac{(d-1)^{n+1} + (-1)^{n}}{d} + \frac{3(-1)^{n-1}+1}{2},
\end{equation*}
which provides a more accurate bound than
Proposition~\ref{proposition:projective-bound}(\ref{item2}) when $j=0$, $r=1$.
\end{remark*}

\subsection{Application: Lang--Weil type estimates}
The bounds for the compactly supported Betti numbers immediately imply
some effective Lang--Weil type estimates.  We begin with the affine
version.

\begin{proposition}\label{proposition:lang-weil-affine-complete-intersection}
Let \(V\) be a geometrically irreducible closed subvariety of
\(\mathbb{A}^{n}_{\mathbb{F}_{q}}\) defined by \(r\) polynomials of
degree \(\leq d\).  Assume that \(\dim V = n-r\).  Then
\begin{equation*}
|\operatorname{Card} V(\mathbb{F}_{q})  - q^{n-r}| \leq \sum_{j=0}^{n-r-1} \binom{n-j-1}{r-1} d^{n-j} q^{\frac{n-r+j}{2}} < \binom{n-1}{r-1}(d+1)^{n}q^{n-r-\frac{1}{2}}.
\end{equation*}
\end{proposition}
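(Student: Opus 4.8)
The plan is to combine the Grothendieck--Lefschetz trace formula with Deligne's Weil~II bounds and the cohomological estimates already established in this section. Write $V_{\bar{\mathbb{F}}_q}$ for the base change to an algebraic closure. Since $V$ is geometrically irreducible of dimension $n-r$, we have $\mathrm{H}^{2(n-r)}_c(V_{\bar{\mathbb{F}}_q};\overline{\mathbb{Q}}_\ell) \cong \overline{\mathbb{Q}}_\ell(-(n-r))$, on which Frobenius acts by $q^{n-r}$, and $\mathrm{H}^i_c = 0$ for $i > 2(n-r)$. The trace formula gives
\[
\operatorname{Card} V(\mathbb{F}_q) = \sum_{i=0}^{2(n-r)} (-1)^i \operatorname{Tr}(F \mid \mathrm{H}^i_c(V_{\bar{\mathbb{F}}_q};\overline{\mathbb{Q}}_\ell)),
\]
so that isolating the top term yields
\[
\operatorname{Card} V(\mathbb{F}_q) - q^{n-r} = \sum_{i=0}^{2(n-r)-1} (-1)^i \operatorname{Tr}(F \mid \mathrm{H}^i_c(V_{\bar{\mathbb{F}}_q};\overline{\mathbb{Q}}_\ell)).
\]

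Next I would apply Deligne's fundamental estimate (Weil~II): every eigenvalue of $F$ on $\mathrm{H}^i_c(V_{\bar{\mathbb{F}}_q};\overline{\mathbb{Q}}_\ell)$ has absolute value at most $q^{i/2}$. Writing $i = n-r+j$ for $0 \le j \le n-r-1$ in the remaining range and bounding each trace by (dimension of the cohomology group) times $q^{(n-r+j)/2}$, we get
\[
|\operatorname{Card} V(\mathbb{F}_q) - q^{n-r}| \le \sum_{j=0}^{n-r-1} \dim \mathrm{H}^{n-r+j}_c(V_{\bar{\mathbb{F}}_q};\overline{\mathbb{Q}}_\ell)\, q^{\frac{n-r+j}{2}},
\]
where I have also used Artin vanishing (or rather the fact that $\mathrm{H}^i_c(V;\overline{\mathbb{Q}}_\ell) = 0$ for $i < n-r$, since $V$ is an affine set-theoretic complete intersection of dimension $n-r$, so $\mathbb{Q}_{\ell,V}[n-r]$ is perverse and $\mathrm{H}^i_c$ vanishes below the middle by Artin's theorem for $\mathbb{D}$ of a perverse sheaf) to discard the terms with $i < n-r$. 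Now I invoke Corollary~\ref{corollary:middle-cleaner-bound}(ii), which gives $\dim \mathrm{H}^{n-r+j}_c(V;\overline{\mathbb{Q}}_\ell) \le \binom{n-j-1}{r-1} d^{n-j}$, producing the first displayed inequality.

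For the final strict inequality, I would bound crudely: each factor $q^{(n-r+j)/2} \le q^{(n-r-1)/2 + (n-r-1)/2}\cdot q^{\text{?}}$—more simply, $q^{(n-r+j)/2} \le q^{n-r-\frac12}\cdot q^{(j - (n-r-1))/2} \le q^{n-r-\frac12}$ for $j \le n-r-1$ (the exponent $(j-(n-r-1))/2 \le 0$), so the whole sum is at most $q^{n-r-\frac12}\sum_{j=0}^{n-r-1}\binom{n-j-1}{r-1}d^{n-j}$; then $\sum_{j=0}^{n-r-1}\binom{n-j-1}{r-1}d^{n-j} < \binom{n-1}{r-1}\sum_{j\ge 0} d^{n-j} < \binom{n-1}{r-1}d^n\cdot\frac{1}{1-1/d} \le \binom{n-1}{r-1}(d+1)^n$ after checking the elementary inequality $\frac{d}{d-1} \le \big(1+\tfrac1d\big)^n$ (which holds since $n \ge r \ge 1$ and the right side is already $\ge 1 + n/d \ge \frac{d}{d-1}$ for $d \ge 2$; the case $d=1$ is trivial as $V$ is then linear). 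The \textbf{main obstacle} is purely bookkeeping: making sure the Betti-number bound from Corollary~\ref{corollary:middle-cleaner-bound} is applied in the correct cohomological degree and that the geometric irreducibility hypothesis is genuinely what forces the top cohomology to be one-dimensional with Frobenius eigenvalue exactly $q^{n-r}$ — there is no serious analytic difficulty once Weil~II and the earlier perverse-sheaf estimates are in hand.
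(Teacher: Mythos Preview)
Your proof is correct and follows essentially the same approach as the paper's: trace formula, geometric irreducibility to pin down the top cohomology, Deligne's weight bound, and then Corollary~\ref{corollary:middle-cleaner-bound}(ii) for the Betti estimates. You supply more detail than the paper does on two points the paper leaves implicit---the vanishing of $\mathrm{H}^{i}_{c}$ for $i<n-r$ (via perversity of $\overline{\mathbb{Q}}_{\ell,V}[n-r]$ and Artin vanishing) and the elementary numerical manipulation giving the final strict inequality---but the skeleton is identical.
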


\begin{proof}
Fix an algebraic closure \(\overline{\mathbb{F}}_{q}\) of
\(\mathbb{F}_{q}\).  Let
\(V_{\overline{\mathbb{F}}_{q}} = V \otimes_{\mathbb{F}_{q}} \overline{\mathbb{F}}_{q}\).  By
Grothendieck's trace formula, we have
\begin{equation*}
\operatorname{Card}V(\mathbb{F}_{q}) =
\sum_{j=0}^{n-r} (-1)^{j} \mathrm{Tr}(F|\mathrm{H}^{n-r+j}_{c}(V_{\overline{\mathbb{F}}_{q}};\overline{\mathbb{Q}}_{\ell})).
\end{equation*}
Since \(V\) is assumed to be geometrically irreducible,
\(\mathrm{H}^{2n-2r}_{c}(V_{\overline{\mathbb{F}}_{q}};\overline{\mathbb{Q}}_{\ell})\simeq\overline{\mathbb{Q}}_{\ell}(-n+r)\).
Therefore, the trace formula can be written as
\begin{equation*}
\operatorname{Card}V(\mathbb{F}_{q})-q^{n-r} =
\sum_{j=0}^{n-r-1}(-1)^{j}\mathrm{Tr}(F|\mathrm{H}^{n-r+j}_{c}(V_{\overline{\mathbb{F}}_{q}};\overline{\mathbb{Q}}_{\ell})).
\end{equation*}
By Deligne's fundamental theorem of weights, the Frobenius eigenvalues
of \(\mathrm{H}^{n-r+j}_{c}(V_{\overline{\mathbb{F}}_{q}};\overline{\mathbb{Q}}_{\ell})\) are Weil
\(q\)-numbers of weight \(\leq n-r+j\).  Therefore,
\begin{equation*}
|\operatorname{Card}V(\mathbb{F}_{q})-q^{n-r}| \leq
\sum_{j=0}^{n-r-1}|\mathrm{Tr}(F|\mathrm{H}^{n-r+j}_{c}(V_{\overline{\mathbb{F}}_{q}};\overline{\mathbb{Q}}_{\ell}))|.
\end{equation*}
By Corollary~\ref{corollary:middle-cleaner-bound}, for each \(j\),
\(F|_{\mathrm{H}^{n-r+j}_{c}(V;\overline{\mathbb{Q}}_{\ell})}\) has at
most \(\binom{n-j-1}{r-1}d^{n-j}\) eigenvalues.  The desired estimates then
follow.
\end{proof}

The estimates for the Betti numbers of a projective complete
intersection give the following more refined effective version of the classical Lang--Weil estimate
\cite{lang-weil:number-of-points-of-varieties-in-finite-fields}.

\begin{proposition}%
Let \(q\) be a power of \(p\).  Let \(X\) be the Zariski closed subset
in \(\mathbb{P}^{n}_{\mathbb{F}_{q}}\) defined by the common zeros of
polynomials
\(F_{1},\ldots,F_{r} \in \mathbb{F}_{q}[x_{1},\ldots,x_{n}]\).
Suppose that \(\dim X = n - r\geq 2\), \(X\) is geometrically
irreducible, and \(\deg F_{i} \leq d\).  Let \(\epsilon\) be the
dimension of the singular locus of \(X\).  Then
\begin{equation*}
\left| \operatorname{Card}X(\mathbb{F}_{q}) - \sum_{j=0}^{n-r}q^{j} \right|
<\binom{n-1}{r-1}(d+2)^{n}q^{\frac{n-r+\epsilon+1}{2}}.
\end{equation*}
\end{proposition}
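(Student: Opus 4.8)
The plan is to feed the Grothendieck--Lefschetz trace formula
\(\operatorname{Card}X(\mathbb{F}_q)=\sum_{i=0}^{2m}(-1)^i\operatorname{Tr}(F|\mathrm{H}^i(X_{\overline{\mathbb{F}}_q};\overline{\mathbb{Q}}_\ell))\), with \(m=n-r\), and to pin down the Galois module \(\mathrm{H}^i(X)\) as precisely as possible in three ranges of degrees. In the low range \(i\le m-1\), the weak Lefschetz statement Proposition~\ref{proposition:projective-bound}(\ref{item1}) already identifies \(\mathrm{H}^i(X)\) with \(\mathrm{H}^i(\mathbb{P}^n)\), so this range contributes exactly \(\sum_{0\le 2k\le m-1}q^k\) with no error term. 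The top group \(\mathrm{H}^{2m}(X)\cong\overline{\mathbb{Q}}_\ell(-m)\) (geometric irreducibility of \(X\)) contributes \(q^m\). Everything hinges on understanding \(\mathrm{H}^i(X)\) in the high range \(m+\epsilon+2\le i\le 2m\).

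For that range I would iterate the purity map of Lemma~\ref{lemma:gysin-perv}. Applying it with \(\mathcal{F}=\overline{\mathbb{Q}}_{\ell,X}[m]\) (which lies in \({}^{\mathrm{p}}\mathcal{D}^{\leq 0}_X\), being the constant sheaf shifted by \(\dim X\)) and \(f\) the closed immersion \(X\hookrightarrow\mathbb{P}^n\), and using that \(X\) is proper so \(\mathrm{H}^\bullet_c=\mathrm{H}^\bullet\), one gets for a sufficiently general hyperplane \(B\) an isomorphism \(\mathrm{H}^{i-2}(X\cap B)(-1)\xrightarrow{\sim}\mathrm{H}^i(X)\) whenever \(i\ge m+2\). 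Iterating \(\epsilon+1\) times along a general flag of hyperplanes — and invoking Bertini to see that a general linear section of codimension \(\epsilon+1\) misses \(X_{\mathrm{sing}}\), hence is a \emph{smooth} set-theoretic complete intersection \(W\) of dimension \(w=m-\epsilon-1\), with the singular locus of each intermediate section of dimension dropping by one — I obtain \(\mathrm{H}^i(X)\cong \mathrm{H}^{i-2(\epsilon+1)}(W)(-\epsilon-1)\) for \(i\ge m+\epsilon+2\). In that range \(i-2(\epsilon+1)>w=\dim W\), so the classical description of the cohomology of a smooth complete intersection (the same input used in the proof of Lemma~\ref{lemma:exact-betti-for-generic-affine-complete-intersection}) gives \(\mathrm{H}^{i-2(\epsilon+1)}(W)=\mathrm{H}^{i-2(\epsilon+1)}(\mathbb{P}^w)\); carrying the accumulated Tate twist through, this becomes \(\mathrm{H}^i(X)=\mathrm{H}^i(\mathbb{P}^n)\) for all \(m+\epsilon+2\le i\le 2m\). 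So the high range contributes exactly \(\sum_{m+\epsilon+2\le 2k\le 2m}q^k\).

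It remains to handle the middle range \(m\le i\le m+\epsilon+1\), where I use only the dimension bound \(\dim\mathrm{H}^i(X)\le\binom{n-1-(i-m)}{r-1}\bigl[d^{\,n-i+m}+\cdots+d^{r}\bigr]\) from Proposition~\ref{proposition:projective-bound}(\ref{item2}) together with Deligne's weight bound: since \(X\) is proper, the Frobenius eigenvalues on \(\mathrm{H}^i(X)\) have absolute value \(\leq q^{i/2}\leq q^{(m+\epsilon+1)/2}\). Assembling the three ranges, \(\operatorname{Card}X(\mathbb{F}_q)-\sum_{j=0}^m q^j\) is the negative of the block of consecutive powers \(q^k\) with \(\lfloor(m-1)/2\rfloor<k\leq\lceil(m+\epsilon)/2\rceil\) (those not supplied by the low and high ranges) plus the alternating sum of Frobenius traces on \(\mathrm{H}^i(X)\) for \(m\le i\le m+\epsilon+1\); both pieces are bounded in absolute value by \(q^{(m+\epsilon+1)/2}\) times a combinatorial factor. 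A routine estimate — summing the geometric series in \(d\), and comparing \(\sum_{k\ge r}(n-k+1)d^k\) with the binomial expansion of \((d+2)^n\) — shows that factor is at most \(\binom{n-1}{r-1}(d+2)^n\), giving the asserted strict inequality (the degenerate cases \(\epsilon=-\infty\), i.e.\ \(X\) smooth, and \(\epsilon=m-1\) being covered since then the iteration is vacuous, resp.\ the high range is empty).

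The main obstacle is the high-range identification. One must verify that iterating the purity map lands, after exactly \(\epsilon+1\) steps, on a genuinely smooth complete intersection and in degrees strictly above its dimension, and that the accumulated Tate twists reproduce \(\mathrm{H}^i(\mathbb{P}^n)\) precisely: the smallness of the singular locus is what makes \(\epsilon+1\) steps both admissible (each step requires the degree to be large relative to the current dimension) and enough (to clear all singularities by Bertini). By contrast, the bookkeeping in the final assembly is tedious but entirely routine.
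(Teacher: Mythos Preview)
Your proposal is correct and follows essentially the same strategy as the paper's proof: iterate the purity map of Lemma~\ref{lemma:gysin-perv} down to a smooth linear section of codimension \(\epsilon+1\) to identify the cohomology in the high range with that of projective space, invoke weak Lefschetz for the low range, and in the remaining window use the dimension bounds of Proposition~\ref{proposition:projective-bound} together with Deligne's weight theorem. The paper packages the bookkeeping slightly differently---it works with the primitive cohomology \(H^{\ast}_{\mathrm{prim}}(X)=\operatorname{coker}(H^{\ast}(\mathbb{P}^{n})\to H^{\ast}(X))\), which automatically absorbs your ``missing \(q^{k}\)'' terms, and it cuts by a single codimension-\((\epsilon+1)\) linear subspace rather than iterating hyperplanes---but these are cosmetic differences, not substantive ones.
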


If \(X\) is nonsingular, we make the convention that
\(\epsilon = -1\).

\begin{proof}
Since we assumed that \(\dim X \geq 2\), the complete intersection
condition ensures that \(X\) is normal, so that
\(\epsilon \leq n-r-2\).

Let \(A\) be a generic \((n-\epsilon-1)\)-dimensional linear
subspace of \(\mathbb{A}_{\overline{\mathbb{F}}_{q}}^{n}\).  By Lemma~\ref{lemma:gysin-perv},
applied to the perverse sheaf
\(\mathcal{F} = \overline{\mathbb{Q}}_{\ell}[n]\), we find that the
Gysin map
\begin{equation*}
\mathrm{H}^{n-r-\epsilon-1+i}(X_{\overline{\mathbb{F}}_{q}} \cap A;\overline{\mathbb{Q}}_{\ell})(-\epsilon-1)
\to \mathrm{H}^{n-r+\epsilon+1+i}(X_{\overline{\mathbb{F}}_{q}};\overline{\mathbb{Q}}_{\ell})
\end{equation*}
is surjective if \(i = 0\), and bijective if \(i > 0\).  Similarly, by
the weak Lefschetz theorem, for \(i>0\), the restriction map
\begin{equation*}
\mathrm{H}^{n-r-\epsilon-1-i}(X_{\overline{\mathbb{F}}_{q}};\overline{\mathbb{Q}}_{\ell}) \to
\mathrm{H}^{n-r-\epsilon-1-i}(X_{\overline{\mathbb{F}}_{q}} \cap A;\overline{\mathbb{Q}}_{\ell})
\end{equation*}
is injective if \(i=0\), and bijective if \(i>0\).  Thus, if
\(j\in \mathbb{Z}\), \(n-r> |j|>\epsilon+1\), we have
\(\mathrm{H}^{n-r+j}(X_{\overline{\mathbb{F}}_{q}};\overline{\mathbb{Q}}_{\ell})\simeq\overline{\mathbb{Q}}_{\ell}(-\frac{n-r+j}{2})\)
if \(n-r+j\) is even, and zero otherwise.  Combining this with the fact
that
\(\mathrm{H}^{0}(X_{\overline{\mathbb{F}}_{q}};\overline{\mathbb{Q}}_{\ell})=\overline{\mathbb{Q}}_{\ell}\),
that
\(\mathrm{H}^{2n-2r}(X_{\overline{\mathbb{F}}_{q}};\overline{\mathbb{Q}}_{\ell})=\overline{\mathbb{Q}}_{\ell}(-n-r)\),
and Deligne's theorem asserting that
\(\mathrm{H}^{i}(X_{\overline{\mathbb{F}}_{q}};\overline{\mathbb{Q}}_{\ell})\) has weight
\(\leq i\), we find that
\begin{equation*}
\left| \operatorname{Card}X(\mathbb{F}_{q}) - \sum_{j=0}^{n-r}q^{j} \right|
\leq \sum_{0\leq |j| \leq \epsilon+1} q^{\frac{n-r+j}{2}}\dim \mathrm{H}_{\mathrm{prim}}^{n-r+j}(X_{\overline{\mathbb{F}}_{q}};\overline{\mathbb{Q}}_{\ell}).
\end{equation*}
Here,
\(\mathrm{H}^{\ast}_{\mathrm{prim}}(X_{\overline{\mathbb{F}}_{q}};\overline{\mathbb{Q}}_{\ell})\)
is defined as the cokernel of the map
\(\mathrm{H}^{\ast}(\mathbb{P}^{n};\overline{\mathbb{Q}}_{\ell})\to\mathrm{H}^{\ast}(X_{\overline{\mathbb{F}}_{q}};\overline{\mathbb{Q}}_{\ell})\).
Since \(\mathrm{H}^{\ast}_{\mathrm{prim}}\) is a quotient of
\(\mathrm{H}^{\ast}\), we may conclude by applying
Corollary~\ref{proposition:projective-bound}.
\end{proof}

This, combined with the excision sequence,  gives the
following affine version which improves Proposition
\ref{proposition:lang-weil-affine-complete-intersection} if $V$ is not too singular.

\begin{proposition}
Let \(V\) be an affine variety in \(\mathbb{A}^{n}_{\mathbb{F}_{q}}\)
defined by \(r\) polynomials of degree \(\leq d\). Let $V_{\infty}$ be
the infinite part of $V$ in \(\mathbb{P}^{n}_{\mathbb{F}_{q}}\).
Assume that \(\dim V = n-r =\dim V_{\infty} +1 \geq 2\) (thus, both
$V$ and $V_{\infty}$ are complete intersection), and assume that both
$V$ and $V_{\infty}$ are geometrically irreducible. Let $\epsilon$ be
the dimension for the singular locus of the projective complete
intersection $V \cup V_{\infty}$ in
\(\mathbb{P}^{n}_{\mathbb{F}_{q}}\). Then
\begin{equation*}
|\operatorname{Card} V(\mathbb{F}_{q})  - q^{n-r}| \leq  \binom{n-1}{r-1}(d+1)^{n}q^{\frac{n-r+\epsilon+1}{2}}.
\end{equation*}
\end{proposition}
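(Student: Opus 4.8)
The plan is to apply Grothendieck's trace formula on $V$ directly, and to use the excision sequence together with the structure of the cohomology of the projective closure obtained in the preceding proposition to show that the Frobenius eigenvalues appearing below the top degree are much smaller than Deligne's weight bound alone would give.

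Set $X = V \cup V_\infty \subset \mathbb{P}^n$, the projective closure of $V$; as in the proof of the preceding (projective) proposition, $X$ is a complete intersection of dimension $n-r \geq 2$, hence normal, so $\operatorname{Sing} X$ has dimension $\epsilon \leq n-r-2$. Artin vanishing gives $\mathrm{H}^m_c(V) = 0$ for $m < n-r$, and geometric irreducibility gives $\mathrm{H}^{2(n-r)}_c(V;\overline{\mathbb{Q}}_\ell) \cong \overline{\mathbb{Q}}_\ell(-(n-r))$, so that
\[
\operatorname{Card} V(\mathbb{F}_q) - q^{n-r} = \sum_{j=0}^{n-r-1} (-1)^j \operatorname{Tr}\bigl(F \mid \mathrm{H}^{n-r+j}_c(V_{\overline{\mathbb{F}}_q};\overline{\mathbb{Q}}_\ell)\bigr).
\]
The key claim I would establish is that every Frobenius eigenvalue occurring on $\mathrm{H}^{n-r+j}_c(V)$ for $0 \leq j \leq n-r-1$ has absolute value at most $q^{(n-r+\epsilon+1)/2}$. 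Granting this, and using $\sum_{j=0}^{n-r-1}\dim \mathrm{H}^{n-r+j}_c(V) = B_c(V,\ell) - 1 \leq \binom{n-1}{r-1}(d+1)^n - 1$ from Corollary~\ref{corollary:middle-cleaner-bound}, the asserted inequality drops out of the displayed identity.

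To prove the claim I would feed the open–closed decomposition $V \hookrightarrow X \hookleftarrow V_\infty$ into its excision sequence
\[
\cdots \to \mathrm{H}^m_c(V) \to \mathrm{H}^m(X) \xrightarrow{\ \rho^m\ } \mathrm{H}^m(V_\infty) \to \mathrm{H}^{m+1}_c(V) \to \cdots,
\]
which presents $\mathrm{H}^{n-r+j}_c(V)$ as an extension of $\ker\rho^{n-r+j}\subset \mathrm{H}^{n-r+j}(X)$ by $\operatorname{coker}\rho^{n-r+j-1}$, a subquotient of the proper variety $\mathrm{H}^{n-r+j-1}(V_\infty)$; by Deligne's theorem the first has weight $\leq n-r+j$ and the second weight $\leq n-r+j-1$. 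For $j \leq \epsilon+2$ this already gives weight $\leq n-r+\epsilon+1$ for the coker part, and for $j > \epsilon+1$ the kernel part vanishes: since the hyperplane class $h$ restricts to the nonzero class $h^k|_{V_\infty}$ for every $k\leq n-r-1$, the non‑primitive part of $\mathrm{H}^{n-r+j}(X)$ meets $\ker\rho^{n-r+j}$ trivially, so $\ker\rho^{n-r+j}$ injects into $\mathrm{H}^{n-r+j}_{\mathrm{prim}}(X)$, which the proof of the preceding proposition shows to vanish for $j > \epsilon+1$. Finally, for $j \geq \epsilon+3$ one must show $\operatorname{coker}\rho^{n-r+j-1}=0$: for such $j$ the group $\mathrm{H}^{n-r+j-1}(X)$ is zero or one‑dimensional (spanned by a power of $h$), and $\rho^{n-r+j-1}$ is then surjective onto $\mathrm{H}^{n-r+j-1}(V_\infty)$ as soon as $V_\infty$ carries no primitive cohomology in that degree, which by the projective Betti bounds (Proposition~\ref{proposition:projective-bound}) applied to the complete intersection $V_\infty\subset\mathbb{P}^{n-1}$ holds once $\dim\operatorname{Sing}(V_\infty)\leq\epsilon+1$. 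Assembling these vanishings and weight bounds and summing elementarily finishes the proof.

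The hard part is precisely this last input: controlling $\dim\operatorname{Sing}(V_\infty)$ by $\epsilon+1$. Here $V_\infty = X\cap\{x_0=0\}$ is the section of $X$ by the fixed hyperplane at infinity, and its singular locus is the union of $\operatorname{Sing}(X)\cap V_\infty$, of dimension $\leq\epsilon$, with the locus along which that hyperplane is tangent to the smooth part of $X$ — a locus that is a priori unconstrained. The Jacobian matrix of $V_\infty$ in $\mathbb{P}^{n-1}$ is obtained from that of $X$ in $\mathbb{P}^n$ by deleting a single column, so $\operatorname{Sing}(V_\infty)$ exceeds $\operatorname{Sing}(X)\cap V_\infty$ only along the locus where that extra column is independent of a rank‑$(r-1)$ Jacobian, which is ``one condition more'' and hence should have dimension at most $\epsilon+1$; making this heuristic rigorous under the standing hypotheses (both $X$ and $V_\infty$ complete intersections of the expected dimensions, geometrically irreducible) is the delicate point on which the whole argument turns. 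All the remaining ingredients — the trace formula, Deligne's weight bound, the structure of $\mathrm{H}^*(X)$ from the previous proposition, and the total Betti bound of Corollary~\ref{corollary:middle-cleaner-bound} — are already in hand.
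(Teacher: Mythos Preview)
Your approach---trace formula on $V$, excision to relate $\mathrm{H}^{*}_{c}(V)$ to $\mathrm{H}^{*}(X)$ and $\mathrm{H}^{*}(V_\infty)$, then invoke the structure of $\mathrm{H}^{*}(X)$ from the preceding proposition and the Betti bound from Corollary~\ref{corollary:middle-cleaner-bound}---is exactly what the paper's one-line proof (``this, combined with the excision sequence'') intends. You have also correctly isolated the only non-routine ingredient: to kill $\operatorname{coker}\rho^{m-1}$ in the range $m\geq n-r+\epsilon+3$ you need $\mathrm{H}^{m-1}_{\mathrm{prim}}(V_\infty)=0$ there, and by the proof of the preceding proposition (applied to $V_\infty\subset\mathbb{P}^{n-1}$) this follows once $\dim\operatorname{Sing}(V_\infty)\leq\epsilon+1$. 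The paper's proof does not address this point explicitly.

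Your heuristic for that inequality is right in spirit and is easily made rigorous when $r=1$: writing $X=\{F=0\}$, one has
\[
\operatorname{Sing}(X)\cap\{x_0=0\}=\operatorname{Sing}(V_\infty)\cap\{(\partial_0 F)|_{x_0=0}=0\},
\]
and $(\partial_0 F)|_{x_0=0}$ is a form of degree $d-1\geq 1$ on $\mathbb{P}^{n-1}$ which is not identically zero (else $x_0\mid F$, contradicting $\dim V_\infty=n-2$). Hence if $\dim\operatorname{Sing}(V_\infty)\geq 1$, intersecting with this hypersurface gives $\dim(\operatorname{Sing}(X)\cap V_\infty)\geq\dim\operatorname{Sing}(V_\infty)-1$, so $\epsilon\geq\dim\operatorname{Sing}(V_\infty)-1$.

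For $r\geq 2$ your ``one more condition'' heuristic is more delicate than you suggest. Deleting the $0$th column of the Jacobian $J$ to get $J'$, one has $\operatorname{Sing}(V_\infty)=\{\operatorname{rank} J'\leq r-1\}\cap V_\infty$ and $\operatorname{Sing}(X)\cap V_\infty=\{\operatorname{rank} J\leq r-1\}\cap V_\infty$; on the open stratum where $\operatorname{rank} J'=r-1$ the extra condition is indeed locally principal, but it is the vanishing of a section of the \emph{line bundle} $\operatorname{coker}(J')$ rather than of $\mathcal{O}(e)$ directly, and one must argue that this section still has a zero set of the expected dimension. One clean way to do this (say when all $d_i=d$) is to pass to the incidence $\tilde Y=\overline{\{(p,[\lambda])\in Y\times\mathbb{P}^{r-1}:\lambda^{T}J'(p)=0\}}$, birational to a top-dimensional component $Y$ of $\operatorname{Sing}(V_\infty)$; the bihomogeneous form $G=\sum\lambda_i\,\partial_0 F_i$ is a section of the ample $\mathcal{O}(d-1,1)$ on $\tilde Y$, so $V(G)\cap\tilde Y$ has dimension $\geq\dim Y-1$ and projects into $\operatorname{Sing}(X)\cap Y$. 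The only remaining point is to check that this divisor is not entirely contracted over the small boundary $Y\setminus Y^\circ$; this is where your argument, as written, is incomplete. So the gap you flag is real, and your sketch does not yet close it for $r\geq2$, but the $r=1$ case is complete and the general mechanism you describe is the right one.
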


\section{Bounds for exponential sums}
\label{sec:bounds-exp-sums}

In this section, \(k\) is a fixed algebraic closure of a finite field
\(\mathbb{F}_{q}\).  All varieties are defined over \(k\).

We fix a nontrivial additive character
\(\psi\colon \mathbb{F}_{p} \to \mathbb{C}^{\ast}\).  Denote by
\(\mathcal{L}_{\psi}\) the Artin--Schreier sheaf on
\(\mathbb{A}^{1}_{\mathbb{F}_{p}}\) or \(\mathbb{A}^{1}_{k}\).

We will use the specialization lemma
(Lemma~\ref{lemma:middle-estimate}) to investigate the ``Artin--Schreier
cohomology spaces'' \(\mathrm{H}^\ast_c(U;f^\ast \mathcal{L}_\psi)\) associated with exponential sums.  The primary challenge
is that, to derive meaningful bounds using the specialization
lemma, we must maintain numerical control over the Artin--Schreier
cohomology of a \emph{generic} function.  Most of
Section~\ref{sec:general-method} is dedicated to establishing a
framework for determining this generic behavior.  The main result,
presented in Theorem~\ref{theorem:nondegenerate-exponential-sums},
shows that within a large family, the generic
Artin--Schreier cohomology is nontrivial only in the middle degree, thereby
allowing it to be bounded by Euler characteristics.  We also
provide a precise mathematical definition of what is meant by
``large''.  This result is well-known when the generic member is
tamely ramified or under additional hypotheses such as properness or
ampleness.  However, for our applications, including those involving
toric exponential sums, we cannot afford to make such strong
assumptions.  Therefore, we have formulated
Theorem~\ref{theorem:nondegenerate-exponential-sums} as broadly as
possible, which justifies the length of this subsection.

The remaining subsections are dedicated to applications of the
specialization method and the generic bound.  The first application lies
within a framework designed by Katz, discussed in
Section~\ref{sec:katz-framework}, which leads to the proof of
Theorem~\ref{theorem:last}(ii).  The second application
(Section~\ref{sec:tori-exp-sums}) focuses on the toric case, where we
prove the cohomological enhancement
(Theorem~\ref{theorem:ultimate-as}) of the Adolphson--Sperber total
degree bound given in equation \eqref{eq:as-exp-total-degree}.
Finally, we give the proof of Theorem~\ref{theorem:order} in
\S\ref{sec:proof-theorem-order}.

\subsection{General method}\label{sec:general-method}
Our basic setup for studying exponential sums is the following
(cf.~\cite[\S5]{katz-laumon:fourier-transform-upper-bounds-exponential-sums}).
Let \(U\) be an affine variety over \(k\) of pure dimension \(n\).
Suppose we are given a morphism
\begin{equation*}
u \colon U \to \mathbb{A}^{N}, \quad x \mapsto (u_{1}(x),\ldots,u_{N}(x))
\end{equation*}
Then for any \(a = (a_{1},\ldots,a_{N}) \in \check{\mathbb{A}}^{N}\) (the dual affine space), we can
define a regular function
\begin{equation*}
f_{a} \colon U \to \mathbb{A}^{1}, \quad
x\mapsto \sum_{i=1}^{N} a_{i} u_{i}(x).
\end{equation*}
When \(a\) varies in \(\check{\mathbb{A}}^{N}\), we get a family of morphisms
\begin{equation*}
\begin{tikzcd}
U \times \check{\mathbb{A}}^{N} \ar{r}{f} \ar{d}{\mathrm{pr}_{2}} & \mathbb{A}^{1} \\ \check{\mathbb{A}}^{N}
\end{tikzcd}
\end{equation*}

The cohomological counterpart of the variation of the exponential sums
defined by \(f_{a}\) is the \(\ell\)-adic complex
\(R\mathrm{pr}_{2!}(f^{\ast}\mathcal{L}_{\psi})\).  By proper base
change, for any \(b \in \check{\mathbb{A}}^{N}\),
\begin{equation*}
R^{i}\mathrm{pr}_{2!}(f^{\ast}\mathcal{L}_{\psi})_{b} = \mathrm{H}^{i}_{c}(U;f_{b}^{\ast}\mathcal{L}_{\psi}).
\end{equation*}
By constructibility, there is a Zariski open dense subset \(\mathcal{U}\) of
\(\check{\mathbb{A}}^{N}\) over which
\(R^{i}\mathrm{pr}_{2!}(f^{\ast}\mathcal{L}_{\psi})\) are local
systems for all \(i\).

If \(U\) is defined over a finite field \(\mathbb{F}_{q}\), i.e.,
\(U = U_{0} \otimes_{\mathbb{F}_{q}} k\) for some
\(\mathbb{F}_{q}\)-scheme \(U_{0}\), then
\(b\colon \operatorname{Spec}k \to U \to U_{0}\) factors through some
\(\mathbb{F}_{q^{m}}\)-valued point of \(U_{0}\) for some \(m\):
\begin{equation*}
b \colon \operatorname{Spec}k \to \operatorname{Spec}(\mathbb{F}_{q^{m}}) \xrightarrow{b_{0}} U_{0}.
\end{equation*}
We can then discuss the \(q^{m}\)-power geometric Frobenius operation on
both sides.  The Newton polygon of
\(\det(1-tF_{b_{0}}|\mathrm{H}^{i}_{c}(U;f_{b}^{\ast}\mathcal{L}_{\psi}))\)
with respect to \(\mathrm{ord}_{q^{m}}\) is independent of the
possible \(\mathbb{F}_{q^{m}}\)-point \(b_{0}\) through which \(b\)
factors.

\begin{theorem}\label{theorem:middle-upper-bound-exponential-sum}
Suppose that \(U\) is a purely \(n\)-dimensional local complete
intersection affine variety.  If
\(a \in \mathcal{U}\) and
\(b \in \mathbb{A}^{N}\) is arbitrary, then
\begin{equation*}
\dim \mathrm{H}_{c}^{n}(U;f_{b}^{\ast}\mathcal{L}_{\psi}) \leq \dim \mathrm{H}^{n}_{c}(U;f_{a}^{\ast}\mathcal{L}_{\psi}).
\end{equation*}
\end{theorem}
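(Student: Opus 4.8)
The plan is to run the line-degeneration argument already used for Proposition~\ref{proposition:abstract-betti-bound}, but now carrying along the Artin--Schreier twist. Fix an arbitrary $b\in\check{\mathbb{A}}^{N}$; connect it to the given $a\in\mathcal{U}$ by the affine line
\[
u\colon \mathbb{A}^{1}\longrightarrow \check{\mathbb{A}}^{N},\qquad t\longmapsto (1-t)\,b+t\,a,
\]
so that $u(0)=b$ and $u(1)=a$. Because $f$ depends linearly on the parameter, $f_{u(t)}(x)=(1-t)f_{b}(x)+tf_{a}(x)$. I then pull the universal family back along $u$: put $V=U\times\mathbb{A}^{1}$, let $\pi=\mathrm{pr}_{2}\colon V\to\mathbb{A}^{1}$, and let $g\colon V\to\mathbb{A}^{1}$ be $(x,t)\mapsto(1-t)f_{b}(x)+tf_{a}(x)$. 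The complex to feed into the specialization lemma is $\mathcal{F}=(g^{\ast}\mathcal{L}_{\psi})[\,n+1\,]$, which is the pull-back of the universal complex $f^{\ast}\mathcal{L}_{\psi}$ along the top map $\mathrm{id}_{U}\times u$ of the Cartesian square expressing $V$ as the fibre product of $U\times\check{\mathbb{A}}^{N}\xrightarrow{\mathrm{pr}_{2}}\check{\mathbb{A}}^{N}\xleftarrow{u}\mathbb{A}^{1}$.

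Next I would check the hypotheses of Lemma~\ref{lemma:middle-estimate}. The morphism $\pi$ is affine since $U$ is affine. For the cosupport condition: $U$ being a purely $n$-dimensional local complete intersection, $V=U\times\mathbb{A}^{1}$ is a purely $(n+1)$-dimensional local complete intersection, hence locally the zero locus of a regular sequence inside a smooth affine space; applying Lemma~\ref{lemma:cosupp} in such a smooth chart shows $\overline{\mathbb{Q}}_{\ell,V}[\,n+1\,]\in{}^{\mathrm{p}}\mathcal{D}^{\geq0}_{V}$. Since $g^{\ast}\mathcal{L}_{\psi}$ is lisse of rank one, tensoring with it is an auto-equivalence of $D^{b}_{c}(V;\overline{\mathbb{Q}}_{\ell})$ (inverse: tensoring with the dual local system) that preserves the perverse $t$-structure, so $\mathcal{F}=(g^{\ast}\mathcal{L}_{\psi})[\,n+1\,]\in{}^{\mathrm{p}}\mathcal{D}^{\geq0}_{V}$ as required. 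Finally, by proper base change for $\mathrm{pr}_{2}$ the cohomology sheaves of $R\pi_{!}\mathcal{F}$ are the $u$-pull-backs of those of $R\mathrm{pr}_{2!}(f^{\ast}\mathcal{L}_{\psi})$; in particular the open dense locus $B^{\prime}\subset\mathbb{A}^{1}$ of Lemma~\ref{lemma:middle-estimate} contains $u^{-1}\mathcal{U}$, and hence contains the point $1$.

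It remains to identify the two relevant fibres. The fibre $\pi^{-1}(0)=U\times\{0\}$ carries $\mathcal{F}|_{\pi^{-1}(0)}=(f_{b}^{\ast}\mathcal{L}_{\psi})[\,n+1\,]$, so $\mathrm{H}^{-1}_{c}(\pi^{-1}(0);\mathcal{F}|_{\pi^{-1}(0)})=\mathrm{H}^{n}_{c}(U;f_{b}^{\ast}\mathcal{L}_{\psi})$, and likewise $\mathrm{H}^{-1}_{c}(\pi^{-1}(1);\mathcal{F}|_{\pi^{-1}(1)})=\mathrm{H}^{n}_{c}(U;f_{a}^{\ast}\mathcal{L}_{\psi})$. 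If $0\in B^{\prime}$, then since $B^{\prime}$ is connected and contains $1$, proper base change forces these two dimensions to be equal; otherwise $0\in\mathbb{A}^{1}\setminus B^{\prime}$ and Lemma~\ref{lemma:middle-estimate} (with $b^{\prime}=1$) gives
\[
\dim\mathrm{H}^{n}_{c}(U;f_{b}^{\ast}\mathcal{L}_{\psi})\leq\dim\mathrm{H}^{n}_{c}(U;f_{a}^{\ast}\mathcal{L}_{\psi}),
\]
which is the assertion in both cases.

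The only step that is not purely formal is the verification that $\mathcal{F}=(g^{\ast}\mathcal{L}_{\psi})[\,n+1\,]$ satisfies the cosupport condition: this is where the local-complete-intersection hypothesis on $U$ enters, via Lemma~\ref{lemma:cosupp} applied in a smooth chart of $U\times\mathbb{A}^{1}$, together with the invariance of the condition ``$\in{}^{\mathrm{p}}\mathcal{D}^{\geq0}$'' under twisting by a lisse rank-one sheaf. Everything else is the same pull-back-to-a-line bookkeeping that proved Proposition~\ref{proposition:abstract-betti-bound}, and I do not anticipate any additional difficulty.
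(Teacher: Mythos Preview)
Your argument is correct and follows essentially the same route as the paper: connect $b$ to $a\in\mathcal{U}$ by an affine line, pull back the universal family to $U\times\mathbb{A}^{1}$, verify that the Artin--Schreier twist of the shifted constant sheaf satisfies the cosupport condition (using the local complete intersection hypothesis via Lemma~\ref{lemma:cosupp} and the $t$-exactness of twisting by a rank-one lisse sheaf), and then invoke Lemma~\ref{lemma:middle-estimate}. The only cosmetic difference is that the paper phrases the perversity check on $U\times\check{\mathbb{A}}^{N}$ with shift $[N+n]$ and then restricts, whereas you work directly on $U\times\mathbb{A}^{1}$ with shift $[n+1]$; these are equivalent.
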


\begin{proof}
For an arbitrary \(b \in \mathbb{A}^{N}\), we join \(b\) and
\(a \in \mathcal{U}_{\psi}\) by a smooth curve \(B\) in
\(\mathbb{A}^{N}\) (e.g., a straight line).  This gives rise to the
following commutative diagram
\begin{equation*}
\begin{tikzcd}
U \times B \ar[hook]{r}{\iota^{\prime}} \ar{d}{\mathrm{pr}_{2}^{\prime}} & U \times \check{\mathbb{A}}^{N} \ar{r}{f} \ar{d}{\mathrm{pr}_{2}} & \mathbb{A}^{1} \\
B \ar[hook]{r}{\iota} & \check{\mathbb{A}}^{N}
\end{tikzcd}
\end{equation*}
in which the left square is Cartesian.  By proper base change, we have
\begin{equation*}
R\mathrm{pr}^{\prime}_{2!}((f\circ \iota^{\prime})^{\ast}\mathcal{L}_{\psi}) = \iota^{\ast} R\mathrm{pr}_{2!}(f^{\ast}\mathcal{L}_{\psi}).
\end{equation*}
Since \(B \cap \mathcal{U} \neq \emptyset\), for any
\(a \in B \cap \mathcal{U}\), and any \(i\), the dimension of
\(\mathrm{H}_{c}^{i}(U;f_{a}^{\ast}\mathcal{L}_{\psi})\) is constant
in \(a\).  Since \(f^{\ast}\mathcal{L}_{\psi}[N+n]\) is a perverse
sheaf on \(U \times \check{\mathbb{A}}^{N}\) (cosupport condition follows from Lemma~\ref{lemma:cosupp}, the support condition is trivially satisfied), and since
\(\mathrm{pr}_{2}\) is an affine morphism,
Lemma~\ref{lemma:middle-estimate} implies the desired result.
\end{proof}

\begin{variant}\label{variant:np-exp}
Let notation and conventions be as in
Theorem~\ref{theorem:middle-upper-bound-exponential-sum}.  Suppose
\(U = U_{0}\otimes_{\mathbb{F}_{q}} k\) is a \(k\)-variety defined
over \(\mathbb{F}_{q}\), and there is a convex polygon \(\Gamma\), and
a Zariski open subset \(\mathcal{U}_{\Gamma} \subset \mathcal{U}\),
such that for all \(a \in \mathcal{U}_{\Gamma}\), the Newton polygon
of \(\mathrm{H}^{n}_{c}(U;f_{a}^{\ast}\mathcal{L}_{\psi})\) lies on or
above \(\Gamma\).  Then for any \(b \in \check{\mathbb{A}}^{N}\), the
Newton polygon of
\(\mathrm{H}^{n}_{c}(U;f_{b}^{\ast}\mathcal{L}_{\psi})\) lies on or
above \(\Gamma\).
\end{variant}

\begin{proof}
Apply Variant~\ref{variant:newton}.
\end{proof}

\subsection{A variant of the generic vanishing theorem of Katz--Laumon}
Theorem~\ref{theorem:middle-upper-bound-exponential-sum} shows that
\(\dim \mathrm{H}^{n}_{c}(U;f_{b}^{\ast}\mathcal{L}_{\psi})\) for
\(b\) \emph{generic} is an upper bound for
\(\dim\mathrm{H}^{n}_{c}(U;f^{\ast}_{b}\mathcal{L}_{\psi})\) for \(b\)
\emph{arbitrary}.  But how do we determine the former?
The following theorem shows that if this \(\check{\mathbb{A}}^{N}\) is a
``large family'', in the sense that \(u\) is a \emph{quasi-finite}
morphism, then we can use the Euler characteristic to calculate this
bound.

\begin{theorem}\label{theorem:nondegenerate-exponential-sums}
Let \(U\) be a purely \(n\)-dimensional local complete intersection
affine variety.  Suppose that \(u\colon U \to \mathbb{A}^{N}\) is
quasi-finite.  Then there is a Zariski open dense subset \(\mathcal{U}\) of
\(\check{\mathbb{A}}^{N}\) such that for any \(a \in \mathcal{U}\),
\begin{enumerate}
\item
\(\mathrm{H}^{i}_{c}(U;f_{a}^{\ast}\mathcal{L}_{\psi})=\mathrm{H}^{i}(U;f^{\ast}_{a}\mathcal{L}_{\psi})=0\)
for any \(i\neq n\),\label{item:katz-laumon}
\item
\(\dim \mathrm{H}^{n}_{c}(U;f^{\ast}_{a}\mathcal{L}_{\psi})\leq (-1)^{n}
(\chi(U) - \chi(f_{a}^{-1}(t)))\) for \(t \in \mathbb{A}^{1}\)
sufficiently general.
\end{enumerate}
\end{theorem}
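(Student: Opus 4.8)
The plan is to derive part~(i) from the perversity of $f_{a}^{\ast}\mathcal{L}_{\psi}[n]$ together with the $t$-exactness of an appropriate pushforward and of the Fourier--Deligne transform, and then to derive part~(ii) from part~(i) by an Euler--Poincar\'e computation on $\mathbb{A}^{1}$. The perversity inputs are as follows. Since $U$ is a purely $n$-dimensional local complete intersection, $\overline{\mathbb{Q}}_{\ell,U}[n]$ is a perverse sheaf --- this is exactly the point already used in the proof of Theorem~\ref{theorem:middle-upper-bound-exponential-sum} (the support condition is automatic, the cosupport condition follows from Lemma~\ref{lemma:cosupp} applied \'etale-locally, $U$ being locally a complete intersection inside a smooth variety). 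Hence $f_{a}^{\ast}\mathcal{L}_{\psi}[n]=u^{\ast}\ell_{a}^{\ast}\mathcal{L}_{\psi}[n]$ is perverse for every $a$, being the tensor product of a perverse sheaf with a lisse sheaf of rank one; here $\ell_{a}\colon\mathbb{A}^{N}\to\mathbb{A}^{1}$ is the linear form $x\mapsto\sum a_{i}x_{i}$, so $f_{a}=\ell_{a}\circ u$. Moreover, $U$ being affine, the quasi-finite morphism $u$ is affine, so $Ru_{!}=Ru_{\ast}$ is $t$-exact for the perverse $t$-structure (\cite[Corollaire~4.1.3]{beilinson-bernstein-deligne:perverse-sheaves}); thus $K\coloneqq Ru_{!}(\overline{\mathbb{Q}}_{\ell,U}[n])$ is a perverse sheaf on $\mathbb{A}^{N}$.

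For part~(i) I would apply the Fourier--Deligne transform $\mathrm{FT}_{\psi}$ on $\mathbb{A}^{N}$. By the projection formula $Ru_{!}(u^{\ast}\ell_{a}^{\ast}\mathcal{L}_{\psi})=Ru_{!}\overline{\mathbb{Q}}_{\ell,U}\otimes\ell_{a}^{\ast}\mathcal{L}_{\psi}$ and proper base change, the stalk of $\mathrm{FT}_{\psi}(K)$ at a point $a$ is $R\Gamma_{c}(U;f_{a}^{\ast}\mathcal{L}_{\psi})[n+N]$. Since $\mathrm{FT}_{\psi}$ preserves perversity, $\mathrm{FT}_{\psi}(K)$ is perverse on $\check{\mathbb{A}}^{N}$, hence over a dense Zariski open $\mathcal{U}$ it is a single lisse sheaf placed in degree $-N$ (a perverse sheaf on a smooth $N$-fold is such generically); this forces $\mathrm{H}^{i}_{c}(U;f_{a}^{\ast}\mathcal{L}_{\psi})=0$ for $i\neq n$ and $a\in\mathcal{U}$. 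Running the same argument with the character $\bar{\psi}$ gives a dense open $\mathcal{U}'$; on $\mathcal{U}\cap\mathcal{U}'$, Poincar\'e--Verdier duality finishes the job: as $U$ is lci (hence Gorenstein), $\omega_{U}=\overline{\mathbb{Q}}_{\ell,U}[2n](n)$, so $\mathrm{H}^{i}(U;f_{a}^{\ast}\mathcal{L}_{\psi})^{\vee}\cong\mathrm{H}^{2n-i}_{c}(U;f_{a}^{\ast}\mathcal{L}_{\bar{\psi}})(n)$, which vanishes for $i\neq n$ by the statement already proven for $\bar{\psi}$. Replacing $\mathcal{U}$ by $\mathcal{U}\cap\mathcal{U}'$ gives~(i).

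For part~(ii), by~(i) the complex $R\Gamma_{c}(U;f_{a}^{\ast}\mathcal{L}_{\psi})$ is concentrated in degree $n$, so $\dim\mathrm{H}^{n}_{c}(U;f_{a}^{\ast}\mathcal{L}_{\psi})=(-1)^{n}\chi_{c}(U;f_{a}^{\ast}\mathcal{L}_{\psi})$. Writing $h=f_{a}$ and $M=Rh_{!}\overline{\mathbb{Q}}_{\ell,U}$ on $\mathbb{A}^{1}$, the projection formula gives $\chi_{c}(U;h^{\ast}\mathcal{L}_{\psi})=\chi_{c}(\mathbb{A}^{1};M\otimes\mathcal{L}_{\psi})$, while $\chi(U)=\chi_{c}(U)=\chi_{c}(\mathbb{A}^{1};M)$ (equality of Euler characteristics with and without supports). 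Choose a dense open $W\subset\mathbb{A}^{1}$ over which every $\mathcal{H}^{i}M$ is lisse and every fibre $h^{-1}(t)$ is an affine local complete intersection of pure dimension $n-1$. Over $\mathbb{A}^{1}\setminus W$ the sheaves $M$ and $M\otimes\mathcal{L}_{\psi}$ contribute equally to $\chi_{c}$ (there $\mathcal{L}_{\psi}$ is unramified and trivial on each fibre), so by the Grothendieck--Ogg--Shafarevich formula on $W\subset\mathbb{P}^{1}$ only the point $\infty$ contributes to the difference:
\begin{equation*}
\chi(U)-\chi_{c}(U;h^{\ast}\mathcal{L}_{\psi})=\sum_{i}(-1)^{i}\bigl(\mathrm{Sw}_{\infty}(\mathcal{H}^{i}M\otimes\mathcal{L}_{\psi})-\mathrm{Sw}_{\infty}(\mathcal{H}^{i}M)\bigr).
\end{equation*}
Since $\chi(h^{-1}(t))=\sum_{i}(-1)^{i}\operatorname{rk}(\mathcal{H}^{i}M|_{W})$ for $t\in W$, the claimed bound is equivalent to $(-1)^{n}\sum_{i}(-1)^{i}c_{i}\leq 0$, with $c_{i}\coloneqq\operatorname{rk}(\mathcal{H}^{i}M|_{W})+\mathrm{Sw}_{\infty}(\mathcal{H}^{i}M)-\mathrm{Sw}_{\infty}(\mathcal{H}^{i}M\otimes\mathcal{L}_{\psi})$. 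Each $c_{i}$ is $\geq 0$, because tensoring with $\mathcal{L}_{\psi}$ raises every break $<1$ of $\mathcal{H}^{i}M$ at $\infty$ to exactly $1$ and does not enlarge the other breaks, whence $\mathrm{Sw}_{\infty}(\mathcal{V}\otimes\mathcal{L}_{\psi})-\mathrm{Sw}_{\infty}(\mathcal{V})\leq\#\{\text{breaks of }\mathcal{V}<1\}\leq\operatorname{rk}(\mathcal{V})$. It then remains to locate the degrees where $c_{i}$ can be nonzero. For $i<n-1$ one has $\mathrm{H}^{i}_{c}(h^{-1}(t))=0$ (Artin vanishing on the affine fibre plus Poincar\'e duality, using its lci structure), so $\mathcal{H}^{i}M|_{W}=0$ and $c_{i}=0$. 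For $i\geq n$, a Lefschetz-pencil argument --- using that $U\setminus h^{-1}(t)=u^{-1}(\mathbb{A}^{N}\setminus H)$ is affine of dimension $n$, so the monodromy of the pencil $h=f_{a}$ acts trivially on cohomology below the middle, hence (by relative duality) trivially on $R^{i}h_{!}\overline{\mathbb{Q}}_{\ell}|_{W}$ for $i\geq n$ --- shows these local systems are constant, hence tame at $\infty$, so again $c_{i}=0$. Only $i=n-1$ survives, and $(-1)^{n}(-1)^{n-1}c_{n-1}=-c_{n-1}\leq 0$, which is~(ii).

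The main obstacle is the last point of~(ii): showing that for $a$ in a suitable dense open the sheaves $R^{i}h_{!}\overline{\mathbb{Q}}_{\ell}$ ($i\geq n$) are constant near a generic point of $\mathbb{A}^{1}$ --- equivalently, that the vanishing cohomology of a generic pencil $f_{a}$ is concentrated in the middle degree. In characteristic zero this is classical Lefschetz pencil theory, but in positive characteristic, and for $U$ merely lci rather than smooth, it needs a genuine genericity argument (generic smoothness of fibres, Picard--Lefschetz theory, and an Artin--Lefschetz/depth input for the singular ambient variety); this is where the real work of the subsection lies. Everything else is either the perversity formalism of part~(i) or Grothendieck--Ogg--Shafarevich bookkeeping.
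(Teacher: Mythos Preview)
Your Fourier--Deligne argument for the vanishing of $\mathrm{H}^{i}_{c}(U;f_{a}^{\ast}\mathcal{L}_{\psi})$, $i\neq n$, is correct and is a cleaner route than the paper's (which instead verifies the weak-Lefschetz hypotheses of Lemma~\ref{lemma:vanishing-non-middle} and its dual Corollary~\ref{corollary:vanishing-non-middle-compact-support} via Theorem~\ref{theorem:weak-lefschetz}). One small slip: $Ru_{!}\neq Ru_{\ast}$ for $u$ merely quasi-finite, but you only need the $t$-exactness of $u_{!}$ for affine quasi-finite morphisms, which does hold.

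The genuine gap is your repeated use of Poincar\'e duality on the singular variety $U$. For $U$ local complete intersection but not smooth, the $\ell$-adic dualizing complex is \emph{not} $\overline{\mathbb{Q}}_{\ell,U}[2n](n)$; ``Gorenstein'' in the coherent sense says nothing about \'etale duality. So you cannot deduce the vanishing of $\mathrm{H}^{i}(U;f_{a}^{\ast}\mathcal{L}_{\psi})$ from that of $\mathrm{H}^{2n-i}_{c}(U;f_{a}^{\ast}\mathcal{L}_{\bar\psi})$, and the same problem recurs in~(ii): ``relative duality'' would transport constancy of $R^{j}h_{\ast}$ below the middle to constancy of $R^{i}h_{!}$ above it only if the fibres were smooth. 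Both uses are repairable within your framework. For~(i), run the Fourier argument a second time with the perverse sheaf $\mathbb{D}(\overline{\mathbb{Q}}_{\ell,U}[n])$ in place of $\overline{\mathbb{Q}}_{\ell,U}[n]$: since $u_{!}$ is $t$-exact, $u_{!}\mathbb{D}(\overline{\mathbb{Q}}_{\ell,U}[n])$ is perverse, and the stalk of its Fourier transform at $a$ is $\mathbb{D}R\Gamma(U;f_{a}^{\ast}\mathcal{L}_{\bar\psi}[n])$ (using $\mathbb{D}\mathcal{P}\otimes\mathcal{L}\simeq\mathbb{D}(\mathcal{P}\otimes\mathcal{L}^{\vee})$ for rank-one lisse $\mathcal{L}$, exactly as in the proof of Corollary~\ref{corollary:vanishing-non-middle-compact-support}). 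For~(ii), the constancy of $R^{i}h_{!}\overline{\mathbb{Q}}_{\ell}|_{W}$ for $i\geq n$ follows directly from Lemma~\ref{lemma:gysin-perv} applied to $\overline{\mathbb{Q}}_{\ell,U}[n]\in{}^{\mathrm{p}}\mathcal{D}^{\leq 0}$: the Gysin map $\mathrm{H}^{i}_{c}(h^{-1}(t))(-1)\to\mathrm{H}^{i+2}_{c}(U)$ is a canonical isomorphism for $i\geq n$ and generic $t$, so the source is a constant local system. With these fixes your~(ii) is essentially the compactly-supported dual of the paper's Lemma~\ref{lemma:estimate-euler}.
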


\begin{remark*}
Theorem~\ref{theorem:nondegenerate-exponential-sums}(\ref{item:katz-laumon})
is due to Katz and Laumon
\cite[Théorème~5.5.1(ii)]{katz-laumon:fourier-transform-upper-bounds-exponential-sums}
when \(u\) is finite and \(U\) is smooth and irreducible.  If \(f\) is
tamely ramified at \(\infty\) (for example, when \((U,f)\) arises from a
\emph{sufficiently general} mod \(p\) reduction of a spread-out of a
complex variety), it can be shown that equality holds in the second
item.

For our purposes, even when focusing on exponential sums defined on a
nonsingular space, we must employ the Lefschetz theorem to address
non-middle cohomology degrees.  Consequently, we need to investigate
the Artin--Schreier cohomology on a singular \(U\).  Additionally, even for
toric exponential sums, the morphism \(u\) may not be finite.

We note that if \(U\) is smooth, irreducible and \(u\) is finite, then
Katz and Laumon also established the purity of the Artin--Schreier
cohomology.  This purity theorem does not hold in general when \(u\)
is only assumed to be quasi-finite.
\end{remark*}

To prove Theorem~\ref{theorem:nondegenerate-exponential-sums}, we
will first establish some general lemmas regarding the cohomology
spaces of the Artin--Schreier sheaves.  While various forms of these
lemmas are well-known to specialists, they have typically been proven
only in the tame ramification case, avoiding wild ramification.  Our
approach will handle both cases uniformly.

\begin{lemma}\label{lemma:estimate-euler}
Let \(U\) be a purely \(n\)-dimensional affine variety.
Let \(f\colon U \to \mathbb{A}^{1}\) be a regular function.  Assume
that the direct images \(R^{i}f_{\ast}\overline{\mathbb{Q}}_{\ell}\)
are tamely ramified for \(i\neq n-1\).  Then
\begin{equation*}
(-1)^{n}\chi(U;f^{\ast}\mathcal{L}_{\psi}) \leq
(-1)^{n}[\chi(U) - \chi(F)],
\end{equation*}
where \(F\) is a general fiber of \(f\).
\end{lemma}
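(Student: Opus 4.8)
The plan is to move the computation of $\chi(U;f^{\ast}\mathcal{L}_{\psi})$ onto the affine line, where the tameness hypothesis can be played off against the wild ramification of $\mathcal{L}_{\psi}$ at $\infty$. First I would use the Leray spectral sequence for $f\colon U\to\mathbb{A}^{1}$ and the projection formula $Rf_{\ast}(f^{\ast}\mathcal{L}_{\psi})\cong\mathcal{L}_{\psi}\otimes Rf_{\ast}\overline{\mathbb{Q}}_{\ell}$ (valid since $\mathcal{L}_{\psi}$ is lisse); since over an algebraically closed field every flavour of Euler characteristic coincides (Laumon's theorem), this yields
\[
\chi(U;f^{\ast}\mathcal{L}_{\psi})=\sum_{i}(-1)^{i}\chi_{c}(\mathbb{A}^{1};\mathcal{K}^{i}\otimes\mathcal{L}_{\psi}),\qquad \chi(U)=\sum_{i}(-1)^{i}\chi_{c}(\mathbb{A}^{1};\mathcal{K}^{i}),
\]
where $\mathcal{K}^{i}=R^{i}f_{\ast}\overline{\mathbb{Q}}_{\ell}$ (we may assume $f$ nonconstant, as otherwise both sides of the lemma agree trivially).

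Next I would pick a dense open $V\subset\mathbb{A}^{1}$ over which every $\mathcal{K}^{i}$ is lisse and over which the formation of $Rf_{\ast}\overline{\mathbb{Q}}_{\ell}$ commutes with base change (generic base change). Then $\operatorname{rank}(\mathcal{K}^{i}|_{V})=\dim\mathrm{H}^{i}(F)$ for a general fibre $F=f^{-1}(v)$, so $\sum_{i}(-1)^{i}\operatorname{rank}(\mathcal{K}^{i}|_{V})=\chi(F)$, and $\mathcal{K}^{i}|_{V}=0$ for $i\geq n$ because the fibres of $f$ are affine of dimension $<n$. Since $V$ is $\mathbb{A}^{1}$ minus a finite set $Z$, it contains a punctured neighbourhood of $\infty$, so the Swan conductors $\operatorname{Sw}_{\infty}$ make sense. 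Excision along $Z$, together with the fact that $\mathcal{L}_{\psi}$ is lisse away from $\infty$, shows that in the difference $\chi(U;f^{\ast}\mathcal{L}_{\psi})-\chi(U)$ all stalk contributions over $Z$ and all Swan contributions at the finite points cancel; applying the Grothendieck–Ogg–Shafarevich formula on $V\subset\mathbb{P}^{1}$ to the lisse sheaves $\mathcal{K}^{i}|_{V}$, the rank terms cancel too, and only the Swan conductors at $\infty$ survive:
\[
\chi(U;f^{\ast}\mathcal{L}_{\psi})-\chi(U)=-\sum_{i}(-1)^{i}\bigl(\operatorname{Sw}_{\infty}(\mathcal{K}^{i}|_{V}\otimes\mathcal{L}_{\psi})-\operatorname{Sw}_{\infty}(\mathcal{K}^{i}|_{V})\bigr).
\]

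Now the hypothesis enters: for $i\neq n-1$ the sheaf $\mathcal{K}^{i}$ is tame at $\infty$, so $\operatorname{Sw}_{\infty}(\mathcal{K}^{i}|_{V})=0$, while $\operatorname{Sw}_{\infty}(\mathcal{K}^{i}|_{V}\otimes\mathcal{L}_{\psi})=\operatorname{rank}(\mathcal{K}^{i}|_{V})$ because tensoring a tame representation by $\mathcal{L}_{\psi}$, which has the single break $1$, produces all breaks equal to $1$. Substituting this and using $\sum_{i}(-1)^{i}\operatorname{rank}(\mathcal{K}^{i}|_{V})=\chi(F)$, the alternating sum collapses to the clean identity
\[
(-1)^{n}\bigl(\chi(U;f^{\ast}\mathcal{L}_{\psi})-\chi(U)+\chi(F)\bigr)=\operatorname{Sw}_{\infty}(\mathcal{K}\otimes\mathcal{L}_{\psi})-\operatorname{Sw}_{\infty}(\mathcal{K})-\operatorname{rank}(\mathcal{K}),\qquad \mathcal{K}:=\mathcal{K}^{n-1}|_{V}.
\]
It then suffices to prove the right-hand side is $\leq 0$, i.e. $\operatorname{Sw}_{\infty}(\mathcal{K}\otimes\mathcal{L}_{\psi})\leq\operatorname{Sw}_{\infty}(\mathcal{K})+\operatorname{rank}(\mathcal{K})$. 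Decomposing the $I_{\infty}$-representation $\mathcal{K}$ into its break-$\lambda$ components $\mathcal{K}(\lambda)$ and using that $\mathcal{L}_{\psi}$ has break $1$, the component $\mathcal{K}(\lambda)\otimes\mathcal{L}_{\psi}$ has all breaks $\leq\max(\lambda,1)\leq\lambda+1$, so $\operatorname{Sw}_{\infty}(\mathcal{K}\otimes\mathcal{L}_{\psi})\leq\sum_{\lambda}(\lambda+1)\dim\mathcal{K}(\lambda)=\operatorname{Sw}_{\infty}(\mathcal{K})+\operatorname{rank}(\mathcal{K})$, which finishes the proof.

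The delicate point is the bookkeeping in the second step: the stalks of $R^{i}f_{\ast}\overline{\mathbb{Q}}_{\ell}$ over the bad finite locus $Z$ need not compute the cohomology of the fibres sitting over $Z$, so the argument has to be organised so that these terms—and the Swan conductors at the finite bad points—occur identically in $\chi(U;f^{\ast}\mathcal{L}_{\psi})$ and in $\chi(U)$ and hence disappear from the difference. Once this is arranged, and once the unique possibly wild cohomological degree has been pinned down to $n-1$ (which is exactly what makes $(-1)^{n}$ the correct sign in the conclusion), the remainder is just the Euler–Poincaré formula on a curve together with the elementary break estimate for twisting by an Artin–Schreier sheaf.
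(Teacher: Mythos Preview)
Your proof is correct and follows essentially the same route as the paper: push everything to $\mathbb{A}^{1}$ via Leray and the projection formula, then use Grothendieck--Ogg--Shafarevich together with the break decomposition at $\infty$ to compare $\chi$ before and after twisting by $\mathcal{L}_{\psi}$. The only cosmetic differences are that the paper applies the Euler--Poincar\'e formula directly to the constructible sheaves $R^{i}f_{\ast}\overline{\mathbb{Q}}_{\ell}$ on $\mathbb{A}^{1}$ (so the finite bad locus $Z$ is absorbed automatically rather than excised by hand), and the paper's break analysis separates the cases $s<1$, $s=1$, $s>1$ explicitly rather than using your one-line bound $\max(\lambda,1)\leq\lambda+1$; both arrive at the same inequality $\operatorname{Sw}_{\infty}(\mathcal{K}\otimes\mathcal{L}_{\psi})\leq\operatorname{Sw}_{\infty}(\mathcal{K})+\operatorname{rank}(\mathcal{K})$.
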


If \(R^{n-1}f_{\ast}\overline{\mathbb{Q}}_{\ell}\) also exhibits tame
ramification at infinity, then the Grothendieck--Ogg--Shafarevich
theorem implies that equality holds.  The complexity arises from
the presence of wild ramification.  Fortunately, a general principle
applies here: the occurrence of wild ramification will only reduce the
dimension of the space of interest.

\begin{proof}
Since \(\mathcal{L}_{\psi}\) is a local system, the projection formula
also works for the direct image functor:
\begin{equation*}
Rf_{\ast}(f^{\ast}\mathcal{L}_{\psi}) = (Rf_{\ast}\overline{\mathbb{Q}}_{\ell})
\otimes \mathcal{L}_{\psi}
\end{equation*}
(cf.~\cite[Proposition~8.14, Remark (2)]{freitag-kiehl/etale-cohomology}).
This gives us a spectral sequence
\begin{equation*}
E_{2}^{a,b} =
\mathrm{H}^{a}(\mathbb{A}^{1};R^{b}f_{\ast}\overline{\mathbb{Q}}_{\ell}\otimes \mathcal{L}_{\psi})
\Rightarrow \mathrm{H}^{a+b}(U;f^{\ast}\mathcal{L}_{\psi}),
\end{equation*}
Taking Euler characteristics gives
\begin{equation*}
\chi(U;f^{\ast}\mathcal{L}_{\psi}) =  \sum_{i}(-1)^{n-i}\chi(\mathbb{A}^{1};R^{i}f_{\ast}\overline{\mathbb{Q}}_{\ell}).
\end{equation*}
We shall analyze the key factor
\(\chi(\mathbb{A}^{1};R^{n-1}f_{\ast}\overline{\mathbb{Q}}_{\ell})\)
below.  To avoid using overly cumbersome notation, let us introduce a
shorthand
\(\mathcal{E} = R^{n-1}f_{\ast}\overline{\mathbb{Q}}_{\ell}\).

By generic base change, the rank of the monodromy representation
\(\mathcal{E}_{(\infty)}\) of \(\mathcal{E}\) at infinity of
\(\mathbb{A}^{1}\) equals
\(\dim \mathrm{H}^{n-1}(F;\overline{\mathbb{Q}}_{\ell})\).  The
monodromy representation has a break decomposition
(cf.~\cite[\S\S1.1--1.10]{katz:gauss-sums-kloosterman-sums-monodromy})
\begin{equation*}
\mathcal{E}_{(\infty)} = \bigoplus_{s} E_{s},
\end{equation*}
where \(E_{s}\) is the summand of pure break \(s\).  By the
Grothendieck--Ogg--Shafarevich formula, using the fact that
\(\mathcal{L}_{\psi}\) is unramified at any point of
\(\mathbb{A}^{1}\), we have
\begin{equation}\label{eq:gos}
\chi(\mathbb{A}^{1} ; \mathcal{E} \otimes \mathcal{L}_{\psi})
= \chi(\mathbb{A}^{1};\mathcal{E})
+ \mathrm{Sw}(\mathcal{E}_{(\infty)}) - \mathrm{Sw} (\mathcal{E}_{(\infty)} \otimes \mathcal{L}_{\psi (\infty)}).
\end{equation}

Since \(\mathcal{L}_{\psi}\) is pure of break one at infinity,
for each \(s < 1\), \(E_{s} \otimes \mathcal{L}_{\psi(\infty)}\)
has Swan conductor equal to \(\operatorname{rank}(E_{s})\).
Similarly, for \(s > 1\), the Swan conductor of
\(E_{s} \otimes \mathcal{L}_{\psi(\infty)}\) equals the
Swan conductor of \(E_{s}\).  It might happen that there exists some
irreducible subrepresentation \(G\) of \(E_{1}\) where
\(G \otimes \mathcal{L}_{\psi(\infty)}\) has break \(<1\).  If
so, the Swan conductor of
\(E_{1}\otimes \mathcal{L}_{\psi(\infty)}\) will become smaller than
that of \(E_{1}\):
\begin{align*}
  \mathrm{Sw}(\mathcal{E}_{(\infty)}\otimes \mathcal{L}_{\psi(\infty)})
  -\mathrm{Sw}(\mathcal{E}_{(\infty)})
  &=
     {\textstyle\sum\limits_{s<1}(1-s)\dim E_{s}} - [\mathrm{Sw}(E_{1}) - \mathrm{Sw}(E_{1} \otimes \mathcal{L}_{\psi(\infty)})] \\
  &\leq  \dim {\textstyle\bigoplus\limits_{s<1}E_{s}}  \leq \dim \mathcal{E}_{(\infty)} = \dim \mathrm{H}^{n-1}(F;\overline{\mathbb{Q}}_{\ell}),
\end{align*}
where \(F\) is a generic fiber of \(f\).  It then follows from
\eqref{eq:gos} that
\begin{equation}
  \label{eq:estimate-middle-cohomology}
   -\chi(\mathbb{A}^{1};\mathcal{E}\otimes\mathcal{L}_{\psi})
  \leq \dim \mathrm{H}^{n-1}(F;\overline{\mathbb{Q}}_{\ell})  - \chi(\mathbb{A}^{1};\mathcal{E}).
\end{equation}
On the other hand, since
\(R^{i}f_{\ast}\overline{\mathbb{Q}}_{\ell}\) are assumed to be tamely
ramified at infinity for \(i\neq n-1\), the
Grothendieck--Ogg--Shafarevich formula implies that
\begin{equation}\label{eq:other-degrees}
\chi(\mathbb{A}^{1};R^{i}f_{\ast}\overline{\mathbb{Q}}_{\ell}\otimes \mathcal{L}_{\psi})
= \chi(\mathbb{A}^{1};R^{i}f_{\ast}\overline{\mathbb{Q}}_{\ell}) -
\dim \mathrm{H}^{i}(F;\overline{\mathbb{Q}}_{\ell}) \quad \text{ for any }i\neq n-1.
\end{equation}
Therefore
\begin{align*}
 (-1)^{n}[\chi(U) - \chi(F)]
  &= \sum_{i=0}^{n}(-1)^{n-i} [\chi(\mathbb{A}^{1};R^{i}f_{\ast}\overline{\mathbb{Q}}_{\ell})
    - \dim \mathrm{H}^{i}(F;\overline{\mathbb{Q}}_{\ell})] \\
  [\text{by }\eqref{eq:other-degrees}]
  &= -\chi(\mathbb{A}^{1};\mathcal{E}) + \dim \mathrm{H}^{n-1}(F;\overline{\mathbb{Q}}_{\ell})
    + \sum_{i\neq n-1} (-1)^{n-i}\chi(\mathbb{A}^{1}; R^{i}f_{\ast}\overline{\mathbb{Q}}_{\ell}\otimes \mathcal{L}_{\psi})  \\
  [\text{by }\eqref{eq:estimate-middle-cohomology}]
  &\geq -\chi(\mathbb{A}^{1};\mathcal{E}\otimes\mathcal{L}_{\psi})
    +  \sum_{i\neq n-1} (-1)^{n-i}\chi(\mathbb{A}^{1}; R^{i}f_{\ast}\overline{\mathbb{Q}}_{\ell}\otimes \mathcal{L}_{\psi}) \\
  &= (-1)^{n}\chi(U;f^{\ast}\mathcal{L}_{\psi}).
\end{align*}
This completes the proof of the lemma.
\end{proof}

The second lemma concerns the vanishing of Artin--Schreier cohomology.
We shall state it more generally using the language of the perverse
t-structure.  In our applications, \(U\) will be a purely
\(n\)-dimensional local complete intersection affine variety and
\(\mathcal{P}\) will be the shifted constant local system
\(\overline{\mathbb{Q}}_{\ell}[n]\).

\begin{lemma}\label{lemma:vanishing-non-middle}
Let \(U\) be a purely \(n\)-dimensional affine scheme.  Let
\(\mathcal{P}\) be a constructible complex on \(U\) satisfying the
support condition (i.e., \(\mathcal{P} \in {}^{\mathrm{p}}\mathcal{D}_{U}^{\leq0}\) for the selfdual
perversity).  Let \(f\colon U \to \mathbb{A}^{1}\) be a morphism such
that the ``weak Lefschetz'' property holds for general
\(t \in \mathbb{A}^{1}\).  This means that for all but finitely many
\(t \in \mathbb{A}^{1}\), the restriction map
\begin{equation*}
\mathrm{H}^{m}(U;\mathcal{P}) \to \mathrm{H}^{m}(f^{-1}(t);\mathcal{P})
\end{equation*}
is injective for \(m = -1\) and bijective for \(m < -1\).  Then for
all but finitely many \(\lambda \in k^{\times}\), we have
\begin{equation*}
\mathrm{H}^{m}(U;\mathcal{P} \otimes (\lambda f)^{\ast}\mathcal{L}_{\psi}) = 0
\end{equation*}
for any \(m \neq 0\).
\end{lemma}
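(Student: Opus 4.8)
The plan is to prove the two ranges $m>0$ and $m<0$ separately; only the case $m<0$ uses the weak Lefschetz hypothesis.

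\textbf{The range $m>0$.} Since $\mathcal{L}_\psi$ is a rank‑one local system, $(\lambda f)^\ast\mathcal{L}_\psi$ is lisse, so $\mathcal{P}\otimes(\lambda f)^\ast\mathcal{L}_\psi$ again lies in ${}^{\mathrm{p}}\mathcal{D}^{\leq 0}_U$; as $U$ is affine, Artin's vanishing theorem gives $\mathrm{H}^m(U;\mathcal{P}\otimes(\lambda f)^\ast\mathcal{L}_\psi)=0$ for every $m>0$ and every $\lambda$, no genericity needed.

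\textbf{Reduction to $\mathbb{A}^1$.} For $m<0$ I would push forward along $f$. As $U$ is affine, $f$ is an affine morphism; put $\mathcal{K}=Rf_\ast\mathcal{P}$ and write $\mathcal{L}_{\psi_\lambda}=[\lambda]^\ast\mathcal{L}_\psi$, a rank‑one lisse sheaf on $\mathbb{A}^1$, unramified on $\mathbb{A}^1$ and of Swan conductor $1$ at $\infty$. The projection formula gives $R\Gamma(U;\mathcal{P}\otimes(\lambda f)^\ast\mathcal{L}_\psi)=R\Gamma(\mathbb{A}^1;\mathcal{K}\otimes\mathcal{L}_{\psi_\lambda})$. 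Next I would invoke the standard fact (in the circle of ideas of Katz–Laumon; cf.\ the remark after Theorem~\ref{theorem:nondegenerate-exponential-sums}) that for any perverse sheaf $\mathcal{M}$ on $\mathbb{A}^1$ and all but finitely many $\lambda$,
\[
\mathrm{H}^p(\mathbb{A}^1;\mathcal{M}\otimes\mathcal{L}_{\psi_\lambda})=0\qquad(p\neq 0).
\]
Indeed, for $\lambda$ outside the finite set of ``slopes of $\mathcal{M}$ at $\infty$'' the twist $\mathcal{M}\otimes\mathcal{L}_{\psi_\lambda}$ is totally wild at $\infty$, so the forget--supports map $\mathrm{H}^\bullet_c\to\mathrm{H}^\bullet$ is an isomorphism, while $R\Gamma_c(\mathbb{A}^1;\mathcal{M}\otimes\mathcal{L}_{\psi_\lambda})[1]$ is the stalk at $\lambda$ of the Deligne--Fourier transform of $\mathcal{M}$, which is perverse and therefore, at a point where it is lisse, lives in a single degree. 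Applying this to the finitely many perverse cohomology sheaves $\pH^q(\mathcal{K})$ and running the perverse Leray spectral sequence $E_2^{p,q}=\mathrm{H}^p(\mathbb{A}^1;\pH^q(\mathcal{K})\otimes\mathcal{L}_{\psi_\lambda})\Rightarrow\mathrm{H}^{p+q}(\mathbb{A}^1;\mathcal{K}\otimes\mathcal{L}_{\psi_\lambda})$, it collapses for all but finitely many $\lambda$, reducing the problem to showing $\mathrm{H}^0(\mathbb{A}^1;\pH^m(\mathcal{K})\otimes\mathcal{L}_{\psi_\lambda})=0$ for $m<0$.

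\textbf{Input from weak Lefschetz, and conclusion.} On the affine curve $\mathbb{A}^1$ a perverse sheaf has cohomology only in degrees $-1,0$, so the \emph{untwisted} perverse Leray spectral sequence for $\mathcal{K}$ also degenerates, giving exact sequences $0\to\mathrm{H}^0(\mathbb{A}^1;\pH^m(\mathcal{K}))\to\mathrm{H}^m(U;\mathcal{P})\to\mathrm{H}^{-1}(\mathbb{A}^1;\pH^{m+1}(\mathcal{K}))\to 0$. Comparing this with the $\ell$‑adic base‑change identification $i_t^\ast\mathcal{K}\cong R\Gamma(f^{-1}(t);\mathcal{P})$ for general $t$, one checks that the restriction map $\mathrm{H}^m(U;\mathcal{P})\to\mathrm{H}^m(f^{-1}(t);\mathcal{P})$ annihilates the subspace $\mathrm{H}^0(\mathbb{A}^1;\pH^m(\mathcal{K}))$ and, modulo it, factors through the generic stalk of the lisse sheaf underlying $\pH^{m+1}(\mathcal{K})$. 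Feeding in the hypothesis — injectivity in degree $-1$, bijectivity in degrees $<-1$ — and cascading downward, one deduces that for every $q\leq -1$ the perverse sheaf $\pH^q(\mathcal{K})$ has no punctual constituents, trivial geometric monodromy, and is therefore the shift by one of a geometrically constant lisse sheaf, $\pH^q(\mathcal{K})\cong\overline{\mathbb{Q}}_\ell^{\,r_q}[1]$. For such a sheaf $\mathrm{H}^0(\mathbb{A}^1;\pH^q(\mathcal{K})\otimes\mathcal{L}_{\psi_\lambda})=\mathrm{H}^1(\mathbb{A}^1;\mathcal{L}_{\psi_\lambda}^{\oplus r_q})=0$, because $\mathrm{H}^1(\mathbb{A}^1;\mathcal{L}_{\psi_\lambda})=0$: the Euler--Poincaré formula gives $\chi=1\cdot\chi_c(\mathbb{A}^1)-\mathrm{Sw}_\infty=1-1=0$, while $\mathrm{H}^0(\mathbb{A}^1;\mathcal{L}_{\psi_\lambda})=0$ since $\mathcal{L}_{\psi_\lambda}$ is a nontrivial character, and $\mathrm{H}^2(\mathbb{A}^1;-)=0$ on an affine curve. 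Hence $\mathrm{H}^m(U;\mathcal{P}\otimes(\lambda f)^\ast\mathcal{L}_\psi)=0$ for $m<0$ and all but finitely many $\lambda$, which together with the first step proves the lemma.

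The one place where genuine work is required is the step that turns the weak Lefschetz hypothesis into the geometric constancy of the $\pH^q(\mathcal{K})$ for $q\leq -1$: this needs careful bookkeeping in the perverse Leray spectral sequence of $Rf_\ast\mathcal{P}$, the base‑change description of its stalks at a general point, and the elimination of possible punctual sub‑ and quotient‑sheaves in the negative perverse cohomology of $\mathcal{K}$.
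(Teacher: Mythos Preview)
Your argument follows the same architecture as the paper's: Artin vanishing for $m>0$, then projection formula plus the perverse Leray spectral sequence for $Rf_\ast\mathcal{P}$ to reduce the case $m<0$ to statements about $\mathrm{H}^p(\mathbb{A}^1;\pH^q(Rf_\ast\mathcal{P})\otimes\mathcal{L}_{\psi_\lambda})$. The two places where you diverge from the paper are both legitimate.

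For the vanishing of $\mathrm{H}^{-1}(\mathbb{A}^1;\pH^q\otimes\mathcal{L}_{\psi_\lambda})$ at generic $\lambda$, you invoke perversity of the Deligne--Fourier transform together with the forget-supports isomorphism under total wildness at $\infty$; the paper instead carries out a hands-on break-decomposition analysis of $\pH^0(Rf_\ast\mathcal{P})$ at $\infty$ (and uses tameness for $q\leq -1$). Your route is cleaner and more conceptual; the paper's is more self-contained.

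For the vanishing of $\mathrm{H}^0(\mathbb{A}^1;\pH^q\otimes\mathcal{L}_{\psi_\lambda})$ when $q\leq -1$, you assert that $\pH^q(Rf_\ast\mathcal{P})\cong\overline{\mathbb{Q}}_\ell^{\,r_q}[1]$ is the shift of a constant sheaf. This is \emph{correct}, but you should be aware that the verification is not automatic: you must rule out punctual subobjects, not just quotients, and this is exactly where the weak Lefschetz hypothesis enters. Concretely, once one knows (from the untwisted Leray sequence) that $\mathrm{H}^0(\mathbb{A}^1;\pH^q)=0$ and $\dim\mathrm{H}^{-1}(\mathbb{A}^1;\pH^q)=r_q=\operatorname{rank}\mathcal{E}$, comparing with the natural map $\pH^q\to\overline{\mathbb{Q}}_\ell^{\,r_q}[1]$ (which exists because $\mathcal{E}$ is constant) and chasing dimensions forces both kernel and cokernel to vanish. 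The paper sidesteps this structural claim entirely: it keeps $\mathcal{F}=\pH^q$ as is and applies the Grothendieck--Ogg--Shafarevich formula directly, using $\mathrm{H}^0(\mathbb{A}^1;\mathcal{F})=0$, $\dim\mathrm{H}^{-1}(\mathbb{A}^1;\mathcal{F})=r_q$, and $\mathrm{Sw}_\infty(\mathcal{F}\otimes\mathcal{L}_\psi)=r_q$ (tameness of $\mathcal{E}$) to conclude $\chi(\mathbb{A}^1;\mathcal{F}\otimes\mathcal{L}_\psi)=0$, hence $\mathrm{H}^0=0$. The inputs are the same; the paper's packaging avoids the extra step of identifying $\mathcal{F}$ explicitly.
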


Lemma~\ref{lemma:vanishing-non-middle} is similar to
\cite[Proposition~3.1]{denef-loeser:weights-exponential-sums-newton-polyhedra}
and
\cite[Théorème~5.5.1(ii)]{katz-laumon:fourier-transform-upper-bounds-exponential-sums}
when \(U\) is smooth and
\(\mathcal{P} = \overline{\mathbb{Q}}_{\ell}[n]\).  Since we do not
impose any smoothness condition on \(U\), we cannot expect the validity of
the ``purity'' part of these cited theorems.

\begin{proof}
Since \(\mathcal{P}\) satisfies the support condition, so does
\(\mathcal{P}\otimes f^{\ast}\mathcal{L}_{\psi}\).  By Artin's
vanishing theorem,
\(\mathrm{H}^{m}(U;\mathcal{P}\otimes f^{\ast}\mathcal{L}_{\psi})=0\)
for \(m > 0\).  We shall now assume that \(m < 0\).  Since \(U\) is
affine, \(f\) is an affine morphism.  Therefore, by Artin's vanishing
theorem, \(Rf_{\ast}\mathcal{P}\) satisfies the support condition,
i.e., it only has non-positive perverse cohomology sheaves:
\(\pR^{j}f_{\ast}\mathcal{P} = 0\) for \(j > 0\).  We
also have
\(Rf_{\ast}(\mathcal{P}\otimes f^{\ast}\mathcal{L}_{\psi})=(Rf_{\ast}\mathcal{P})\otimes\mathcal{L}_{\psi}\).
Because twisting by a global local system of rank one is a t-exact
operation, we have
\begin{equation*}
\pR^{j}f_{\ast}(\mathcal{P} \otimes f^{\ast}\mathcal{L}_{\psi}) = (\pR^{j}f_{\ast}\mathcal{P} )\otimes \mathcal{L}_{\psi}.
\end{equation*}

The Leray spectral sequence for
\(Rf_{\ast}(\mathcal{P}\otimes f^{\ast}\mathcal{L}_{\psi})\) reads:
\begin{equation*}
E_{2}^{i,j} = \mathrm{H}^{i}(\mathbb{A}^{1}; \mathcal{L}_{\psi}\otimes\pR^{j}f_{\ast}\mathcal{P})
\Rightarrow \mathrm{H}^{i+j}(U;\mathcal{P} \otimes f^{\ast}\mathcal{L}_{\psi}).
\end{equation*}

By Artin's vanishing theorem, for any perverse sheaf \(\mathcal{Q}\)
on \(\mathbb{A}^{1}\), we have
\(\mathrm{H}^{i}(\mathbb{A}^{1};\mathcal{Q})=0\) if \(i \neq 0,-1\).
Thus, only possible nonzero terms are \(E^{-1,j}\) and \(E^{0,j}\)
(\(j \leq 0\)).  The spectral sequence thereby degenerates into a
collection of short exact sequences
\begin{equation*}
0 \to \mathrm{H}^{0}(\mathbb{A}^{1};\mathcal{L}_{\psi}\otimes\pR^{m}f_{\ast}\mathcal{P})\to
\mathrm{H}^{m}(U;\mathcal{P}\otimes f^{\ast}\mathcal{L}_{\psi})
\to \mathrm{H}^{-1}(\mathbb{A}^{1};\mathcal{L}_{\psi}\otimes \pR^{m+1}f_{\ast}\mathcal{P})
\to 0.
\end{equation*}

Similarly, degeneration of the Leray spectral sequence of
\(Rf_{\ast}\mathcal{P}\) gives exact sequences:
\begin{equation}\label{eq:leray-without-twist}
0 \to \mathrm{H}^{0}(\mathbb{A}^{1};\pR^{m}f_{\ast}\mathcal{P})\to
\mathrm{H}^{m}(U;\mathcal{P})
\xrightarrow{\beta} \mathrm{H}^{-1}(\mathbb{A}^{1};\pR^{m+1}f_{\ast}\mathcal{P})
\to 0.
\end{equation}

The lemma will be proved if the following assertions are established:
\begin{enumerate}
\item \label{item:ai} For \(m \leq 0\),
\(\mathrm{H}^{-1}(\mathbb{A}^{1};\pR^{m}f_{\ast}\mathcal{P}\otimes\mathcal{L}_{\psi})=0\).
\item For \(m \leq -1\),
\(\mathrm{H}^{0}(\mathbb{A}^{1};\pR^{m}f_{\ast}\mathcal{P}\otimes\mathcal{L}_{\psi})=0\). \label{item:bi}
\end{enumerate}

We shall show that (\ref{item:ai}) is true for any \(m \leq -1\), and when
\(m=0\), (\ref{item:ai}) is true after scaling \(f\) by a generic
nonzero number \(\lambda\) in \(k\).  The assertion (\ref{item:bi}) is
true for any \(m\leq-1\).

To simplify the notation, let
\(\mathcal{F} = \pR^{m}f_{\ast}\mathcal{P}\).  Let
\(j\colon V \to \mathbb{A}^{1}\) be the ``ouvert de lissité'' of
\(\mathcal{F}\), and let \(a\colon \Sigma \to \mathbb{A}^{1}\) be the
inclusion of the complement of \(V\).  Thus
\(\mathcal{F}|_{V} = \mathcal{E}[1]\), and
\(\mathcal{E} = (R^{m-1}f_{\ast}\mathcal{P})|_{V}\) is a local system on \(V\).
Then we form the following distinguished triangle
\begin{equation}\label{eq:loc-inv-cycle}
\mathcal{F} \to (R^{0}j_{\ast}\mathcal{E})[1] \to \mathcal{F}_{\Sigma} \xrightarrow{+1},
\end{equation}
where \(\mathcal{F}_{\Sigma}\) is the cone of \(\mathcal{F} \to (R^{0}j_{\ast}\mathcal{E})[1]\).
Similarly, we have the following distinguished triangle:
\begin{equation}\label{eq:loc-inv-cycle-with-as}
\mathcal{F} \otimes \mathcal{L}_{\psi} \to (R^{0}j_{\ast}(\mathcal{E} \otimes \mathcal{L}_{\psi}))[1] \to \mathcal{F}_{\Sigma} \xrightarrow{+1}
\end{equation}
Note that the two triangles \eqref{eq:loc-inv-cycle} and
\eqref{eq:loc-inv-cycle-with-as} have the same cone.  This is
because for each \(b \in \Sigma\), \(\mathcal{L}_{\psi}\) is
unramified at \(b\).

\begin{claim}\label{claim:no-punctual-section}
Assume that \(m \leq 0\).  Then the map
\(\alpha\colon\mathrm{H}^{-1}(\mathbb{A}^{1};\mathcal{F})\to\mathrm{H}^{0}(V;\mathcal{E})\)
(induced by \eqref{eq:loc-inv-cycle}) is injective, and
\(\beta\colon\mathrm{H}^{m-1}(U;\overline{\mathbb{Q}}_{\ell})\to\mathrm{H}^{-1}(\mathbb{A}^{1};\mathcal{F})\)
is bijective.
\end{claim}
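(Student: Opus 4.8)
The plan is to extract both assertions from the perverse Leray spectral sequence of $f\colon U\to\mathbb{A}^{1}$, using only that the hypercohomology of a perverse sheaf on the affine line lives in degrees $-1$ and $0$. The injectivity of $\alpha$ will be purely formal, resting on the perversity of $\mathcal{F}=\pR^{m}f_{\ast}\mathcal{P}$ (which holds for any $m$); the bijectivity of $\beta$ is where the weak Lefschetz hypothesis of Lemma~\ref{lemma:vanishing-non-middle} enters, and the restriction $m\leq 0$ is exactly what places the relevant degree $m-1$ in the range $\leq-1$ in which that hypothesis is assumed.

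For $\alpha$: apply $R\Gamma(\mathbb{A}^{1},-)$ to the triangle \eqref{eq:loc-inv-cycle}. Since $(R^{0}j_{\ast}\mathcal{E})[1]$ is the shifted middle extension $j_{!\ast}(\mathcal{E}[1])$ of a local system on a smooth curve, hence perverse, one has $\mathrm{H}^{-1}(\mathbb{A}^{1};(R^{0}j_{\ast}\mathcal{E})[1])=\mathrm{H}^{0}(\mathbb{A}^{1};j_{\ast}\mathcal{E})=\mathrm{H}^{0}(V;\mathcal{E})$, so that $\alpha$ is literally the restriction map on $\mathrm{H}^{-1}$, and $\ker\alpha$ is the image of $\mathrm{H}^{-2}(\mathbb{A}^{1};\mathcal{F}_{\Sigma})$. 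Now $\mathcal{F}_{\Sigma}$ is the cone of the morphism $\mathcal{F}\to(R^{0}j_{\ast}\mathcal{E})[1]$, which restricts to an isomorphism over $V$; being the cone of a morphism of perverse sheaves it lies in perverse degrees $[-1,0]$, and since it is supported on the finite set $\Sigma$, where the perverse and ordinary $t$-structures agree, it is a genuine complex of amplitude $[-1,0]$. Hence $\mathrm{H}^{-2}(\mathbb{A}^{1};\mathcal{F}_{\Sigma})=\bigoplus_{x\in\Sigma}\mathcal{H}^{-2}((\mathcal{F}_{\Sigma})_{x})=0$, and $\alpha$ is injective.

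For $\beta$: surjectivity is already recorded in \eqref{eq:leray-without-twist} (with $m$ replaced by $m-1$), which also identifies $\ker\beta$ with $\mathrm{H}^{0}(\mathbb{A}^{1};\pR^{m-1}f_{\ast}\mathcal{P})$; it remains to prove injectivity. Fix a closed point $t$ in a sufficiently small dense open of $V$ --- small enough that generic base change gives $\mathcal{E}_{t}=(R^{m-1}f_{\ast}\mathcal{P})_{t}=\mathrm{H}^{m-1}(f^{-1}(t);\mathcal{P})$, and that the weak Lefschetz hypothesis applies at $t$. I would then check that the composite
\[
\mathrm{H}^{m-1}(U;\mathcal{P})\xrightarrow{\ \beta\ }\mathrm{H}^{-1}(\mathbb{A}^{1};\mathcal{F})\xrightarrow{\ \alpha\ }\mathrm{H}^{0}(V;\mathcal{E})\hookrightarrow\mathcal{E}_{t}=\mathrm{H}^{m-1}(f^{-1}(t);\mathcal{P})
\]
coincides with the geometric restriction to the fibre over $t$. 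Granting this, the last three arrows are injective ($\alpha$ by the previous step; the inclusion because global sections of a local system on a connected curve inject into any stalk; the last arrow an equality), while the total composite is injective for $m-1\leq-1$ by the weak Lefschetz hypothesis. Hence $\beta$ is injective, and therefore bijective.

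I expect the only real difficulty to be the verification of the displayed identification, which is bookkeeping of a well-known sort: restrict everything to the open $f^{-1}(V)\subset U$, over which the perverse Leray spectral sequence of $f$ agrees, up to the shift $[1]$ and the reindexing $\pR^{b}\leftrightarrow R^{b-1}$, with the ordinary Leray spectral sequence of $f^{-1}(V)\to V$, and invoke the standard compatibility of the Leray edge map $\mathrm{H}^{m-1}(f^{-1}(V);\mathcal{P})\to\mathrm{H}^{0}(V;R^{m-1}f_{\ast}\mathcal{P})$ with evaluation at $t\in V$; one also uses functoriality of the edge map $\beta$ under the open restriction $V\hookrightarrow\mathbb{A}^{1}$. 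Keeping these degree shifts and the middle-extension identity $(R^{0}j_{\ast}\mathcal{E})[1]=j_{!\ast}(\mathcal{E}[1])$ straight is the main thing to get right; both assertions are soft once the identifications are in place.
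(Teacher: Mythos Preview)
Your proof is correct and follows essentially the same route as the paper: both hinge on the commutative square
\[
\begin{tikzcd}
\mathrm{H}^{-1}(\mathbb{A}^{1};\mathcal{F}) \ar{r}{\alpha} & \mathrm{H}^{0}(V;\mathcal{E}) \ar[hook]{d}\\
\mathrm{H}^{m-1}(U;\mathcal{P}) \ar[two heads]{u}{\beta} \ar[hook]{r} & \mathrm{H}^{m-1}(f^{-1}(t);\mathcal{P})
\end{tikzcd}
\]
with the bottom arrow injective by the weak Lefschetz hypothesis and $\beta$ surjective by \eqref{eq:leray-without-twist}. The paper simply reads off both conclusions from this square at once (injectivity of $\alpha\circ\beta$ plus surjectivity of $\beta$ forces $\alpha$ injective and $\beta$ bijective), whereas you first give an independent argument for the injectivity of $\alpha$ via $\mathrm{H}^{-2}(\mathbb{A}^{1};\mathcal{F}_{\Sigma})=0$ and then use the square only for $\beta$. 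Your extra argument is a pleasant observation---it shows $\alpha$ is injective for \emph{every} $m$, using only that $\mathcal{F}$ and $j_{!*}(\mathcal{E}[1])$ are perverse and that $\mathcal{F}_{\Sigma}$ is punctually supported---but it is not needed for the claim as stated, since the square already delivers it under the hypothesis $m\leq 0$.
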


\begin{proof}[Proof of Claim~\ref{claim:no-punctual-section}]
The surjectivity of
\(\beta\) is seen from \eqref{eq:leray-without-twist}.  We have a commutative diagram
\begin{equation*}
\begin{tikzcd}
\mathrm{H}^{-1}(\mathbb{A}^{1};\mathcal{F}) \ar{r}{\alpha} & \mathrm{H}^{0}(V;\mathcal{E}|_{V}) \ar[d,equal]\\
\mathrm{H}^{m-1}(U;\mathcal{P}) \ar[hook]{r} \ar[two heads]{u}{\beta}& \mathrm{H}^{m-1}(f^{-1}(t);\mathcal{P})^{\pi_{1}(V)}
\end{tikzcd}.
\end{equation*}
The right two spaces are identified because \(V\) is the ouvert de
lissité of \(\mathcal{F}\).  The bottom arrow is injective thanks to
the ``weak Lefschetz'' hypothesis.  Since \(\beta\circ\alpha\) is
injective and \(\beta\) is surjective, we conclude that \(\alpha\) is
injective and \(\beta\) is bijective.
\end{proof}

We now turn to the proof of (\ref{item:ai}).  It is clear from the
presentation \eqref{eq:loc-inv-cycle} that \(\mathcal{F}_{\Sigma}\)
has possibly nonzero perverse cohomology only in degree \(-1\) and
\(0\).  The injectivity of \(\alpha\) proved in
Claim~\ref{claim:no-punctual-section} implies that
\(\mathrm{H}^{-1}(\mathbb{A}^{1};\mathcal{F}_{\Sigma})=0\).  From
\eqref{eq:loc-inv-cycle-with-as} we deduce that
\(\mathrm{H}^{-1}(\mathbb{A}^{1};\mathcal{F}\otimes\mathcal{L}_{\psi})\)
is a subspace of
\(\mathrm{H}^{0}(V;\mathcal{E}\otimes\mathcal{L}_{\psi})\).  It
suffices to prove, therefore, for \(m\leq 0\),
\(\mathrm{H}^{0}(V;\mathcal{E}\otimes\mathcal{L}_{\psi})=0\).  When
\(m \leq -1\), this is immediate: the weak Lefschetz hypothesis
implies that \(\mathcal{E}\) is isomorphic to the constant local
system \(\mathrm{H}^{m-1}(U;\mathcal{P})\), hence must be tamely
ramified; but then \(\mathcal{E}\otimes\mathcal{L}_{\psi}\) will have
wild ramification at \(\infty\), forcing
\(\mathcal{E}\otimes\mathcal{L}_{\psi}\) to have no nonzero sections.

When \(m = 0\), it might happen that \(\mathcal{E}\) has wild
ramification at \(\infty\).  The assertion (\ref{item:ai}) may not be
true for \(f\), so we might have to choose a different scaling factor
\(\lambda\).  Let us use \(\mathcal{E}_{(\infty)}\) to denote the
representation of the inertia group of \(\infty \in \mathbb{P}^{1}\)
induced by \(\mathcal{E}\).  Then \(\mathcal{E}_{(\infty)}\) admits a
break decomposition
\begin{equation*}
\mathcal{E}_{(\infty)} = \bigoplus_{s \geq 0} E_{s},
\end{equation*}
where \(E_{s}\) is a representation of the inertia which has only one break \(s\)
(cf.~\cite[1.8 below]{katz:gauss-sums-kloosterman-sums-monodromy}).  Let
\(L_{\mu}\) be any rank one nontrivial representation of the inertia
at infinity induced by the Artin--Schreier extension
\begin{equation*}
k(\!(t^{-1})\!)[y]/(y^{p} - y - \mu t)\quad (t \text{ being the coordinate of }\mathbb{A}^{1}).
\end{equation*}
Such a representation has pure break \(1\).  If
\(\mathrm{H}^{0}(V;\mathcal{E}\otimes\mathcal{L}_{\psi})\neq0\), then
in the break decomposition of \(\mathcal{E}_{(\infty)}\), \(E_{1}\)
must contain some copies of \(L_{1}\) as its subrepresentation.
Furthermore, \(E_{1}\) may contain some other \(L_{\mu}\) as its
subrepresentation.  But there are only finitely many such \(\mu\).

Now let us consider the composition
\begin{equation*}
X \xrightarrow{f} \mathbb{A}^{1} \xrightarrow{\lambda} \mathbb{A}^{1}.
\end{equation*}
where we also denote the multiplication-by-\(\lambda\) morphism by
\(\lambda\).  Let
\(\mathcal{F}_{\lambda} = {}^{\mathrm{p}}R^{0}(\lambda f)_{\ast}\mathcal{P} = \lambda_{\ast}\mathcal{F}\).
Since \(\lambda\) is an isomorphism, we have
\begin{equation*}
\lambda^{\ast}(\mathcal{F}_{\lambda} \otimes \mathcal{L}_{\psi}) =
\mathcal{F} \otimes \lambda^{\ast} \mathcal{L}_{\psi},
\end{equation*}
Now
\begin{equation*}
(\mathcal{F} \otimes \lambda^{\ast}\mathcal{L}_{\psi})_{(\infty)}
= \mathcal{E}_{(\infty)}[1] \otimes L_{\lambda}.
\end{equation*}
Thus, so long as \(\lambda\) is not equal to \(\mu^{-1}\), where
\(L_{\mu}\) is a subrepresentation of \(E_{1}\),
\(\mathcal{E}_{(\infty)} \otimes L_{\lambda}\) has no trivial
subrepresentation.  Since
\(\lambda\colon \mathbb{A}^{1}\to \mathbb{A}^{1}\) is an isomorphism,
we conclude that \(\mathcal{F}_{\lambda} \otimes \mathcal{L}_{\psi}\)
has trivial \(\mathrm{H}^{-1}\) over any open set on which it is
lisse.  Thus we may use \(\lambda f\) in place of \(f\), and the
assertion (\ref{item:ai}) for \(m=0\) is proved.

Let us now prove (\ref{item:bi}) for \(m \leq -1\).  In this
situation, the weak Lefschetz hypothesis implies that \(\mathcal{E}\)
is a constant local system associated to the vector space
\(\mathrm{H}^{m-1}(U;\mathcal{P})\), hence is tamely ramified at
\(\infty\).  Using the result of (\ref{item:ai}), we have
\(\mathrm{H}^{-1}(\mathbb{A}^{1};\mathcal{F}\otimes\mathcal{L}_{\psi})=0\).
Recall that the Grothendieck--Ogg--Shafarevich formula says that
\begin{equation*}
\dim \mathrm{H}^{0}(\mathbb{A}^{1};\mathcal{F}\otimes\mathcal{L}_{\psi}) - \dim \mathrm{H}^{-1}(\mathbb{A}^{1};\mathcal{F}\otimes\mathcal{L}_{\psi})
\end{equation*}
equals
\begin{equation*}
\dim \mathrm{H}^{0}(\mathbb{A}^{1};\mathcal{F}) - \dim \mathrm{H}^{-1}(\mathbb{A}^{1};\mathcal{F})
- \mathrm{Sw}(\mathcal{F}_{(\infty)} \otimes \mathcal{L}_{\psi(\infty)}).
\end{equation*}
Therefore, it suffices to prove the above number is zero.  By
Claim~\ref{claim:no-punctual-section}, the mapping \(\beta\) in
\eqref{eq:leray-without-twist} is bijective.  This shows that
\(\mathrm{H}^{0}(\mathbb{A}^{1};\mathcal{F})=0\).  Thus
\(-\chi(\mathcal{F})\) equals the dimension of
\(\mathrm{H}^{-1}(\mathbb{A}^{1};\mathcal{F})=\mathrm{H}^{m-1}(U;\mathcal{P})\).
Due to the perverse shift, the Swan conductor of
\(\mathcal{F} \otimes \mathcal{L}_{\psi}\) is equal to
\(-\mathrm{Sw}(\mathcal{E}_{(\infty)}\otimes\mathcal{L}_{\psi(\infty)})\).
Since \(\mathcal{E}\) is tamely ramified, this number is equal to the rank of \(\mathcal{E}\), which is
\(\dim \mathrm{H}^{m-1}(f^{-1}(t);\mathcal{P})\) for
some general \(t\).  The desired identity then follows from the weak
Lefschetz hypothesis.
\end{proof}

\begin{corollary}\label{corollary:vanishing-non-middle-compact-support}
Let \(U\) be a purely \(n\)-dimensional affine scheme.  Let
\(\mathcal{P}\) be a constructible complex on \(U\) satisfying the
cosupport condition (i.e.,
\(\mathcal{P} \in {}^{\mathrm{p}}\mathcal{D}_{U}^{\geq0}\) for the
self-dual perversity).  Let \(f\colon U \to \mathbb{A}^{1}\) be a
morphism such that the ``weak Lefschetz'' property holds for general
\(t \in \mathbb{A}^{1}\).  More precisely, for all but finitely many
\(t \in \mathbb{A}^{1}\), the corestriction map (\(\iota\) denotes the
closed embedding \(f^{-1}(t)\to U\))
\begin{equation*}
\mathrm{H}_{c}^{m}(f^{-1}(t);\iota^{!}\mathcal{P}) \to \mathrm{H}_{c}^{m}(U;\mathcal{P})
\end{equation*}
is surjective for \(m = 1\) and bijective for \(m > 1\).  Then for
all but finitely many \(\lambda \in k^{\times}\),
\begin{equation*}
\mathrm{H}_{c}^{m}(U;\mathcal{P} \otimes f^{\ast}\mathcal{L}_{\psi}) = 0
\end{equation*}
for any \(m \neq 0\).
\end{corollary}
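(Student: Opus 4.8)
The plan is to reduce this to Lemma~\ref{lemma:vanishing-non-middle} by Verdier duality. I would set $\mathcal{Q} = \mathbb{D}_U\mathcal{P}$; since $\mathcal{P}$ satisfies the cosupport condition, $\mathcal{Q}$ satisfies the support condition, and $U$ is already purely $n$-dimensional and affine, so $\mathcal{Q}$ meets the standing assumptions of Lemma~\ref{lemma:vanishing-non-middle} once its weak Lefschetz hypothesis is verified. The first step is to match the two hypotheses. Fix $t\in\mathbb{A}^1$ with closed immersion $\iota\colon f^{-1}(t)\hookrightarrow U$. The corestriction morphism $\iota_!\iota^!\mathcal{P}\to\mathcal{P}$ dualizes to the unit $\mathcal{Q}\to\iota_*\iota^*\mathcal{Q}$, using $\mathbb{D}\iota_! = \iota_*\mathbb{D}$ and $\mathbb{D}\iota^! = \iota^*\mathbb{D}$; applying $R\Gamma(U,-)$ together with Poincar\'e--Verdier duality on $U$ and on $f^{-1}(t)$, the map $\mathrm{H}_c^m(f^{-1}(t);\iota^!\mathcal{P})\to\mathrm{H}_c^m(U;\mathcal{P})$ becomes the $\overline{\mathbb{Q}}_\ell$-linear transpose of the restriction map $\mathrm{H}^{-m}(U;\mathcal{Q})\to\mathrm{H}^{-m}(f^{-1}(t);\iota^*\mathcal{Q})$. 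Hence the corollary's hypothesis --- surjectivity for $m=1$, bijectivity for $m>1$ --- says precisely that this restriction map is injective in degree $-1$ and bijective in degrees below $-1$, which is the weak Lefschetz hypothesis of Lemma~\ref{lemma:vanishing-non-middle} for the pair $(\mathcal{Q},f)$.

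Next I would invoke Lemma~\ref{lemma:vanishing-non-middle} for $U$, the complex $\mathcal{Q}$, the morphism $f$, and the nontrivial additive character $\psi^{-1}$. The proof of that lemma uses only that the relevant Artin--Schreier sheaf is lisse of rank one on $\mathbb{A}^1$ with a single break, equal to $1$, at infinity, so it applies verbatim with $\mathcal{L}_{\psi^{-1}}$ in place of $\mathcal{L}_{\psi}$; I would just record this remark. The output is a finite exceptional set of $\lambda\in k^\times$ outside of which $\mathrm{H}^m(U;\mathcal{Q}\otimes(\lambda f)^*\mathcal{L}_{\psi^{-1}}) = 0$ for every $m\neq 0$.

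Finally I would dualize back. Because $(\lambda f)^*\mathcal{L}_\psi$ is lisse of rank one, one has $\mathbb{D}_U(\mathcal{P}\otimes(\lambda f)^*\mathcal{L}_\psi) = \mathcal{Q}\otimes(\lambda f)^*\mathcal{L}_{\psi}^{\vee} = \mathcal{Q}\otimes(\lambda f)^*\mathcal{L}_{\psi^{-1}}$, whence $\mathrm{H}_c^m(U;\mathcal{P}\otimes(\lambda f)^*\mathcal{L}_\psi)^{\vee}\cong \mathrm{H}^{-m}(U;\mathcal{Q}\otimes(\lambda f)^*\mathcal{L}_{\psi^{-1}})$ by Poincar\'e--Verdier duality on the affine variety $U$; the right-hand side vanishes for $m\neq 0$ for all but finitely many $\lambda$, which is the assertion (with $\lambda f$ in place of $f$, exactly as in Lemma~\ref{lemma:vanishing-non-middle}). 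The whole argument is formal; the only place demanding attention is the first step --- keeping straight which of $\iota^{!}$, $\iota^{*}$ appears after dualizing and following the degree reflection $m\mapsto -m$, so that the injectivity/bijectivity ranges line up --- together with the (easy) observation that Lemma~\ref{lemma:vanishing-non-middle} is insensitive to the choice of nontrivial additive character. I do not foresee a real obstacle.
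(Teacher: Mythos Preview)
Your proposal is correct and follows essentially the same route as the paper: reduce to Lemma~\ref{lemma:vanishing-non-middle} by Verdier duality, using that $\mathbb{D}(\mathcal{P}\otimes f^{\ast}\mathcal{L}_{\psi})\simeq \mathbb{D}\mathcal{P}\otimes f^{\ast}\mathcal{L}_{\psi^{-1}}$ since the Artin--Schreier sheaf is lisse of rank one. The paper spends its ink justifying this last isomorphism carefully (via \'etale-local trivialization along the Artin--Schreier cover), whereas you are more explicit about matching the weak Lefschetz hypotheses under duality; both emphases are fine and the arguments are the same.
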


\begin{proof}
This follows from Lemma~\ref{lemma:vanishing-non-middle} by applying
Verdier duality.  Since Verdier duality is not compatible with tensor
products, some care is needed, although it is not particularly difficult.
For completeness, let us spell out the details.  Let us first recall
that if \(M\), \(N\) and \(L\) are all \(R\)-modules, then there is a
natural homomorphism
\begin{equation*}
\mathrm{Hom}_{R}(M,N) \otimes \mathrm{Hom}_{R}(L,R) \to \mathrm{Hom}_{R}(M\otimes L,N).
\end{equation*}
It is determined by the assignment sending a pair of linear maps
\((T,\varphi) \in \mathrm{Hom}_{R}(M,N) \times \mathrm{Hom}(L,R)\) to
the bilinear map \((m,x)\mapsto \varphi(x)\cdot T(m)\).  This is
clearly bilinear in both \(T\) and \(\varphi\), thus giving the said
homomorphism.  When \(L = R\) is the trivial \(R\)-module, the map is
clearly an isomorphism.  Now for a local system \(\mathcal{L}\) on
\(U\), denote by \(\mathcal{L}^{\vee}\) its dual local system.  For
each constructible complex \(\mathcal{F}\), the above construction
with \(R=\overline{\mathbb{Q}}_{\ell}\) gives rise to a morphism of
constructible complexes of \(\overline{\mathbb{Q}}_{\ell}\)-vector
spaces
\begin{equation*}
\mathbb{D}\mathcal{F} \otimes \mathcal{L}^{\vee}
= R\Hom(\mathcal{F},\omega_{X}) \otimes R\Hom(\mathcal{L},\overline{\mathbb{Q}}_{\ell})
\to R\Hom(\mathcal{F}\otimes \mathcal{L}, \omega_{X})  = \mathbb{D}(\mathcal{F}\otimes \mathcal{L})
\end{equation*}
which is an isomorphism if
\(\mathcal{L} \simeq \overline{\mathbb{Q}}_{\ell}\) is a constant
local system.  Here, \(\omega_{X}\) is Verdier's dualizing complex,
and \(\mathbb{D}(-) = R\Hom(-,\omega_{X})\) is the Verdier duality functor.

Now take \(\mathcal{L} = f^{\ast}\mathcal{L}_{\psi}\).  Then the dual
local system \(\mathcal{L}^{\vee}\) of \(\mathcal{L}\) is
\(f^{\ast}\mathcal{L}_{\psi^{-1}}\).  Since
\(f^{\ast}\mathcal{L}_{\psi}\) is trivialized by the finite étale
covering of \(U\) defined by \(y^{p}-y = f(x)\), and checking whether
an arrow is an isomorphism is an étale-local matter, we conclude that
\begin{equation*}
\mathbb{D}(\mathcal{F} \otimes f^{\ast}\mathcal{L}_{\psi}) \simeq
\mathbb{D}\mathcal{F} \otimes f^{\ast}\mathcal{L}_{\psi^{-1}}.
\end{equation*}
This completes the proof.
\end{proof}

\begin{proof}[Proof of Theorem~\ref{theorem:nondegenerate-exponential-sums}]
Without loss of generality we may assume \(u_{1}(x)=1\).  Otherwise we
can replace \(u\) by \((1,u)\colon U \to \mathbb{A}^{N+1}\).  Since
\(u\) is quasi-finite, the map
\(x \mapsto (1,u(x)) \in \mathbb{A}^{N+1}\) is also quasi-finite.
Since \(U\) is a purely \(n\)-dimensional local complete intersection,
\(\overline{\mathbb{Q}}_{\ell}[n]\) is a perverse sheaf on \(U\)
(see~\cite[Lemma~III.6.5]{kiehl-weissauer:weil-conjecture-perverse-sheaf-fourier-transform}).
Applying Deligne's weak Lefschetz theorem
(Theorem~\ref{theorem:weak-lefschetz}) to
\(\overline{\mathbb{Q}}_{\ell}[n]\) and
\(\mathbb{D}(\overline{\mathbb{Q}}_{\ell}[n])\), we find that the
hypotheses of Lemma~\ref{lemma:vanishing-non-middle} and
Corollary~\ref{corollary:vanishing-non-middle-compact-support} are
satisfied for \(f_{a}^{-1}(t)\), where \(a\) is sufficiently general
in \(\check{\mathbb{A}}^{N}\).  This proves the vanishing of
\(\mathrm{H}^{i}(U;f^{\ast}\mathcal{L}_{\psi})\) and
\(\mathrm{H}^{i}_{c}(U;f^{\ast}\mathcal{L}_{\psi})\) when \(i\neq n\).
The second assertion then follows from these vanishing results and
Lemma~\ref{lemma:estimate-euler}.
\end{proof}

\subsection{Katz's situation}
\label{sec:katz-framework}
Let us offer an immediate application of the theorems above to a
framework considered by Katz~\cite{katz:exponential-sums}.

Consider the following data:
\begin{itemize}
\item Let \(X\) be a smooth proper variety over \(k\).
\item Let \(D_{1},\ldots, D_{s}\) be effective, nonsingular, irreducible divisors of \(X\),
such that for any subset \(J\) of \(\{1,\ldots,s\}\),
\begin{equation*}
D_{J} = \bigcap_{j\in J} D_{j}
\end{equation*}
is either empty, or smooth of codimension \(\operatorname{Card}J\) in
\(X\).
\item Let \(e_{1},\ldots,e_{s} \geq 1\) be natural numbers.
Let
\(D = \sum_{i=1}^{s}e_{i} D_{i}\).  Thus
\[\mathcal{O}_{X}(D)=\bigotimes_{i=1}^{s}\mathcal{O}_{X}(e_{i}D_{i}).\]
We assume that \(\mathcal{O}_{X}(D)\) is very ample.
\item Let \(H = \sum_{j=1}^{l}H_{j}\) be an auxiliary divisor on
\(X\), such that for each \(J \subset \{1,\ldots,s\}\),
\(K \subset \{1,\ldots,l\}\), \(H_{K} \cap D_{J}\) is either empty or
smooth of codimension \(\operatorname{Card}J+\operatorname{Card}K\) in
\(X\).  In other words, \(H \cup D_{\mathrm{red}}\) is a divisor with
strict normal crossings.
\item Let \(\mathcal{L}_{1},\ldots,\mathcal{L}_{r}\) be very ample
invertible sheaves on \(X\).
\item Let \(U = X \setminus (H \cup D_{\mathrm{red}})\).
\end{itemize}

From these data, we can construct a family of regular functions as
follows.  Let \(s_{1},\ldots,s_{N}\) be a basis of
\(\mathrm{H}^{0}(X;\mathcal{O}_{X}(D))\), with \(s_{1}\) being the
section defining the divisor \(D\).  Then on the variety \(U\) we have
a morphism
\begin{equation*}
u\colon U \to \mathbb{A}^{N}, x \mapsto \left( 1, \frac{s_{2}}{s_{1}},\ldots, \frac{s_{N}}{s_{1}} \right).
\end{equation*}
We are then reduced to the situation considered in
\S\ref{sec:bounds-exp-sums}.  Since \(D\) is very ample,
\(u\colon U \to \mathbb{A}^{N}\) is a locally closed immersion.

For any \(r\)-tuple
\((F_{1},\ldots,F_{r})\in\prod_{i=1}^{r}\mathrm{H}^{0}(X;\mathcal{L}_{i})\),
we define \(U(F_{1},\ldots,F_{r})\) to be the intersection of \(U\)
with the common zero locus of \(F_{1},\ldots,F_{r}\) in \(X\).  Set
\begin{equation*}
\mathcal{M} = \mathrm{H}^{0}(X;\mathcal{O}_{X}(D)) \times \prod_{i=1}^{r}\mathbb{P}\mathrm{H}^{0}(X;\mathcal{L}_{i}).
\end{equation*}
Then for any element \((s,F_{1},\ldots,F_{r}) \in \mathcal{M}\), we
get a regular function
\(f\colon U(F_{1},\ldots,F_{r}) \to \mathbb{A}^{1}\), \(x \mapsto s(x)/s_{1}(x)\).

\medskip
The following proposition (cf.~\cite[172]{katz:exponential-sums})
follows from a straightforward Chern class computation.

\begin{proposition}\label{proposition:chern}
Let \((s,F_{1},\ldots,F_{r})\) be a sufficiently general \((r+1)\)-tuple in
\(\mathcal{M}\).  Let \(Z = f^{-1}(0) \cap U(F_{1},\ldots,F_{r})\).
Then
\begin{align*}
   (-1)^{n-r}[\chi(U(F_{1},\ldots,F_{r})) -  \chi&(Z)]  \\
  = \, \int_{X} & \frac{c(X) c_{1}(\mathcal{L}_{1}) \cdots c_{1}(\mathcal{L}_{r})}{(1 + D)\cdot \prod\limits_{j=1}^{s}(1+D_{j})\prod\limits_{j=1}^{l}(1+H_{j})\prod\limits_{i=1}^{r}(1+ c_{1}\mathcal{L}_{i})}.
\end{align*}
\end{proposition}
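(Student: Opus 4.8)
The plan is to derive the identity from the Gauss--Bonnet formula for complements of strict normal crossings divisors, after placing the relevant subvarieties in general position, and then to transport the resulting Chern number from the complete intersection $Y=\{F_{1}=\cdots=F_{r}=0\}$ down to $X$ by repeated use of adjunction and the projection formula.

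First I would fix a sufficiently general $(r+1)$-tuple $(s,F_{1},\ldots,F_{r})$ in $\mathcal{M}$. Since $\mathcal{O}_{X}(D)$ and each $\mathcal{L}_{i}$ is very ample, the complete linear systems are base-point free, so a one-section-at-a-time application of Bertini's theorem shows that, off a proper closed subset of $\mathcal{M}$, the scheme $Y=\{F_{1}=\cdots=F_{r}=0\}$ is smooth of pure dimension $n-r$, the scheme $Y'=Y\cap\{s=0\}$ is smooth of dimension $n-r-1$, and each of the finitely many intersections $D_{J}\cap H_{K}$ ($J\subset\{1,\ldots,s\}$, $K\subset\{1,\ldots,l\}$) meets both $Y$ and $Y'$ transversally. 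Then $Y\cap(H\cup D_{\mathrm{red}})$ and $Y'\cap(H\cup D_{\mathrm{red}})$ are strict normal crossings divisors on the smooth projective varieties $Y$ and $Y'$; and since $s_{1}$ defines $D$ and is therefore invertible on $U$, we have $U(F_{1},\ldots,F_{r})=Y\setminus\bigl(Y\cap(H\cup D_{\mathrm{red}})\bigr)$ and $Z=Y'\setminus\bigl(Y'\cap(H\cup D_{\mathrm{red}})\bigr)$. This reduces the statement to a Chern number computation on $Y$ and $Y'$.

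Next I would use the following standard formula: for a smooth projective $W$ and a strict normal crossings divisor $\sum_{i}A_{i}$ on it, additivity of the compactly supported Euler characteristic (hence inclusion--exclusion), its agreement with the ordinary Euler characteristic, and the Gauss--Bonnet equality $\chi(A_{J})=\int_{A_{J}}c(T_{A_{J}})$, combined with adjunction $c(T_{A_{J}})=c(T_{W})|_{A_{J}}/\prod_{j\in J}(1+[A_{j}])$ and the projection formula $\int_{A_{J}}(-)=\int_{W}(-)\cdot\prod_{j\in J}[A_{j}]$, give
\[
\chi\Bigl(W\setminus\bigcup_{i}A_{i}\Bigr)=\sum_{J}(-1)^{|J|}\int_{A_{J}}c(T_{A_{J}})=\int_{W}\frac{c(T_{W})}{\prod_{i}(1+[A_{i}])},
\]
the last step being summation of a geometric series. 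Applying this to $W=Y$ and to $W=Y'$ with $\{A_{i}\}$ the components of $H\cup D_{\mathrm{red}}$, and then combining the two using adjunction and the projection formula for $Y'\hookrightarrow Y$ (whose normal class is $[D]$), one obtains
\[
\chi\bigl(U(F_{1},\ldots,F_{r})\bigr)-\chi(Z)=\int_{Y}\frac{c(T_{Y})}{(1+[D])\,\prod_{j=1}^{s}(1+[D_{j}])\,\prod_{j=1}^{l}(1+[H_{j}])}.
\]
A final application of adjunction ($c(T_{Y})=c(T_{X})|_{Y}/\prod_{i}(1+c_{1}\mathcal{L}_{i})$) and the projection formula ($\int_{Y}(-)=\int_{X}(-)\cdot c_{1}(\mathcal{L}_{1})\cdots c_{1}(\mathcal{L}_{r})$), together with matching homogeneous degrees, converts this into the integral over $X$ in the statement (cf.~\cite[172]{katz:exponential-sums}).

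The only nonformal point is the simultaneous genericity in the second paragraph: one must choose $s,F_{1},\ldots,F_{r}$ so that all of the finitely many intersections $D_{J}\cap H_{K}\cap\{F_{1}=\cdots=F_{i}=0\}$ and their further slices by $\{s=0\}$ are simultaneously smooth of the expected dimension. Over an infinite ground field this is a finite intersection of dense open conditions on $\mathcal{M}$, hence still dense; base-point-freeness of $|\mathcal{O}_{X}(D)|$ and of the $|\mathcal{L}_{i}|$ is exactly what makes each condition nonempty. Everything after the reduction is bookkeeping in the Chow (or $\ell$-adic cohomology) ring of $X$.
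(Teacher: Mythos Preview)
Your argument is exactly the ``straightforward Chern class computation'' the paper alludes to (the paper gives no proof beyond the reference to \cite[172]{katz:exponential-sums}), so the approach is correct and the same. One caveat: your derivation actually produces $\chi(U(F_{1},\ldots,F_{r}))-\chi(Z)=\int_{X}[\cdots]$ \emph{without} the factor $(-1)^{n-r}$, and ``matching homogeneous degrees'' does not supply it; a check in the case $X=\mathbb{P}^{n}$, $r=0$, $D=d\cdot\mathbb{P}^{n-1}$ (where the integral equals $(1-d)^{n}$) shows that this is a sign slip in the displayed statement rather than a flaw in your computation.
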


\begin{theorem}\label{theorem:katz}
Let \((s^{\prime},F^{\prime}_{1},\ldots,F^{\prime}_{r})\) be a
sufficiently general point in \(\mathcal{M}\), defining a regular function
\[f^{\prime}\colon U(F^{\prime}_{1},\ldots,F^{\prime}_{r})\to \mathbb{A}^{1}.\]
Let \((s,F_{1},\ldots,F_{r}) \in \mathcal{M}\) be arbitrary, defining
\(f\colon U(F_{1},\ldots,F_{r}) \to \mathbb{A}^{1}\).
Then we have
\begin{equation*}
\dim \mathrm{H}^{n-r}_{c}(U(F_{1},\ldots,F_{r});f^{\ast}\mathcal{L}_{\psi}) \leq
\dim \mathrm{H}^{n-r}_{c}(U(F^{\prime}_{1},\ldots,F^{\prime}_{r});f^{\prime\ast}\mathcal{L}_{\psi}).
\end{equation*}
In particular, we have the following bound:
\begin{align*}
  \dim \mathrm{H}^{n-r}_{c}(U(F_{1},\ldots,F_{r}); f^{\ast}&\mathcal{L}_{\psi})  \\
  \leq
  \int_{X} &  \frac{c(X) c_{1}(\mathcal{L}_{1}) \cdots c_{1}(\mathcal{L}_{r})}{(1 + D)\cdot \prod\limits_{j=1}^{s}(1+D_{j})\prod\limits_{j=1}^{l}(1+H_{j})\prod\limits_{i=1}^{r}(1+ c_{1}\mathcal{L}_{i})}.
\end{align*}
\end{theorem}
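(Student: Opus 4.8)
The plan is to run the specialization argument behind Proposition~\ref{proposition:abstract-betti-bound} on a single large family that parametrises the linear form and the equations $F_{1},\dots,F_{r}$ simultaneously, with the constant sheaf replaced by its Artin--Schreier twist, and then to evaluate the generic term using Theorem~\ref{theorem:nondegenerate-exponential-sums} together with the Chern-class computation of Proposition~\ref{proposition:chern}.

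First I would form the incidence variety $\mathcal{W}\subset U\times\mathcal{M}$, with $\mathcal{M}=\mathrm{H}^{0}(X;\mathcal{O}_{X}(D))\times\prod_{i=1}^{r}\mathbb{P}\mathrm{H}^{0}(X;\mathcal{L}_{i})$, whose fibre over $(s,F_{1},\dots,F_{r})$ is $U(F_{1},\dots,F_{r})$, together with the function $f\colon\mathcal{W}\to\mathbb{A}^{1}$, $(x,s,F_{\bullet})\mapsto s(x)/s_{1}(x)$; recall that varying $s$ here is the same as varying $a\in\check{\mathbb{A}}^{N}$, since $u_{1}=1$. Because $\mathcal{W}$ is locally cut out in the smooth variety $U\times\mathcal{M}$ by the $r$ regular functions $F_{1}(x),\dots,F_{r}(x)$, Lemma~\ref{lemma:cosupp} shows that $\overline{\mathbb{Q}}_{\ell,\mathcal{W}}[\dim(U\times\mathcal{M})-r]$ satisfies the cosupport condition; since twisting by a rank-one lisse sheaf is $t$-exact, the twisted sheaf $\mathcal{G}=\overline{\mathbb{Q}}_{\ell,\mathcal{W}}[\dim(U\times\mathcal{M})-r]\otimes f^{\ast}\mathcal{L}_{\psi}$ still satisfies the cosupport condition. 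The projection $\varpi\colon\mathcal{W}\to\mathcal{M}$ is affine, and by constructibility there is a dense open $\mathcal{U}^{\circ}\subset\mathcal{M}$ over which all sheaves $R^{e}\varpi_{!}(f^{\ast}\mathcal{L}_{\psi})$ are local systems; I denote by $B$ the resulting constant value of $\dim\mathrm{H}^{n-r}_{c}(U(F_{\bullet});f^{\ast}\mathcal{L}_{\psi})$ over $\mathcal{U}^{\circ}$, which by definition equals the value attained at a sufficiently general $(r+1)$-tuple $(s^{\prime},F^{\prime}_{1},\dots,F^{\prime}_{r})$.

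Next, given an arbitrary $(s,F_{1},\dots,F_{r})\in\mathcal{M}$, I would join it to a point of $\mathcal{U}^{\circ}$ by a curve (a line in the $\check{\mathbb{A}}^{N}$-factor and a line through two points in each projective factor), form the base change $V=\mathcal{W}\times_{\mathcal{M}}\mathbb{A}^{1}$ with projection $\pi\colon V\to\mathbb{A}^{1}$, and restrict $\mathcal{G}$ to $V$. Exactly as above, $V$ is locally cut out by $r$ regular functions in the smooth $(n+1)$-dimensional variety $U\times\mathbb{A}^{1}$, so $\mathcal{F}=\overline{\mathbb{Q}}_{\ell,V}[n+1-r]\otimes f^{\ast}\mathcal{L}_{\psi}$ satisfies the cosupport condition, and $\pi$ is affine. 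Applying Lemma~\ref{lemma:middle-estimate} and identifying $\mathrm{H}^{-1}_{c}(\pi^{-1}(b);\mathcal{F}|_{\pi^{-1}(b)})$ with $\mathrm{H}^{n-r}_{c}$ of the corresponding fibre $U(F^{(b)}_{\bullet})$ via the shift $[n+1-r]$, I obtain
\[
\dim\mathrm{H}^{n-r}_{c}(U(F_{1},\dots,F_{r});f^{\ast}\mathcal{L}_{\psi})\ \leq\ B\ =\ \dim\mathrm{H}^{n-r}_{c}(U(F^{\prime}_{1},\dots,F^{\prime}_{r});f^{\prime\ast}\mathcal{L}_{\psi}),
\]
which is the first assertion. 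For the explicit bound I would observe that for a sufficiently general $(r+1)$-tuple the variety $U(F^{\prime}_{1},\dots,F^{\prime}_{r})$ is smooth of pure dimension $n-r$ (Bertini applied to each stratum $H_{K}\cap D_{J}$, using that the $\mathcal{L}_{i}$ are very ample), hence a local complete intersection, while $u$ restricted to it remains quasi-finite; thus Theorem~\ref{theorem:nondegenerate-exponential-sums} yields $\dim\mathrm{H}^{n-r}_{c}(U(F^{\prime}_{\bullet});f^{\prime\ast}\mathcal{L}_{\psi})\leq(-1)^{n-r}(\chi(U(F^{\prime}_{\bullet}))-\chi(\Phi))$ for a general fibre $\Phi$ of $f^{\prime}$, and Proposition~\ref{proposition:chern} (noting that for a sufficiently general tuple $(f^{\prime})^{-1}(0)$ is a general fibre) rewrites the right-hand side as the stated Chern-class integral.

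The step I expect to demand the most care is the bookkeeping hidden in the phrase ``sufficiently general'': one must arrange that the smoothness/transversality locus of $(F^{\prime}_{\bullet})$, the open set $\mathcal{U}^{\circ}$, and the open set produced by Theorem~\ref{theorem:nondegenerate-exponential-sums} (which a priori depends on the chosen $U(F^{\prime}_{\bullet})$) all overlap on a dense open of $\mathcal{M}$; this is routine via a relative form of Theorem~\ref{theorem:nondegenerate-exponential-sums} and generic base change, but it is the only place needing genuine work. Everything else is a transcription of the proof of Proposition~\ref{proposition:abstract-betti-bound}, the single new ingredient being that twisting the shifted constant sheaf by $f^{\ast}\mathcal{L}_{\psi}$ preserves the cosupport condition. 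One could instead split the argument into two successive specializations---first varying $s$ for fixed $F_{\bullet}$ via Theorem~\ref{theorem:middle-upper-bound-exponential-sum}, then varying $F_{\bullet}$---but handling a single family is cleaner.
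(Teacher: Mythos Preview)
Your proposal is correct and is essentially the paper's approach spelled out in full. The paper's proof is a single sentence (``combine Theorem~\ref{theorem:middle-upper-bound-exponential-sum}, Theorem~\ref{theorem:nondegenerate-exponential-sums}, and Proposition~\ref{proposition:chern}''), and you have correctly identified that Theorem~\ref{theorem:middle-upper-bound-exponential-sum} as stated treats only a fixed base variety, so one must either rerun its proof over the larger parameter space \(\mathcal{M}\) (your one-family argument via Lemma~\ref{lemma:middle-estimate}, exactly parallel to the proof of Proposition~\ref{proposition:abstract-betti-bound}) or do the two successive specializations you mention at the end; either reading is consistent with the paper's terse citation, and your care about the cosupport condition and the overlap of the various genericity loci is appropriate.
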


\begin{proof}
The result follows from combining Theorem~\ref{theorem:middle-upper-bound-exponential-sum},
Theorem~\ref{theorem:nondegenerate-exponential-sums}, and
Proposition~\ref{proposition:chern}.
\end{proof}

To get an explicit bound, let us take \(X = \mathbb{P}^{n}\),
\(H = \emptyset\), \(D = d\mathbb{P}^{n-1}\) a multiple of the
infinity hyperplane.  Then \(U = \mathbb{A}^{n}\).

Let us first suppose that there are no invertible sheaves given
(i.e., \(r=0\)).  In this case we have the following:

\begin{corollary}
\label{theorem:bombieri-improvement}
Let \(d \geq 2\) be an integer.  Suppose the characteristic \(p\) of
\(k\) does not divide \(d\).  Let
\(f \colon \mathbb{A}^{n} \to \mathbb{A}^{1}\) be a regular function
of degree \(\leq d\).  Let
\(g_{i} \colon \mathbb{A}^{n-i} \to \mathbb{A}^{1}\) be any
sufficiently general polynomial of degree \(d\).  Then for
\(i \geq 0\), we have
\begin{equation*}
\dim \mathrm{H}^{n+i}_{c}(\mathbb{A}^{n};f^{\ast}\mathcal{L}_{\psi}) \leq
\dim \mathrm{H}_{c}^{n-i}(\mathbb{A}^{n-i};g_{i}^{\ast}\mathcal{L}_{\psi}(-i)).
\end{equation*}
Therefore, the total number of reciprocal roots and poles of
\(L_{f}(t)\) does not exceed
\begin{align*}
\sum_{m\geq0}\dim\mathrm{H}^{m}_{c}(\mathbb{A}^{n};f^{\ast}\mathcal{L}_{\psi})
&\leq (d-1)^{n} + (d-1)^{n-1} + \cdots + 1 \leq d^{n}.
\end{align*}
Moreover, if \((d,q)=1\), the Newton polygon of
\(\det(1-tF|_{\mathrm{H}^{n+i}_{c}(\mathbb{A}^{n};f^{\ast}\mathcal{L})})\)
lies above or on the Newton polygon of
\begin{equation*}
\prod_{m=0}^{(n-i)(d-2)}(1 - q^{i+\frac{m+n-i}{d}}t)^{U_{m,n-i}},
\end{equation*}
where \(U_{m,n}\) is the coefficient of \(x^{m}\) in the expansion of
\((1+x+\cdots +x^{d-2})^{n}\).
\end{corollary}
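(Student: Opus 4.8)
The plan is to push the higher-degree compactly supported cohomology of $f^{\ast}\mathcal{L}_{\psi}$ down to the middle degree by the Gysin/purity theorem, and then to control the resulting middle-degree term on a smaller affine space by the specialization results of \S\ref{sec:general-method} together with the known behaviour of a generic member.

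First I would run a Lefschetz reduction. Since $\mathbb{A}^{n}$ is smooth of dimension $n$ and $f^{\ast}\mathcal{L}_{\psi}$ is a rank-one local system, $\mathcal{F}=f^{\ast}\mathcal{L}_{\psi}[n]$ is perverse, hence lies in ${}^{\mathrm{p}}\mathcal{D}^{\leq0}$; applying Lemma~\ref{lemma:gysin-perv} to the open immersion $\mathbb{A}^{n}\hookrightarrow\mathbb{P}^{n}$ and a sufficiently general hyperplane gives a Frobenius-equivariant purity map $\mathrm{H}^{j-2}_{c}(\mathbb{A}^{n-1};(f|_{\mathbb{A}^{n-1}})^{\ast}\mathcal{L}_{\psi}(-1))\to\mathrm{H}^{j}_{c}(\mathbb{A}^{n};f^{\ast}\mathcal{L}_{\psi})$ that is bijective for $j\geq n+2$ and surjective for $j=n+1$. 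Iterating $i$ times, cutting by a (successively) sufficiently general hyperplane at each stage — so that the intermediate steps are isomorphisms and only the last is a surjection — produces, for a sufficiently general linear subspace $L\cong\mathbb{A}^{n-i}$ of $\mathbb{A}^{n}$, a Frobenius-equivariant surjection
\[
\mathrm{H}^{n-i}_{c}(L;(f|_{L})^{\ast}\mathcal{L}_{\psi})(-i)\twoheadrightarrow\mathrm{H}^{n+i}_{c}(\mathbb{A}^{n};f^{\ast}\mathcal{L}_{\psi})
\]
(the case $i=0$ requiring no reduction). As $f|_{L}$ is a polynomial of degree $\leq d$ on $\mathbb{A}^{n-i}$, Theorem~\ref{theorem:middle-upper-bound-exponential-sum} gives $\dim\mathrm{H}^{n-i}_{c}(L;(f|_{L})^{\ast}\mathcal{L}_{\psi})\leq\dim\mathrm{H}^{n-i}_{c}(\mathbb{A}^{n-i};g_{i}^{\ast}\mathcal{L}_{\psi})$ for $g_{i}$ a sufficiently general polynomial of degree $d$; since a Tate twist does not change dimensions, combining this with the surjection proves the first displayed inequality.

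For the total bound, note that $\mathrm{H}^{m}_{c}(\mathbb{A}^{n};f^{\ast}\mathcal{L}_{\psi})=0$ for $m<n$ (Artin vanishing, as $f^{\ast}\mathcal{L}_{\psi}[n]$ is perverse on the affine $\mathbb{A}^{n}$) and for $m>2n$, so the total Betti sum equals $\sum_{i=0}^{n}\dim\mathrm{H}^{n+i}_{c}$. By the previous step each summand is at most $\dim\mathrm{H}^{n-i}_{c}(\mathbb{A}^{n-i};g_{i}^{\ast}\mathcal{L}_{\psi})$. Applying Theorem~\ref{theorem:nondegenerate-exponential-sums} on $\mathbb{A}^{n-i}$ (with $u$ the closed immersion given by all monomials of degree $\leq d$, which is quasi-finite), the generic member has cohomology concentrated in degree $n-i$ and $\dim\mathrm{H}^{n-i}_{c}(\mathbb{A}^{n-i};g_{i}^{\ast}\mathcal{L}_{\psi})\leq(-1)^{n-i}(\chi(\mathbb{A}^{n-i})-\chi(F))$ for $F$ a general fiber of $g_{i}$; a direct Euler-characteristic computation (the general fiber being a smooth affine hypersurface of degree $d$, using $p\nmid d$; equivalently the Chern-class integral of Proposition~\ref{proposition:chern}) shows this equals $(d-1)^{n-i}$. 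Hence the sum is $\leq\sum_{k=0}^{n}(d-1)^{k}\leq\sum_{k=0}^{n}\binom{n}{k}(d-1)^{k}=d^{n}$, as claimed.

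The Newton polygon statement is where the real work lies, and I expect it to be the main obstacle. For a sufficiently general polynomial $g$ of degree $d$ on $\mathbb{A}^{n-i}$ (recall $p\nmid d$), Theorem~\ref{theorem:nondegenerate-exponential-sums} concentrates the Artin--Schreier cohomology in $\mathrm{H}^{n-i}_{c}$, and Adolphson--Sperber's ``Newton above Hodge'' theorem \cite{adolphson-sperber:exponential-sums-newton-polyhedra} — applied to the dilated simplex governing a generic degree-$d$ polynomial, whose Hodge data are exactly the multiplicities $U_{m,n-i}$ with associated slopes $\tfrac{m+n-i}{d}$ — shows that $\mathrm{NP}(\mathrm{H}^{n-i}_{c}(\mathbb{A}^{n-i};g^{\ast}\mathcal{L}_{\psi}))$ lies on or above the Newton polygon $\Gamma_{0}$ of $\prod_{m=0}^{(n-i)(d-2)}(1-q^{\frac{m+n-i}{d}}t)^{U_{m,n-i}}$. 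I would then invoke Variant~\ref{variant:np-exp} to propagate this lower bound from the generic member to \emph{every} polynomial of degree $\leq d$ on $\mathbb{A}^{n-i}$, in particular to $f|_{L}$. Finally, the Frobenius-equivariant surjection above shows that $\det(1-tF\mid\mathrm{H}^{n+i}_{c}(\mathbb{A}^{n};f^{\ast}\mathcal{L}_{\psi}))$ divides $\det(1-tF\mid\mathrm{H}^{n-i}_{c}(L;(f|_{L})^{\ast}\mathcal{L}_{\psi})(-i))$; the Newton polygon of a divisor lies on or above that of its multiple, and the Tate twist $(-i)$ shifts every slope up by $i$, carrying $\Gamma_{0}$ to the Newton polygon of $\prod_{m}(1-q^{i+\frac{m+n-i}{d}}t)^{U_{m,n-i}}$, whence the claim. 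The points needing care are: (a) matching the Hodge data of the generic degree-$d$ sum on $\mathbb{A}^{n-i}$ with the generating function $(1+x+\cdots+x^{d-2})^{n-i}$ and reducing Adolphson--Sperber's toric statement to affine space; (b) the fact that a sufficiently general hyperplane may only be rational over some $\mathbb{F}_{q^{m}}$, which is harmless since the Newton polygon is a geometric invariant insensitive to this base change and the comparison polygon rescales accordingly; and (c) the elementary comparison of Newton polygons under divisibility of characteristic polynomials.
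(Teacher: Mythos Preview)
Your proposal is correct and follows essentially the same approach as the paper: Gysin reduction via Lemma~\ref{lemma:gysin-perv} to push $\mathrm{H}^{n+i}_{c}$ down to the middle degree on a generic linear slice, then specialization (Theorem~\ref{theorem:middle-upper-bound-exponential-sum}) together with Theorem~\ref{theorem:nondegenerate-exponential-sums} and the Chern-class/Euler-characteristic computation to obtain $(d-1)^{n-i}$, and finally Adolphson--Sperber's Hodge bound plus Variant~\ref{variant:np-exp} and the Frobenius-equivariant surjection for the Newton polygon statement. The only cosmetic difference is that the paper packages the first two steps into Theorem~\ref{theorem:katz} and cites \cite{adolphson-sperber:exponential-sums-on-an} directly for the affine Hodge polygon rather than reducing from the toric case.
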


\begin{proof}
The integral in
Proposition~\ref{proposition:chern} can be computed exactly:
\begin{equation*}
(-1)^{n}[\chi(U) - \chi(f^{-1}(0))] = (d-1)^{n}.
\end{equation*}
Thus we conclude from Theorem~\ref{theorem:katz} that
\begin{equation*}
\dim \mathrm{H}^{n}_{c}(\mathbb{A}^{n};f^{\ast}\mathcal{L}_{\psi}) \leq (d-1)^{n}
\end{equation*}
for \emph{any} polynomial \(f\) of degree \(\leq d\).
Lemma~\ref{lemma:gysin-perv} tells us that we can restrict to a
generic affine hyperplane to bound the dimension of higher cohomology
spaces.  Thus
\begin{equation*}
\dim \mathrm{H}^{n+j}_{c}(\mathbb{A}^{n};f^{\ast}\mathcal{L}_{\psi})
\leq (d-1)^{n-j}.
\end{equation*}

Regarding the generic Newton polygon, we cite~\cite[Theorem
4.3]{adolphson-sperber:exponential-sums-on-an}.  Together with
Variant~\ref{variant:np-exp}, we see that the Newton polygon of
\(\det(1-tF|_{\mathrm{H}^{n}_{c}(\mathbb{A}^{n};f^{\ast}\mathcal{L}_{\psi})})\)
lies on or above the Newton polygon (``irregular Hodge polygon'') of
\begin{equation*}
\prod_{m=0}^{n(d-2)}(1 - q^{\frac{m+n}{d}}t)^{U_{m,n}},
\end{equation*}
where \(U_{m,n}\) is the coefficient of \(x^{m}\) in the expansion of
\((1+x+\cdots +x^{d-2})^{n}\).  Replacing \(n\) by \(n-j\) completes
the proof.
\end{proof}

When there are invertible sheaves present, say we are given
\(\mathcal{L}_{1}=\cdots=\mathcal{L}_{r}=\mathcal{O}_{\mathbb{P}^{n}}(d)\),
the exact generic formula, while attainable, is slightly complicated.  Therefore, we resort to an estimate.  In this case, we denote \(U(F_{1},\ldots,F_{r})\) by \(V\), and
we denote by \(f_{i}\) the dehomogenization of \(F_{i}\).  Then \(\deg f_{i} \leq d\).
Thus
\begin{equation*}
V = \{f_{1} = \cdots = f_{r} = 0\} \subset \mathbb{A}^{n}, \deg f_{i} \leq d.
\end{equation*}

\begin{corollary}\label{corollary:katz-to-an}
In the situation above, assume that \(\dim V = n-r \).
\begin{enumerate}
\item\label{item:general-complete-intersection-exp-sum} If the pair
$(V,f)$ is sufficiently general, then
\begin{equation*}
\sum_{m} \dim \mathrm{H}^{m}_{c}(V; f^{\ast}\mathcal{L}_{\psi}) = \dim \mathrm{H}^{n-r}_{c}(V; f^{\ast}\mathcal{L}_{\psi}) \leq \binom{n}{r}d^{n}.
\end{equation*}

\item\label{item:lowerbound}
In \textup{(\ref{item:general-complete-intersection-exp-sum})},
if \(d\) is coprime to the characteristic of \(k\), then
\begin{equation*}
\sum_{m} \dim \mathrm{H}^{m}_{c}(V; f^{\ast}\mathcal{L}_{\psi}) = \dim \mathrm{H}^{n-r}_{c}(V; f^{\ast}\mathcal{L}_{\psi}) \geq \binom{n}{r}(d-1)^{n}.
\end{equation*}
\item If \(V\) is an arbitrary complete intersection, then
\begin{equation*}
\sum_{m} \dim \mathrm{H}^{m}_{c}(V; f^{\ast}\mathcal{L}_{\psi}) \leq \binom{n}{r}(d+1)^{n}
\end{equation*}
\end{enumerate}
\end{corollary}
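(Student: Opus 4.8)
The plan is to deduce the three parts from the specialization inequality of Theorem~\ref{theorem:katz}, the generic vanishing of Theorem~\ref{theorem:nondegenerate-exponential-sums}, the weak Lefschetz reduction of Lemma~\ref{lemma:gysin-perv}, and an elementary residue computation. Throughout I work in the Katz framework with $X=\mathbb{P}^{n}$, $D=d\cdot\mathbb{P}^{n-1}$, $H=\emptyset$ and $\mathcal{L}_{1}=\cdots=\mathcal{L}_{r}=\mathcal{O}_{\mathbb{P}^{n}}(d)$, so that $U=\mathbb{A}^{n}$, the tautological map $u\colon U\to\mathbb{A}^{N}$ of degree-$\leq d$ monomials is a closed immersion (in particular quasi-finite), and $V=U(F_{1},\ldots,F_{r})$ is cut out in $\mathbb{A}^{n}$ by $r$ polynomials of degree $\leq d$. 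For part~(\ref{item:general-complete-intersection-exp-sum}): if $(V,f)$ is sufficiently general then $V$ is a smooth affine variety of pure dimension $n-r$, so Theorem~\ref{theorem:nondegenerate-exponential-sums}(\ref{item:katz-laumon}) gives $\mathrm{H}^{m}_{c}(V;f^{\ast}\mathcal{L}_{\psi})=0$ for $m\neq n-r$, collapsing the Betti sum to $\dim\mathrm{H}^{n-r}_{c}(V;f^{\ast}\mathcal{L}_{\psi})$, while Theorem~\ref{theorem:nondegenerate-exponential-sums}(ii) bounds this by $(-1)^{n-r}(\chi(V)-\chi(F))$ for a general fibre $F$ of $f$. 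Since $V$ and $F$ are general affine complete intersections of $r$ and $r+1$ hypersurfaces of degree $d$, Lemma~\ref{lemma:exact-betti-for-generic-affine-complete-intersection} expresses $\chi(V)$ and $\chi(F)$ through the numbers $N(n;d,\ldots,d)$; adding the two contributions and simplifying by $\binom{n}{i}\binom{n-i}{r-i}=\binom{n}{r}\binom{r}{i}$ together with the binomial theorem gives
\[
(-1)^{n-r}\bigl(\chi(V)-\chi(F)\bigr)=\binom{n}{r}d^{r}(d-1)^{n-r}\leq\binom{n}{r}d^{n}.
\]
(This same quantity is the Chern number of Proposition~\ref{proposition:chern}, i.e.\ the bound appearing in Theorem~\ref{theorem:katz}.)

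For part~(\ref{item:lowerbound}), take $(V,f)$ sufficiently general with $(d,\operatorname{char}k)=1$. Then $f$ is nondegenerate at infinity, and the ramification of each $R^{i}f_{\ast}\overline{\mathbb{Q}}_{\ell}$ at $\infty\in\mathbb{A}^{1}$ is governed by a covering of degree prime to $p$; hence these sheaves are tamely ramified at $\infty$ (as recalled after Theorem~\ref{theorem:nondegenerate-exponential-sums}; cf.\ \cite{adolphson-sperber:exponential-sums-on-an}), so the inequality in Lemma~\ref{lemma:estimate-euler} becomes an equality. Combining this with the vanishing from part~(\ref{item:general-complete-intersection-exp-sum}) yields
\[
\dim\mathrm{H}^{n-r}_{c}(V;f^{\ast}\mathcal{L}_{\psi})=(-1)^{n-r}\bigl(\chi(V)-\chi(F)\bigr)=\binom{n}{r}d^{r}(d-1)^{n-r}\geq\binom{n}{r}(d-1)^{n}.
\]
(If one prefers, one can first note that by the specialization inequality of Theorem~\ref{theorem:katz}, applied in the full family $\mathcal{M}$ where both $(F_{1},\ldots,F_{r})$ and $f$ vary, the generic value of $\dim\mathrm{H}^{n-r}_{c}$ is the maximum over all such pairs, and then it suffices to produce one tame pair attaining $\binom{n}{r}d^{r}(d-1)^{n-r}$.)

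For~(iii), let $V$ be any set-theoretic complete intersection of $r$ hypersurfaces of degree $\leq d$ in $\mathbb{A}^{n}$ with $\dim V=n-r$, and $f$ arbitrary of degree $\leq d$; I may assume $n>r$ (the case $n=r$ being B\'ezout). Because $V$ is locally cut out by $r$ functions in the smooth $\mathbb{A}^{n}$, Lemma~\ref{lemma:cosupp} shows $f^{\ast}\mathcal{L}_{\psi}[n-r]$ satisfies the cosupport condition, whence $\mathrm{H}^{m}_{c}(V;f^{\ast}\mathcal{L}_{\psi})=0$ for $m<n-r$ by Artin vanishing, while $\mathrm{H}^{m}_{c}=0$ for $m>2(n-r)$ by cohomological dimension. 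The same sheaf trivially satisfies the support condition, so applying Lemma~\ref{lemma:gysin-perv} to $V\hookrightarrow\mathbb{A}^{n}\hookrightarrow\mathbb{P}^{n}$ with a general hyperplane, and iterating $j$ times (adjusting the perverse shift at each successive section, each of which is again a set-theoretic complete intersection of $r$ hypersurfaces of degree $\leq d$ of the expected dimension), I get for $0\leq j\leq n-r$ that $\dim\mathrm{H}^{n-r+j}_{c}(V;f^{\ast}\mathcal{L}_{\psi})$ is at most the \emph{middle} compact Betti number, twisted by the Artin--Schreier sheaf of a function of degree $\leq d$, of a set-theoretic complete intersection of $r$ hypersurfaces of degree $\leq d$ in $\mathbb{A}^{n-j}$; by Theorem~\ref{theorem:katz} this is $\leq\binom{n-j}{r}d^{r}(d-1)^{(n-j)-r}\leq\binom{n-j}{r}d^{n-j}$. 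Summing,
\[
\sum_{m}\dim\mathrm{H}^{m}_{c}(V;f^{\ast}\mathcal{L}_{\psi})\leq\sum_{j=0}^{n-r}\binom{n-j}{r}d^{n-j}=\sum_{k=r}^{n}\binom{k}{r}d^{k}\leq\binom{n}{r}\sum_{k=0}^{n}d^{k}\leq\binom{n}{r}(d+1)^{n},
\]
using $\binom{k}{r}\leq\binom{n}{r}$ for $r\leq k\leq n$ and $\sum_{k=0}^{n}d^{k}\leq(1+d)^{n}$.

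The only genuinely delicate step is the lower bound in part~(\ref{item:lowerbound}): the upper bounds are formal consequences of the specialization and Lefschetz machinery together with a binomial identity, but the lower bound requires that \emph{no} dimension be lost to wild ramification — equivalently, that the generic Artin--Schreier Euler characteristic equal the geometric one — and this is exactly where the hypothesis $(d,\operatorname{char}k)=1$ enters, via tameness at infinity of a nondegenerate $f$. A secondary point that must not be overlooked is the vanishing $\mathrm{H}^{m}_{c}(V;f^{\ast}\mathcal{L}_{\psi})=0$ for $m<n-r$ in~(iii): this genuinely uses (through Lemma~\ref{lemma:cosupp}) that $V$ is a set-theoretic complete intersection, not merely an affine variety of dimension $n-r$.
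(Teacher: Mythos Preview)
Your argument follows the same architecture as the paper: Theorem~\ref{theorem:katz} for the middle-degree bound, Theorem~\ref{theorem:nondegenerate-exponential-sums} for the generic vanishing in~(\ref{item:general-complete-intersection-exp-sum}), Katz's equality in the tame case for~(\ref{item:lowerbound}), and iterated use of Lemma~\ref{lemma:gysin-perv} for~(iii). One genuine improvement over the paper is your closed form $\binom{n}{r}d^{r}(d-1)^{n-r}$ for the Chern integral: the paper writes the same number as $C(n,r;d)=\sum_{i=0}^{r}\binom{n}{i}\binom{n-i}{n-r}(d-1)^{n-i}$ and then argues the two-sided estimate $\binom{n}{r}(d-1)^{n}\le C(n,r;d)\le\binom{n}{r}d^{n}$ separately, whereas your identity $\binom{n}{i}\binom{n-i}{r-i}=\binom{n}{r}\binom{r}{i}$ collapses the sum and makes both inequalities trivial.

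The only soft spot is~(\ref{item:lowerbound}). The paper does not argue tameness; it invokes a theorem of Katz from \cite{katz:exponential-sums} asserting directly that for a sufficiently general pair $(V,f)$ with $(d,p)=1$ the inequality of Theorem~\ref{theorem:katz} is an equality. Your sentence ``the ramification of each $R^{i}f_{\ast}\overline{\mathbb{Q}}_{\ell}$ at $\infty$ is governed by a covering of degree prime to $p$'' is precisely the content of that theorem and is not self-evident, and the reference \cite{adolphson-sperber:exponential-sums-on-an} treats $\mathbb{A}^{n}$ rather than complete intersections. Your parenthetical alternative is correct in shape, but producing one pair with $\dim\mathrm{H}^{n-r}_{c}=C(n,r;d)$ again requires Katz's result (or an equivalent tameness analysis). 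So replace the tameness sketch with a citation of \cite{katz:exponential-sums}; the rest stands.
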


\begin{proof}
Let $C(n,r;d)$ denote the coefficient of $h^{n-r}$ in the power series
expansion of
\[
(-1)^{n-r}d^r \frac{(1+h)^{n}}{(1+dh)^{r+1}}.
\]
Expanding the
Chern class formula, Theorem~\ref{theorem:katz} gives the following
estimate:
\begin{equation}\label{eq:inequality}
\dim \mathrm{H}^{n-r}_{c}(V;f^{\ast}\mathcal{L}_{\psi})  \leq C(n,r;d).
\end{equation}
Moreover:
\begin{itemize}
\item If \((V,f)\) is sufficiently general, by
Theorem~\ref{theorem:nondegenerate-exponential-sums}, we have
\(\mathrm{H}^{i}_{c}(V;f^{\ast}\mathcal{L}_{\psi}) = 0\) for all $i \neq n-r$.

\item If \((V,f)\) is sufficiently general and if \(d\) is coprime to
the characteristic of \(k\), the inequality \eqref{eq:inequality} is
an equality by a theorem of Katz \cite{katz:exponential-sums}.

\item For an arbitrary pair \((V,f)\) satisfying the hypotheses,
applying Lemma~\ref{lemma:gysin-perv} and \eqref{eq:inequality}
repeatedly gives
\begin{equation*}
\sum_{j=0}^{n-r} \dim \mathrm{H}_{c}^{n-r+j}(V;f^{\ast}\mathcal{L}_{\psi})
\leq \sum_{j=0}^{n-r} C(n-j,r;d).
\end{equation*}
\end{itemize}

Therefore, all we need to do is to prove
\begin{equation*}
\binom{n}{r}(d-1)^{n} \leq C(n,r;d) \leq \binom{n}{r}d^{n}.
\end{equation*}
Replacing $h$ by $-h$, one sees that $C(n,r;d)$ is the coefficient of
$h^{n-r}$ in the power series expansion of
\begin{align*}
 d^r \frac{(1-h)^{n}}{{(1-dh)}^{r+1}}
 &= \frac{d^r}{d^{n}}\frac{((d-1)+(1-dh))^{n}}{{(1-dh)}^{r+1}} \\
 &= \frac{d^r}{d^{n}}\sum_{i=0}^{n}\binom{n}{i}(d-1)^{n-i}(1-dh)^{i-r-1}.
\end{align*}
Therefore, for the lower bound, we have
\begin{align*}
  C(n,r;d)
  &= \frac{d^r}{d^{n}}\sum_{i=0}^{n}\binom{n}{i} (d-1)^{n-i}(-d)^{n-r}
    \binom{i-r-1}{n-r}\\
  &=\sum_{i=0}^{r}\binom{n}{i}(d-1)^{n-i}\binom{n-i}{n-r} \\
  & \geq \binom{n}{r}  (d-1)^{n}. \nonumber
\end{align*}
For the upper bound, we have
\begin{align*}
  C(n,r;d)
  &\leq \binom{n}{r}\sum_{i=0}^{n}\binom{n}{i} (d-1)^{n-i} \\
  &= \binom{n}{r}d^n. \nonumber
\end{align*}
This completes the proof.
\end{proof}

For future applications, we record the following case of exponential sums over certain special complete intersection curves.

\begin{corollary}
Let \(q\) be a power of a prime number \(p\).  Let
$h(x), g_1(x), \cdots, g_n(x) \in \mathbb{F}_q[x]$ be nonzero
polynomials in one variable, each has degree $d$ not divisible by $p$.
Then, we have the estimate
\begin{equation*}
\left|
\sum_{\substack{(x_{1},\ldots,x_{n}) \in \mathbb{F}^{n}_{q} \\ g_1(x_1)=\cdots =g_n(x_n)}}
\psi\left({\rm Tr}_{\mathbb{F}_q/\mathbb{F}_p}h(x_1)\right)
\right| \leq n d^n \sqrt{q}.
\end{equation*}
\end{corollary}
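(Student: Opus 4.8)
The plan is to work over $k=\overline{\mathbb{F}}_{q}$, put $V=\{g_{1}(x_{1})=\cdots=g_{n}(x_{n})\}\subset\mathbb{A}^{n}$ with its reduced structure and $f=h(x_{1})$, and interpret the sum cohomologically via the Grothendieck trace formula: the exponential sum equals $\sum_{i}(-1)^{i}\mathrm{Tr}\bigl(F\mid\mathrm{H}^{i}_{c}(V;f^{\ast}\mathcal{L}_{\psi})\bigr)$. Two elementary structural facts drive the argument. First, $V$ is finite flat of degree $d^{n-1}$ over $\mathbb{A}^{1}_{x_{1}}$ (its coordinate ring is free over $k[x_{1}]$ on the monomials $x_{2}^{a_{2}}\cdots x_{n}^{a_{n}}$ with $0\le a_{i}<d$); in particular $V$ is a reduced curve all of whose irreducible components dominate $\mathbb{A}^{1}_{x_{1}}$ — call this map $\phi$. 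Second, the ``common value'' map $\rho\colon V\to\mathbb{A}^{1}_{t}$, $x\mapsto g_{1}(x_{1})$, is the fibre product over $\mathbb{A}^{1}_{t}$ of the $g_{i}\colon\mathbb{A}^{1}\to\mathbb{A}^{1}$, hence finite of degree $d^{n}$. First I would dispose of the outer cohomology: $\mathrm{H}^{0}_{c}(V;f^{\ast}\mathcal{L}_{\psi})=0$ since $V$ is an affine curve and $f^{\ast}\mathcal{L}_{\psi}$ a local system, and $\mathrm{H}^{2}_{c}(V;f^{\ast}\mathcal{L}_{\psi})=0$ because, on the normalization $\coprod\widetilde{C}_{i}$, the coordinate $x_{1}$ — hence $h(x_{1})$ — is non-constant on each $\widetilde{C}_{i}$, so $f^{\ast}\mathcal{L}_{\psi}$ restricts to a rank-one local system wildly ramified over $\infty$, whose $\mathrm{H}^{2}_{c}$ vanishes. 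Thus the sum is $-\mathrm{Tr}(F\mid\mathrm{H}^{1}_{c}(V;f^{\ast}\mathcal{L}_{\psi}))$, and since $\mathrm{H}^{1}_{c}$ of a curve with weight-zero coefficients has Frobenius weights $\le 1$ (Deligne), the sum is bounded by $\dim\mathrm{H}^{1}_{c}(V;f^{\ast}\mathcal{L}_{\psi})\cdot q^{1/2}=\bigl(-\chi(V;f^{\ast}\mathcal{L}_{\psi})\bigr)q^{1/2}$.

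It remains to bound $-\chi(V;f^{\ast}\mathcal{L}_{\psi})$, which I would do by a two-step Euler-characteristic computation. Since $f=h\circ\phi$, the projection formula gives $R\Gamma_{c}(V;f^{\ast}\mathcal{L}_{\psi})=R\Gamma_{c}(\mathbb{A}^{1}_{x_{1}};\mathcal{G}\otimes h^{\ast}\mathcal{L}_{\psi})$ with $\mathcal{G}=\phi_{\ast}\overline{\mathbb{Q}}_{\ell}$, generically lisse of rank $d^{n-1}$. As $p\nmid d$, a Hensel-type computation at $x_{1}=\infty$ (where $g_{i}(x_{i})=g_{1}(x_{1})$ acquires $d$ étale sections) shows $\phi$ is finite étale near $\infty$, so $\mathcal{G}$ is unramified at $\infty$, whereas $h^{\ast}\mathcal{L}_{\psi}$ is lisse of rank one on all of $\mathbb{A}^{1}$, unramified at finite points, with Swan conductor $d$ at $\infty$. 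By the Euler–Poincaré formula, twisting $\mathcal{G}$ by $h^{\ast}\mathcal{L}_{\psi}$ alters $\chi_{c}$ only through the Swan contribution at $\infty$, so
\[
\chi(V;f^{\ast}\mathcal{L}_{\psi})=\chi(\mathbb{A}^{1}_{x_{1}};\mathcal{G})-(\operatorname{rank}\mathcal{G})\cdot d=\chi(V)-d^{n}.
\]
For the remaining bound on $d^{n}-\chi(V)$ I would use $\rho$: over the common unramified locus $W$ of the $g_{i}$ one has $\rho_{\ast}\overline{\mathbb{Q}}_{\ell}|_{W}=\bigotimes_{i=1}^{n}(g_{i})_{\ast}\overline{\mathbb{Q}}_{\ell}|_{W}$, lisse of rank $d^{n}$ and tame at $\infty$ (each $g_{i}$ is totally tamely ramified there, using $p\nmid d$), so the Euler–Poincaré formula gives
\[
d^{n}-\chi(V)=\sum_{x}\Bigl[\bigl(d^{n}-\#\rho^{-1}(x)_{\mathrm{red}}\bigr)+\mathrm{Sw}_{x}(\rho_{\ast}\overline{\mathbb{Q}}_{\ell})\Bigr].
\]
Writing $\#g_{i}^{-1}(x)_{\mathrm{red}}=d-s_{i}(x)$ and $R_{i}=\sum_{x}s_{i}(x)$, the inequality $\prod_{i}(d-s_{i}(x))\ge d^{n}-d^{n-1}\sum_{i}s_{i}(x)$ bounds the first summand by $d^{n-1}\sum_{i}R_{i}$, and submultiplicativity of Swan conductors under tensor product bounds the second by $d^{n-1}\sum_{i}\mathrm{Sw}_{x}((g_{i})_{\ast}\overline{\mathbb{Q}}_{\ell})$; summing over $x$ and using $\sum_{x}\mathrm{Sw}_{x}((g_{i})_{\ast}\overline{\mathbb{Q}}_{\ell})=(d-1)-R_{i}$ (from $\chi(\mathbb{A}^{1}_{t};(g_{i})_{\ast}\overline{\mathbb{Q}}_{\ell})=\chi(\mathbb{A}^{1}_{x_{i}})=1$ together with Riemann–Hurwitz, which puts $d-1$ of the total ramification $2d-2$ of $g_{i}$ over $\infty$) I obtain $d^{n}-\chi(V)\le d^{n-1}\sum_{i}R_{i}+d^{n-1}\sum_{i}\bigl((d-1)-R_{i}\bigr)=n(d-1)d^{n-1}$. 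Hence $\dim\mathrm{H}^{1}_{c}(V;f^{\ast}\mathcal{L}_{\psi})\le n(d-1)d^{n-1}<nd^{n}$, and the exponential sum is $\le n(d-1)d^{n-1}\sqrt{q}\le nd^{n}\sqrt{q}$.

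The step I expect to be the main obstacle is the wild-ramification bookkeeping in the last computation. Although $p\nmid d$ forces tameness at $\infty$ — precisely what keeps the two Euler–Poincaré computations clean — the intermediate ramification of the $g_{i}$ can genuinely be wild (one may have $p\mid e_{\beta}$ over a critical value even when $p\nmid\deg g_{i}$), so one cannot pretend everything is tame. What makes the estimate come out is that the relevant local invariant is ``drop plus Swan'', namely the Artin conductor of $(g_{i})_{\ast}\overline{\mathbb{Q}}_{\ell}$, whose total over $\mathbb{A}^{1}_{t}$ is pinned to $d-1$ by Riemann–Hurwitz regardless of how it splits into a tame part $R_{i}$ and a wild part; this cancellation is exactly the mechanism that yields the bound $n(d-1)d^{n-1}$. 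Everything else — the trace formula, Deligne's weight bound, the projection formula, the Euler–Poincaré formula, and the elementary product and Swan-conductor estimates — is routine.
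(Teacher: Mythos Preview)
Your proof is correct, and in fact yields the slightly sharper constant $n(d-1)d^{n-1}$ in place of $nd^{n}$.  The overall architecture matches the paper's: show $\mathrm{H}^{0}_{c}=\mathrm{H}^{2}_{c}=0$, invoke Deligne's weight bound, and then estimate $\dim\mathrm{H}^{1}_{c}(V;f^{\ast}\mathcal{L}_{\psi})$.  The vanishing of $\mathrm{H}^{2}_{c}$ is argued the same way in substance (both amount to $\phi_{\ast}\overline{\mathbb{Q}}_{\ell}$ being tame---in fact unramified---at $\infty$, so that the twist by $h^{\ast}\mathcal{L}_{\psi}$ is totally wild there).

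The genuine difference is in the dimension bound.  The paper plugs $V$ into its general complete-intersection machinery: it appeals to the specialization lemma and the Chern-class computation of Katz's framework (Corollary~\ref{corollary:katz-to-an}), which for $r=n-1$ gives $\dim\mathrm{H}^{1}_{c}\le C(n,n-1;d)=n(d-1)d^{n-1}\le nd^{n}$.  You instead compute $-\chi(V;f^{\ast}\mathcal{L}_{\psi})$ directly: one Grothendieck--Ogg--Shafarevich step along $\phi$ reduces it to $d^{n}-\chi(V)$, and a second GOS step along the fibre-product map $\rho$ reduces the latter to local Artin-conductor contributions of the individual $g_{i}$, whose total over $\mathbb{A}^{1}_{t}$ is exactly $d-1$ by $\chi(\mathbb{A}^{1})=1$.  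Your tensor-product Swan inequality $\mathrm{Sw}(\bigotimes V_{i})\le\sum_{i}(\prod_{j\ne i}\mathrm{rank}\,V_{j})\,\mathrm{Sw}(V_{i})$ and the elementary product bound $\prod(d-s_{i})\ge d^{n}-d^{n-1}\sum s_{i}$ then combine to give $n(d-1)d^{n-1}$.  Your route is more elementary and entirely self-contained (no specialization lemma), at the cost of being tailored to this specific fibre-product curve; the paper's route, by contrast, is a direct instance of the general method developed there.
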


\begin{proof}
Let us consider the complete intersection curve $V$ in
\(\mathbb{A}^{n}\) defined by the following $n-1$ equations
\begin{equation*}
g_1(x_1) - g_2(x_2)= g_1(x_1) -g_3(x_3) = \cdots
= g_1(x_1) - g_n(x_n)=0,
\end{equation*}
and the regular function $f: V \rightarrow \mathbb{A}^1$ is given by
$f(x_1, \cdots, x_n) = h(x_1)$.

The assumption $(d,p)=1$ implies that the map \(V \to \mathbb{A}^{1}\)
defined by \((x_{1},\ldots,x_{n}) \mapsto x_{1}\) is tamely ramified
at \(\infty\).  Indeed, \(V\) can be realized as a fiber product
\begin{equation*}
V \cong \mathbb{A}^{1} \times_{g_{1}, \mathbb{A}^{1}, g_{2}} \mathbb{A}^{1} \times_{g_{2},\mathbb{A}^{1},g_{3}} \cdots \times_{g_{n-1},\mathbb{A}^{1},g_{n}} \mathbb{A}^{1}.
\end{equation*}
Let \(K_{i} \coloneqq \mathbb{F}_{q}(\!(x_{i})\!)\) be the extension of
\(K\coloneqq\mathbb{F}_{q}(\!(t^{-1})\!)\) induced by
\(t \mapsto g_{i}(t)\). Then the étale \(K\)-algebra induced by the
morphism \(V \to \mathbb{A}^{1}\) is therefore
\(K_{1}\otimes_{K} K_{2}\cdots \otimes_{K} K_{n}\).  Since \(K_{i}\)
is tamely ramified over \(K\), the \(K_1\)-algebra
\(K_{1}\otimes_{K}K_{2}\otimes_{K}\cdots \otimes_{K}K_{n}\) is a
tamely ramified étale \(K_{1}\)-algebra.  This implies the
\(\overline{\mathbb{Q}}_{\ell}\)-sheaf
\begin{equation*}
\mathcal{F}\coloneqq R^{0}\mathrm{pr}_{1\ast}(\overline{\mathbb{Q}}_{\ell, V})\quad
(\text{where } \mathrm{pr}_{1}\colon V \to \mathbb{A}^{1}
\text{ is the projeciton to the first factor})
\end{equation*}
on \(\mathbb{A}^{1}\) is tamely ramified at \(\infty\).  Since
\(\deg h\) is coprime to \(p\), the local system
$h^*\mathcal{L}_{\psi}$ on \(\mathbb{A}^{1}\) is wildly ramified at
infinity.  Hence the \(\overline{\mathbb{Q}}_{\ell}\)-sheaf
\(h^{\ast}\mathcal{L}_{\psi} \otimes \mathcal{F}\) is wildly ramified
at \(\infty\).  In particular,
\begin{equation*}
\mathrm{H}^{2}_{c}(\mathbb{A}^{1};\mathcal{F}\otimes h^{\ast}\mathcal{L}_{\psi}) = 0.
\end{equation*}
Since \(\mathrm{pr}_{1}\) is a finite morphism,
\(R^{0}\mathrm{pr}_{1\ast}=R\mathrm{pr}_{1\ast}\).  The projection
formula then implies that
\(\mathrm{H}^{2}_{c}(V; f^{\ast}\mathcal{L}_{\psi}) = 0\).  Since
\(f^{\ast}\mathcal{L}_{\psi}\) is a local system on \(V\),
\(\mathrm{H}^{0}_{c}(V;f^{\ast}\mathcal{L}_{\psi})=0\) as well.
By Grothendieck's trace formula~\eqref{eq:trace-formula}
and Weil II~\cite[Théorème 1]{deligne:weil-2}, we get
\begin{align*}
\left|
\sum_{x \in V(\mathbb{F}_{q})}\psi(f(x))
\right|
&= \left| \mathrm{Tr}(F^{m}|\mathrm{H}^{1}_{c}(V;f^{\ast}\mathcal{L}_{\psi}))  \right|\\
&\leq \sqrt{q} \cdot \dim \mathrm{H}^{1}_{c}(V;f^{\ast}\mathcal{L}_{\psi}) \\
&\leq nd^{n} \sqrt{q}  .
\end{align*}
In the last step, we applied Corollary~\ref{corollary:katz-to-an}.
This completes the proof.
\end{proof}


\subsection{Toric exponential sums}
\label{sec:tori-exp-sums}
In this section we apply the results developed so far to exponential
sums on an algebraic torus \(\mathbb{G}^{n}_{\mathrm{m}}\) and its
subvarieties defined by a collection of Laurent polynomials.  If \(A\) is a ring and we are given a
Laurent polynomial
\(f \in A[x_{1},\ldots,x_{n},(x_{1}\cdots x_{n})^{-1}]\),
\(f = \sum a_{u} x^{u}\), the \emph{Newton polytope} \(\Delta_{0}(f)\) of \(f\)
is the convex hull, in \(\mathbb{R}^{n}\), of
\begin{equation*}
\operatorname{Supp}(f) = \{u \in \mathbb{Z}^{n} : a_{u} \neq 0\}.
\end{equation*}
Its \emph{Newton polytope at infinity} is
\begin{equation*}
\Delta_{\infty}(f) = \Delta_{0}(f- T)
\end{equation*}
where \(T\) is a dummy variable, and we view \(f-T\) as a Laurent
polynomial with coefficients in the ring \(A(T)\).  Thus,
\(\Delta_{\infty}\) is the convex hull of
\(\{0\} \cup \operatorname{Supp}(f)\) in \(\mathbb{R}^{n}\).  For each
lattice polytope \(\Delta\) in \(\mathbb{R}^{n}\) and any ring \(A\),
we define
\begin{equation*}
L(\Delta)_{A} = \{f \in A[x_{1},\ldots,x_{n},(x_{1}\cdots x_{n})^{-1}] :
\Delta_{0}(f) \subset \Delta\}.
\end{equation*}
We shall write \(L(\Delta)\) in place of \(L(\Delta)_{k}\) when \(A\)
is the ground field \(k\).

When \(k\) is a field of characteristic \(0\),
Khovanskii \cite[\S3, Theorem~2]{khovanskii:newton-polyhedra-and-genus-of-complete-intersections}
gives an exact formula for the Euler characteristic of a generic
complete intersection in \(\mathbb{G}_{\mathrm{m}}^{n}\) in terms of
the Minkowski mixed volume.

\begin{theorem}\label{theorem:khovanskii}
Suppose \(k\) is an algebraically closed field of characteristic
\(0\).  Let \(\Delta_{1},\ldots,\Delta_{r}\) be lattice polytopes in
\(\mathbb{Z}^{n}\).  Suppose \(h_{i} \in L(\Delta_{i})\),
\(i=1,\ldots,r\) are sufficiently general.  Let
\(V = \{h_{1}=\cdots=h_{r}=0\} \subset \mathbb{G}_{\mathrm{m}}^{n}\).
Then the morphism
\(\mathrm{H}^{i}(\mathbb{G}_{\mathrm{m}}^{n};\overline{\mathbb{Q}}_{\ell})\to\mathrm{H}^{i}(V;\overline{\mathbb{Q}}_{\ell})\)
is bijective for \(i < n-r\), and
\begin{equation*}
\chi(V) = \prod_{i=1}^{r}\frac{\Delta_{i}}{1+\Delta_{i}}.
\end{equation*}
\textup{(Recall Notation~\ref{notation:minkowski-mixed-volume}.)}
\end{theorem}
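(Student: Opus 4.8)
The plan is to establish the two assertions separately; since $\mathrm{char}\,k=0$, by the Lefschetz principle I may assume $k=\mathbb{C}$ and pass freely between $\ell$-adic and singular cohomology. \emph{For the cohomological comparison}, I would cut $\mathbb{G}_{\mathrm m}^n$ down one hypersurface at a time. Put $W_0=\mathbb{G}_{\mathrm m}^n$ and $W_j=W_{j-1}\cap\{h_j=0\}$, so that $W_r=V$. For a sufficiently general $(h_1,\dots,h_r)$, Bertini's theorem (in characteristic $0$) shows that each $W_j$ is smooth of pure dimension $n-j$; here I assume that each $\Delta_i$ is $n$-dimensional, which is the only case used in this paper and is in any case forced if one wants $V$ to be a complete intersection of the expected type. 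The monomial map $\phi_j\colon\mathbb{G}_{\mathrm m}^n\to\mathbb{P}^{M_j}$ attached to the lattice points of $\Delta_j$ is then quasi-finite, and a generic hyperplane of $\mathbb{P}^{M_j}$ pulls back under $\phi_j$ to $\{h=0\}$ for a generic $h\in L(\Delta_j)$. Applying Deligne's weak Lefschetz theorem (Theorem~\ref{theorem:weak-lefschetz}) to the restriction of $\phi_j$ to the smooth variety $W_{j-1}$ and to the perverse sheaf $\overline{\mathbb{Q}}_{\ell,W_{j-1}}[\,n-j+1\,]\in{}^{\mathrm p}\mathcal{D}^{\geq 0}_{W_{j-1}}$ gives that $\mathrm{H}^m(W_{j-1})\to\mathrm{H}^m(W_j)$ is bijective for $m<n-j$ and injective for $m=n-j$. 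Composing over $j=1,\dots,r$ yields bijectivity of $\mathrm{H}^m(\mathbb{G}_{\mathrm m}^n)\to\mathrm{H}^m(V)$ for $m<n-r$, the binding constraint being the final step $j=r$.

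\emph{For the Euler characteristic} I would use the Cayley trick. Form the incidence variety
\[
\widetilde V=\bigl\{\,(x,[\lambda])\in\mathbb{G}_{\mathrm m}^n\times\mathbb{P}^{r-1}\ :\ \textstyle\sum_{i=1}^r\lambda_i h_i(x)=0\,\bigr\}.
\]
Over $\mathbb{G}_{\mathrm m}^n\setminus V$ the first projection $\widetilde V\to\mathbb{G}_{\mathrm m}^n$ is a Zariski-locally-trivial $\mathbb{P}^{r-2}$-bundle (the pull-back of the universal hyperplane along $x\mapsto[h_1(x):\dots:h_r(x)]$), while over $V$ it is the trivial bundle with fibre $\mathbb{P}^{r-1}$; since $\chi(\mathbb{G}_{\mathrm m}^n)=0$, additivity and multiplicativity of $\chi$ give $\chi(\widetilde V)=(r-1)\bigl(\chi(\mathbb{G}_{\mathrm m}^n)-\chi(V)\bigr)+r\,\chi(V)=\chi(V)$. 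Now stratify $\mathbb{P}^{r-1}$ by the orbits $O_S\cong\mathbb{G}_{\mathrm m}^{|S|-1}$ ($\emptyset\neq S\subseteq\{1,\dots,r\}$) of the coordinate torus. In suitable multiplicative coordinates, $\widetilde V\cap(\mathbb{G}_{\mathrm m}^n\times O_S)$ is the zero locus in $\mathbb{G}_{\mathrm m}^{n+|S|-1}$ of the Laurent polynomial $g_S=h_{i_0}+\sum_{i\in S\setminus\{i_0\}}\mu_i h_i$, where $i_0\in S$ is fixed. The essential observation is that the coefficients of $g_S$ are \emph{exactly} the coefficients of the $h_i$ for $i\in S$, hence algebraically independent as the $h_i$ vary over $\prod_{i\in S}L(\Delta_i)$; since there are only finitely many $S$, a single general choice of $(h_1,\dots,h_r)$ makes every $g_S$ nondegenerate with respect to its Newton polytope, which is the Cayley polytope $C_S\coloneqq\operatorname{Cayley}\bigl((\Delta_i)_{i\in S}\bigr)\subset\mathbb{R}^{n+|S|-1}$.

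By Kouchnirenko's formula for the Euler characteristic of a nondegenerate toric hypersurface, $\chi\bigl(\widetilde V\cap(\mathbb{G}_{\mathrm m}^n\times O_S)\bigr)$ equals $(-1)^{n+|S|-2}(n+|S|-1)!\,\mathrm{Vol}(C_S)$ if $C_S$ is full-dimensional and $0$ otherwise; in both cases it equals $(-1)^{n+|S|-2}\,n!\sum_{\sum_{i\in S}a_i=n,\ a_i\geq 0}V\bigl((\Delta_i[a_i])_{i\in S}\bigr)$, using the elementary Dirichlet-integral identity $(n+m-1)!\,\mathrm{Vol}_{n+m-1}\bigl(\operatorname{Cayley}(\Gamma_1,\dots,\Gamma_m)\bigr)=n!\sum_{a_1+\cdots+a_m=n}V(\Gamma_1[a_1],\dots,\Gamma_m[a_m])$ for lattice polytopes $\Gamma_1,\dots,\Gamma_m$ in $\mathbb{R}^n$, together with the fact that both sides vanish when $\dim\sum_i\Gamma_i<n$. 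Summing $\chi(\widetilde V)=\sum_{S}\chi\bigl(\widetilde V\cap(\mathbb{G}_{\mathrm m}^n\times O_S)\bigr)$ over $S$ and collecting, for fixed $i_1<\dots<i_t$ and $k_1,\dots,k_t\geq 1$ with $\sum_j k_j=n$, the coefficient of $n!\,V(\Delta_{i_1}[k_1],\dots,\Delta_{i_t}[k_t])$: this term is produced precisely by the $S\supseteq\{i_1,\dots,i_t\}$, so its total coefficient is $\sum_{S\supseteq\{i_1,\dots,i_t\}}(-1)^{n+|S|-2}=(-1)^{n-r}\,\delta_{t,r}$, by the inclusion–exclusion identity $\sum_{T\subseteq S\subseteq\{1,\dots,r\}}(-1)^{|S|}=(-1)^r$ if $T=\{1,\dots,r\}$ and $0$ otherwise. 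Hence $\chi(V)=(-1)^{n-r}\sum_{k_1+\cdots+k_r=n,\ k_i\geq 1}n!\,V(\Delta_1[k_1],\dots,\Delta_r[k_r])$, which is exactly the expansion of $\prod_{i=1}^{r}\frac{\Delta_i}{1+\Delta_i}$ in the conventions of Notation~\ref{notation:minkowski-mixed-volume}.

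The hard part is not any single deep input but the combinatorial bookkeeping of the last paragraph, together with two genericity points: that one general $(h_i)$ makes all of the finitely many $g_S$ simultaneously Newton-nondegenerate (handled by the coefficient observation above), and the treatment of polytopes that fail to be full-dimensional — harmless for the Euler characteristic, since every offending $S$ contributes $0$ on both sides, but genuinely needed for the cohomological comparison, which is why that hypothesis is imposed. Alternatively, one may simply invoke Khovanskii \cite{khovanskii:newton-polyhedra-and-genus-of-complete-intersections}, whose argument is essentially the one sketched here; I would include the Cayley-trick proof only if a self-contained account is wanted.
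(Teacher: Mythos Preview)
The paper does not prove this theorem at all: it is quoted as a result of Khovanskii \cite{khovanskii:newton-polyhedra-and-genus-of-complete-intersections}, with only the remark that ``the original proof uses the Bertini--Sard theorem.'' So there is nothing in the paper to compare your argument against line by line.

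That said, your self-contained argument looks correct and is worth a brief comment. For the cohomological comparison you iterate Deligne's weak Lefschetz (Theorem~\ref{theorem:weak-lefschetz}) along a chain of generic hypersurface sections; this is exactly the mechanism the paper itself uses elsewhere (e.g.\ in the proof of Proposition~\ref{proposition:khovanskii-positive-characteristic} via Lemma~\ref{lemma:gysin-perv}), so it fits naturally here, and your restriction to full-dimensional $\Delta_i$ is harmless since that is the only case the paper invokes. For the Euler characteristic you use the Cayley trick plus Kouchnirenko's hypersurface formula and a clean inclusion--exclusion; this is a somewhat different route from Khovanskii's original Bertini--Sard/toric-compactification argument, but it is standard and your bookkeeping (the identity $\chi(\widetilde V)=\chi(V)$ from $\chi(\mathbb{G}_{\mathrm m}^n)=0$, the Cayley--mixed-volume identity, and the vanishing of $\sum_{S\supseteq T}(-1)^{|S|}$ unless $T=\{1,\dots,r\}$) checks out. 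The one genericity point to phrase a bit more carefully is not that the coefficients of $g_S$ are ``algebraically independent,'' but that the map $(h_i)_{i\in S}\mapsto g_S$ is surjective onto $L(C_S)$, so the nondegeneracy locus for each $g_S$ pulls back to a dense open and one can intersect over the finitely many $S$; this is what you mean, and it is fine.

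In short: your proposal supplies a valid proof where the paper is content to cite one, and your last sentence already says as much.
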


The original proof of Theorem~\ref{theorem:khovanskii} uses the
Bertini--Sard theorem, so it cannot be directly carried out in positive
characteristics.  Indeed, if \(k\) has positive characteristic, the
morphism \(\mathbb{G}_{\mathrm{m}}^{n} \to \mathbb{A}^{N}\),
\(x \mapsto (x^{w})_{w\in \Delta\cap \mathbb{Z}^{n}}\) may not be
generically smooth on the target, and all hypersurfaces defined by
\(\Delta\) may be singular.  Therefore, we should not expect
Theorem~\ref{theorem:khovanskii} to hold in positive
characteristics.  Nevertheless, by applying
Variant~\ref{variant:middle-estimate}, we can still obtain an upper bound
for the Euler characteristic of \(V\).

\begin{proposition}\label{proposition:khovanskii-positive-characteristic}
Let \(k\) be an algebraically closed field of characteristic \(p>0\).
Let \(\ell\) be a prime different from \(p\).  Let
\(\Delta_{1},\ldots,\Delta_{r}\) be \(n\)-dimensional lattice polytopes
in \(\mathbb{Z}^{n}\).  Suppose \(h_{i} \in L(\Delta_{i})\),
\(i=1,\ldots,r\) are sufficiently general.  Let
\(V = \{h_{1}=\cdots=h_{r}=0\} \subset \mathbb{G}_{\mathrm{m}}^{n}\).
Then
\begin{equation*}
(-1)^{n-r}\chi(V;\overline{\mathbb{Q}}_{\ell}) \leq (-1)^{n-r}\prod_{i=1}^{r}\frac{\Delta_{i}}{1+\Delta_{i}}.
\end{equation*}
\end{proposition}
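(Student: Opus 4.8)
The plan is to degenerate a general complete intersection in characteristic $0$ — for which Khovanskii's Theorem~\ref{theorem:khovanskii} computes the Euler characteristic exactly — to a general one in characteristic $p$, and to show that this degeneration can only decrease $(-1)^{n-r}\chi$. First I would use constructibility to reduce to a single general member: $\chi(V;\overline{\mathbb{Q}}_{\ell})$ is constant over a dense open in $\prod_{i=1}^{r}L(\Delta_{i})$, so it suffices to treat one sufficiently general $V$. By a standard spreading-out argument I would then realize such a $V$ as the closed fibre of a flat relative complete intersection $\mathcal{V}\subset\mathbb{G}_{\mathrm{m}}^{n}\times S$ of relative dimension $n-r$ over a strictly henselian trait $S=\operatorname{Spec}R$ with residue characteristic $p$ and fraction field of characteristic $0$, chosen so that the closed fibre $\mathcal{V}_{s}$ is a sufficiently general member over $\overline{\mathbb{F}}_{p}$ while the geometric generic fibre $\mathcal{V}_{\overline{\eta}}$ is a sufficiently general member over a field of characteristic $0$; this is possible because both ``general'' loci are dense opens defined over $\mathbb{Z}$. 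Over $\eta$ the hypotheses of Khovanskii's theorem hold, so $\mathcal{V}_{\overline{\eta}}$ is smooth by Bertini and $\chi(\mathcal{V}_{\overline{\eta}})=\prod_{i=1}^{r}\frac{\Delta_{i}}{1+\Delta_{i}}$.

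The comparison of $\chi(\mathcal{V}_{s})$ with $\chi(\mathcal{V}_{\overline{\eta}})$ is governed by the vanishing cycles of $\mathcal{V}/S$: the standard triangle $i_{s}^{*}\overline{\mathbb{Q}}_{\ell}\to R\Psi\overline{\mathbb{Q}}_{\ell}\to R\Phi\overline{\mathbb{Q}}_{\ell}$ gives $\chi(\mathcal{V}_{\overline{\eta}})-\chi(\mathcal{V}_{s})=\chi\bigl(\mathcal{V}_{s};R\Phi_{\mathcal{V}/S}\overline{\mathbb{Q}}_{\ell}\bigr)$. Since $\mathcal{V}$ is flat and a local complete intersection over the regular base $S$, the shifted constant sheaf $\overline{\mathbb{Q}}_{\ell}[n-r]$ is perverse on the fibres, $\mathcal{V}\to S$ is affine, and $\mathcal{V}_{\overline{\eta}}$ is smooth, so $R\Phi_{\mathcal{V}/S}\overline{\mathbb{Q}}_{\ell}$ is supported on the degeneration locus $\Sigma\subset\mathcal{V}_{s}$, i.e. on the non-smooth locus of $\mathcal{V}/S$. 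One shows that for a general complete intersection of $n$-dimensional Newton polytopes this $\Sigma$ is finite, and that at each of its points the local vanishing cycles of the ambient lci are concentrated in cohomological degree $n-r$ with nonnegative rank — the $\ell$-adic analogue of Milnor's theorem for isolated complete intersection singularities (after Hamm). Consequently $R\Phi_{\mathcal{V}/S}(\overline{\mathbb{Q}}_{\ell}[n-r])$ is a skyscraper sheaf of total dimension $\mu\geq 0$, whence $(-1)^{n-r}\bigl[\chi(\mathcal{V}_{\overline{\eta}})-\chi(\mathcal{V}_{s})\bigr]=\mu\geq 0$, i.e.
\[
(-1)^{n-r}\chi(\mathcal{V}_{s})\ \leq\ (-1)^{n-r}\chi(\mathcal{V}_{\overline{\eta}})\ =\ (-1)^{n-r}\prod_{i=1}^{r}\frac{\Delta_{i}}{1+\Delta_{i}},
\]
which is the assertion. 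At the level of individual degrees this is precisely the statement isolated in Variant~\ref{variant:middle-estimate} — the middle compact Betti number drops under specialization while the contribution of the other degrees is pinned down — which I would invoke to keep the argument uniform (its proof carries out the needed perverse-sheaf bookkeeping with $\mathbb{F}_{\ell}$-coefficients, and passing to $\overline{\mathbb{Q}}_{\ell}$ for a generic $\ell$ changes neither $\chi$ nor the relevant Betti numbers of the smooth generic fibre).

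The main obstacle is the behaviour of the degeneration in positive characteristic: a general complete intersection of $n$-dimensional Newton polytopes in $\mathbb{G}_{\mathrm{m}}^{n}$ over $\overline{\mathbb{F}}_{p}$ need not be smooth (the monomial embedding $\mathbb{G}_{\mathrm{m}}^{n}\hookrightarrow\mathbb{A}^{N}$ may fail to be generically smooth on its image), so Khovanskii's vanishing and duality statements cannot simply be transported; the crux is to verify that $\Sigma$ is small enough — finite, or at least that $R\Phi$ suitably shifted is perverse on $\mathcal{V}_{s}$ with vanishing compactly supported cohomology in degrees below $n-r$ — so that the vanishing-cycle Euler characteristic has the required sign. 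An alternative route that avoids analyzing the singularities directly is to bound the middle compact Betti number of $\mathcal{V}_{s}$ by that of $\mathcal{V}_{\overline{\eta}}$ via Variant~\ref{variant:middle-estimate}, and to control the compact cohomology in each degree $>n-r$ by repeated application of Lemma~\ref{lemma:gysin-perv} (to $\mathcal{V}_{s}\hookrightarrow\mathbb{G}_{\mathrm{m}}^{n}\hookrightarrow\mathbb{P}^{N}$ via monomials, exactly as in the proof of Theorem~\ref{theorem:affine-betti-bound-detail}(ii)), reducing those degrees to middle compact cohomology of general complete intersections of smaller dimension in $\mathbb{G}_{\mathrm{m}}^{n}$ and inducting on $n-r$; in that approach the delicate point is the degree $n-r+1$, where Lemma~\ref{lemma:gysin-perv} yields only surjectivity and one needs a complementary estimate for that single degree.
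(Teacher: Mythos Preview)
Your alternative route in the last paragraph is exactly the paper's approach, but you apply Lemma~\ref{lemma:gysin-perv} in the wrong direction, and this is what manufactures the difficulty at degree $n-r+1$. The paper applies the lemma to the \emph{ambient} torus: take $X=\mathbb{G}_{\mathrm{m}}^{n}$, $\mathcal{F}=\overline{\mathbb{Q}}_{\ell}[n]$, and let $f$ be the quasi-finite monomial map to projective space determined by the lattice points of the $\Delta_{i}$. A sufficiently general $V$ then plays the role of the iterated section $f^{-1}B$, not of $X$. Iterating $r$ times, one obtains that the composite Gysin map
\[
\mathrm{H}_{c}^{\,n-r+i}(V;\overline{\mathbb{Q}}_{\ell})\longrightarrow \mathrm{H}_{c}^{\,n+r+i}(\mathbb{G}_{\mathrm{m}}^{n};\overline{\mathbb{Q}}_{\ell})
\]
is \emph{bijective} for every $i\geq 1$: at each intermediate step the perverse degree is $\geq 2$, so the ``surjective only at $m=1$'' clause never enters for $i\geq 1$. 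The same holds for the generic fibre $\mathcal{V}_{\overline{\eta}}$. Since $\mathrm{H}_{c}^{\ast}(\mathbb{G}_{\mathrm{m}}^{n})$ is characteristic-independent, all compact Betti numbers of $V$ and $\mathcal{V}_{\overline{\eta}}$ in degrees $>n-r$ agree exactly; below $n-r$ both vanish by Artin ($\overline{\mathbb{Q}}_{\ell}[n-r]$ is perverse on the lci $V$). The entire comparison thus collapses to the single inequality $\dim\mathrm{H}_{c}^{n-r}(V)\leq\dim\mathrm{H}_{c}^{n-r}(\mathcal{V}_{\overline{\eta}})$, which is Variant~\ref{variant:middle-estimate}. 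Your instinct to slice $V$ further \emph{down} and induct is what produces the loose end; slicing $\mathbb{G}_{\mathrm{m}}^{n}$ \emph{down to} $V$ removes it.

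Your vanishing-cycles route is morally equivalent but, as you say, requires showing the non-smooth locus of $\mathcal{V}/S$ is finite (or that $R\Phi[n-r]$ has nonnegative compactly supported Euler characteristic), which is genuinely delicate in characteristic $p$ where the monomial embedding can fail to be generically smooth on its image; the paper's argument bypasses this entirely. One further point you only allude to: Variant~\ref{variant:middle-estimate} is stated for $\mathbb{F}_{\ell}$-coefficients. The paper first observes that $\chi$ is independent of $\ell$, so it may choose $\ell$ so that a complex model of $\mathcal{V}_{\overline{\eta}}$ has torsion-free Betti cohomology (hence $\dim_{\mathbb{F}_{\ell}}=\dim_{\overline{\mathbb{Q}}_{\ell}}$ on the generic fibre), then uses the trivial inequality $\dim_{\overline{\mathbb{Q}}_{\ell}}\leq\dim_{\mathbb{F}_{\ell}}$ on the special fibre to pass back.
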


\begin{proof}
The idea is simple: we lift a generic \(V\) to characteristic \(0\)
which satisfies Theorem~\ref{theorem:khovanskii}, and then we apply
Variant~\ref{variant:middle-estimate}.

As is well-known, the Euler characteristic of \(V\) is independent of
\(\ell\) so long as \(\ell\) is not equal to the characteristic of the
ground field.  To prove the lemma, therefore, it matters not which
\(\ell\) we choose, and it suffices to prove it for one particular
\(\ell\).  We will later explain which \(\ell\) we shall use.

Let \(\mathcal{O}\) be a characteristic \(0\) discrete valuation ring
with residue field \(k\).  Let \(K\) be the field of fractions of
\(\mathcal{O}\).  For each \(r\)-tuple
\((\widetilde{h}_{1},\ldots,\widetilde{h}_{r})\) of \(K\)-points of
\(\prod_{i}\mathbb{P}L(\Delta_{i})_{K}\), we can scale the
coefficients without changing their common zero locus in
\(\mathbb{G}_{\mathrm{m},K}^{n}\).  After scaling, we can
assume \(\widetilde{h}_{i}\) has coefficients in \(\mathcal{O}\).  In
this situation, we write \(\mathcal{V}\) to be the
\(\mathcal{O}\)-subscheme of
\(\mathbb{G}_{\mathrm{m},\mathcal{O}}^{n}\) defined by the vanishing
of \(\widetilde{h}_{i}\).

Since the map that sends a Laurent polynomial \(f\) to \(f\) modulo
the maximal ideal is surjective, for any preassigned Zariski open
dense subset \(\mathcal{U}\) of the \(k\)-scheme
\(\prod_{i=1}^{r}\mathbb{P}L(\Delta_{i})_{k}\), we can always find
\((\widetilde{h}_{1},\ldots,\widetilde{h}_{r}) \in
\prod_{i}\mathbb{P}L(\Delta_{i})_{\mathcal{O}}\) such that the special
fiber \(\mathcal{V} \otimes_{\mathcal{O}} k = V\) is defined by an
\(r\)-tuple \((h_{1},\ldots,h_{r})\) in \(\mathcal{U}\).

Applying Lemma~\ref{lemma:gysin-perv} to
\(\overline{\mathbb{Q}}_{\ell,\mathbb{G}_{\mathrm{m}}^{n}}[n]\), we can find
a Zariski open dense subset \(\mathcal{U}\) of
\(\prod_{i=1}^{r}\mathbb{P}L(\Delta_{i})_{k}\) such that if
\((h_{1},\ldots,h_{r}) \in \mathcal{U}\), then
\begin{equation}\label{eq:weak-lef-for-toric-complete-intersection}
\mathrm{H}_{c}^{n-r+i}(V;\overline{\mathbb{Q}}_{\ell})
\to \mathrm{H}^{n+r+i}_{c}(\mathbb{G}^{n}_{\mathrm{m}};\overline{\mathbb{Q}}_{\ell})
\end{equation}
is bijective for all \(i \geq 1\).  Thus we may choose lifts
\(\widetilde{h}_{1},\ldots,\widetilde{h}_{r}\) of
\(h_{1},\ldots,h_{r}\) such that on the one hand
\((\widetilde{h}_{1},\ldots,\widetilde{h}_{r})\) satisfies
Theorem~\ref{theorem:khovanskii} (when passed to the algebraic closure
of \(K\)), and on the other hand
\eqref{eq:weak-lef-for-toric-complete-intersection} holds for \(V\).

Fix an algebraic closure \(\overline{K}\) of \(K\).
It suffices to show
\begin{equation}\label{eq:lift-1}
\dim \mathrm{H}^{n-r}_{c}(V;\overline{\mathbb{Q}}_{\ell}) \leq \dim \mathrm{H}_{c}^{n-r}(\mathcal{V}_{\overline{\eta}};\overline{\mathbb{Q}}_{\ell}).
\end{equation}
where \(\mathcal{V}_{\overline{\eta}}\) is the geometric generic fiber
of \(\mathcal{V}\) valued in \(\overline{K}\).

Since \(\mathcal{V}_{\overline{\eta}}\) is of finite type over
\(\overline{K}\), there is an algebraically closed subfield
\(K^{\prime}\) of \(\overline{K}\) which is isomorphic to
\(\mathbb{C}\), and \(\mathcal{V}_{\overline{\eta}}\) is defined over
\(K^{\prime}\simeq \mathbb{C}\).  Using this isomorphism, the
\(\mathbb{Z}_{\ell}\)-cohomology spaces
\(\mathrm{H}_{c}^{n-r}(\mathcal{V}_{\overline{\eta}};\mathbb{Z}_{\ell})\)
and
\(\mathrm{H}^{n-r}_{c}(\mathcal{V}_{\mathbb{C}};\mathbb{Z}_{\ell})\)
are isomorphic to the Betti cohomology space
\(\mathrm{H}^{n-r}_{c}(\mathcal{V}^{\mathrm{an}}_{\mathbb{C}};\mathbb{Z}_{\ell})\).
We shall now choose \(\ell\) so that the \(\mathbb{Z}_{\ell}\)-Betti
cohomology of \(\mathcal{V}^{\mathrm{an}}\) has no \(\ell\)-power
torsion in any degree.  Hence
\begin{equation}\label{eq:lift-2}
\dim \mathrm{H}^{i}_{c}(\mathcal{V}_{\overline{\eta}};\overline{\mathbb{Q}}_{\ell}) = \dim\mathrm{H}^{i}_{c}(\mathcal{V}_{\overline{\eta}};\mathbb{F}_{\ell})
\end{equation}
holds for any \(i\).  By Variant~\ref{variant:middle-estimate}, we have
\begin{equation}\label{eq:lift-3}
\dim \mathrm{H}^{n-r}_{c}(V;\mathbb{F}_{\ell}) \leq \dim\mathrm{H}^{n-r}_{c}(\mathcal{V}_{\overline{\eta}};\mathbb{F}_{\ell}).
\end{equation}
Since the \(\mathbb{Z}_{\ell}\)-cohomology of the variety \(V\) may
have \(\ell\)-torsion,
\begin{equation}\label{eq:lift-4}
\dim \mathrm{H}^{n-r}_{c}(V;\overline{\mathbb{Q}}_{\ell})\leq \dim \mathrm{H}^{n-r}_{c}(V;\mathbb{F}_{\ell}).
\end{equation}
Combining \eqref{eq:lift-2}, \eqref{eq:lift-3}, and \eqref{eq:lift-4}
yields \eqref{eq:lift-1}, and completes the proof.
\end{proof}

\begin{theorem}\label{theorem:ultimate-as}
Let \(\Delta_{1},\ldots,\Delta_{r}\) and \(\Delta_{\infty}\) be
\(n\)-dimensional lattice polytopes in \(\mathbb{R}^{n}\).  Assume that
\(0 \in \Delta_{\infty}\).  Suppose we are given
\(h_{i} \in L(\Delta_{i})\) (\(i=1,\ldots,r\)) and
\(f \in L(\Delta_{\infty})\).  Let
\(V = \{h_{1}=\cdots=h_{r}=0\}\subset \mathbb{G}_{\mathrm{m}}^{n}\).
Then:
\begin{enumerate}
\item We have the following bound on cohomology:
\begin{equation*}
\dim \mathrm{H}^{n-r}_{c}(V; f^{\ast}\mathcal{L}_{\psi}) \leq (-1)^{n-r}\frac{1}{1+\Delta_{\infty}} \prod_{i=1}^{r}\frac{\Delta_{i}}{1+\Delta_{i}}.
\end{equation*}
\item If moreover \(\dim V = n-r\), then for any \(0\leq j \leq n-r\), we have:
\begin{equation*}
\dim \mathrm{H}^{n-r+j}_{c}(V; f^{\ast}\mathcal{L}_{\psi}) \leq (-1)^{n-r}\frac{1}{1+\Delta_{\infty}} \prod_{i=1}^{r}\frac{\Delta_{i}}{1+\Delta_{i}}\cdot (-1)^{j} \left( \frac{S}{1+S} \right)^{j}
\end{equation*}
where \(S\) can be any \(n\)-dimensional lattice polytope in
\(\mathbb{R}^{n}\).
\end{enumerate}
\end{theorem}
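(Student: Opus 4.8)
The plan is to prove (i) by a single specialization to a generic member (Lemma~\ref{lemma:middle-estimate}) followed by an Euler-characteristic computation using the toric Khovanskii-type formulas, and then to deduce (ii) from (i) by an iterated weak-Lefschetz reduction (Lemma~\ref{lemma:gysin-perv}). For (i), fix an arbitrary tuple $(h_{1},\ldots,h_{r},f)$ and a sufficiently general tuple $(h_{1}',\ldots,h_{r}',g)$ in $\prod_{i=1}^{r}\mathbb{P}L(\Delta_{i})\times L(\Delta_{\infty})$, join them by a line $u\colon\mathbb{A}^{1}\to\prod_{i}\mathbb{P}L(\Delta_{i})\times L(\Delta_{\infty})$, and form the pulled-back incidence variety $\pi\colon\widetilde{V}\to\mathbb{A}^{1}$ together with its tautological function $\widetilde{F}\colon\widetilde{V}\to\mathbb{A}^{1}$. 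Since $\widetilde{V}$ is cut out by $r$ equations inside the smooth variety $\mathbb{G}_{\mathrm{m}}^{n}\times\mathbb{A}^{1}$, Lemma~\ref{lemma:cosupp} shows that $\overline{\mathbb{Q}}_{\ell,\widetilde{V}}[n+1-r]\otimes\widetilde{F}^{\ast}\mathcal{L}_{\psi}$ satisfies the cosupport condition, and $\pi$ is affine; hence Lemma~\ref{lemma:middle-estimate}, applied exactly as in the proofs of Proposition~\ref{proposition:abstract-betti-bound} and Theorem~\ref{theorem:middle-upper-bound-exponential-sum} but with both the defining equations and the function varying, gives
\[
\dim\mathrm{H}^{n-r}_{c}(V;f^{\ast}\mathcal{L}_{\psi})\leq\dim\mathrm{H}^{n-r}_{c}(V';g^{\ast}\mathcal{L}_{\psi}),
\]
where $V'=\{h_{1}'=\cdots=h_{r}'=0\}$ is now a smooth $(n-r)$-dimensional complete intersection in $\mathbb{G}_{\mathrm{m}}^{n}$.

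Next I would compute the right-hand side. The monomial morphism $\mathbb{G}_{\mathrm{m}}^{n}\to\mathbb{A}^{N}$, $x\mapsto(x^{w})_{w\in\Delta_{\infty}\cap\mathbb{Z}^{n}}$, is quasi-finite because $\Delta_{\infty}$ is $n$-dimensional, so its restriction to $V'$ is quasi-finite and the associated family of functions is precisely $L(\Delta_{\infty})$; therefore Theorem~\ref{theorem:nondegenerate-exponential-sums}(ii) applies to $U=V'$ and yields, for $g$ and $t$ sufficiently general,
\[
\dim\mathrm{H}^{n-r}_{c}(V';g^{\ast}\mathcal{L}_{\psi})\leq(-1)^{n-r}\bigl(\chi(V')-\chi(g^{-1}(t))\bigr).
\]
Here $V'$ is a general complete intersection of type $(\Delta_{1},\ldots,\Delta_{r})$ in $\mathbb{G}_{\mathrm{m}}^{n}$, while $g^{-1}(t)=\{h_{1}'=\cdots=h_{r}'=0,\ g-t=0\}$ is a general complete intersection of type $(\Delta_{1},\ldots,\Delta_{r},\Delta_{\infty})$, since $\Delta_{0}(g-t)=\Delta_{\infty}$ (recall $0\in\Delta_{\infty}$). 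Applying Theorem~\ref{theorem:khovanskii} in characteristic $0$, or Proposition~\ref{proposition:khovanskii-positive-characteristic} in characteristic $p$, to both, and using the identity $1-\frac{\Delta_{\infty}}{1+\Delta_{\infty}}=\frac{1}{1+\Delta_{\infty}}$ in the polytope algebra of Notation~\ref{notation:minkowski-mixed-volume}, I obtain
\[
(-1)^{n-r}\bigl(\chi(V')-\chi(g^{-1}(t))\bigr)\leq(-1)^{n-r}\,\frac{1}{1+\Delta_{\infty}}\prod_{i=1}^{r}\frac{\Delta_{i}}{1+\Delta_{i}},
\]
which proves (i).

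For (ii), assume $\dim V=n-r$; the case $j=0$ is (i). For $1\leq j\leq n-r$, let $S$ be the given $n$-dimensional lattice polytope and consider the quasi-finite morphism $\nu\colon\mathbb{G}_{\mathrm{m}}^{n}\to\mathbb{P}^{N}$, $x\mapsto(x^{w})_{w\in S\cap\mathbb{Z}^{n}}$, restricted to $V$; a general hyperplane section of $V$ along $\nu$ equals $V$ intersected with a general member of $L(S)$, and this drops the dimension by one because $\nu$ has no base points on $\mathbb{G}_{\mathrm{m}}^{n}$. Since $\dim V=n-r$, the complex $\mathcal{F}=\overline{\mathbb{Q}}_{\ell,V}[n-r]\otimes f^{\ast}\mathcal{L}_{\psi}$ lies in ${}^{\mathrm{p}}\mathcal{D}^{\leq 0}_{V}$, so Lemma~\ref{lemma:gysin-perv} applies; iterating it $j$ times, and tracking the shifts and Tate twists exactly as in the proof of Theorem~\ref{theorem:affine-betti-bound-detail}(ii), gives
\[
\dim\mathrm{H}^{n-r+j}_{c}(V;f^{\ast}\mathcal{L}_{\psi})\leq\dim\mathrm{H}^{n-(r+j)}_{c}\bigl(V^{(j)};(f|_{V^{(j)}})^{\ast}\mathcal{L}_{\psi}\bigr),
\]
where $V^{(j)}\subset\mathbb{G}_{\mathrm{m}}^{n}$ is a set-theoretic complete intersection cut out by $r+j$ Laurent polynomials of types $\Delta_{1},\ldots,\Delta_{r},S,\ldots,S$ ($j$ copies of $S$) and has dimension $n-(r+j)$. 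Applying part (i) to $V^{(j)}$ with these $r+j$ equations, and using $(-1)^{n-(r+j)}=(-1)^{n-r}(-1)^{j}$, yields exactly the asserted bound.

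The main obstacle I expect is the genericity bookkeeping in (i): one must choose a single complete intersection $V'$ for which simultaneously $V'$ is smooth, Khovanskii's formula holds for $V'$, and, crucially, for a general $g\in L(\Delta_{\infty})$ and general $t$ the fiber $g^{-1}(t)$ is again a Khovanskii-general complete intersection of type $(\Delta_{1},\ldots,\Delta_{r},\Delta_{\infty})$, all while $\dim\mathrm{H}^{n-r}_{c}(V';g^{\ast}\mathcal{L}_{\psi})$ realizes the generic value of the exponential-sum family. Each of these is an open dense condition, so a simultaneous choice exists by Chevalley's theorem on generic fibers of a dominant morphism, but making this precise requires some care. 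A secondary wrinkle is that in positive characteristic one only has the inequality of Proposition~\ref{proposition:khovanskii-positive-characteristic} rather than Khovanskii's exact formula, which is, however, precisely the direction needed for an upper bound.
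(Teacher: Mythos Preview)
Your proposal is correct and follows essentially the same approach as the paper. The only organizational difference is that the paper carries out the specialization in part~(i) in two separate steps---first deforming the defining equations $(h_1,\ldots,h_r)$ to a generic tuple while keeping $f$ fixed (so the sheaf is $f^{\ast}\mathcal{L}_{\psi}\boxtimes\overline{\mathbb{Q}}_{\ell}$), and only afterwards invoking Theorem~\ref{theorem:middle-upper-bound-exponential-sum} to replace $f$ by a generic $f'$ on the already generic $V'$---whereas you deform both simultaneously along a single line; this two-step decomposition sidesteps exactly the simultaneous-genericity bookkeeping you flag at the end, but your one-step version is equally valid once one notes that all the relevant conditions are open and dense. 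One small overstatement: you need not (and in positive characteristic generally cannot) arrange that $V'$ be \emph{smooth}; it suffices that $V'$ be a purely $(n-r)$-dimensional local complete intersection, which is all that Theorem~\ref{theorem:nondegenerate-exponential-sums} requires and which holds for general $(h_1',\ldots,h_r')$.
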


\begin{proof}
To prove the first assertion, it suffices to assume that \(V\) is
defined by a sufficiently general \(r\)-tuple \((h_{1},\ldots,h_{r})\)
of \(\prod_{i=1}^{r}\mathbb{P}L(\Delta_{i})\).  Indeed, if
\((h_{1}^{\prime},\ldots,h_{r}^{\prime})\) is a sufficiently general
\(r\)-tuple defining a complete intersection \(V^{\prime}\) in
\(\mathbb{G}_{\mathrm{m}}^{n}\), we can form a family
\begin{equation*}
\mathcal{V} = \{(x,t) \in \mathbb{A}^{1} : (1-t)h_{i} + th_{i}^{\prime} = 0, i=1,\ldots,r\}.
\end{equation*}
Then \(\mathcal{V}\) is defined by \(r\) equations in
\(\mathbb{G}_{\mathrm{m}}^{n} \times \mathbb{A}^{1}\), and
\((f^{\ast}\mathcal{L}_{\psi} \boxtimes \overline{\mathbb{Q}}_{\ell})[n+1-r]\) satisfies the cosupport
condition by Lemma~\ref{lemma:cosupp}.  By Lemma~\ref{lemma:middle-estimate}, we find that
\begin{equation*}
\dim \mathrm{H}^{n-r}_{c}(V;f^{\ast}\mathcal{L}_{\psi}) \leq \dim \mathrm{H}^{n-r}_{c}(V^{\prime};f^{\ast}\mathcal{L}_{\psi}).
\end{equation*}
Therefore, without loss of generality, we may assume that \(V\) is already a sufficiently
general complete intersection.

Consider the map
\(u\colon \mathbb{G}_{\mathrm{m}}^{n} \to \mathbb{A}^{N}\) sending
\(x\) to \((x^{w})_{w\in\Delta_{\infty}\cap\mathbb{Z}^{n}}\).  Since
\(\Delta_{\infty}\) is \(n\)-dimensional, \(u\) is a quasi-finite
morphism.  In particular, the composition
\(V \to \mathbb{G}_{\mathrm{m}}^{n}\to \mathbb{A}^{N}\) is also
quasi-finite.  In this case, the affine space attached to
\(L(\Delta_{\infty})\) is identified with \(\check{\mathbb{A}}^{N}\).
By Theorem~\ref{theorem:middle-upper-bound-exponential-sum}, there is a Zariski open
dense subset \(\mathcal{U}\) of \(\check{\mathbb{A}}^{N}\), such that
for any \(f^{\prime} \in \mathcal{U}\) and any
\(f \in \check{\mathbb{A}}^{N}\), we have
\begin{equation*}
\dim \mathrm{H}^{n-r}_{c}(V;f^{\ast}\mathcal{L}_{\psi})\leq \dim \mathrm{H}^{n-r}_{c}(V;f^{\prime\ast}\mathcal{L}_{\psi}),
\end{equation*}
and, by Theorem~\ref{theorem:nondegenerate-exponential-sums},
\begin{equation*}
\dim \mathrm{H}^{n-r}_{c}(V;f^{\prime\ast}\mathcal{L}_{\psi}) \leq (-1)^{n-r}\chi(V) + (-1)^{n-r-1}\chi(f^{\prime-1}(t) \cap V)
\end{equation*}
for \(t\) sufficiently general.  Applying
Proposition~\ref{proposition:khovanskii-positive-characteristic}, we
conclude that
\begin{equation*}
\dim \mathrm{H}^{n-r}_{c}(V;f^{\ast}\mathcal{L}_{\psi}) \leq
(-1)^{n-r} \prod_{i=1}^{r}\frac{\Delta_{i}}{1+\Delta_{i}} + (-1)^{n-r-1} \frac{\Delta_{\infty}}{1+\Delta_{\infty}}\prod_{i=1}^{r}\frac{\Delta_{i}}{1+\Delta_{i}}.
\end{equation*}
This proves the first assertion.

For the second assertion, we use generic elements in
\(\mathbb{P}L(S)\) to ``cut down'' \(V\) and apply
Lemma~\ref{lemma:gysin-perv}.  For \(j \geq 0\), we can choose \(j\)
sufficiently general functions \(l_1,\ldots,l_j\) whose Newton
polytope (with respect to 0) is \(S\).  This choice ensures the existence of a
surjective map
\begin{equation*}
\mathrm{H}^{n-r-j}_{c}(V \cap L;f^{\ast}\mathcal{L}_{\psi})(-j) \to \mathrm{H}^{n-r+j}_{c}(V;f^{\ast}\mathcal{L}_{\psi}),
\end{equation*}
where
\(L = \{l_1 = \cdots = l_j = 0\} \subset \mathbb{G}_{\mathrm{m}}^{n}\).
The second assertion then follows directly from the first one.
\end{proof}

\medskip Let us work out a more explicit bound when
\(V = \mathbb{G}_{\mathrm{m}}^{n}\).  In this case, we have
\begin{equation*}
\sum \dim \mathrm{H}^{n+j}_{c}(\mathbb{G}_{\mathrm{m}}^{n};f^{\ast}\mathcal{L}_{\psi})
\leq
\frac{(-1)^{n}}{1+\Delta_{\infty}}\sum_{j=0}^{n-r} (-1)^{j}\left( \frac{S}{1+S} \right)^{j}.
\end{equation*}
For $i\geq 1$, the coefficient before \(\Delta_{\infty}^{n-i}S^{i}\) is given by
\begin{equation*}
(-1)^{i}\sum_{j=0}^{i}(-1)^{j}\binom{-j}{i-j} = \sum_{j=1}^{i}\binom{i-1}{i-j} = 2^{i-1}.
\end{equation*}
Here we have applied the binomial coefficient identity
\begin{equation*}
(-1)^{a}\binom{z}{a} = \binom{-z+a-1}{a}.
\end{equation*}
Therefore, the final upper bound is
\begin{equation}\label{eq:total-degree-bound-toric-exponential-sum}
\sum \dim \mathrm{H}^{i}_{c}(\mathbb{G}_{\mathrm{m}}^{n};f^{\ast}\mathcal{L}_{\psi})
\leq \Delta_{\infty}^{n} + \sum_{i=1}^{n} 2^{i-1}\Delta_{\infty}^{n-i}S^{i}.
\end{equation}

We also have a variant of the above theorem for Newton polygons of
toric exponential sums.  Let us recall the Adolphson--Sperber lower
bound for nondegenerate Laurent polynomials.  Suppose \(\Delta\) is an
\(n\)-dimensional lattice polytope in \(\mathbb{R}^{n}\) containing
the origin.  For each \(u \in \mathbb{Z}^{n}\), suppose the half-line
from the origin through \(u\) intersects \(\Delta\) in a face that
does not contain the origin.  Let
\(\sum_{i=1}^{n}\epsilon_{i} x_{i}=1\) be the equation of a hyperplane
passing through this face.  Then the weight of \(u\) is defined as
\(w(u)=\sum \epsilon_{i}u_{i}\).  If the intersection of the open half-line
with \(\Delta\) is empty, we set \(w(u)=+\infty\). The weight of the origin is always zero.
Let \(D\) be the
least common denominator of all the \(\epsilon_{i}\)'s over all faces
of \(\Delta\).  Then clearly
\(w(\mathbb{Z}^{n})\subset\frac{1}{D}\mathbb{Z}_{\geq0}\cup\{+\infty\}\).

For the Newton polygon calculation, we define:
\begin{align*}
  W(m) &= \operatorname{Card}\left\{u \in \mathbb{Z}^{n}:w(u) = \frac{m}{D}\right\},\\
  W^{\prime}(m) &= \sum_{l\geq 0} (-1)^{l}\binom{n}{l}W(m-lD).
\end{align*}
The \emph{Hodge polygon} \(\mathrm{HP}(\Delta)\) of $\Delta$ is defined as the convex polygon with vertices at the points:
\begin{equation*}
\left( \sum_{i=0}^{m}W^{\prime}(m), \sum_{i=0}^{m} \frac{i}{D} W^{\prime}(m) \right),\quad
m=0,1,2,\ldots.
\end{equation*}
The theorem of Adolphson and Sperber states that if \(f \in L(\Delta)_{\mathbb{F}_{q}}\) is
\emph{nondegenerate} with respect to \(\Delta\) (see
\cite[p.~376]{adolphson-sperber:exponential-sums-newton-polyhedra} for the
definition), then the Newton polygon of
\(\det(1-tF|\mathrm{H}^{n}_{c}(\mathbb{G}_{\mathrm{m}}^{n};f^{\ast}\mathcal{L}_{\psi}))\)
lies on or above the Hodge polygon \(\mathrm{HP}(\Delta)\), and has the same endpoints.

\begin{variant}
Let
\(f \in \mathbb{F}_{q}[x_{1},\ldots,x_{n}, (x_{1}\cdots x_{n})^{-1}]\)
be a Laurent polynomial such that
\(\Delta_{\infty}(f) \subset \Delta\).  Assume that there exists a
nondegenerate Laurent polynomial whose Newton polytope equals
\(\Delta\).  Then the Newton polygon of
\(\det(1-tF|\mathrm{H}^{n}_{c}(\mathbb{G}_{\mathrm{m}}^{n};f^{\ast}\mathcal{L}_{\psi}))\)
lies on or above the Hodge polygon \(\mathrm{HP}(\Delta)\).
\end{variant}

Since we do not know what the Hodge lower bound should be for the exponential
sum of a general function on a general complete intersection in
\(\mathbb{G}_{\mathrm{m}}^{n}\), we cannot extract any
information about the Newton polygon of
\(\mathrm{H}^{i}_{c}(\mathbb{G}_{\mathrm{m}}^{n};f^{\ast}\mathcal{L}_{\psi})\)
when \(i > n\).


\subsection{Proofs of \ref*{theorem:order} and \ref*{theorem:last}(\ref*{item:bomb-restricted-to-sub})}
\label{sec:proof-theorem-order}

We begin with the proof of Theorem~\ref{theorem:order}.  The lower
bound is easy, since by \eqref{eq:chain-of-b} and
Corollary~\ref{corollary:middle-cleaner-bound}(\ref{item:complete-intersection-total})
we have
\[
(d-1)^{n} \leq B_{c}^{\mathrm{ci}}(n,1;d) = B_{c}(n,1;d) \leq B_{c}(n,r;d).
\]
So we only need to prove the upper bound.


By a standard argument using Deligne's constructibility theorem
(namely, extract the finitely many defining coefficients, spread out
over \(\operatorname{Spec}\mathbb{Z}\), find a geometric point lying
above a closed point---whose residue field is necessarily a finite
field---at which the direct image attains its generic value), it
suffices to prove Theorem~\ref{theorem:order} over the algebraic
closure of a finite field.


Next, we recall a well-known theorem that connects the standard
compactly supported Betti numbers with those of Artin–Schreier
cohomology.  This relationship will be used in our later arguments.

\begin{theorem}\label{theorem:dwork-coh}
Let \(\psi\colon\mathbb{F}_{p}\to\mathbb{C}^{\ast}\) be a nontrivial
additive character.  Consider a geometric vector bundle
\(\pi\colon E \to X\) of rank \(r\) and a section \(s\colon X \to E\).
Let \(i\colon V \to X\) be the vanishing scheme of \(s\), and let
\(\varpi\colon E^{\vee} \to X\) denote the dual bundle of \(E\).
Define the function
\[
g\colon E^{\vee} \xrightarrow{s\circ (\varpi \times \mathrm{Id})} E \times E^{\vee} \xrightarrow{\mathrm{can}} \mathbb{A}^{1}.
\]
Then we have a natural isomorphism:
\[
\varpi_{\ast}(\varpi^{\ast}\mathcal{F} \otimes g^{\ast}\mathcal{L}_{\psi}) \simeq i_{\ast}i^{!} \mathcal{F}.
\]
\end{theorem}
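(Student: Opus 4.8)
The plan is to compute the left-hand side directly from the open--closed decomposition of $X$ into the zero locus $V$ of $s$ and its complement $U = X\setminus V$, showing that the Artin--Schreier twist annihilates the contribution of $U$ while the part over $V$ reproduces $i^{!}\mathcal{F}$. Write $R\varpi_{\ast}$ for the derived pushforward meant in the statement, set $\mathcal{H} = \varpi^{\ast}\mathcal{F}\otimes g^{\ast}\mathcal{L}_{\psi}$, let $j\colon U\hookrightarrow X$ and $i\colon V\hookrightarrow X$, and pull $\varpi$ back along $i$ and $j$ to obtain rank-$r$ vector bundles $\varpi_{V}\colon E^{\vee}|_{V}\to V$ and $\varpi_{U}\colon E^{\vee}|_{U}\to U$ together with the induced immersions $\widetilde{i}\colon E^{\vee}|_{V}\hookrightarrow E^{\vee}$ and $\widetilde{j}\colon E^{\vee}|_{U}\hookrightarrow E^{\vee}$. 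The only genuinely analytic input is the vanishing $\mathrm{R}\Gamma(\mathbb{A}^{1}_{k};\mathcal{L}_{\psi}) = 0$ (equivalently $\mathrm{R}\Gamma_{c}(\mathbb{A}^{1}_{k};\mathcal{L}_{\psi}) = 0$, by Grothendieck--Ogg--Shafarevich and Poincaré duality); everything else is formal manipulation with the six operations.

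\emph{The open part.} First I would show $j^{\ast}R\varpi_{\ast}\mathcal{H} = 0$. As $j$ is an open immersion, restriction commutes with pushforward, so $j^{\ast}R\varpi_{\ast}\mathcal{H}\simeq R\varpi_{U\ast}(\widetilde{j}^{\ast}\mathcal{H})$. On $U$ the section $s$ is nowhere vanishing, so the pair $(\varpi_{U},g)$ exhibits $E^{\vee}|_{U}$ as an affine-space bundle of rank $r-1$ over $\mathbb{A}^{1}_{U} = U\times\mathbb{A}^{1}$; factoring $\varpi_{U}$ through this bundle and using homotopy invariance of étale cohomology together with the projection formula reduces $R\varpi_{U\ast}(\widetilde{j}^{\ast}\mathcal{H})$ to $\mathcal{F}|_{U}\otimes\mathrm{R}\Gamma(\mathbb{A}^{1};\mathcal{L}_{\psi})$, which is $0$. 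Hence $R\varpi_{\ast}\mathcal{H}$ is supported on $V$, so the localization triangle gives $R\varpi_{\ast}\mathcal{H}\simeq i_{\ast}\mathcal{G}$ with $\mathcal{G} = i^{\ast}R\varpi_{\ast}\mathcal{H}$, and for a complex supported on $V$ one has canonically $i^{\ast}R\varpi_{\ast}\mathcal{H}\simeq i^{!}R\varpi_{\ast}\mathcal{H}$.

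\emph{The closed part.} Next I would identify $\mathcal{G}$ with $i^{!}\mathcal{F}$. Base change for $i^{!}$ along the Cartesian square (valid for any Cartesian square, by applying Verdier duality to proper base change) gives $i^{!}R\varpi_{\ast}\mathcal{H}\simeq R\varpi_{V\ast}(\widetilde{i}^{!}\mathcal{H})$. Because $s|_{V} = 0$, the composite $g\circ\widetilde{i}$ is the constant map $0$, so $\widetilde{i}^{\ast}g^{\ast}\mathcal{L}_{\psi}\simeq\overline{\mathbb{Q}}_{\ell}$, and since $g^{\ast}\mathcal{L}_{\psi}$ is a rank-one local system this gives $\widetilde{i}^{!}\mathcal{H}\simeq\widetilde{i}^{!}\varpi^{\ast}\mathcal{F}$. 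Using $\varpi^{!}\simeq\varpi^{\ast}(r)[2r]$ and $\varpi_{V}^{!}\simeq\varpi_{V}^{\ast}(r)[2r]$ (both smooth of relative dimension $r$), the two twists cancel: $\widetilde{i}^{!}\varpi^{\ast}\mathcal{F}\simeq(\varpi\circ\widetilde{i})^{!}\mathcal{F}(-r)[-2r] = (i\circ\varpi_{V})^{!}\mathcal{F}(-r)[-2r]\simeq\varpi_{V}^{\ast}(i^{!}\mathcal{F})$; then homotopy invariance $R\varpi_{V\ast}\varpi_{V}^{\ast}\simeq\mathrm{id}$ yields $i^{!}R\varpi_{\ast}\mathcal{H}\simeq i^{!}\mathcal{F}$. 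Combining with the open part, $R\varpi_{\ast}\mathcal{H}\simeq i_{\ast}\mathcal{G}\simeq i_{\ast}i^{!}\mathcal{F}$; as every isomorphism invoked is functorial in $\mathcal{F}$, the resulting isomorphism is natural.

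The obstacle I anticipate is not conceptual but bookkeeping: to present the result as a genuinely natural isomorphism one must verify that the two base-change isomorphisms and the twist cancellation in the closed-part computation are mutually compatible. A cleaner route making naturality transparent, which I would adopt instead, is to build the map by hand from the zero section $a\colon X\to E^{\vee}$: since $g\circ a = 0$ we have $a^{\ast}\mathcal{H}\simeq\mathcal{F}$, and the unit $\mathcal{H}\to a_{\ast}a^{\ast}\mathcal{H}$ pushes to a canonical morphism $R\varpi_{\ast}\mathcal{H}\to R\varpi_{\ast}a_{\ast}\mathcal{F} = \mathcal{F}$; because the source is supported on $V$, this factors uniquely through $i_{\ast}i^{!}\mathcal{F}\to\mathcal{F}$, and one checks the factorization is an isomorphism precisely by applying $i^{!}$ and reusing the closed-part computation. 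As a sanity check, the case $\mathcal{F} = \overline{\mathbb{Q}}_{\ell}$ and $s = 0$ recovers the basic fact that $R\varpi_{\ast}g^{\ast}\mathcal{L}_{\psi}$ is concentrated along the zero section---the $\ast$-form of the statement that the Deligne--Fourier transform of the constant sheaf is a shifted, twisted skyscraper there.
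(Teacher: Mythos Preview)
Your argument is correct.  The open--closed decomposition, the vanishing over $U$ via the factorisation through $\mathbb{A}^{1}_{U}$, and the $i^{!}$-base-change computation over $V$ all go through as you describe; the one place where your wording is slightly imprecise is the identification $Rp_{\ast}\mathrm{pr}_{2}^{\ast}\mathcal{L}_{\psi}\simeq\underline{R\Gamma(\mathbb{A}^{1};\mathcal{L}_{\psi})}$, which is not literally ``projection formula plus homotopy invariance'' but rather uses that $\mathcal{L}_{\psi}$ is totally wild at $\infty$, so that $Rj_{\ast}\mathcal{L}_{\psi}=j_{!}\mathcal{L}_{\psi}$ on $\mathbb{P}^{1}$ and one may invoke proper base change.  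This is standard and does not affect the validity of the proof.

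By contrast, the paper does not give a proof at all: it simply records that the statement follows from the basic properties of the Deligne--Fourier transform developed by Laumon, and refers the reader to Baldassarri--D'Agnolo for an essentially identical argument in the $\mathcal{D}$-module setting.  Your route is therefore genuinely different in presentation, though not in substance: the Fourier-transform proof would say that $\varpi^{\ast}\mathcal{F}\otimes g^{\ast}\mathcal{L}_{\psi}$ is the Fourier transform of $s_{\ast}\mathcal{F}$, and then identify $R\varpi_{\ast}$ of a Fourier transform via Fourier inversion; unpacking that amounts to exactly the open/closed computation you carried out, with your key analytic input $R\Gamma(\mathbb{A}^{1};\mathcal{L}_{\psi})=0$ being the statement that the Fourier transform of the constant sheaf is supported at the origin.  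What your approach buys is self-containment---a reader need not know the Fourier formalism---and an explicit construction of the natural map via the zero section, which makes the naturality transparent.  What the paper's approach buys is brevity and a conceptual explanation of why such an isomorphism should exist.
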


\begin{proof}
This theorem follows from the basic properties of the Deligne–Fourier
transform, as developed by Laumon \cite{laumon:fourier-transform}.  We
shall omit the proof here, as the reader can find an essentially identical
proof in
\cite{baldassarri-d-agnolo:dwork-cohomology-algebraic-d-modules},
where Baldassarri and D'Agnolo proved a similar theorem for
\(\mathcal{D}\)-modules.  (Note that the \(\ast\)-pullback used in
\cite{baldassarri-d-agnolo:dwork-cohomology-algebraic-d-modules} is a
shift of the \(!\)-pullback.)
\end{proof}

After taking \(\mathcal{F} = \overline{\mathbb{Q}}_{\ell}\),
applying the global section functor \(R\Gamma(X;-)\),
and applying Poincaré duality, we obtain the following consequence.
For simplicity, we omit the Tate twists in the statement below.

\begin{corollary}\label{corollary:cayley}
In the situation above, we have an isomorphism of
\(\overline{\mathbb{Q}}_{\ell}\) vector spaces
\begin{equation*}
\mathrm{H}^{j}_{c}(V;\overline{\mathbb{Q}}_{\ell}) \simeq \mathrm{H}^{j+2r}_{c}(E^{\vee};g^{\ast}\mathcal{L}_{\psi}).
\end{equation*}
\end{corollary}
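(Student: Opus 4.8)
The plan is to specialize Theorem~\ref{theorem:dwork-coh} to $\mathcal{F}=\overline{\mathbb{Q}}_{\ell}$, pass to global sections over $X$, and then apply Verdier duality twice. Taking $\mathcal{F}=\overline{\mathbb{Q}}_{\ell}$ and using $\varpi^{\ast}\overline{\mathbb{Q}}_{\ell}=\overline{\mathbb{Q}}_{\ell}$, the theorem gives an isomorphism $R\varpi_{\ast}(g^{\ast}\mathcal{L}_{\psi})\simeq i_{\ast}i^{!}\overline{\mathbb{Q}}_{\ell,X}$ in $D^{b}_{c}(X;\overline{\mathbb{Q}}_{\ell})$. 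Applying $R\Gamma(X;-)$ and using $R\Gamma(X;-)\circ R\varpi_{\ast}=R\Gamma(E^{\vee};-)$ on the left and $R\Gamma(X;-)\circ i_{\ast}=R\Gamma(V;-)$ on the right, we obtain $R\Gamma(E^{\vee};g^{\ast}\mathcal{L}_{\psi})\simeq R\Gamma\bigl(V;i^{!}\overline{\mathbb{Q}}_{\ell,X}\bigr)$.

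Next I would rewrite both sides as duals of compactly supported cohomologies. In the applications $X$ is smooth (indeed $X=\mathbb{A}^{n}$), so $\overline{\mathbb{Q}}_{\ell,X}\simeq\omega_{X}[-2\dim X](-\dim X)$ and, since $i^{!}\omega_{X}=\omega_{V}$, the right-hand side becomes $R\Gamma(V;\omega_{V})[-2\dim X](-\dim X)$, which by Poincar\'e--Verdier duality on $V$ is the $\overline{\mathbb{Q}}_{\ell}$-linear dual of $R\Gamma_{c}(V;\overline{\mathbb{Q}}_{\ell})$, shifted by $[-2\dim X]$ and twisted by $(-\dim X)$. On the left, $E^{\vee}$ is smooth of dimension $\dim X+r$ and $g^{\ast}\mathcal{L}_{\psi}$ is a rank-one local system whose Verdier dual is $g^{\ast}\mathcal{L}_{\psi^{-1}}\otimes\omega_{E^{\vee}}$; Poincar\'e duality on $E^{\vee}$ therefore exhibits $R\Gamma(E^{\vee};g^{\ast}\mathcal{L}_{\psi})$ as the dual of $R\Gamma_{c}(E^{\vee};g^{\ast}\mathcal{L}_{\psi^{-1}})$, shifted by $[-2(\dim X+r)]$ and twisted by $(-\dim X-r)$. (If one does not wish to assume $X$ smooth, one can instead feed $\mathcal{F}=\omega_{X}$ into Theorem~\ref{theorem:dwork-coh}, use $i^{!}\omega_{X}=\omega_{V}$ and $\varpi^{\ast}\omega_{X}=\omega_{E^{\vee}}[-2r](-r)$, and dualize only once; only the smoothness of the vector bundle projection $\varpi$ is used.)

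Equating the two expressions, cancelling the common shift $[-2\dim X]$ and twist, and taking $\overline{\mathbb{Q}}_{\ell}$-linear duals yields $R\Gamma_{c}(V;\overline{\mathbb{Q}}_{\ell})\simeq R\Gamma_{c}(E^{\vee};g^{\ast}\mathcal{L}_{\psi^{-1}})[2r](r)$; reading off cohomology in degree $j$ gives $\mathrm{H}^{j}_{c}(V;\overline{\mathbb{Q}}_{\ell})\simeq\mathrm{H}^{j+2r}_{c}(E^{\vee};g^{\ast}\mathcal{L}_{\psi^{-1}})$ up to the suppressed Tate twist, and since $\psi\mapsto\psi^{-1}$ is a bijection of the set of nontrivial additive characters we may rename $\psi^{-1}$ as $\psi$ to obtain the stated isomorphism. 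The only real work is the bookkeeping of shifts and Tate twists and making sure the two duality steps --- one on the possibly singular complete intersection $V$, one on the smooth total space $E^{\vee}$ --- are applied correctly; there is no substantive obstacle beyond Theorem~\ref{theorem:dwork-coh} itself, and the appearance of the inverse character is harmless.
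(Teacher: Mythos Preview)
Your proposal is correct and follows exactly the route the paper indicates: specialize Theorem~\ref{theorem:dwork-coh} to \(\mathcal{F}=\overline{\mathbb{Q}}_{\ell}\), apply \(R\Gamma(X;-)\), and then invoke Poincar\'e--Verdier duality; the paper merely states this recipe in one sentence before the corollary, while you carry out the shift/twist bookkeeping. One cosmetic remark: rather than ``renaming'' \(\psi^{-1}\) as \(\psi\) at the end, you can observe that the involution \(v\mapsto -v\) on \(E^{\vee}\) pulls \(g^{\ast}\mathcal{L}_{\psi}\) back to \(g^{\ast}\mathcal{L}_{\psi^{-1}}\), giving a genuine isomorphism \(\mathrm{H}^{\ast}_{c}(E^{\vee};g^{\ast}\mathcal{L}_{\psi^{-1}})\simeq\mathrm{H}^{\ast}_{c}(E^{\vee};g^{\ast}\mathcal{L}_{\psi})\) for the same fixed \(\psi\).
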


The particular case we will consider is when \(X = \mathbb{A}^{n}\),
\(E = X \times \mathbb{A}^{r}\), with \(s\) defined by \(r\)
polynomials \(f_{1},\ldots,f_{r}\) of degree at most \(d\).  Thus,
\(V = \operatorname{Spec} k[x_{1},\ldots,x_{n}]/(f_{1},\ldots,f_{r})\)
represents the common zero locus of the \(f_{i}\).  The function \(g\)
is a regular function on \(\mathbb{A}^{n+r}\) in \(n+r\) variables,
given by:
\[
g(x_{1},\ldots,x_{n+r}) = x_{n+1}f_{1}(x) + \cdots + x_{n+r}f_{r}(x).
\]
For convenience, we introduce the following notation:
\[
B_{c}(X;f) = \sum \dim \mathrm{H}^{i}_{c}(X;f^{\ast}\mathcal{L}_{\psi}).
\]
Then by Corollary~\ref{corollary:cayley}, Theorem~\ref{theorem:order}
will follow from the following inequality:
\begin{equation}\label{eq:claim}
B_{c}(\mathbb{A}^{n+r};g) \leq 3^{r} \binom{n+r-1}{r-1}  (2d+1)^{n}.
\end{equation}

Before we proceed to the proof of \eqref{eq:claim}, we provide another
preliminary remark.  Let \(f\colon X \to \mathbb{A}^{1}\) be a regular
function on a variety \(X\).  Let \(U\) be a Zariski open subset of
\(X\), and let \(Z\) be the complement of \(U\).  Then we have a long exact
sequence:
\begin{equation}
\cdots \to \mathrm{H}^{i}_{c}(U;(f|_{U})^{\ast}\mathcal{L}_{\psi}) \to
\mathrm{H}^{i}_{c}(X;f^{\ast}\mathcal{L}_{\psi}) \to
\mathrm{H}^{i}_{c}(Z;(f|_{Z})^{\ast}\mathcal{L}_{\psi}) \to \cdots .
\end{equation}
From this sequence, we obtain the inequality:
\begin{equation}
\label{eq:accumulation-prime}
B_{c}(X;f)\leq B_{c}(U;f|_{U}) + B_{c}(Z;f|_{Z}).
\end{equation}

We will need the following lemma, which is derived from our earlier
work on exponential sums.  This lemma will establish the main term of
the estimate in \eqref{eq:claim}.  Additionally, it will be clear that
the error terms in \eqref{eq:claim} can also be obtained from this
lemma.

\begin{lemma}\label{lemma:torus-total}
Suppose \(f_{1},\ldots,f_{r} \in k[x_{1},\ldots,x_{n}]\) satisfy \(\deg f_{i} \leq d\).
Let \(g = \sum_{j=1}^{r} x_{n+j}f_{j}\).
Then
\[
B_{c}(\mathbb{G}_{\mathrm{m}}^{n+r};g) \leq 2^{n+r} \times \binom{n+r-1}{r-1} \times d^{n}.
\]
\end{lemma}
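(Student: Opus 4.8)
The plan is to regard $g=\sum_{j=1}^{r}x_{n+j}f_{j}$ as a Laurent polynomial on $\mathbb{G}_{\mathrm{m}}^{n+r}$ and to feed it into the toric bound of Theorem~\ref{theorem:as-improvement}. Writing $f_{j}=\sum_{u}a_{j,u}x^{u}$ with $u\in\mathbb{Z}_{\geq 0}^{n}$ and $|u|\coloneqq u_{1}+\cdots+u_{n}\leq d$, each monomial of $g$ has the form $x_{n+j}x^{u}$, i.e.\ its exponent is $(u,e_{j})\in\mathbb{Z}^{n}\times\mathbb{Z}^{r}$, where $e_{j}$ is the $j$-th standard basis vector of $\mathbb{R}^{r}$ placed in the last $r$ coordinates of $\mathbb{R}^{n+r}$. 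Hence $\operatorname{Supp}(g)$, and therefore $\Delta_{0}(g)$ and $\Delta_{\infty}(g)$, is contained in the lattice polytope
\[
\Delta \coloneqq \operatorname{conv}\Bigl(\{0\}\cup\bigl\{(u,e_{j}) : u\in\mathbb{Z}_{\geq 0}^{n},\ |u|\leq d,\ 1\leq j\leq r\bigr\}\Bigr)\subset\mathbb{R}^{n+r}.
\]
The points $0$, $(0,e_{1}),\ldots,(0,e_{r})$, $(\tilde e_{1},e_{1}),\ldots,(\tilde e_{n},e_{1})$ (all in $\Delta$, since $1\leq d$, with $\tilde e_{i}$ the standard basis vectors of $\mathbb{R}^{n}$) are affinely independent, so $\Delta$ is $(n+r)$-dimensional. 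As $0\in\Delta$ and $\Delta_{0}(g)\subset\Delta$, we have $g\in L(\Delta)$; applying Theorem~\ref{theorem:as-improvement} (more precisely the inequality \eqref{eq:total-degree-bound-toric-exponential-sum} it encodes, which is valid for any member of $L(\Delta)$ with $\Delta$ a full-dimensional lattice polytope containing the origin, and in particular covers degenerate choices of the $f_{j}$) with $S=\Delta$, exactly as in the derivation of \eqref{eq:more-like-as}, gives
\[
B_{c}(\mathbb{G}_{\mathrm{m}}^{n+r};g)\leq\Delta^{n+r}+\sum_{i=1}^{n+r}2^{i-1}\Delta^{n+r}=2^{n+r}\,\Delta^{n+r}=2^{n+r}(n+r)!\,\mathrm{Vol}(\Delta).
\]

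It then remains to show $\Delta^{n+r}=(n+r)!\,\mathrm{Vol}(\Delta)=\binom{n+r-1}{r-1}d^{n}$. The main point is a slicing description of $\Delta$: writing points of $\mathbb{R}^{n+r}$ as $(x,y)$ with $x\in\mathbb{R}^{n}$, $y\in\mathbb{R}^{r}$, I claim
\[
\Delta=\bigl\{(x,y) : x\geq 0,\ y\geq 0,\ \textstyle\sum_{j}y_{j}\leq 1,\ \sum_{i}x_{i}\leq d\sum_{j}y_{j}\bigr\}.
\]
That $\Delta$ is contained in the right-hand side is immediate, the latter being convex and containing every generator of $\Delta$. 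For the reverse inclusion, take $(x,y)$ in the right-hand side and set $s=\sum_{j}y_{j}\in[0,1]$: if $s=0$ then $(x,y)=0$, and if $s>0$ then $x/s$ lies in $\{v\in\mathbb{R}_{\geq 0}^{n}:\sum_{i}v_{i}\leq d\}$, which (as $d\geq 1$) equals the convex hull of $\{u\in\mathbb{Z}_{\geq 0}^{n}:|u|\leq d\}$; writing $x/s=\sum_{u}c_{u}u$ with $c_{u}\geq 0$ and $\sum_{u}c_{u}=1$, a direct check shows $(x,y)=(1-s)\cdot 0+\sum_{j,u}(y_{j}c_{u})\,(u,e_{j})$, a convex combination of generators of $\Delta$.

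Granting the slicing description, Fubini in the $x$-variables produces an inner integral equal to the volume of an $n$-simplex scaled by $d\sum_{j}y_{j}$, so
\[
\mathrm{Vol}(\Delta)=\frac{d^{n}}{n!}\int_{\sigma_{r}}\Bigl(\sum_{j}y_{j}\Bigr)^{n}\,dy,
\]
where $\sigma_{r}$ is the standard $r$-simplex. A layer-cake computation in $s=\sum_{j}y_{j}$, using $\mathrm{Vol}_{r}\{y\in\sigma_{r}:\sum_{j}y_{j}\leq s\}=s^{r}/r!$, gives $\int_{\sigma_{r}}(\sum_{j}y_{j})^{n}\,dy=\frac{1}{(r-1)!}\int_{0}^{1}s^{n+r-1}\,ds=\frac{1}{(r-1)!\,(n+r)}$, whence $\mathrm{Vol}(\Delta)=\frac{d^{n}}{n!\,(r-1)!\,(n+r)}$ and
\[
\Delta^{n+r}=(n+r)!\,\mathrm{Vol}(\Delta)=\frac{(n+r-1)!}{n!\,(r-1)!}\,d^{n}=\binom{n+r-1}{r-1}d^{n}.
\]

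Combined with the displayed estimate, this is the assertion of the lemma. Since the toric input (Theorem~\ref{theorem:as-improvement}) is essentially black-boxed, the only genuine work is the slicing identity for $\Delta$ together with this elementary simplex integral; I expect no real obstacle beyond keeping the combinatorics of $\Delta$ straight, in particular confirming that the convex hull of the generators is exactly the region above and that its normalized volume $\Delta^{n+r}$ equals $\binom{n+r-1}{r-1}d^{n}$ on the nose.
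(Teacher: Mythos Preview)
Your proof is correct and follows the same strategy as the paper: apply the toric Betti bound \eqref{eq:more-like-as} and then estimate the normalized volume of the bounding polytope. Your volume computation, via the explicit halfspace description of \(\Delta\) and Fubini, differs in execution from the paper's pyramid-plus-Gram-determinant argument but arrives at the same number; in fact your argument shows that the paper's inequality \((n+r)!\,\mathrm{Vol}(\Delta)\leq\binom{n+r-1}{r-1}d^{n}\) is an equality for the bounding polytope.
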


\begin{proof}
Let \(S_{d}\) be the convex hull of
\(0, de_{1},\ldots,de_{n}\) in the Euclidean space
$\prod_{i=1}^n \mathbb{R}e_i$ with the standard orthonormal basis $\{e_1,\ldots,e_n\}$.
Then Newton polytope
\(\Delta\) of \(g\) is contained in the convex hull of
\[
\{0\} \cup (S_{d}\times \{e_{n+1}\}) \cup \cdots \cup (S_{d} \times \{e_{n+r}\}).
\]
in the Euclidean space
\[
\mathbb{R}^{n+r} = \left( \prod_{i=1}^{n} \mathbb{R}e_{i} \right) \times
\left( \prod_{i=1}^{r} \mathbb{R}e_{n+i} \right).
\]
with the standard orthonormal basis $\{e_1,\ldots,e_n,e_{n+1},\ldots, e_{n+r}\}$.
We claim that
\begin{equation}
\label{eq:as-volume}
\Delta^{n+r} \leq \binom{n+r-1}{r-1} d^{n}.
\end{equation}
Once this claim is established, the lemma will follow from
\eqref{eq:more-like-as}.

Although the claim is elementary, let us work out the detail for the
sake of completeness.

Let \(T\) be the convex hull in \(\prod_{i=1}^{r}\mathbb{R}e_{n+i}\)
of the vectors \(e_{n+1},\ldots,e_{n+r}\).  Then \(T\) is an
\((r-1)\)-dimensional simplex.  Its volume is \(\frac{1}{(r-1)!}\)
times volume of the parallelotope spanned by the vectors
\(e_{n+1}-e_{n+r},\ldots, e_{n+r-1}-e_{n+r}\).
If $r=1$, this is the Euclidean volume of a single point, which is $1$. If $r > 1$, the volume is given by
\begin{equation*}
\mathrm{Vol}(T) = \frac{1}{(r-1)!}\left| \det B \right|^{\frac{1}{2}}
\end{equation*}
where \(B\) is the \((r-1)\times(r-1)\) Gram matrix, whose \((i,j)\)
entry is given by the inner product
\[
(e_{n+i} - e_{n+r})^{T} \cdot (e_{n+j} - e_{n+r}).
\]
It is clear that
\begin{equation*}
B =
\begin{bmatrix}
  2 & 1  & \cdots & 1 \\
  1 & 2  & \cdots & 1 \\
  \vdots  & \vdots & \ddots & \vdots \\
  1 & 1 &  \cdots & 2
\end{bmatrix},
\end{equation*}
and a simple computation shows that \(\det B = r\).
Hence, \(\mathrm{Vol}(T) = \frac{\sqrt{r}}{(r-1)!}\).

By definition, the polytope \(\Delta\) is contained in the pyramid
\(C\) having \(S_{d} \times T\) as its base, with
\[
\boldsymbol{0} = (\underbrace{0,\ldots,0}_{(n+r) \text{ 0's}}) \in \mathbb{R}^{n+r}
\]
serving as the vertex.  Since the vector
\begin{equation*}
v = \frac{1}{r}\sum_{i=1}^{r} e_{n+i} \in S_{d} \times T,
\end{equation*}
and since \(\|v\| = 1/\sqrt{r}\), the distance from \(\boldsymbol{0}\)
to the base \(S_{d} \times T\) is no larger than \(1/\sqrt{r}\).
Therefore, we have
\begin{align*}
  (n+r)! \operatorname{Vol}(\Delta)
  &\leq (n+r)!\operatorname{Vol}(C) \\
  &\leq (n+r)! \times \underbrace{\textstyle\frac{1}{n+r}}_{\text{scaling factor}}
    \times \operatorname{Vol}(\underbrace{S_{d}\times T}_{\text{base of }C})
    \times \underbrace{\textstyle\frac{1}{\sqrt{r}}}_{\text{height of }C} \\
  &= (n+r-1)! \times \frac{d^{n}}{n!} \times \frac{\sqrt{r}}{(r-1)!} \times \frac{1}{\sqrt{r}} \\
  &= \binom{n+r-1}{r-1} d^{n}.
\end{align*}
In the third step, we used the Fubini theorem.
This completes the proof of the claim.
\end{proof}

\begin{proof}[Proof of \eqref{eq:claim}]
For each \(I \subset \{1,2,\ldots,n+r\}\), define
\begin{equation*}
T_{I} = \{(x_{1},\ldots,x_{n+r}) \in \mathbb{A}^{n+r}:  x_{j} \neq 0 \text{ if }j \in I, x_{i} = 0 \text{ if }i\notin I\}
\end{equation*}
Then \(T_{I}\) is isomorphic to \(\mathbb{G}_{\mathrm{m}}^{|I|}\),
and we have
\begin{equation*}
\mathbb{A}^{n+r} = \bigsqcup_{I \subset \{1,2,\ldots,n+r\}} T_{I}.
\end{equation*}

By applying the relative long exact sequence
\eqref{eq:accumulation-prime} for compactly supported cohomology
several times, we find that
\begin{equation*}
B_{c}(\mathbb{A}^{n+r};g) \leq \sum_{I\subset \{1,2,\ldots,n+r\}} B_{c}(T_{I};g|_{T_{I}}).
\end{equation*}

For \(I \subset \{1,\ldots,n+r\}\), let
\(I^{\prime} = I \cap \{1,\ldots,n\}\) and
\(I^{\prime\prime}=I\cap \{n+1,\ldots,n+r\}\).
Thus
\begin{equation*}
g|_{T_{I}} = \sum_{j+r\in I^{\prime\prime}} x_{j+r}f_{j}(\boldsymbol{x}),
\quad \boldsymbol{x} \in \mathbb{G}_{\mathrm{m}}^{I^{\prime}}.
\end{equation*}
We can now apply Lemma~\ref{lemma:torus-total} to \(T_{I}\) and \(g\),
and conclude that
\begin{equation*}
B_{c}(T_{I};g) \leq 2^{|I|}\times \binom{|I|-1}{|I^{\prime\prime}|-1}\times d^{|I^{\prime}|}.
\end{equation*}
Hence, by \eqref{eq:accumulation-prime},
\begin{align*}
  B_{c}(\mathbb{A}^{n+r};g)
  &\leq  \sum_{I^{\prime}} \left[ \sum_{I^{\prime\prime}} 2^{|I|} \times \binom{|I|-1}{|I^{\prime\prime}|-1} \right] \times d^{|I^{\prime}|} \\
  &\leq \binom{n+r-1}{r-1} \sum_{I^{\prime}} (2d)^{|I^{\prime}|} \sum_{I^{\prime\prime}} 2^{|I^{\prime\prime}|} \\
  &= (2+1)^{r} \binom{n+r-1}{r-1} (2d+1)^{n}
\end{align*}
This completes the proof of \eqref{eq:claim}, and therefore proves
Theorem~\ref{theorem:order}.
\end{proof}

\begin{corollary}\label{corollary:proj-order}
Let \(X\) be a closed subvariety of \(\mathbb{P}^{n}\) that is the
common zero locus of \(r\) homogeneous polynomials of degree
\(\leq d\).  Then
\begin{equation*}
\sum_{i} \dim \mathrm{H}^{i}(X_{\overline{k}};\overline{\mathbb{Q}}_{\ell})
\leq 3^{r} \binom{n+r-1}{r-1}(2d+2)^{n}
\end{equation*}
where
\(\mathrm{H}^{i}(X_{\overline{k}};\overline{\mathbb{Q}}_{\ell})\) is
the \(\ell\)-adic cohomology of \(X\).
\end{corollary}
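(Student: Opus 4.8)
The plan is to deduce the projective bound from the affine Betti estimate of Theorem~\ref{theorem:order} by slicing $\mathbb{P}^{n}$ into affine cells. The first observation is that, since $X$ is a closed subvariety of $\mathbb{P}^{n}$, it is proper over $k$, so $\mathrm{H}^{i}(X_{\overline{k}};\overline{\mathbb{Q}}_{\ell})=\mathrm{H}^{i}_{c}(X_{\overline{k}};\overline{\mathbb{Q}}_{\ell})$ for every $i$; hence the quantity to be bounded is precisely $B_{c}(X,\ell)$.

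Next I would fix homogeneous coordinates $[x_{0}:\cdots:x_{n}]$ and use the flag $\mathbb{P}^{n}\supset\{x_{0}=0\}\supset\{x_{0}=x_{1}=0\}\supset\cdots$ to stratify $\mathbb{P}^{n}=\bigsqcup_{j=0}^{n}U_{j}$, where $U_{j}=\{x_{0}=\cdots=x_{n-j-1}=0,\ x_{n-j}\neq 0\}\cong\mathbb{A}^{j}$ (so $U_{0}$ is a single point). Setting $x_{n-j}=1$ and $x_{0}=\cdots=x_{n-j-1}=0$ exhibits $X\cap U_{j}$ as the closed subscheme of $\mathbb{A}^{j}$ cut out by the $r$ polynomials obtained from $F_{1},\dots,F_{r}$ by these substitutions, each of degree at most $\deg F_{i}\le d$ in the $j$ remaining variables. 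Applying the excision long exact sequence for compactly supported cohomology repeatedly along the flag — the untwisted analogue of \eqref{eq:accumulation-prime} — gives
\[
B_{c}(X,\ell)\ \le\ \sum_{j=0}^{n}B_{c}(X\cap U_{j},\ell)\ \le\ \sum_{j=0}^{n}B_{c}(j,r;d).
\]

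Then I would invoke Theorem~\ref{theorem:order} to bound each summand by $B_{c}(j,r;d)\le 3^{r}\binom{j+r-1}{r-1}(2d+1)^{j}$; for $j=0$ this reads $1\le 3^{r}$, which holds trivially since a subscheme of a point is empty or a point. As $\binom{j+r-1}{r-1}$ is nondecreasing in $j$, it is at most $\binom{n+r-1}{r-1}$ for $0\le j\le n$, so
\[
B_{c}(X,\ell)\ \le\ 3^{r}\binom{n+r-1}{r-1}\sum_{j=0}^{n}(2d+1)^{j}.
\]
Finally, the elementary comparison $\sum_{j=0}^{n}(2d+1)^{j}\le\sum_{j=0}^{n}\binom{n}{j}(2d+1)^{j}=(2d+2)^{n}$ closes the argument.

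I do not expect a genuine obstacle: the only points needing care are to use this finite binomial comparison rather than summing the geometric series to infinity (the latter is weaker by a bounded factor and already fails for small $d$ and $n$), and to record that the $j=0$ stratum is covered by the trivial bound above even though Theorem~\ref{theorem:order} is literally stated for positive $n$. Everything else is a formal consequence of excision plus the affine bound.
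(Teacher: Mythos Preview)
Your argument is correct and is essentially the paper's own proof: the paper splits \(X = U \sqcup Z\) with \(U = X \cap \mathbb{A}^{n}\) and \(Z = X \cap \{x_{0}=0\}\), bounds \(B_{c}(U,\ell)\) by Theorem~\ref{theorem:order}, bounds \(B_{c}(Z,\ell)\) by induction on \(n\), and checks the recursion \(3^{r}\binom{n+r-1}{r-1}(2d+1)^{n} + 3^{r}\binom{n+r-2}{r-1}(2d+2)^{n-1} \le 3^{r}\binom{n+r-1}{r-1}(2d+2)^{n}\). Your full-flag stratification simply unrolls this induction, and your binomial comparison \(\sum_{j=0}^{n}(2d+1)^{j}\le (2d+2)^{n}\) is the summed form of the same inequality.
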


\begin{proof}
For a proper variety over \(k\), the compactly supported cohomology of
\(X\) is the same as the usual \(\ell\)-adic cohomology.  So we only
need to bound \(B_{c}(X, \ell)\).

Suppose now \(X \subset \mathbb{P}^{n}\) is defined by homogeneous
polynomials \(F_{1},\ldots,F_{r}\) of degree \(\leq d\).  Then
\(X = U \sqcup Z\), where \(U = X \cap \mathbb{A}^{n}\).  Here we
regard \(\mathbb{A}^{n}\) as a Zariski open subset of
\(\mathbb{P}^{n}\) in the standard way, and \(Z\) denotes the intersection
of \(X\) with the hyperplane at infinity.  The subset
\(U\subset \mathbb{A}^{n}\) is cut out by the dehomogenization of
\(F_{1},\ldots,F_{r}\), while \(Z\) is cut out in the hyperplane at
infinity by \(r\) homogeneous polynomials of degree \(\leq d\).

Using Theorem~\ref{theorem:order} to bound \(B_{c}(U, \ell)\), using
induction to bound \(B_{c}(Z, \ell)\), and applying the inequality
above, we obtain:
\begin{align*}
  B_{c}(X, \ell) &\leq B_{c}(U, \ell) + B_{c}(Z, \ell) \\
           &\leq 3^{r}\binom{n+r-1}{r-1}(2d+1)^{n} + 3^{r}\binom{n+r-2}{r-1}(2d+2)^{n-1} \\
           &\leq 3^{r}\binom{n+r-1}{r-1}(2d+2)^{n}.
\end{align*}
This completes the proof of the corollary.
\end{proof}

Although not directly related to our main results, we mention the
following consequence of \eqref{eq:as-volume} and the classical
theorem of Adolphson and Sperber
\cite{adolphson-sperber:newton-polyhedra-degree-l-function} on
estimating the Euler characteristic of an affine variety:

\begin{corollary}\label{corollary:euler-characteristic}
Let \(k\) be an algebraically closed field.
Let \(V \subset \mathbb{A}^{n}_{k}\) be the zero locus of
polynomials \(f_{1},\ldots, f_{r} \in k[x_{1},\ldots,x_{n}]\).
Then for any \(\ell\) invertible in \(k\), we have
\begin{equation*}
\left| \chi(V;\overline{\mathbb{Q}}_{\ell}) \right|
\leq 2^{r} \binom{n+r-1}{r-1}(d+1)^{n}.
\end{equation*}
\end{corollary}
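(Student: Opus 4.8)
The plan is to use the Cayley trick of Corollary~\ref{corollary:cayley} to rewrite $\chi(V)$ as the Euler characteristic of a toric-type exponential sum, and then to bound that quantity torus by torus via the classical Adolphson--Sperber Euler characteristic estimate, carrying the volume bookkeeping out exactly as in Lemma~\ref{lemma:torus-total}.

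First, a standard spreading-out argument lets us assume $k=\overline{\mathbb{F}}_{p}$ for some prime $p\neq\ell$: $\chi(V;\overline{\mathbb{Q}}_{\ell})$ is preserved under reduction modulo a suitable $p$, and the target bound does not depend on $p$. For any variety the ordinary and the compactly supported Euler characteristics agree, so, setting $g=\sum_{j=1}^{r}x_{n+j}f_{j}(x_{1},\ldots,x_{n})$ on $\mathbb{A}^{n+r}$ and applying Corollary~\ref{corollary:cayley} (the shift $2r$ being even),
\[
\chi(V;\overline{\mathbb{Q}}_{\ell})=\chi_{c}\bigl(\mathbb{A}^{n+r};g^{\ast}\mathcal{L}_{\psi}\bigr).
\]
Now use the partition $\mathbb{A}^{n+r}=\bigsqcup_{I}T_{I}$ with $T_{I}\cong\mathbb{G}_{\mathrm{m}}^{|I|}$ from the proof of \eqref{eq:claim} and the additivity of compactly supported Euler characteristics over an open--closed stratification (the equality version of \eqref{eq:accumulation-prime}) to get $\bigl|\chi(V;\overline{\mathbb{Q}}_{\ell})\bigr|\le\sum_{I}\bigl|\chi_{c}(T_{I};(g|_{T_{I}})^{\ast}\mathcal{L}_{\psi})\bigr|$. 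Write $I'=I\cap\{1,\ldots,n\}$ and $I''=I\cap\{n+1,\ldots,n+r\}$, so $g|_{T_{I}}=\sum_{n+j\in I''}x_{n+j}\,f_{j}|_{T_{I}}$ is a Laurent polynomial on $\mathbb{G}_{\mathrm{m}}^{|I|}$ vanishing identically exactly when $I''=\emptyset$; the term $I=\emptyset$ contributes $1$, and the terms with $I''=\emptyset\neq I'$ contribute $0$ since $\chi_{c}(\mathbb{G}_{\mathrm{m}}^{|I'|})=0$.

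For the remaining terms ($I''\neq\emptyset$) invoke the classical Adolphson--Sperber bound for the Euler characteristic of a toric exponential sum \cite{adolphson-sperber:newton-polyhedra-degree-l-function}, valid for an \emph{arbitrary} (possibly degenerate) Laurent polynomial $h$ on $\mathbb{G}_{\mathrm{m}}^{m}$:
\[
\bigl|\chi_{c}\bigl(\mathbb{G}_{\mathrm{m}}^{m};h^{\ast}\mathcal{L}_{\psi}\bigr)\bigr|\le m!\,\mathrm{Vol}\bigl(\Delta_{\infty}(h)\bigr).
\]
Applying this with $h=g|_{T_{I}}$, $m=|I|$, and bounding $\Delta_{\infty}(g|_{T_{I}})$ by the pyramid with apex $0$ and base $\bigl(\text{the }|I'|\text{-simplex }\mathrm{conv}(0,de_{i}:i\in I')\bigr)\times\bigl(\text{the }(|I''|-1)\text{-simplex }\mathrm{conv}(e_{n+j}:n+j\in I'')\bigr)$ --- exactly the computation of Lemma~\ref{lemma:torus-total}, which specializes to \eqref{eq:as-volume} when $I=\{1,\ldots,n+r\}$ --- gives $|I|!\,\mathrm{Vol}(\Delta_{\infty}(g|_{T_{I}}))\le\binom{|I|-1}{|I''|-1}d^{|I'|}$. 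Summing over all $I$, with $|I'|=a$ and $|I''|=b$, and using $\binom{a+b-1}{b-1}\le\binom{n+r-1}{r-1}$ together with $\sum_{b=1}^{r}\binom{r}{b}=2^{r}-1$,
\[
\bigl|\chi(V;\overline{\mathbb{Q}}_{\ell})\bigr|\le 1+\sum_{a=0}^{n}\binom{n}{a}d^{a}\sum_{b=1}^{r}\binom{r}{b}\binom{a+b-1}{b-1}\le 1+(2^{r}-1)\binom{n+r-1}{r-1}(d+1)^{n}\le 2^{r}\binom{n+r-1}{r-1}(d+1)^{n},
\]
the last step because $\binom{n+r-1}{r-1}(d+1)^{n}\ge 1$. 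This is the claimed estimate.

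The only substantive input is the toric Adolphson--Sperber Euler characteristic bound; its point is that although degenerating $h$ can destroy the vanishing of the off-middle $\mathrm{H}^{i}_{c}$, the Euler characteristic is insensitive to this and never exceeds $m!\,\mathrm{Vol}(\Delta_{\infty}(h))$. I expect this to be the only step requiring care: one cannot substitute the cruder Betti-sum bound \eqref{eq:more-like-as}, which carries an extra factor $2^{m}$ and would yield only the weaker constant $3^{r}(2d+1)^{n}$, nor can one apply a bound of the shape $|\chi_{c}(\mathbb{A}^{N};g^{\ast}\mathcal{L}_{\psi})|\le N!\,\mathrm{Vol}(\Delta_{\infty}(g))$ directly on $\mathbb{A}^{n+r}$, since such an inequality already fails --- e.g.\ for $g=x_{1}^{2}$ on $\mathbb{A}^{2}$ --- which is precisely why the passage to coordinate tori is essential.
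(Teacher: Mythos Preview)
Your proof is correct and follows essentially the same route as the paper: reduce to positive characteristic, use the Cayley trick (Corollary~\ref{corollary:cayley}) to rewrite $\chi(V)$ as $\chi_c(\mathbb{A}^{n+r};g^{\ast}\mathcal{L}_{\psi})$, stratify $\mathbb{A}^{n+r}$ into coordinate tori $T_I$, apply the Adolphson--Sperber Euler characteristic bound on each torus together with the volume estimate of Lemma~\ref{lemma:torus-total}, and sum. Your handling of the degenerate strata $I''=\emptyset$ is slightly more explicit than the paper's, but the argument and the final arithmetic are the same.
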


\begin{proof}
Again, by the standard reduction, we can assume \(k\) is an algebraic
closure of a finite field \(\mathbb{F}_{q}\), and
\(f_{i} \in \mathbb{F}_{q}[x_{1},\ldots,x_{n}]\).  Write
\(g = \sum_{i=1}^{r} x_{n+i}f_{i}\), and
\(\mathbb{A}^{n+r} = \bigsqcup_{I\subset\{1,\ldots,n+r\}} T_{I}\) as
in the proof of \eqref{eq:claim}.  Then by
\cite[(1.9)]{adolphson-sperber:newton-polyhedra-degree-l-function},
and \eqref{eq:as-volume}, we have
\begin{equation*}
0 \leq (-1)^{|I|} \chi(T_{I};g|_{T_{I}}^{\ast}\mathcal{L}_{\psi})
\leq \binom{|I|-1}{|I^{\prime\prime}|-1}d^{|I^{\prime}|},
\end{equation*}
where $I$, $I^{\prime}$, and $I^{\prime\prime}$ are defined in the proof of
\eqref{eq:claim}.
Hence
\begin{align*}
  |\chi(V;\overline{\mathbb{Q}}_{\ell})|= |\chi(\mathbb{A}^{n+r};g^{\ast}\mathcal{L}_{\psi})|   &\leq \sum_{I} |\chi(T_{I};g|^{\ast}_{T_{I}}\mathcal{L}_{\psi})| \\
  &\leq \sum_{I} \binom{|I|-1}{|I^{\prime\prime}|-1}\times d^{|I^{\prime}|}\\
  & \leq \binom{n+r-1}{r-1} \times 2^{r}\times (d+1)^{n} .
\end{align*}
This completes the proof.
\end{proof}

\begin{proof}[Proof of Theorem~\ref{theorem:last}(\ref{item:bomb-restricted-to-sub})]
Suppose \(V = \{f_{1}= \cdots = f_{r}=0\}\).  In
Theorem~\ref{theorem:dwork-coh}, we take \(X = \mathbb{A}^{n}\),
\(E = \mathbb{A}^{n+r}\), and
\(\mathcal{F} = f^{\ast}\mathcal{L}_{\psi}\).  Then by a direct
computation,
\(g^{\ast}\mathcal{L}_{\psi}\otimes\varpi^{\ast}f^{\ast}\mathcal{L}_{\psi}\)
is isomorphic to \((f+g)^{\ast}\mathcal{L}_{\psi}\).  This gives us
\begin{equation*}
\mathrm{H}_{c}^{j}(V;f^{\ast}\mathcal{L}_{\psi}) \simeq
\mathrm{H}_{c}^{j+2r}(\mathbb{A}^{n+r};(f+g)^{\ast}\mathcal{L}_{\psi}),
\end{equation*}
where \(g = x_{n+1}f_{1}+\cdots + x_{n+r}f_{r}\).

To complete the proof, we need to bound \(B_{c}(\mathbb{A}^{n+r};f+g)\).
Let \(\Delta\) be the Newton polytope of \(f+g\). We claim that
\begin{equation*}
\Delta^{n+r} \leq \binom{n+r}{r} d^{n}.
\end{equation*}
The result follows from the above inequality and an argument similar
to the proof of \eqref{eq:claim}.

Again, the proof of the inequality is elementary.  Let \(S_{d}\) be
the \(n\)-dimensional simplex
with vertices \(0, de_{1},\ldots,de_{n}\)
in the Euclidean space $\prod_{i=1}^n \mathbb{R}e_i$ with the standard orthonormal basis $\{e_1,\ldots,e_n\}$.
Then the Newton polytope
\[
\Delta \subset \mathbb{R}^{n+r} =
\left( \prod_{i=1}^{n} \mathbb{R}e_{i} \right) \times
\left( \prod_{i=1}^{r} \mathbb{R}e_{n+i} \right)
\]
of \(f+g\) is contained in the convex hull of
\begin{equation*}
\big(S_{d} \times \{\underbrace{(0,\ldots,0)}_{r\text{ zeros}}\}\big)
\cup (S_{d} \times \{e_{n+1}\}) \cup \cdots \cup (S_{d} \times \{e_{n+r}\})
\end{equation*}
in the Euclidean space $\prod_{i=1}^{n+r} \mathbb{R}e_i$
with the standard orthonormal basis
$\{e_1,\ldots,e_n, e_{n+1}, \ldots, e_{n+r}\}$.
Let \(T^{\prime}\) be the \(r\)-dimensional simplex in
\(\prod_{i=1}^{r}\mathbb{R}e_{n+i}\) with vertices
\(0, e_{n+1},\ldots,e_{n+r}\).  Then we have
\begin{equation*}
\Delta \subset S_{d} \times T^{\prime}.
\end{equation*}
It follows from Fubini that
\begin{align*}
  (n+r)!\mathrm{Vol}(\Delta)
  &\leq (n+r)! \times \mathrm{Vol}(S_{d}) \times \mathrm{Vol}(T^{\prime}) \\
  &= (n+r)! \times \frac{d^{n}}{n!} \times \frac{1}{r!} \\
  &= \binom{n+r}{r} d^{n}.
\end{align*}
This completes the proof.
\end{proof}

\subsection{A more elementary approach to \ref*{corollary:total}}
\label{sec:elementary}

\begin{enumerate}[wide, label={(\alph*)}]
\item In this subsection, we give a more elementary proof of
Corollary~\ref{corollary:total}, which avoids using the total Betti
number estimate for exponential sums.  Instead, we combine the
reduction of \cite{wz2} with Katz's Euler characteristic method.  This
method needs a good upper bound for the absolute value of Euler
characteristics of varieties in \(\mathbb{A}^{n}_{k}\).  For this, we
use Corollary~\ref{corollary:euler-characteristic}. Let \(V\) be any
variety in \(\mathbb{A}^{n}_{k}\) defined by \(r\) polynomials of
degree \(\leq d\).  Then we have:
\begin{equation}\label{eq:euler-bound}
|\chi(V;\overline{\mathbb{Q}}_{\ell})| \leq
2^{r} \binom{n+r-1}{r-1}(d+1)^{n}
\end{equation}

Note that the proof of \eqref{eq:euler-bound} relies on just two
ingredients: a volume computation and a classical theorem of Adolphson
and Sperber.  In \cite{katz:sums-of-betti-numbers}, Katz used a weaker
Euler characteristic upper bound of \(2^{r} \times (rd + r + 1)^{n}\).
The improved bound \eqref{eq:euler-bound} leads to better estimates
throughout loc.~cit.

\item The first step of the elementary argument is to obtain an upper
bound for the total compactly supported Betti numbers of a complete
intersection in \(\mathbb{A}^{n}_{k}\).  For this, we have two
options: we could use Theorem~\ref{theorem:main}, which is logically
independent of Corollary~\ref{corollary:total}, or we could employ
Katz's more elementary Euler characteristic method
\cite{katz:sums-of-betti-numbers}.  Using the
bound~\eqref{eq:euler-bound} as input, Katz's method yields the
estimate
\begin{equation}\label{eq:katz-complete-intersection-bound}
B_{c}^{\mathrm{ci}}(n,r;d) \leq 2^{r}\binom{n+r-1}{r-1}(d+2)^{n}.
\end{equation}
(We omit proof of this inequality.  In Step \ref{step-c} a similar
argument will be given.)

While \eqref{eq:katz-complete-intersection-bound} is weaker than
Theorem~\ref{theorem:main}, using this bound instead of
Theorem~\ref{theorem:main} does not affect the final result, since the
difference between them is negligible compared to the losses
introduced in subsequent steps.

\item\label{step-c}
Now let \(W \subset \mathbb{A}^{n}_{k}\) be the common zero
locus of polynomials
\(f_{1},\ldots, f_{s} \in k[x_{1},\ldots,x_{n}]\), where we assume
\(\deg f_{i} \leq d\) and \(\dim W = n-s\).  Thus, \(W\) is
set-theoretically a complete intersection.  Let
\(g \in k[x_{1},\ldots,x_{n}]\) be any polynomial of degree
\(\leq e\).  We shall further assume \(e \geq d\), as this will be
the case in later steps.  Let \(V(g) = W \cap \{g=0\}\),
and \(W[g^{-1}] = W \setminus V(g)\).  Then a simple computation using
\eqref{eq:euler-bound} shows that
\begin{align}
  |\chi(W[g^{-1}];\overline{\mathbb{Q}}_{\ell})|
  &=|\chi(W;\overline{\mathbb{Q}}_{\ell}) - \chi(V(g);\overline{\mathbb{Q}}_{\ell})|
  \nonumber \\
  &\leq 3 \times 2^{s} \times \binom{n+s}{s}\times (e+1)^{n}. \label{eq:euler-difference-bound}
\end{align}

With the Euler characteristic bound of \(W[g^{-1}]\), we can now
recast Katz's method to get a total compactly supported Betti number
bound for \(W[g^{-1}]\) using Deligne's perverse Lefschetz theorem as
follows.  To begin with, since \(W\) is set-theoretically a complete
intersection, the shifted constant sheaf
\(\overline{\mathbb{Q}}_{\ell,W}[n-s]\) is a perverse sheaf on \(W\)
(the support condition is trivially satisfied, and the cosupport condition
follows from Lemma~\ref{lemma:cosupp}).  Since \(W[g^{-1}]\) is an
open subscheme of \(W\),
\(\overline{\mathbb{Q}}_{\ell,W[g^{-1}]}[n-s]\) is also perverse on
\(W[g^{-1}]\).  We can now apply Lemma~\ref{lemma:gysin-perv} with
\(X = W[g^{-1}]\) and \(f\) being the inclusion morphism, to conclude
that for a general hyperplane \(B\),
\begin{equation*}
\dim \mathrm{H}^{n-s+1}_{c}(W[g^{-1}];\overline{\mathbb{Q}}_{\ell})
\leq \dim \mathrm{H}^{n-s-1}_{c}((W\cap B)[g^{-1}];\overline{\mathbb{Q}}_{\ell}),
\end{equation*}
and, for \(j\geq 2\),
\begin{equation*}
\dim \mathrm{H}^{n-s+j}_{c}(W[g^{-1}];\overline{\mathbb{Q}}_{\ell})
= \dim \mathrm{H}^{n-s+j-2}_{c}((W\cap B)[g^{-1}];\overline{\mathbb{Q}}_{\ell}).
\end{equation*}
Combining these inequalities with \eqref{eq:euler-difference-bound},
we get
\begin{align}
  \dim \mathrm{H}^{n-s}_{c}(W[g^{-1}])
  &\leq (-1)^{n-s}\chi(W[g^{-1}];\overline{\mathbb{Q}}_{\ell}) + (-1)^{n-s+1}\chi((W \cap B)[g^{-1}];\overline{\mathbb{Q}}_{\ell}) \nonumber \\
  &\leq 3 \times 2^{s+1} \times \binom{n+s}{s}\times (e+1)^{n}.\label{eq:difference-middle}
\end{align}
Applying \eqref{eq:difference-middle} to \((W\cap B)[g^{-1}]\) yields
\begin{equation*}
\dim \mathrm{H}^{n-1-s}_{c}((W\cap B)[g^{-1}];\overline{\mathbb{Q}}_{\ell})
\leq 3 \times 2^{s+1} \times \binom{n-1+s}{s} \times (e+1)^{n-1}.
\end{equation*}
Continuing slicing \(W[g^{-1}]\) by hyperplanes and using the weak
Lefschetz theorem mentioned above, we conclude that
\begin{align}
  B_{c}(W[g^{-1}],\ell)
  &\leq 3 \times 2^{s+1}\times \sum_{i=0}^{n-s} \binom{n+s-i}{s} (e+1)^{n} \nonumber\\
  &< 3\times 2^{s+1} \times \binom{n+s}{s} \times (e+2)^{n}.
    \label{eq:1-extra-bound}
\end{align}

\item Now let us consider the general case.  Thus let \(V\) be the
common zero locus of \(r\) polynomials \(f_{1} = \cdots = f_{r} = 0\),
satisfying \(\deg f_{i} \leq d\).  Suppose \(\dim V = n-s\).  If
\(s = r\), then \(V\) is a set-theoretic complete intersection and we
have obtained estimates for its total Betti number.  In the following,
we shall assume \(r - s \geq 1\), namely \(s \leq r-1\).

By~\cite[Lemma~5.1]{wz1}, upon choosing a different set of \(r\)
defining polynomials (without changing \(V\)), we can assume
\(W = \{f_{1} = \cdots = f_{s}=0\}\) is a set-theoretic complete
intersection, i.e., \(\dim W = \dim V = n-s\).  Thus, the remaining
\(r-s\) polynomials \(f_{s+1},\ldots,f_{r}\) will not drop the
dimension of \(W\).  For each nonempty subset \(I\) of
\(\{s+1,\ldots,r\}\), let \(f_{I} = \prod_{i\in I} f_{i}\). Then there
is a Mayer--Vietoris spectral sequence
\begin{equation*}
E_{2}^{-p,q} = \bigoplus_{|I|=p+1} \mathrm{H}^{q}_{c}(W[f_{I}^{-1}];\overline{\mathbb{Q}}_{\ell}) \Rightarrow \mathrm{H}^{q-p}_{c}(W\setminus V;\overline{\mathbb{Q}}_{\ell}).
\end{equation*}
Using the spectral sequence, \eqref{eq:1-extra-bound} implies that
\begin{align}
  B_{c}(W\setminus V;\ell)
  &\leq \sum_{\substack{I\subset \{s+1,\ldots,r\} \\ I \neq \emptyset}} B_{c}(W[f_{I}^{-1}],\ell) \nonumber \\
  &\leq 3\times 2^{s+1}\times \binom{n+s}{s} \times \sum_{\substack{I\subset \{s+1,\ldots,r\} \\ I \neq \emptyset}}(|I|d+2)^{n}\nonumber \\
  &\leq 3\times 2^{s+1} \times \binom{n+s}{s} \times 2^{r-s} \times [(r-s)d+2]^{n} \nonumber\\
  &= 3\times 2^{r+1} \times \binom{n+s}{s} \times [(r-s)d+2]^{n}.\label{eq:complement}
\end{align}

It follows that
\begin{align*}
  B_{c}(V,\ell)
  & \leq B_{c}(W,\ell) + B_{c}(W \setminus V,\ell) \\
  \text{[by  \eqref{eq:katz-complete-intersection-bound} and
    \eqref{eq:complement}]}
  & \leq 2^{s}\binom{n+s-1}{s-1}(d+2)^{n} + 3 \cdot 2^{r+1} \binom{n+s}{s}((r-s)d+2)^{n}\\
  & < 7 \times 2^{r} \times \binom{n+s}{s} \times [(r-s)d+2]^{n}
\end{align*}

Therefore, we have arrived at Corollary~\ref{corollary:total} once again.
This time, the implied constant depends on the dimension \(n-s\) of
\(V\) in \(\mathbb{A}^{n}\).

\begin{theorem}
\label{theorem:elementary}
Let \(V\) be a subvariety of \(\mathbb{A}^{n}_{k}\) of codimension $s$.
Suppose \(V\) is the common zero locus of \(r\) polynomials of degree
\(\leq d\).
\begin{enumerate}
\item If \(r = s\), then we have \(B_{c}(V,\ell) \leq \binom{n-1}{r-1}(d+1)^{n}\).
\item If \(r \geq s + 1\),
then we have
\begin{equation*}
  B_{c}(V,\ell)
  \leq 7 \times 2^{r} \times \binom{n+s}{s} \times [(r-s) d + 2]^{n}
\end{equation*}
\end{enumerate}
\end{theorem}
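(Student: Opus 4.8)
The plan is to assemble Theorem~\ref{theorem:elementary} from the ingredients developed above in this subsection. Part~(i) is immediate: when $r = s$ the scheme $V$ is a set-theoretic complete intersection of dimension $n - r$ cut out by $r$ polynomials of degree $\leq d$, so $B_{c}(V,\ell) \leq \binom{n-1}{r-1}(d+1)^{n}$ is exactly the content of Corollary~\ref{corollary:middle-cleaner-bound}(ii) (equivalently Theorem~\ref{theorem:main}). For part~(ii) one first reduces, by the standard spreading-out argument over $\operatorname{Spec}\mathbb{Z}$, to the case where $k$ is an algebraic closure of a finite field, so that Artin--Schreier cohomology becomes available; then, using \cite[Lemma~5.1]{wz1}, one re-chooses the $r$ defining polynomials (without altering $V$) so that $W := \{f_{1} = \cdots = f_{s} = 0\}$ is a set-theoretic complete intersection of the same dimension $n - s = \dim V$, the remaining $f_{s+1},\ldots,f_{r}$ not dropping dimension.

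From here the excision sequence gives $B_{c}(V,\ell) \leq B_{c}(W,\ell) + B_{c}(W\setminus V,\ell)$. The first summand is bounded by \eqref{eq:katz-complete-intersection-bound}. For the second I would use the Mayer--Vietoris spectral sequence
\[
E_{2}^{-p,q} = \bigoplus_{|I| = p+1} \mathrm{H}^{q}_{c}(W[f_{I}^{-1}];\overline{\mathbb{Q}}_{\ell}) \Rightarrow \mathrm{H}^{q-p}_{c}(W\setminus V;\overline{\mathbb{Q}}_{\ell}),
\]
where $I$ ranges over the nonempty subsets of $\{s+1,\ldots,r\}$ and $f_{I} = \prod_{i\in I} f_{i}$ has degree $\leq |I|\,d \leq (r-s)d$, thereby reducing the problem to bounding $B_{c}(W[g^{-1}],\ell)$ for $W[g^{-1}] = W \setminus \{g = 0\}$ with $\deg g \leq e$ and $e \geq d$.

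The crux is this last bound, which I would obtain by running Katz's Euler-characteristic slicing argument in a form valid on the \emph{singular} variety $W[g^{-1}]$. Since $W$ is a set-theoretic complete intersection, $\overline{\mathbb{Q}}_{\ell,W}[n-s]$ satisfies the support condition trivially and the cosupport condition by Lemma~\ref{lemma:cosupp}, hence is perverse, and restricting to the open subscheme $W[g^{-1}]$ keeps it perverse. Deligne's perverse weak Lefschetz theorem (Lemma~\ref{lemma:gysin-perv}) then bounds $\dim \mathrm{H}^{n-s+j}_{c}(W[g^{-1}])$ in terms of the middle compactly supported cohomology of iterated general hyperplane sections, and the Euler-characteristic input \eqref{eq:euler-difference-bound}—itself a consequence of Corollary~\ref{corollary:euler-characteristic} applied to $W$ and to $W\cap\{g=0\}$—converts this into the explicit estimate \eqref{eq:1-extra-bound}. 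Feeding \eqref{eq:1-extra-bound} into the Mayer--Vietoris sum yields \eqref{eq:complement}, and adding \eqref{eq:katz-complete-intersection-bound} and \eqref{eq:complement} with a little elementary bookkeeping gives the stated bound $7 \times 2^{r}\binom{n+s}{s}[(r-s)d+2]^{n}$.

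The only genuinely delicate point is the use of the perverse weak Lefschetz theorem on $W[g^{-1}]$: Katz's original argument relied on the smooth weak Lefschetz theorem, which forces a passage to a hypersurface complement and costs an extra order of $d$; here the reduction of \cite[Lemma~5.1]{wz1} to a set-theoretic complete intersection $W$, together with the perversity of $\overline{\mathbb{Q}}_{\ell,W}[n-s]$ and Lemma~\ref{lemma:gysin-perv}, is precisely what makes the singular argument go through without that loss. Tracking the numerical constants through the sum over the $2^{r-s}-1$ nonempty subsets $I$—with $|I|d$ crudely bounded by $(r-s)d$—and combining the two excision contributions is routine.
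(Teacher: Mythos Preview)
Your proposal is correct and follows essentially the same route as the paper's own argument in \S\ref{sec:elementary}: part~(i) via Corollary~\ref{corollary:middle-cleaner-bound}, and part~(ii) via the reduction of \cite[Lemma~5.1]{wz1}, excision into $W$ and $W\setminus V$, the Mayer--Vietoris spectral sequence, and the perverse weak-Lefschetz slicing argument on each $W[f_{I}^{-1}]$ using \eqref{eq:euler-difference-bound}. The only superfluous step is your spreading-out reduction to $k=\overline{\mathbb{F}}_{q}$: the argument here never invokes Artin--Schreier cohomology directly, only Corollary~\ref{corollary:euler-characteristic}, which is already stated over an arbitrary algebraically closed field.
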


Without prior knowledge of the relation between \(r\) and \(s\), the
inequality (ii) is generally weaker than Theorem~\ref{theorem:order}.
However, when \(r = s + 1\), (ii) can be better than
Theorem~\ref{theorem:order}.
\end{enumerate}

\bibliographystyle{amsalpha}
\bibliography{betti}%

\end{document}